\newcommand{\reall}{\mathbb{R}}
\newcommand{\x}{\mathbf{x}}
\newcommand{\y}{\mathbf{y}}
\newcommand{\one}{\mathbbm{1}}
\newcommand{\scl}[2]{\left\langle #1\,,\, #2\right\rangle} 
\newcommand{\om}{\omega}
\newcommand{\vr}{\varrho}
\newcommand{\vp}{\varphi}
\newcommand{\dr}[1]{\left(#1\right)}
\newcommand{\dn}[1]{\left\|#1\right\|}
\newcommand{\da}[1]{\left|#1\right|}
\newcommand{\db}[1]{\left\{#1\right\}}
\newcommand{\Rmnum}[1]{\uppercase\expandafter{\romannumeral#1}}
\newcommand{\eps}{\varepsilon}
\newcommand{\br}{\bar{r}}
\newcommand{\bz}{\bar{z}}
\newcommand{\tw}{\widetilde w}
\newcommand{\bw}{\overline w}
\newcommand{\bu}{\overline{u}}
\newcommand{\tr}{\tilde r}
\newcommand{\tz}{\tilde z}
\newcommand{\hH}{\widehat H}
\renewcommand{\H}{\mathbb{H}}
\newcommand{\dbtilde}[1]{\accentset{\approx}{#1}}
\newcommand{\E}{\mathbf{E}}
\newcommand{\Span}{\mathrm{Span\,}}
\newtheorem{proposition}{Proposition}
\newtheorem{lemma}{Lemma}
\newtheorem{definition}{Definition}
\newtheorem{theorem}{Theorem}
\newtheorem{corollary}{Corollary}
\numberwithin{equation}{section}
\newcommand{\Rb}{\mathbb{R}}
\newcommand{\Dc}{\mathcal{D}}
\newcommand{\Ec}{\mathcal{E}}
\newcommand{\Fc}{\mathcal{F}}
\newcommand{\Gc}{\mathcal{G}}
\newcommand{\Oc}{\mathcal{O}}
\newcommand{\pa}{\partial}
\newcommand{\inn}{\textup{in}}
\newcommand{\out}{\textup{ex}}
\renewcommand{\mod}{\textup{Mod}}
\newcommand{\Ls}{\mathscr{L}}
\newcommand{\Ms}{\mathscr{M}}
\newcommand{\Hs}{\mathscr{H}}
\newcommand{\As}{\mathscr{A}}
\newcommand{\Lst}{\tilde{\mathscr{L}}}
\newcommand{\BS}{\mathrm{BS}}
\title{Axisymmetric type \Rmnum{2} blowup solutions to the three-dimensional Keller-Segel system}
 \author{Thomas Y. Hou\footnote{Applied and Computational Mathematics, Caltech, Pasadena, CA. Email: \href{hou@cms.caltech.edu}{hou@cms.caltech.edu}},\; 
 Van Tien Nguyen\footnote{Department of Mathematics, National Taiwan University, Taiwan. Email: \href{vtnguyen@ntu.edu.tw}{vtnguyen@ntu.edu.tw}},\; 
 Peicong Song\footnote{Applied and Computational Mathematics, Caltech, Pasadena, CA. Email: \href{psong2@caltech.edu}{psong2@caltech.edu} }}
\date{\vspace{-5ex}}
\begin{document}
\maketitle
\begin{abstract}
    We construct axisymmetric solutions to the three-dimensional parabolic-elliptic Keller-Segel system that blow up in finite time.
    In particular, the singularity is of type \Rmnum{2}, which locally admits a leading-order profile of the rescaled stationary solution of the two-dimensional system. Additionally, mass concentration occurs along a one-dimensional ring in the plane. In the analysis, we rely on an approximate solution of the eigenproblem associated with the linearized operator around the stationary solution as well as the modulation dynamics to control the perturbation function and derive the accurate blowup rate.
\end{abstract}

\section{Introduction}
\subsection{Problem Setup}
We consider the three-dimensional Keller-Segel system 
\begin{equation}\label{Introduction:3dks}
       \begin{cases}
        \pa_t u(\x,t) = \nabla\cdot(\nabla u(\x,t) - u(\x,t)\nabla\Phi_u(\x,t))\quad (\x,t)\in\reall^3\times\reall,\\
        -\Delta\Phi_u(\x,t) = u(\x,t),
    \end{cases} \tag{3dKS}
\end{equation}
where the $3$D Poisson field is given by $\Phi_u = \frac{1}{4\pi|\x|}*u$. More generally, the Keller-Segel system can be defined in $\reall^d$, which we will discuss shortly.
The Keller-Segel system models \textit{chemotaxis}, the directed movement of biological organisms in response to chemical gradients. Notable examples include the behavior of slime mold (\textit{Dictyostelium discoideum}) and \textit{Escherichia coli} bacteria.
It was first established by Patlak \cite{Patlak1953} and Keller \& Segel \cite{KellerSegel1970}. We refer to \cite{Horstmann2004} and \cite{Chavanis2008} for a survey of this model as well as related mathematical problems.   
Since the Keller-Segel system (in general dimension $d$) takes a divergence form, its strong solutions preserve the total mass:
\[
     \int_{\reall^d}u(\x,t)\,d\x = \int_{\reall^d}u(\x,0)\,d\x:=M \quad \forall\;t>0.
\]
In addition, the solutions admit an important scaling symmetry (besides the translation symmetries in space and time): if $u(\x,t)$ is a solution, then so is
\[
    u_\lambda(\x,t):= \lambda^2 u(\lambda \x,\lambda^2t),
\]
for any $\lambda >0$. We say that the solution $u$ blows up in finite time $T$, if
\[
    \limsup_{t\to T}\|u(t)\|_{L^\infty(\reall^d)} = +\infty.
\]
A blowup at time $T$ is said to be of type \Rmnum{1} if there exists a constant $C>0$, such that
\[
   \limsup_{t\to T}\;(T-t)\|u(t)\|_{L^\infty(\reall^d)}\leq C.
\]
Otherwise, the blowup is called type \Rmnum{2}.
The aim of this work is to construct a type \Rmnum{2} finite-time blowup solution to the $3$D system \eqref{Introduction:3dks}, whose mass concentrates along a ring on the plane, i.e. $\{(x_1,x_2,0):x_1^2+x_2^2 = R^2\}$ for some $R>0$. 

\subsection{Previous Results}
There are abundant results on both the well-posedness and singularity formation of the Keller-Segel system in dimension $d$. In particular, the blowup mechanisms can vary significantly in different dimensions. The case $d=2$ is called $L^1$-critical, as the scaling transformation $u\mapsto u_\lambda$ preserves the $L^1$-norm of $u$. On the other hand, the case $d\geq 3$ is called $L^1$-supercritical, and the scaling transformation preserves the $L^{d/2}$-norm. \\
\textbf{The case} $\mathbf{d=2}$. In the study of the $2$D Keller-Segel system, the stationary state solution plays a fundamental role:
\begin{equation}\label{intro: steady state solution}
    U(x_1,x_2):= \frac{8}{(1+x_1^2+x_2^2)^2},
\end{equation}
whose Poisson field is $\Psi_U = -2\log(x_1^2+x_2^2)$. It turns out that $\int U = 8\pi$ is the critical mass threshold that distinguishes between the global existence and finite-time blowup. For $M<8\pi$, there is global existence of solutions that diffuse to zero; for example, see \cite{Blanchet2006,Biler2006}. For $M=8\pi$, there exist infinite-time blowup solutions as well as global regularity results \cite{Blanchet2008,Davila2024,Blanchet2012}. For $M>8\pi$, there are various concrete examples of finite-time blowup solutions, which admit the form 
\begin{equation}\label{previous 2D blowup result and law}
    u(\x,t) = \frac{1}{\lambda^2(t)}(U+\tilde u(t))\dr{\frac{\x-\x^*(t)}{\lambda(t)}},\quad \lambda(t) \ll \sqrt{T-t},
\end{equation}
with $\tilde u\to 0$ and $\x^*(t)\to\x^*$ as $t\to T$ in a certain topology. Formal asymptotics and rigorous proofs can be found in \cite{Herrero1996,Velazquez2002, Raphael2014,Ann.PDE22,CPAM2022, buseghin2023existencefinitetimeblowup}. In particular, in \cite{CPAM2022} the authors are able to determine the precise stable blowup rate:
\begin{equation}\label{more precise blowup law}
    \lambda(t) = 2e^{-\frac{2+\mathbf{E}}{2}}\sqrt{T-t}\;e^{-\sqrt{\frac{|\log(T-t)|}{2}}}(1+o_{t\to T}(1)),
\end{equation}
where $\mathbf{E}$ is the Euler constant. Other (unstable) blowup rates were also derived in \cite{CPAM2022}. Our work is closely related to this line of research. Indeed, the $3$D axisymmetric Keller-Segel system resembles a $2$D one near the center of the blowup ring, which allows us to locally recover the same blowup mechanism (in particular, the same blowup rate). See the next subsection for a detailed discussion. There are other blowup scenarios, for example, the unstable ones in \cite{CPAM2022}, as well as the multiple collapsing blowup \cite{Seki2013,collot2024}. It is worth mentioning that for the $2$D Keller-Segel system there is no type \Rmnum{1} blowup (for example, see \textit{Theorem 10} in \cite{YukiNaito2008}).
\\
\textbf{The case} $\mathbf{d\geq 3}$. Similar to the $2$D case, there is a threshold on $\|u(0)\|_{L^{d/2}}$ that distinguishes between global existence in time and finite-time blowup. For small initial data, global existence results can be found in \cite{Corrias2004,Calvez2012}.
Different from the $d=2$ case, for $d\geq 3$ there exist type \Rmnum{1} blowups \cite{Herrero1998,Senba2005, nguyen2025construction,Souplet2019, nguyen2026infinitely}. In particular, the exact self-similar solution has been shown to be fully stable when $d=3$ \cite{Glogić2024,collot2024stabilitytypeiselfsimilar,li2025nonradialstabilityselfsimilarblowup}. There is also a type \Rmnum{2} radial collapsing sphere blowup, which was first formally constructed in \cite{Herrero1997collapsingring3d} and then proved rigorously in \cite{COLLOT2023collapsingRing}. In this scenario, the blowup profile is a traveling wave solution of the viscous Burgers' equation. It is worth noting that blowup can occur on the entire sphere in the Keller-Segel type system with strong diffusion degeneracies \cite{Winkler2025}. Finally, we refer the readers to \cite{Tello07062007,KANG201657,WINKLER2011261,Winkler2018,Fuest2021,liu2025finitetimeblowupkellersegel} for related discussions on the more general Keller-Segel system with logistic damping.

\subsection{Statement of the Result}
In \eqref{Introduction:3dks}, we consider the axisymmetric setting and adopt the cylindrical coordinate
\begin{equation*}
    u = u(r,z, t), \quad r = \sqrt{x_1^2 + x_2^2}, \quad z = x_3,
\end{equation*}
where the functions are defined on the half-space
\begin{equation}
    \H^+:= \db{(r,z):r\geq 0,\;z\in\reall}.
\end{equation}
For any function $f(r,z):\H^+\to \reall$, we define the norm
\[
    \|f\|_\Ec:= \dr{\int_{\H^+\cap\{(r,z):\,r^2+z^2\leq 4\}}\dr{|f(r,z)|^2+|\nabla_{r,z} f(r,z)|^2}\,drdz}^{\frac{1}{2}}+\sup_{\H^+\cap\{(r,z):\,r^2+z^2\geq 1\}}
    |f(r,z)(1+r^2+z^2)^\frac{3}{4}|.
\]
The solutions we construct lie in the following function space:
\[
    \Ec:= \{u:\H^+\to \reall\mid \|u\|_\Ec<+\infty\}.
\]
\begin{theorem}[Axisymmetric type \Rmnum{2} blowup for the $3$D Keller-Segel system]\label{main theorem}
 For any $T>0$, there exist initial data $u_0\in\Ec$ and a radius $R_0>0$, such that the following holds for the associated solution to \eqref{Introduction:3dks}. It blows up at finite time $T$ according to the dynamics
 \[
     u(r,z,t) = \frac{1}{\lambda^2{(t)}}\dr{U+\tilde u(t)}\dr{\frac{r-R(t)}{\lambda(t)},\frac{z}{\lambda(t)}}\text{ with }u(r,z,0) = u_0\dr{r-R_0,z},
 \]
 such that:
 \begin{itemize}
     \item Law for the blowup scale:
     \begin{equation}\label{main theorem: blowup law}
         \lambda(t) = \sqrt{T-t}e^{-\sqrt{\frac{|\log(T-t)|}{2}}+\Oc(1)}\text{ as } t\to T;
     \end{equation}
     \item Convergence to the stationary state profile:
     \[
         \|\tilde u(t)\|_\Ec \to 0\text{ as } t\to T;
     \]
     \item Convergence of the blowup radius: there exists $R_* = R_*(T,u_0,R_0)>0$, such that $R(t)\to R_*$ as $t\to T$.
 \end{itemize}
\end{theorem}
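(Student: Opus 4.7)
The idea is to transplant the modulation/energy scheme that yields the sharp $2$D blowup law \eqref{main theorem: blowup law} (see \cite{Ann.PDE22,CPAM2022,buseghin2023existencefinitetimeblowup}) onto a neighborhood of the evolving ring, and to treat the axisymmetric corrections as a perturbation of order $\lambda/R$. Introduce the modulated self-similar variables
\[
y_1=\frac{r-R(t)}{\lambda(t)},\qquad y_2=\frac{z}{\lambda(t)},\qquad \frac{ds}{dt}=\frac{1}{\lambda^2(t)},\qquad u(r,z,t)=\frac{1}{\lambda^2(t)}\bigl(U+\varepsilon\bigr)(y,s).
\]
Rewriting \eqref{Introduction:3dks} in cylindrical coordinates and subtracting the stationary equation satisfied by $U$, the perturbation solves
\[
\partial_s \varepsilon = \Lc\varepsilon-\Bigl(\tfrac{\lambda_s}{\lambda}\Lambda U+\tfrac{R_s}{\lambda}\pa_{y_1}U\Bigr)+\mathrm{Curv}(s,y)+\Nc(\varepsilon),
\]
where $\Lc$ is the $2$D Keller-Segel linearization around $U$, $\Lambda U=2U+y\cdot\nabla U$, the term $\mathrm{Curv}$ gathers the geometric corrections coming from $r^{-1}\pa_r$ and from the expansion of the $3$D Poisson kernel near a point on the ring, and $\Nc$ is the quadratic nonlinearity in $\varepsilon$.

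\textbf{Linear structure and modulation.} The generalized kernel of $\Lc$ on axisymmetric perturbations is generated by $\Lambda U$ (scaling) and $\pa_{y_1}U$ (translation along $r$); the $y_2$-translation is excluded by the $z$-reflection symmetry. The subtle logarithmic correction in \eqref{main theorem: blowup law} is encoded by an approximate profile $\Phi_b$, built as a suitably cut-off solution of the degenerate problem $\Lc\Phi_b=-\Lambda U$, following the construction in \cite{Ann.PDE22,CPAM2022}. Writing $\varepsilon=b(s)\Phi_b+q$ and fixing the parameters $\lambda, R, b$ by imposing orthogonality of $q$ against the adjoint kernel directions yields modulation ODEs for $(\lambda_s/\lambda+b,\,b_s,\,R_s/\lambda)$ whose integration, combined with $ds/dt=\lambda^{-2}$, produces \eqref{main theorem: blowup law} together with the finite limit $R(t)\to R_*$.

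\textbf{Energy estimates and closure.} Control of $q$ proceeds through a weighted energy built on $\|\cdot\|_\Ec$ and on higher-order Sobolev norms matched to the dissipative structure of $-\Lc$. The central ingredient is a coercivity estimate $\scl{-\Lc q}{q}\geq c\|q\|^2$ valid once $q$ is projected off finitely many unstable and neutral directions; combined with $|\mathrm{Curv}|\lesssim \lambda$ on the inner zone and the smallness of $\Nc(q)$, it gives a differential inequality of the form $\tfrac{d}{ds}\|q\|^2\leq -c\|q\|^2+b^{2K}$, hence $\|q\|\lesssim b^K$ for some large $K$ (noting that $\lambda\ll b^K$ as $t\to T$). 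The remaining finitely many unstable directions, including the one driving $b$, are eliminated by a Brouwer shooting argument on the initial data parametrizing these modes, as in \cite{Ann.PDE22,Raphaël2014}.

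\textbf{Main obstacle.} Two issues are genuinely delicate. First, the construction of $\Phi_b$ with the sharp tail asymptotics responsible for the log-log law, together with coercivity estimates for the non-self-adjoint operator $\Lc$, forces one to work simultaneously with $\Lc$ and its adjoint in logarithmically weighted spaces. Second, the control of $\mathrm{Curv}$ on the outer region is subtle because $\|\cdot\|_\Ec$ provides only the slow decay $(1+|y|^2)^{-3/4}$; this requires an inner–outer decomposition and a precise expansion of the $3$D Poisson integral around the ring, to confirm that the cylindrical geometry only contributes perturbatively to the underlying $2$D blowup dynamics.
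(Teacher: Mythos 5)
Your global strategy coincides with the paper's: view the axisymmetric flow near the ring as a perturbed $2$D Keller--Segel problem, modulate $(\lambda,R)$ plus one interior parameter, prove coercivity of the linearization under orthogonality conditions, and remove the residual instability by a Brouwer/shooting argument. The one genuinely different choice is the inner approximate solution: you take the Rapha\"el--Schweyer route ($\eps=b\,\Phi_b+q$ with $\Lc\Phi_b=-\Lambda U$ suitably cut off), whereas the paper builds two approximate eigenfunctions $\vp_{0,\nu},\vp_{1,\nu}$ of the full operator $\Ls^\zeta_\nu$ (scaling term included) by matched asymptotics, with eigenvalues $2\beta(1-i+\tfrac{1}{2\log\nu})$, and decomposes $w=U_\nu+a(\vp_{1,\nu}-\vp_{0,\nu})+\eps$ (Proposition \ref{proposition 1}). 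Both inner constructions are viable, and both encode the same $1/\log$ correction responsible for \eqref{main theorem: blowup law}.

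However, two steps of your closure would fail as written. First, the claimed energy inequality $\tfrac{d}{ds}\|q\|^2\le -c\|q\|^2+b^{2K}$ with $\|q\|\lesssim b^K$ for large $K$ is not attainable in this problem: with a one-term profile $b\Phi_b$ (or with the paper's $a(\vp_{1,\nu}-\vp_{0,\nu})$) the generated error is only logarithmically small --- compare \eqref{Prop3: weighted L2 estimate of tilde E}, where $\|\tilde E\|_\inn\lesssim \nu^2/|\log\nu|$ --- and no finite iteration of the construction improves this to a power gain, because the expansion proceeds in powers of $1/|\log\nu|$, not of $b$. Consequently the whole scheme must be closed exactly at precision $\nu^2/|\log\nu|$ (bootstrap norms, modulation estimates for $\mathrm{Mod}_0,\mathrm{Mod}_1$, and the extraction of the $O(1)$ factor in \eqref{main theorem: blowup law}); the hierarchy of logarithmic gains, not high-order smallness, is the actual mechanism, and your proposal does not set this up. Related to this, a single coercivity bound $\scl{-\Lc q}{q}\ge c\|q\|^2$ in one norm does not exist uniformly across the two scales present here (soliton scale $\lambda$ versus parabolic scale $\sqrt{T-t}$), because the scaling term dominates at distance $\sim\sqrt{T-t}$ from the ring; the paper must design a mixed adapted inner product and a modified operator to obtain Proposition \ref{Main coercivity prop}. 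Second, the outer region cannot be handled by a weighted $2$D energy built on $\|\cdot\|_\Ec$: away from the ring the problem is genuinely three-dimensional, the perturbation decays only like $(1+\zeta)^{-3/2}$, and the nonlocal terms are not closable there by the inner machinery. The paper has to return to the $3$D heat-kernel representation to propagate a weighted $L^\infty$ bound (Lemma \ref{lemma: L infty control of eps}) and to insert an intermediate $H^2$ zone so that the inner weighted-$L^2/H^1$ estimates, the middle parabolic-regularity estimates, and the outer $L^\infty$ estimate communicate without loss. Your ``inner--outer decomposition'' remark points in this direction, but the $3$D dissipative argument and the three-zone norm system are essential missing ingredients, not routine bookkeeping.
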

\begin{figure}[H]
    \centering
    \begin{subfigure}{0.48\textwidth}
        \centering
        \includegraphics[width=\textwidth]{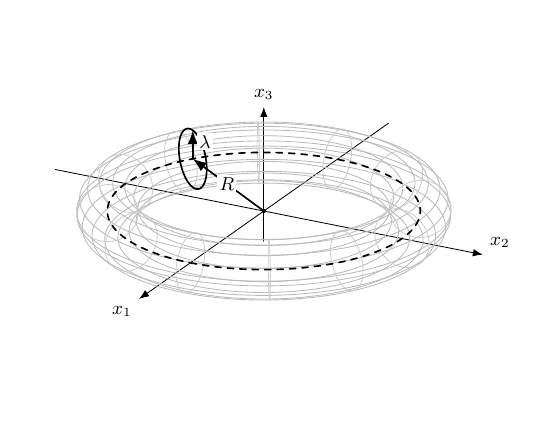}
        \caption{Axisymmetric ring geometry.}
    \end{subfigure}
    \hfill
    \begin{subfigure}{0.48\textwidth}
        \centering
        \includegraphics[width=\textwidth]{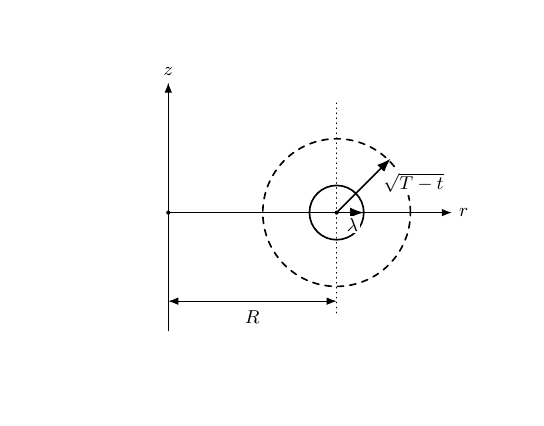}
        \caption{Cross-section in the $(r,z)$-plane.}
    \end{subfigure}
    \caption{Schematic illustration of the ring-shaped blowup geometry.
Left: the singular set is an axisymmetric ring of radius $R$ lying in the plane $x_3=0$,
with transverse thickness measured by the smaller scale $\lambda(t)$.
Right: in the $(r,z)$ cross-section near the ring center $(R,0)$, the smaller solid circle
represents the soliton scale $\lambda(t)$, while the larger dashed circle represents
the parabolic scale $\sqrt{T-t}$.} \label{fig:ring-geometry}
\end{figure}
As illustrated in Figure 1, our constructed solution concentrates on a fixed ring of radius $R > 0$ with the length scale $\lambda(t) \to 0$ as $t \to T$. Formally, the leading-order dynamics resemble the two-dimensional case, in the sense that near the ring ($|r-R|\lesssim \lambda$), we have 
\[
\frac{1}{r}\partial_r\sim \frac{1}{R\lambda}
\ll
\partial_r^2+\partial_z^2\sim \frac{1}{\lambda^2},
\]
hence,
\[
\Delta_{\x} u \approx (\partial^2_r + \partial_z^2)u, \quad \nabla_{\x} \cdot (u \nabla_{\x} \Phi_u) \approx \partial_r(u \partial_r \Psi_u) + \partial_z(u \partial_z \Psi_u),\quad\text{where}\; -(\pa_r^2+\pa_z^2)\Psi_u =u.
\]
Therefore, the local blowup profile is governed by $U$, the stationary solution of the two-dimensional Keller--Segel system. This intuition will be made rigorous in the proof.

We remark that the constructed solutions can have either finite or infinite total mass, depending on the choice of initial data. Indeed, let $\mathbf{m}(f) = 2\pi\int_{-\infty}^\infty \int_0^\infty f(r,z)r dr dz$ be the total mass of $f$ and $f_{\lambda, R}(r,z) := \frac{1}{\lambda^2}f\Big( \frac{r - R}{\lambda}, \frac{z}{\lambda}\Big)$, we then have $\mathbf{m}(u) = \mathbf{m}(U_{\lambda, R}) + \mathbf{m}(\tilde u_{\lambda, R})$, where $\mathbf{m}(U_{\lambda, R}) \sim 16\pi^2 R_*$. Since the control of the perturbation $\tilde{u}_{\lambda, R}$ requires neither the finiteness of $\mathbf{m}(\tilde u_{\lambda, R})$ nor the smallness of $\|\tilde u_{\lambda,R} \|_{L^1}$, this allows the initial perturbation $\tilde u$ to have either finite mass or infinite mass supported away from the ring. This demonstrates the robustness of our method, which does not require the $L^1$-smallness assumption of the perturbation.

\noindent\textbf{Comments on the result.}\\
\textit{(\romannumeral1) A new blowup scenario for the Keller-Segel system.} To the best of our knowledge, this is the first blowup result of this kind for the Keller-Segel system, whose leading-order geometry is nonradial with a $1$-dimensional singular set. The solution we construct here converges in distribution to a Dirac measure supported on a $1$-dimensional circle on the $x_3=0$ plane. It is worth noting that similar blowup phenomena occur in other systems, for example, harmonic map flow into $\mathbb{S}^2$ \cite{JuanDávila2019_HarmonicMapFlow} and supercritical heat equation \cite{delpino2020newtypeiifinite}. In our case, the partial mass technique does not work and the linearized operator is essentially nonlocal. Indeed, to close the bootstrap argument, we must derive sharp controls on perturbations across regions to prevent interference. It is interesting to see that while the leading-order dynamics near the blowup ring resembles the $2$D Keller-Segel system, the dynamics away from the ring remain genuinely three-dimensional and should be dealt with separately. This work provides a method of lifting a lower-dimensional blowup to a higher-dimensional space, which may be applied to other systems. After our work, we realized that the authors of \cite{delPino_JFA26} established a similar axisymmetric blowup result, in which they used a highly sophisticated parabolic inner-outer gluing procedure to construct solutions that concentrate on multiple separate rings. Moreover, they were able to provide a refined blowup law that is consistent with the $2$D blowup scenario \eqref{more precise blowup law}.
\\
\textit{(\romannumeral2) Simplification of the spectral analysis.}
The spectral information of the linearized operator $\Ls_\nu^\zeta$ (defined in \eqref{intro: linearized operator parabolic}) in the radial sector played an essential role in the analysis of \cite{Ann.PDE22,CPAM2022}. However, as shown in \cite{Ann.PDE22}, the precise construction of the eigenfunctions can be a heavy task. Since the eigenfunctions are only used to construct an approximate solution, it suffices to solve the eigenproblems only approximately, which greatly simplifies our analysis. Indeed, through a simple asymptotic matching procedure, we obtain the first two approximate eigenfunctions of $\Ls^\zeta_\nu$ with sufficiently small generated errors, which are sufficient for our analysis. See Proposition \ref{proposition 1} for details. It is worth noting that a similar technique has been applied in a recent work \cite{collot2024} to construct a finite-time singularity formed by the collision of two collapsing solitons for the $2$D Keller-Segel system.\\ 
\textit{(\romannumeral3) A robust approach.} In our analysis, we completely avoid using the partial mass setting and control both the radial and nonradial parts of the perturbation at the same time. We remark that the analysis in \cite{CPAM2022} crucially used the partial mass setting for which the nonlocal operator $\Ls_\nu^\zeta$ was transformed into a local one which is self-adjoint in a weighted $L^2$ space. In our case, by enforcing suitable local orthogonality conditions, we can obtain equivalence of norms as well as coercivity of the linearized operator for the whole perturbation function. See Sect. \ref{sec: properties of the linearized oper} for the discussion. Since the strategy provided here is simple and not restricted to the radial sector, we expect it can be applied to other problems. 
\\
\textit{(\romannumeral4) Adapted inner product and coercivity of the linearized operator.} The coercivity of the linearized operator plays a crucial role in the control of the perturbation around the blowup ring. In this region, we deal with a two-scale problem -- the larger parabolic scale and the smaller soliton scale. The linearized operator $\Ls^\zeta_\nu$ has different limits in these two scales (partly due to the presence of the scaling term), each of which has its own coercivity structure. Therefore, in order to obtain coercivity in both scales (i.e. the ``global" coercivity) we design a mixed inner product (see \eqref{def of adapted inner scl, parabolic}) that is compatible with both structures according to the idea of asymptotic matching. Moreover, it preserves norm equivalence for functions with the local orthogonality conditions, which is important for the energy estimates. See Proposition \ref{Main coercivity prop} for details. 
\\
\textit{(\romannumeral5) Stability.} Based on our analysis, we expect that the constructed ring-like blowup solution is stable in the axially symmetric regime, which is similar to the $2$D blowup mechanism reported in \cite{CPAM2022}. On the other hand, we believe that this ring-like concentration mechanism is inherently structurally unstable against non-axially symmetric perturbations, similar to liquid filaments or vortex rings in fluid dynamics such as Rayleigh-Plateau-type instabilities or azimuthal wrinkling. \\
\textit{(\romannumeral6) On numerical simulations.} Since the ring-like solution is expected to be stable within the strictly axially symmetric regime, numerical simulation should be feasible, and the particle method developed in \cite{bertozzi2012aggregation} (see also \cite{bertozzi2016regularity, campospinto2018convergence}) for the inviscid case could be used to simulate this ring-type solution. It is worth mentioning that in the absence of diffusion, solutions to the inviscid system can collapse into a singular measure supported on nontrivial shapes, for example, a three-branched star, a skeleton of codimension 1, or a toroidal patch with a square section, … (see \cite{bertozzi2012aggregation}). We expect that no such nontrivial shape-concentrating solution exists in the Keller-Segel system because of the smoothing effect of diffusion, so proving this exclusion would be an interesting problem to be addressed separately. Finally, it is worth mentioning the recent work \cite{hu2026faststochasticinteractingparticlefield}, where the authors developed a novel numerical method (stochastic interacting particle-field method with particle-in-cell acceleration, or SIPF-PIC) for the efficient simulation of the $3$D parabolic-parabolic Keller-Segel systems. In particular, they were able to accurately resolve the ring-shaped blowup pattern, which aligns with our theoretical scenario.
\\
\textit{(\romannumeral7) Connection with the Nonlinear Schr\"odinger equations (NLS).} Finally, we remark on the connection between the Keller-Segel system and the Nonlinear Schr\"odinger equations:
\begin{equation}\label{NLS}
    i\pa_t\psi(\x,t)+\Delta \psi(\x,t)+\psi(\x,t)|\psi(\x,t)|^{p-1}=0,\quad \x\in\reall^d.\tag{NLS}
\end{equation}
Here we summarize some blowup phenomena of NLS that share similarities with the Keller-Segel system. The case $p-1 = \frac{4}{d}$ in \eqref{NLS} is called ($L^2$-)critical and blowup occurs once the mass (i.e., $\|\psi\|_{L^2}$) is above a certain threshold. A stable blowup mechanism in this case enjoying the so-called ``loglog" law can be found in \cite{Perelman2001,MerleNLSAnnals2005,MerleNLSGAFA2003,MerleNLSJAMS06}. In the supercritical cases $p-1>\frac{4
}{d}$, there exist standing-ring blowup solutions, spherical in this context (see \cite{RaphaelNLS2006,RaphaelNLS2009}), as well as collapsing ring blowup solutions (see \cite{MerleRapaelSzeftelNLS14}). It is worth noting that all the blowup solutions mentioned above converge in a certain sense to some $1$-dimensional ground state solutions of \eqref{NLS}. We also recommend \cite{NLS2015} for a comprehensive review on NLS.
\\[4pt]
\noindent\textbf{Notation and conventions.} Unless otherwise specified, differential operators such as $\Delta$, $\nabla$ and $\nabla\cdot$ are understood as $2$D ones, and $\int$ denotes the integration on the half-space $\H^+$ equipped with the $2$D Lebesgue measure. For any function $f(r,z):\H^+\to \reall$, we identify it with a $3$D axisymmetric function via $\tilde f(x_1,x_2,x_3):= f(\sqrt{x_1^2+x_2^2},x_3)$, and define its $3$D Poisson field
\[
    \Phi_f := \frac{1}{4\pi|(x_1,x_2,x_3)|}*\tilde f,
\]
where $|(x_1,\dots,x_n)|:=\sqrt{x_1^2+\cdots+x_n^2}$ denotes the standard Euclidean norm on $\reall^n$. On the other hand, we can extend $f$ to some $\bar f:\reall^2\to \reall$ ( see, for example \eqref{Appendix: def of modified 2D Poisson field} for details), and define its $2$D Poisson field as
\[
   \Psi_f := -\frac{1}{2\pi}\log(|(x_1,x_2)|)*\bar{f}.
\]
With a slight abuse of notation, we also use $\Phi$ and $\Psi$ to denote the standard Poisson fields for functions on $\reall^3$ and $\reall^2$ (with suitable decay), respectively. We define the difference of the $2$D and $3$D Poisson fields as
\[
    \Theta_f := \Phi_f - \Psi_f. 
\]
Now for $\nu>0$, we denote 
\[
    U_\nu(x_1,x_2):= \frac{1}{\nu^2}U(x_1/\nu,x_2/\nu),
\]
where $U$ is the stationary solution defined in \eqref{intro: steady state solution}, and the $2$D differential operator
\[
    \Lambda f(x_1,x_2) := -\left.\frac{d}{d\nu}\right|_{\nu=1} \frac{1}{\nu^2}f(x_1/\nu,x_2/\nu)= 2f + x_1\pa_{x_1} f+x_2\pa_{x_2} f = \nabla\cdot((x_1,x_2)f).
\]
Define $\chi\in C^\infty_c(\reall^2)$ to be a radially symmetric positive cutoff function with:
\[
    \chi(\x) = 
    \begin{cases}
        1\text{ for }|\x|\leq 1,\\
        0,\text{ for } |\x|\geq 2.
    \end{cases}
\]
With a slight abuse of notation, we will denote $\chi = \chi(|\x|)$. Given two fixed constants $0<\zeta_*\ll 1\ll \zeta^*< +\infty$ (which will be specified later in the analysis) and a small parameter $\nu>0$, we denote ($\zeta:=|\x|$)
\begin{align}\label{notations: special cutoff functions}
    &\chi_*(\zeta) := \chi(\zeta/\zeta_*),& &\chi^*(\zeta):=\chi(\zeta/\zeta^*),\nonumber\\
    &\chi_\nu(\zeta):=\chi(\zeta/|\log\nu|),& &\bar\chi_\nu(\zeta):=\chi(\zeta\nu/|\log\nu|).
\end{align}
We define the norms:
\begin{equation}\label{notations: weighted L2 norms}
    \|f\|^2_\inn:=\int_{\reall^2}\frac{\nu^2f^2\chi^2_\nu\vr_\nu}{U_\nu},\quad \|f\|^2_{L^2(U_\nu)}:=\int_{\reall^2}\frac{\nu^2f^2}{U_\nu},
\end{equation}
where $\vr_\nu$ is the exponential weight function defined in \eqref{def of weight functions}.
We denote the standard $L^2(\reall^2)$ inner product as $\scl{f}{g}:=\int_{\reall^2}fg$. For any two positive quantities, $A_1\lesssim A_2$ means that there exists some universal (independent of any parameters in this problem) constant $C>0$, such that $A_1\leq CA_2$. Similarly, $A_1\approx A_2$ means that there exists a universal constant $C>0$, such that $\frac{1}{C}A_1\leq A_2\leq CA_1$. Universal constants will be denoted generically as $C$ or $\delta$, the specific values of which may change from line to line. We use brackets to specify the dependence of constants on other quantities. For example, $C(A_1,A_2)$ will denote (generically) a constant depending only on $A_1$ and $A_2$, the specific value of which may also vary in different places. The expression $A_1 = \Oc(A_2)$ means that there exists a universal constant $C$, such that $|A_1|\leq C A_2$. We use $\sim$ to denote ``asymptotically equivalent" under a certain limiting process (which will always be specified), i.e., $A_1\sim A_2$ means $\lim A_1/A_2 \approx 1$.

\subsection{Strategy of the Proof}
Now we briefly describe the main steps of the proof of Theorem \ref{main theorem}. \\
\textit{Step 1: Renormalization and linearization of the problem.}
Using the scaling invariance of the equation, we first make the following change of variables according to the parabolic scaling and the standing ring scenario:  
\begin{align*}
    u(r,z,t) = \frac{1}{\mu^2}w\dr{\frac{r-R(t)}{\mu},\frac{z}{\mu},\tau},\quad \br := \frac{r-R(t)}{\mu},\quad\bz := \frac{z}{\mu},\quad \tau = \int_{0}^{t}\frac{1}{\mu^2(\tilde t)}\;d\tilde t,
\end{align*}
where $\mu(t):=\sqrt{T-t}$ is the parabolic scale with blowup time $T>0$. Define $\zeta:=\sqrt{\br^2+\bz^2}$. Then, the system for $(w,\Phi_w)$ is 
\begin{align}\label{Parabolic System}
\begin{split}
    &\pa_{\tau} w = \nabla\cdot(\nabla w - w\nabla\Phi_w)+\frac{1}{\br+R/\mu}(\pa_{\br} w - w\pa_{\br}\Phi_w)-\beta\Lambda w +\frac{R_\tau}{\mu}\pa_{\br} w\quad \beta:=-\frac{\mu_\tau}{\mu}=\frac{1}{2}, \\
    &-\dr{\pa^2_{\br} + \pa^2_{\bz} +\frac{1}{\br+R/\mu}\pa_{\br}}\Phi_w = w.
\end{split}
\end{align}
Since the blowup solutions we construct are of type \Rmnum{2}, there exists a smaller scale $\nu(t)\to 0$ as $t\to T$, beyond the parabolic scale. This scale, called the soliton scale (or the blowup scale), will serve as a crucial asymptotically small parameter in our analysis. 
Specifically, we consider the soliton change of variables:
\begin{align*}
    w(\br,\bz,\tau) = \frac{1}{\nu^2}v\dr{\frac{\br}{\nu},\frac{\bz}{\nu},s},\quad \rho:=\frac{\br}{\nu},\quad\xi:=\frac{\bz}{\nu},\quad \frac{d s}{d\tau} = \frac{1}{\nu^2}.
\end{align*}
Define $\gamma:=\sqrt{\rho^2+\xi^2}$. Note that $\Phi_v(\rho,\xi) = \Phi_w(\br,\bz)$. Then, the system for $(v,\Phi_v)$ is
\begin{align}\label{Soliton System}
\begin{split}
    &\pa_{s} v = \nabla\cdot(\nabla v - v\nabla\Phi_v)+\frac{1}{\rho+R/(\mu\nu)}(\pa_{\rho} v - v\pa_{\rho}\Phi_v)-\dr{\eta+\nu^2\beta}\Lambda v +\nu\frac{R_\tau}{\mu}\pa_{\rho} v\quad \eta:=-\frac{\nu_s}{\nu},\\
    &-\dr{\pa^2_{\xi} + \pa^2_{\rho} +\frac{1}{\rho+R/(\mu\nu)}\pa_{\rho}}\Phi_v = v.
\end{split}
\end{align}
The $3$D system \eqref{Parabolic System} can be approximated by a $2$D one in the parabolic scale (in a sense that will soon become clear), i.e., in the region around the blowup ring (and away from the axis of symmetry). Linearization of the system \eqref{Parabolic System} around the stationary solution $U_\nu$ gives the leading-order linearized operator:
\begin{align}\label{intro: linearized operator parabolic}
    \Ls^\zeta_\nu f  = \Ls^\zeta_{0,\nu}f-\frac{1}{2}\Lambda f = \nabla\cdot(U_\nu\nabla(\Ms^\zeta_\nu f))-\frac{1}{2}\Lambda f,\quad\text{with}\quad \Ms^\zeta_\nu f:= \frac{f}{U_\nu} - \Psi_f.
\end{align}
Equivalently, the linearized operator around $U$ for the system \eqref{Soliton System} is 
\begin{align}\label{intro: linearized operator soliton}
    \Ls f  = \Ls_0f-\frac{1}{2}\nu^2\Lambda f = \nabla\cdot(U\nabla(\Ms f))-\frac{1}{2}\nu^2\Lambda f,\quad\text{with}\quad \Ms f:= \frac{f}{U} - \Psi_f.
\end{align}
The core of the analysis lies within the parabolic scale (which includes the smaller soliton scale). We remark that while most of the time our analysis is done using the parabolic variables, it is equivalent and sometimes more convenient to work with soliton variables.\\
\textit{Step 2: Construction of an approximate solution.} 
The blowup solutions we construct converge locally to the steady state profile $U$ around the blowup ring. More specifically, we will show that $w$ in \eqref{Parabolic System} decomposes as $U_\nu$ plus some controllable perturbation. However, due to the instability of the linearized operator in certain directions, orthogonality conditions need to be imposed on the perturbation. This is done by the introduction of modulation parameters, whose dynamics yield the blowup rate. \\
To begin, we construct the first two approximate eigenfunctions of the linearized operator $\Ls^\zeta_\nu$, corresponding to the positive and ``almost zero" eigenvalues, respectively:
\begin{align}
    \Ls^\zeta_\nu \vp_{i,\nu} = \dr{1-i+\frac{1}{2\log(\nu)}}\vp_{i,\nu}+ R_i,\quad i=0,1,
\end{align}
where $R_i$ are small error terms. Then, exploiting the cancellation of $\vp_{1,\nu} - \vp_{0,\nu}$, we decompose the solution as
\[
    w = U_\nu+a(\vp_{1,\nu} - \vp_{0,\nu})+\eps,
\]
where $a=a(\tau)$ is a modulation parameter.
Thus, we can write the evolution for $\eps$:
\begin{equation}\label{intro: evolution for epsilon}
    \pa_\tau\eps = \Ls^\zeta_\nu\eps+L(\eps)+NL(\eps)+E,
\end{equation}
where both the extra linear term $L(\eps)$ and the nonlinear term $NL(\eps)$ are small in a suitable sense. It is essential that $\eps$ satisfies the local orthogonality conditions (we recall the definition of $\chi_*$ in \eqref{notations: special cutoff functions}):
\begin{equation}\label{intro: orthog conditions}
     \int_{\reall^2}\eps\chi_*(\zeta)\;d\br d \bz = \int_{\reall^2}\eps\Lambda U_\nu\chi_*(\zeta)\;d\br d \bz = \int_{\reall^2}\eps\nabla U_\nu\chi_*(\zeta)\;d\br d \bz =0.
\end{equation}
These conditions are preserved dynamically by our choice of the modulation parameters, aided by the enforcement of even symmetry of the solution in the $\bz$-direction. Through a study of the linearized operator, we can define an adapted inner product $\scl{\cdot}{\cdot}_{\nu,*}$ (see \eqref{def of adapted inner scl, parabolic}), such that its corresponding norm is equivalent to a weighted $L^2$ norm for any function satisfying the orthogonality conditions \eqref{intro: orthog conditions}. Moreover, for such functions, $\Ls^\zeta_\nu$ (up to a slight modification) will be coercive under the adapted inner product, in the sense of \eqref{main coercivity corollary ineq}. This coercivity is crucial in the energy estimates of $\eps$. 
\\
\textit{Step 3: Modulation dynamics and energy estimates.} The generated error in \eqref{intro: evolution for epsilon}, consisting mainly of the modulation parameters, admits a further decomposition:
\[
    E = \mod_0\vp_{0,\nu}+\mod_1\vp_{1,\nu}+\frac{R_\tau}{\mu}\pa_{\br}U_\nu+\tilde E,
\] 
where $\mod_i$ and $\tilde E$ are terms defined in Proposition \ref{prop: decomp of the gene error}. Note that we have three modulation parameters at hand, namely $\nu(\tau),a(\tau)$ and $\frac{R_\tau(\tau)}{\mu}$, which correspond precisely to the three orthogonality conditions in \eqref{intro: orthog conditions}. Projecting \eqref{intro: evolution for epsilon} onto these three directions together with the bootstrap assumptions on $\eps$ yields the modulation equations:
\[
\begin{cases}
    |\mod_0|=\da{a_\tau - 2a\beta\dr{1+\frac{1}{2\log(\nu)}}-16\nu^2\dr{\frac{\nu_\tau}{\nu}-\beta}} =\Oc\dr{\frac{\nu^2}{|\log\nu|}},\\[4pt]
    |\mod_1|=\da{-a_\tau +\frac{a(\tau)\beta}{\log(\nu)}} =\Oc\dr{\frac{\nu^2}{|\log\nu|^2}},\\[4pt]
    \da{\frac{R_\tau}{\mu}}=\Oc\dr{\frac{\nu}{|\log\nu|}},
\end{cases}
\]
the solution of which yields the blowup law given in \eqref{main theorem: blowup law}. The energy estimates for $\eps$ are done separately in the inner zone (i.e., $\br^2+\bz^2\lesssim 1$) and the outer zone (i.e., $\br^2+\bz^2\gtrsim 1$). In the inner zone, a weighted $H^1$-norm of $\eps$ is controlled, thanks to the coercivity of $\Ls^\zeta_\nu$. In the outer zone, we return to the original $3$D structure and control a weighted $L^\infty$-norm of $\eps$ via the dissipative structure of the system. The two sets of estimates are connected in the intermediate region via an $H^2$-control of $\eps$ resulting from the parabolic regularity of the system. Finally, combining the modulation equations and the energy estimates, we are able to close the bootstrap argument for both the modulation parameters and the energy norms of $\eps$, and the global control of $\eps$ in the renormalized time variable $\tau$ implies the finite-time blowup of the solution of the original Keller-Segel system.

For the reader's convenience, we summarize the main scales, modulation parameters, and auxiliary constants used throughout the proof in Table~\ref{tab:main-parameters}:

\begin{table}[H]
\centering
\footnotesize
\renewcommand{\arraystretch}{1.12}
\begin{tabularx}{\textwidth}{@{}p{0.22\textwidth}p{0.28\textwidth}X@{}}
\toprule
\textbf{Category} & \textbf{Symbols} & \textbf{Type / definition} \\
\midrule

Physical variables &
$t$, $T$, $r$, $z$ &
$t$ is time, $T$ is the blowup time, and $(r,z)$ are cylindrical variables. \\[0.2cm]

Ring parameters &
$R_0$, $R(t)$, $R_*$ &
Initial, time-dependent, and limiting radii of the concentration ring. \\[0.2cm]

Profiles &
$U$, $U_\nu$ &
$U(x)=8(1+|x|^2)^{-2}$ is the two-dimensional stationary profile, and $U_\nu(x)=\nu^{-2}U(x/\nu)$. \\[0.2cm]

Main scales &
$\lambda(t)$, $\mu(t)$, $\nu(\tau)$ &
$\lambda(t)$ is the physical blowup scale, $\mu(t)=\sqrt{T-t}$ is the parabolic scale, and $\nu(\tau)$ is the relative soliton scale, with $\lambda(t)=\mu(t)\nu(\tau)$. \\[0.2cm]

Renormalized times &
$\tau$, $s$ &
$\tau$ is the parabolic time and $s$ is the soliton time, related by $ds/d\tau=\nu^{-2}$. \\[0.2cm]

Parabolic variables &
$\bar r$, $\bar z$, $\zeta$ &
$\bar r=(r-R(t))/\mu(t)$, $\bar z=z/\mu(t)$, and $\zeta=(\bar r^2+\bar z^2)^{1/2}$. \\[0.2cm]

Soliton variables &
$\rho$, $\xi$, $\gamma$ &
$\rho=\bar r/\nu$, $\xi=\bar z/\nu$, and $\gamma=(\rho^2+\xi^2)^{1/2}$. \\[0.2cm]

Modulation parameters &
$a(\tau)$, $\beta$, $\eta$, $R_\tau/\mu$ &
$a$ is the scalar amplitude modulation, $\beta=-\mu_\tau/\mu=1/2$, $\eta=-\nu_s/\nu$, and $R_\tau/\mu$ is the ring-radius modulation. \\[0.2cm]

Perturbation variables &
$\eps$, $\tilde u$, $P$ &
$\eps$ is the perturbation in parabolic variables, $\tilde u$ is the perturbation in the original statement, and $P$ denotes the approximate correction. \\[0.2cm]

Approximate eigenfunctions &
$\vp_{0,\nu}$, $\vp_{1,\nu}$ &
The first two approximate eigenfunctions of the linearized operator around $U_\nu$. \\[0.2cm]

Operators and potentials &
$\Phi$, $\Psi$, $\Theta$, $\Lambda$, $\Ls^\zeta_\nu$, $\Ms^\zeta_\nu$ &
$3$D and $2$D Poisson fields, their difference, scaling operator, and linearized operators used in the inner/parabolic analysis. \\[0.2cm]

Cutoffs &
$\chi$, $\chi_*$, $\chi^*$, $\chi_\nu$, $\bar\chi_\nu$ &
Smooth localization functions used to separate the soliton, parabolic, and matching regions. \\[0.2cm]

Fixed constants &
$\zeta_*$, $\zeta^*$, $K_i$, $M_0$, $C$, $\delta$ &
Auxiliary localization, bootstrap, largeness, and universal constants. \\

\bottomrule
\end{tabularx}
\caption{Summary of the main constants, parameters, and variables used in the construction.}
\label{tab:main-parameters}
\end{table}

\indent This work is organized as follows. In Sect. \ref{sec: properties of the linearized oper}, we construct the first two approximate eigenfunctions of the linearized operator, and then explore its coercivity properties. Sect. \ref{sec: construction of blowup} is the heart of the analysis, including the setup of the bootstrap assumptions, derivation of the modulation equations, and energy estimates. Finally, we close the bootstrap argument in Sect. \ref{sec: existence of blowup} and conclude the proof of the main theorem.  

\section{Properties of the linearized operator}\label{sec: properties of the linearized oper}
This section is devoted to the study of the linearized operator 
\begin{equation} \label{def:Lszeta}
      \Ls^\zeta_\nu f  = \nabla \cdot \big( U_\nu \nabla \Ms^\zeta_\nu f \big) -\beta\Lambda f= \Ls_{0,\nu}^\zeta f - \beta \Lambda f.
\end{equation}
First, we construct the first two approximate eigenfunctions of $\Ls^\zeta_\nu$ in Proposition \ref{proposition 1}, which will be important building blocks of the approximate solution of the Keller-Segel system. We also describe their asymptotic behaviors and generated errors, which will be helpful in the energy estimates.
Next, we study the operator $\Ms^\zeta_\nu$,
the appearance of which is natural from a linearization of the free energy functional associated to the two-dimensional Keller-Segel equation: 
\begin{equation}
    \Fc(f) = \int_{\Rb^2} f \Big( \log f - \frac 12 \Psi_f\Big)\; d\x.
\end{equation}
Its definiteness and norm equivalence properties will motivate our definition of the adapted inner product, with which we are able to prove the crucial coercivity result for $\Ls^\zeta_\nu$ (Proposition \ref{Main coercivity prop}). 
We will adopt the soliton variables $(\rho,\xi)$ instead of the parabolic ones $(\br,\bz)$ when it is more convenient, though these two settings are equivalent in terms of analysis.

\subsection{Two approximate eigenfunctions}
In \cite{Ann.PDE22}, the authors used the partial mass setting to derive a complete description of the spectrum of $\Ls^\zeta_\nu$ in the radial setting. Here we derive only rough information of the spectrum via a simple asymptotic matching procedure, which is sufficient for our purpose of constructing blowup solutions. 

\begin{proposition}[Two approximate eigenfunctions]\label{proposition 1} Consider $\beta > 0$ and $0 < \nu \ll 1$ to be fixed. There are two smooth radial functions $\varphi_{0, \nu}$ and $\varphi_{1, \nu}$, with supports in $\{\zeta:\;\zeta\leq2|\log\nu|\}$, that solve
\begin{equation}
\Ls_\nu^\zeta \varphi_{i, \nu} = 2\beta \Big(1 - i + \frac{1}{2\log(\nu)} \Big)\varphi_{i, \nu} + R_i,\quad i=0,1.
\end{equation}
(i) \textup{(Approximate eigenfunctions)} 
\begin{equation}
    \varphi_{i, \nu}(\zeta) = -\frac{1}{16\nu^4} \varphi_{i}^\inn(\zeta/\nu) \chi_m  + \varphi_{i}^\out(\zeta) (1 - \chi_m)\chi_\nu = -\frac{1}{16 \nu^4} \Lambda U(\zeta/\nu) \chi_\nu  + \tilde \varphi_{i}(\zeta), 
\end{equation}
where $\varphi_i^\inn$ and $\varphi_i^\out$ are defined by \eqref{inner eigenfunction} and \eqref{outer eigenfunction}, and the cutoff function $\chi_m(\zeta) = \chi(\zeta/\zeta_m)$ is defined at the beginning of the proof. In particular, we have the pointwise estimates for $k = 0,1,2$:
\begin{align}\label{prop1: pointwise est 1}
\begin{split}
   &|\pa_\zeta^k \tilde \varphi_0 (\zeta)| + | \pa_\zeta^k \nu \pa_\nu \tilde \varphi_0(\zeta)| \lesssim\dr{\frac{\nu^2\zeta^{2-k}\log^2(2+\zeta/\nu)}{(\nu+\zeta)^6}+\frac{\zeta^{2-k} }{ |\log\nu|(\nu + \zeta)^{4}}}(1+\log(\zeta)\one_{\{\zeta>1\}}),\\  
   &|\pa_\zeta^k \tilde \varphi_1 (\zeta)| + | \pa_\zeta^k \nu \pa_\nu \tilde \varphi_1(\zeta)| \lesssim \frac{\zeta^{2-k} }{ (\nu + \zeta)^{4}}(1+\log(\zeta)\one_{\{\zeta>1\}}),
\end{split}
\end{align}
and the improved estimate near the origin,
\begin{align}\label{prop1: improved est near the origin}
\begin{split}
&|\pa_\zeta^k \nu \pa_\nu (\varphi_{1, \nu} - \varphi_{0, \nu})|  \lesssim  \dr{\frac{\nu^2\zeta^{2-k}\log^2(2+\zeta/\nu)}{(\nu+\zeta)^6}+\frac{1}{|\log(\nu)|}\cdot\frac{\zeta^{2-k}}{(\nu+\zeta)^2(1+\zeta)^2}}(1+\log(\zeta)\one_{\{\zeta>1\}}),\quad k = 0,1,2, \\
&|\pa_\zeta^k (\varphi_{1, \nu} - \varphi_{0, \nu})|  \lesssim  \frac{\zeta^{2-k} }{ (\nu + \zeta)^{4}}(1+\log(\zeta)\one_{\{\zeta>1\}}),\quad k = 0,1,2.
\end{split}
\end{align}

\noindent (ii) \textup{(Pointwise estimates of $R_i$)} 
\begin{equation}\label{prop1: pointwise est for R_i}
    \left|\pa^k_{\zeta}R_i(\zeta)\right|\lesssim
        \frac{\zeta^{2-k}}{(\nu+\zeta)^2(1+\zeta)^2}\frac{|\log(\nu+\zeta)|}{|\log(\nu)|}+\frac{\nu^2\zeta^{2-k}\log^2(2+\zeta/\nu)}{(\nu+\zeta)^4}\quad k=0,1,2,
    \quad\;
    \left| \int_{\zeta < 1} R_i(\zeta) \zeta d\zeta \right| \lesssim \frac{1}{|\log \nu|}.
\end{equation}
\end{proposition}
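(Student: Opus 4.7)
The plan is to construct each $\varphi_{i,\nu}$ by matched asymptotics, patching an inner profile $\varphi_i^{\inn}$ resolving the soliton scale $\zeta\sim\nu$ with an outer profile $\varphi_i^{\out}$ living in the parabolic region, glued via $\chi_m$ at an intermediate scale $\zeta_m$ satisfying $\nu\ll\zeta_m\ll 1$. In the inner zone, working in the soliton variable $\gamma=\zeta/\nu$, the scaling term $\beta\Lambda$ in \eqref{def:Lszeta} is a lower-order perturbation relative to $\nabla\cdot(U_\nu\nabla\Ms^\zeta_\nu\cdot)$, so at leading order the eigenproblem collapses to $\Ls_0\varphi_i^{\inn}=2\beta(1-i)\varphi_i^{\inn}$. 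Using the scaling/translation symmetries of the 2D stationary profile $U$, I would take $\varphi_0^{\inn}$ proportional to $\Lambda U$ (the scaling direction, eigenvalue $2\beta$) and obtain $\varphi_1^{\inn}$ by inverting the reduced ODE $\Ls_0\varphi_1^{\inn}=0$ (the translation/zero mode, shifted by the $\log\nu$ correction) against the natural second radial solution. I would normalise so that both profiles share the common leading piece $-\tfrac{1}{16}\Lambda U(\gamma)$ at the top of the matching zone, and read off far-field asymptotics $\varphi_i^{\inn}(\gamma)\sim c_i^{(1)}\gamma^{2-2i}+c_i^{(2)}\log\gamma+\ldots$ as $\gamma\to\infty$.

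For the outer problem, $U_\nu\sim 8\nu^2/\zeta^4$ is tiny, so the combination appearing in $\Ms^\zeta_\nu f=f/U_\nu-\Psi_f$ is dominated by the Poisson term; the rescaled eigenproblem reduces to a singular ODE in $\zeta$ whose two-parameter family of solutions I would select by (a) requiring compatibility at the matching scale with the inner asymptotics just computed, and (b) truncating by $\chi_\nu$ so that $\varphi_{i,\nu}$ is supported in $\{\zeta\leq 2|\log\nu|\}$, yielding the claimed form of $\varphi_{i,\nu}$. The crucial observation is that the logarithmic behaviour of the inner tail together with the slow decay of the outer profile produces a solvability obstruction at $\zeta_m$ which, under the regime $1\ll\log(1/\zeta_m)\ll|\log\nu|$, can only be absorbed by shifting the eigenvalue by an amount of order $1/\log\nu$. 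This is exactly the origin of the $\frac{1}{2\log\nu}$ correction, and the matching is consistent independently of the precise choice of $\zeta_m$ in the overlap range.

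Given the ansatz, the error $R_i$ decomposes into three pieces: (a) the $\beta\Lambda$ contribution in the inner region applied to $\varphi_i^{\inn}$, giving the term of size $\nu^2\zeta^{2-k}\log^2(2+\zeta/\nu)/(\nu+\zeta)^4$ in \eqref{prop1: pointwise est for R_i}; (b) the outer residue from the dropped lower-order terms of the operator, contributing the $\frac{\zeta^{2-k}}{(\nu+\zeta)^2(1+\zeta)^2}\cdot\frac{|\log(\nu+\zeta)|}{|\log\nu|}$ piece; and (c) commutator errors localised at the cutoffs $\chi_m$ and $\chi_\nu$. The matching was designed precisely so that (c) is subsumed in (a)+(b), up to a constant depending only on the overlap region. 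The zero-average bound $|\int_{\zeta<1}R_i\zeta\,d\zeta|\lesssim 1/|\log\nu|$ then follows by observing that the inner $\Ls_0$-part of $R_i$ is a divergence $\nabla\cdot(U_\nu\nabla(\cdot))$, which upon radial integration produces a boundary term that is controlled by the outer contribution of size $1/|\log\nu|$. The pointwise estimates \eqref{prop1: pointwise est 1} for $\tilde\varphi_i$ follow from the explicit formulas for the inner/outer profiles together with the fact that, by construction, the leading $-\tfrac{1}{16\nu^4}\Lambda U\chi_\nu$ piece has been subtracted out.

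Finally, for the improved cancellation \eqref{prop1: improved est near the origin} of $\varphi_{1,\nu}-\varphi_{0,\nu}$, the inner parts differ only in their subleading tails (the $-\tfrac{1}{16}\Lambda U$ piece cancels exactly by normalisation), and the outer parts differ by $\Oc(1/|\log\nu|)$ because the eigenvalue shift $2\beta(1-i+\frac{1}{2\log\nu})$ produces only a $1/\log\nu$ separation between the two outer profiles. The $\nu\pa_\nu$ estimates follow from the explicit $\nu$-dependence of the inner and outer pieces together with the matching equations. The main obstacle I anticipate is the outer construction step: producing a sufficiently explicit $\varphi_i^{\out}$ that has the correct inner asymptotic at $\zeta_m$, is integrable enough for the $L^1$-bound on $R_i$, and at the same time reveals the precise $1/\log\nu$ eigenvalue shift — keeping track of the interaction between the logarithms generated by the 2D Poisson kernel and those generated by matching in the overlap zone.
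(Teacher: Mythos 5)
Your overall skeleton (inner profile built from $\Lambda U$ in the soliton variable, an outer reduced ODE in $\zeta$, gluing at $\zeta_m$ with the $O(1/\log\nu)$ eigenvalue shift forced by matching) is the same as the paper's, but two of your reductions are wrong as stated. First, the leading-order inner equation is not $\Ls_0\varphi_i^{\inn}=2\beta(1-i)\varphi_i^{\inn}$: in the soliton variable both the scaling term and the eigenvalue term carry a factor $\nu^2$ (see \eqref{inner eigenproblem}), so at leading order the inner equation is $\Ls_0\varphi_i^{\inn}=0$, and $\Lambda U$ is used because it spans the decaying radial kernel of $\Ls_0$ (recall $\Ms(\Lambda U)=-2$, hence $\Ls_0\Lambda U=0$); it is \emph{not} an eigenfunction of $\Ls_0$ with eigenvalue $2\beta$, and the second radial mode is not the translation mode. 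The leading eigenvalues $2\beta(1-i)$ are selected by the outer problem, where $\Ls^\zeta_\nu$ reduces to the six-dimensional Hermite operator $\Hs=\pa^2_\zeta+\frac{5}{\zeta}\pa_\zeta-\beta\Lambda$; this reduction comes from the $f/U_\nu$ part of $\Ms^\zeta_\nu$ (through $\Delta f-\nabla\cdot(f\nabla\log U_\nu)$ with $\pa_\zeta\Psi_{U_\nu}\sim-4/\zeta$), while the Poisson part contributes only $O(\nu^2)$ there — your claim that the Poisson term dominates is backwards, and dropping $f/U_\nu$ would lose precisely the diffusion and drift of the reduced ODE.

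The decisive gap is in the inner construction and the resulting size of $R_i$. Your ansatz stops at $\Lambda U$ (plus a vaguely specified second solution for $i=1$), whereas the claimed bound \eqref{prop1: pointwise est for R_i} forces corrections at orders $\nu^2$ and $\nu^4$ obtained by iterated inversion of $\Ls_0$ (the functions $V_2,\tilde V_2,V_{4,\#},\tilde V_{4,\#},V_4,\tilde V_4$ of \eqref{inner eigenfunction}--\eqref{proposition 1: construction of building block functions}), with coefficients chosen to cancel the $O(\nu^2)$ and $O(\nu^4)$ parts of the residual \eqref{prop1: full inner error}. Without these cancellations the term $-\beta\nu^2\Lambda^2U-2\beta\nu^2(1-i+\tilde\alpha_{i,\nu})\Lambda U$ survives in the residual; after the $-\frac{1}{16\nu^4}$ normalisation it is of size $\nu^{-4}$ at $\zeta\sim\nu$, i.e. comparable to $\Ls^\zeta_\nu\varphi_{i,\nu}$ itself, whereas \eqref{prop1: pointwise est for R_i} demands $O(\log^2(2+\zeta/\nu))$ there — so your error item (a) is not of the size you assert. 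These same corrections are also what generate the $\frac{\beta(i-\tilde\alpha_{i,\nu})}{2\zeta^2}$, constant, and $\tilde\alpha_{i,\nu}\log\nu$ terms in the inner far-field expansion \eqref{prop1: inner eigen asymptotics}; without them the ``solvability obstruction'' you invoke carries no $\log\nu$ dependence, there is no mechanism pinning $\tilde\alpha_{i,\nu}=\frac{1}{2\log\nu}$, and no second-order matching distinguishing $i=0$ from $i=1$ (which is exactly what later yields the crucial $|\log\nu|^{-1}$ gain for $\tilde\varphi_0$ and for $\varphi_{1,\nu}-\varphi_{0,\nu}$). Finally, the cutoff errors are not automatically ``subsumed'': the nonlocal pieces $\pa_\zeta\Psi^{\inn}_{i,m}-\chi_m\pa_\zeta\Psi^{\inn}_{i,*}$ must be handled via the radial partial-mass representation of the Poisson field, and it is the quantitative matching estimate \eqref{prop1: eigenfunc matching} that makes the $\chi_m'$-terms cancel in pairs at size $O(1/|\log\nu|)$; this requires an argument rather than the assertion that the matching was designed for it.
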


\begin{proof}[Proof of Proposition 1]
We proceed as follows: First, we construct the inner approximate eigenfunctions by iterated inversions of the linearized operator. Second, we solve the outer approximate eigenproblems whose eigenfunctions are well known. Third, we match the inner eigenfunctions with the outer ones by specifying the $\Oc(|\log\nu|^{-1})$ part of the approximate eigenvalues. Finally, the pointwise estimates follow directly from the explicit construction of the (global) eigenfunctions.\\[3pt]
\underline{The construction of $\varphi^\inn_i$:}
Fix a small constant $\zeta_m>0$ (the subscript ``$m$" stands for ``matching"), and $\zeta=\zeta_m$ will be our matching point. Denote $\chi_m(\zeta):=\chi(\zeta/\zeta_m)$. Now, consider the inner region, i.e., $\zeta\in (0,\zeta_m)$ or $\gamma\in (0,\zeta_m/\nu)$. The inner eigenproblem, in the soliton variables, is equivalent to
\begin{equation}\label{inner eigenproblem}    \Ls_0\varphi^\inn_{i}:=\nabla\cdot\dr{U\nabla\dr{\frac{\varphi^{\inn}_{i}}{U}-\Psi^{\inn}_{i}}} = \nu^2\beta\Lambda \varphi^\inn_{i}+2\nu^2\beta(1-i+\tilde\alpha_{i,\nu})\varphi^\inn_{i},
\end{equation}
where
\[
    -(\pa^2_\gamma+\frac{1}{\gamma}\pa_\gamma)\Psi^\inn_{i} = \varphi^\inn_{i},
\]
and $\tilde\alpha_{i,\nu}$ is a next-order part of the approximate eigenvalues to be solved. We look for an approximate solution which takes the form:
\begin{equation}\label{inner eigenfunction}
    \varphi^\inn_{i} = \Lambda U + \nu^2 c_{i,2} V_2+\nu^2 \tilde c_{i,2}\tilde V_2 + \nu^4d_{i,4}V_{4,\#}+\nu^4\tilde d_{i,4}\tilde V_{4,\#} + \nu^4c_{i,4}V_4+\nu^4\tilde c_{i,4}\tilde V_4, 
\end{equation}
where
\begin{equation}\label{proposition 1: construction of building block functions}
\begin{aligned}
    &\Ls_0 V_2 = \Lambda U, &&\Ls_0 \tilde V_2 = \Lambda^2 U, &&\Ls_0 V_{4,\#} = \Lambda V_2,\\
    &\Ls_0\tilde V_{4,\#} = \Lambda \tilde V_2, &&\Ls_0 V_4  =V_2, &&\Ls_0 \tilde V_4 = \tilde V_2,
\end{aligned}
\end{equation}
and $c_{i,2},d_{i,4},c_{i,4},\tilde c_{i,2},\tilde d_{i,4},\tilde c_{i,4}$ are constants (which may depend on $\tilde\alpha_{i,\nu}$) that will be chosen to improve the approximation and matching errors. The building block functions above can be solved explicitly (as the corresponding second order ODE admits explicit solutions). Their asymptotic behaviors, as $\gamma \to \infty$ are: 
\begin{equation}\label{Prop1: asyp at infty}
\begin{aligned}
&V_2 = \frac{4}{\gamma^2}+\Oc\dr{\frac{\log^2(\gamma)}{\gamma^4}}, &&\tilde V_2 = -\frac{8}{\gamma^2}+\Oc\dr{\frac{\log^2(\gamma)}{\gamma^4}},&& V_{4,\#} = 1+\Oc\dr{\frac{\log^2(\gamma)}{\gamma^2}},\\
&\tilde V_{4,\#} = -2+\Oc\dr{\frac{\log^2(\gamma)}{\gamma^2}}, &&V_4 = \log(\gamma)-\frac{5}{4}+\Oc\dr{\frac{\log^2(\gamma)}{\gamma^2}},&& \tilde V_4 = -2\log(\gamma)+\frac{7}{2}+\Oc\dr{\frac{\log^2(\gamma)}{\gamma^2}}.\\
\end{aligned}
\end{equation}
and their asymptotic behavior as $\gamma\to 0^+$ are (for derivative orders $k=0,1,2$):

\begin{equation}\label{Prop1: asymp at 0}
\begin{aligned}
    &V_2^{(k)}\sim \gamma^{2-k},&&\tilde V_2^{(k)}\sim \gamma^{2-k}, &&V_{4,\#}^{(k)} \sim \gamma^{4-k},\\
    &\tilde V_{4,\#}^{(k)} \sim \gamma^{4-k},&& V_4^{(k)}\sim \gamma^{2-k},& &\tilde V_4^{(k)}\sim \gamma^{2-k}.
\end{aligned}
\end{equation}
We remark that although these building block functions are not linearly independent at the leading order, some of them are in fact necessary in order to obtain cancellations in the generated error. This again (as we have already seen in the formal asymptotic matching in the previous section) emphasizes the idea of ``iterative inversions of $\Ls_0$", which is a natural way of improving the generated error in the soliton scale. The inner generated error in the soliton variable is defined as
\begin{align}\label{prop1: full inner error}
\begin{split}
    R^\gamma_i &:= \Ls_0 \varphi^\inn_{i} - \beta\nu^2\Lambda \varphi^\inn_i - 2\beta\nu^2(1-i+\tilde\alpha_{i,\nu})\varphi^\inn_i\\
    &= \nu^2\dr{c_{i,2}\Lambda U + \tilde c_{i,2}\Lambda^2U - \beta\Lambda^2U-2\beta(1-i+\tilde\alpha_{i,\nu})\Lambda U}\\
     &\quad+ \nu^4\Big(d_{i,4}\Lambda V_2+\tilde d_{i,4}\Lambda\tilde V_2+c_{i,4}V_2+\tilde c_{i,4}\tilde V_2 - \beta c_{i,2}\Lambda V_2-\beta\tilde c_{i,4}\Lambda\tilde V_2\\
     &\quad -2\beta c_{i,2}(1-i+\tilde\alpha_{i,\nu})V_2-2\beta \tilde c_{i,2}(1-i+\tilde\alpha_{i,\nu})\tilde V_2\Big)\\
     &\quad -\beta\nu^6\dr{d_{i,4}\Lambda V_{4,\#}+\tilde d_{i,4}\Lambda\tilde V_{4,\#} - c_{i,4}\Lambda V_4- \tilde c_{i,4}\Lambda \tilde V_4}\\
     &\quad -2\beta\nu^6(1-i+\tilde\alpha_{i,\nu})\dr{d_{i,4} V_{4,\#}+\tilde d_{i,4}\tilde V_{4,\#} - c_{i,4} V_4- \tilde c_{i,4}\tilde V_4}.
\end{split}
\end{align}
To cancel out the $\Oc(\nu^2)$ terms in $R^\gamma_i$, we choose
\begin{equation}\label{prop1: determine constants c_2}
    c_{i,2} = 2\beta(1-i+\tilde\alpha_{i,\nu}),\quad\tilde c_{i,2} = \beta.
\end{equation}
Similarly, to cancel out the $\Oc(\nu^4)$ terms, we choose
\begin{equation}
\begin{aligned}
    d_{i,4} &= 2\beta^2(1-i+\tilde\alpha_{i,\nu}),
    &\tilde d_{i,4} &= \beta^2,\\
    c_{i,4} &= 4\beta^2(1-i+\tilde\alpha_{i,\nu})^2,& \tilde c_{i,4} &= 2\beta^2(1-i+\tilde\alpha_{i,\nu}), 
\end{aligned}
\end{equation}
and the error becomes
\begin{align}\label{prop1: inner error}
    R^\gamma_i =& -\beta^3\nu^6\dr{\Lambda \tilde V_{4,\#}+2(1-i+\tilde\alpha_{i,\nu})\Lambda V_{4,\#}+2(1-i+\tilde\alpha_{i,\nu})\Lambda \tilde V_4+4(1-i+\tilde\alpha_{i,\nu})^2\Lambda V_4}\\
    &-2\beta^3\nu^6(1-i+\tilde\alpha_{i,\nu})\dr{\tilde V_{4,\#}+2(1-i+\tilde\alpha_{i,\nu}) V_{4,\#}+2(1-i+\tilde\alpha_{i,\nu}) \tilde V_4+4(1-i+\tilde\alpha_{i,\nu})^2 V_4}.\nonumber
\end{align}
\underline{The construction of $\varphi^\out_i$:} Then, we work in the outer region, i.e., $\zeta\in (\zeta_m,+\infty)$. In this region, we have $U_\nu(\zeta) \lesssim \nu^2$ and $\pa_\zeta \Psi_{U_\nu}(\zeta) \sim -\frac{4}{\zeta}$, hence, the operator $\Ls_\nu^\zeta$ behaves like the Hermite operator in dimension $6$:
\begin{equation*}
    \Hs = \pa_\zeta^2 + \frac{5}{\zeta} \pa_\zeta - \beta \Lambda,
\end{equation*}
and we formally have $\Ls^\zeta_{\nu} = \Hs + \Oc(\nu^2)$ when $\zeta\geq\zeta_m$. Thus, we consider an approximation of the outer eigenfunction of the form 
\begin{equation}\label{outer eigenfunction}
    \varphi_i^\out(\zeta)= \Omega_i(\zeta) + \tilde \varphi_i^\out(\zeta), 
\end{equation}
for some lower order term $\tilde \varphi_i^\out(\zeta)\sim \Oc(\tilde\alpha_{i,\nu})$ and the leading term $\Omega_i$ solves  
\begin{equation}\label{proposition 1: outer eigen problem}
    \dr{\Hs - 2\beta(1 - i)} \Omega_i(\zeta) = 0, \quad \textup{with} \quad \Omega_i(\zeta) \sim \frac{1}{\zeta^4} \;\; \textup{as} \;\; \zeta \to 0. 
\end{equation}
The solutions without exponential growth at infinity are
\begin{align*}
    \Omega_0(\zeta) =\frac{1}{\zeta^4},\quad
    \Omega_1(\zeta) = \frac{1}{\zeta^4}+\frac{\beta}{2\zeta^2}.
\end{align*}
We remark that the eigenproblem for the operator $\Hs$ actually determines our eigenvalues to the leading order, i.e., $\Hs f = \lambda f$ has solutions in the class of functions with suitable decay when $\lambda = 2\beta(1-i)\;(i=0,1)$. Next, we consider the next order which solves:
\begin{equation*}
    \dr{\Hs - 2\beta(1-i)}\tilde\varphi^\out_i = 2\beta\tilde\alpha_{i,\nu}\Omega_i.
\end{equation*}                      
The solutions (without exponential growth and homogeneous modes) are
\begin{align}\label{prop1: outer sol exact form}
\begin{split}
    &(2\beta\tilde\alpha_{0,\nu})^{-1}\tilde\varphi^\out_0(\zeta) =  -\frac{\log(\zeta)}{\beta\zeta^4}-\frac{1}{\beta^2\zeta^6}-\frac{\beta\zeta^2-2}{\beta^2\zeta^4}e^{\frac{\beta\zeta^2}{2}}\int_{\zeta}^{+\infty}\frac{1}{r^3}e^{-\frac{\beta r^2}{2}}\;dr,\\
    &(2\beta\tilde\alpha_{1,\nu})^{-1}\tilde\varphi^\out_1(\zeta) = -\frac{1}{\zeta^4}e^{\frac{\beta\zeta^2}{2}}\int_{\zeta}^{+\infty}\frac{(\beta r^2+2)^2}{2\beta^2 r^3}e^{-\frac{\beta r^2}{2}}\;dr +\dr{-\frac{\log(\zeta)}{\beta}+\frac{1}{\beta^2\zeta^2}}\frac{\beta\zeta^2+2}{2\zeta^4}.
\end{split}
\end{align}
Their asymptotic behaviors, as $\zeta\to 0^+$, are
\begin{align}\label{prop1: asymptotic of outer solution}
\begin{split}
    &(2\beta\tilde\alpha_{0,\nu})^{-1}\tilde\varphi^\out_0(\zeta) =-\frac{1}{4\zeta^2}+\frac{\beta}{32}\dr{1-2\E-2\log(\frac{\beta}{2})-4\log(\zeta)}+\Oc(\zeta^2\log(\zeta)),\\
    &(2\beta\tilde\alpha_{1,\nu})^{-1}\tilde\varphi^\out_1(\zeta) =-\frac{1}{4\zeta^2}+\frac{\beta}{32}\dr{-3+2\E+2\log(\frac{\beta}{2})+4\log(\zeta)}+\Oc(\zeta^2\log(\zeta)),
\end{split}
\end{align}
where $\E$ is the Euler constant. We remark that certain cancellation occurs in \eqref{prop1: outer sol exact form} as $\zeta\to0^+$, so that the leading-order behavior is $1/\zeta^2$ rather than $\log(\zeta)/\zeta^4$ or $1/\zeta^6$, which may seem likely given the expression of \eqref{prop1: outer sol exact form}.\\[3pt]
\underline{Matching $\varphi^\inn_i$ and $\varphi^\out_i$:}
Now in the matching region $\nu\ll \zeta \ll 1$, we match the normalized inner solution $-\frac{1}{16\nu^4}\varphi^\inn_i(\zeta/\nu)$ with the outer solution $\varphi^\out_i(\zeta)$. By the asymptotic \eqref{Prop1: asyp at infty} and the choice of constants above we have
\begin{align}\label{prop1: inner eigen asymptotics}
\begin{split}
    -\frac{1}{16\nu^4}\varphi^\inn_i(\zeta/\nu) ={} \frac{1}{\zeta^4} +\frac{\beta(i-\tilde\alpha_{i,\nu})}{2\zeta^2}-\frac{\beta^2}{16}\Big(2(1-2i)+(10i-1)\tilde\alpha_{i,\nu}+(4-8i)\tilde\alpha_{i,\nu}\log(\zeta)\\
    -(4-8i)\tilde\alpha_{i,\nu}\log(\nu)
    +4\tilde\alpha^2_{i,\nu}(\log(\zeta)-\log(\nu)-\frac{5}{4})\Big)+\Oc(\nu^2),\quad \forall \;\zeta\gtrsim 1,
\end{split}
\end{align}
and by the asymptotic \eqref{prop1: asymptotic of outer solution} we have
\begin{align}\label{prop1: outer eigen asymptotics}
\begin{split}
    \varphi^\out_i(\zeta) = \frac{1}{\zeta^4}+\frac{\beta (i - \tilde\alpha_{i,\nu})}{2\zeta^2}+\frac{\beta^2\tilde\alpha_{i,\nu}(1-2i)}{16}\dr{1+2i-2\E-2\log(\beta)+2\log(2)-4\log(\zeta)}\\
    +\Oc(\tilde\alpha_{i,\nu}\zeta^2\log(\zeta)),\quad\forall \;\zeta\ll 1.
\end{split}
\end{align}
Note that the first two terms of \eqref{prop1: inner eigen asymptotics} and \eqref{prop1: outer eigen asymptotics} already match. Now we are ready to determine $\tilde\alpha_{i,\nu}$. First, due to the $\tilde\alpha_{i,\nu}\log(\nu)$ term in \eqref{prop1: inner eigen asymptotics}, we must have $\tilde\alpha_{i,\nu} = \Oc(1/|\log\nu|)$ in order to minimize the matching error of \eqref{prop1: inner eigen asymptotics} and \eqref{prop1: outer eigen asymptotics}. Furthermore, since the third term in \eqref{prop1: outer eigen asymptotics} is of size $\Oc(1/|\log\nu|)$, in order to improve the matching error by $|\log\nu|^{-1}$ the $\Oc(1)$ parts in the third term of \eqref{prop1: inner eigen asymptotics} must be canceled:
\[
     2(1-2i)-(4-8i)\tilde\alpha_{i,\nu}\log(\nu) = \Oc(1/|\log\nu|),
\]
and one simple choice is
\[
    \tilde\alpha_{i,\nu} = \frac{1}{2\log(\nu)}\quad i=0,1.
\]
With this choice of $\tilde\alpha_{i,\nu}$ we obtain the matching error:
\begin{equation}\label{prop1: eigenfunc matching}
    -\frac{1}{16\nu^4}\varphi^\inn_i(\zeta/\nu) - \varphi^\out_i(\zeta) = \Oc\dr{\frac{1}{\log(\nu)}},\quad \forall\; \frac{1}{4}\zeta_m\leq \zeta\leq 4\zeta_m.
\end{equation}
Finally, using $\varphi^\inn_{i}$ and $\varphi^\out_{i}$, we construct the global approximate eigenfunctions as
\begin{align*}
    \varphi_{i,\nu}(\zeta)&:= -\frac{1}{16\nu^4}\varphi^\inn_{i}(\zeta/\nu)\chi_m(\zeta)+(1-\chi_m(\zeta))\chi_\nu(\zeta)\varphi^\out_i(\zeta)\\
    &:= -\frac{1}{16\nu^4}\Lambda U(\zeta/\nu)\chi_\nu(\zeta)+\tilde\varphi_{i}(\zeta).
\end{align*}
\underline{Pointwise estimates:}
As for the pointwise estimate, we first note from \eqref{prop1: outer sol exact form} that
\[
    |\pa_\zeta^k\varphi^\out_{0}(\zeta)|\lesssim \frac{1}{\zeta^{6-k}}+\frac{\log(\zeta)}{\zeta^{4+k}},\quad |\pa_\zeta^k\varphi^\out_1(\zeta)|\lesssim \frac{\log(\zeta)}{\zeta^{2+k}}+\frac{1}{\zeta^{6-k}},\quad\forall\; \zeta\gg 1.
\]
Then, \eqref{prop1: pointwise est 1} follows from this far field estimate and the asymptotics  \eqref{Prop1: asymp at 0}\eqref{prop1: inner eigen asymptotics}. Note that the pointwise value of $\tilde\varphi_{0,\nu}$ is $\Oc(|\log\nu|^{-1})$ smaller than that of $\tilde\vp_{1,\nu}$ in the region $\zeta\approx 1$, because when $i = 0$ there is a gain of $1/|\log\nu|$ in the $\Oc(1/\zeta^2)$ term in \eqref{prop1: inner eigen asymptotics}. This is important in the upcoming derivation of modulation estimates. To derive the improved estimate near the origin, we note that for $V(\gamma) = V_2(\gamma)\text{ or }\tilde V_2(\gamma)$:
\[
    \left|\nu\pa_\nu\dr{\frac{1}{\nu^2}V(\zeta/\nu)}\right| = \left|\frac{1}{\nu^2}(\Lambda_\gamma V)(\zeta/\nu)\right| \lesssim \frac{\nu^2\zeta^2\log^2(1+\zeta/\nu)}{(\nu+\zeta)^6},
\]
Moreover, for $V(\gamma) = V_4(\gamma)\text{ or }\tilde V_4(\gamma)$ (the leading order of which is log growth at infinity), note that
\[
    |\pa_\zeta^k\nu\pa_\nu(\tilde\alpha_{1,\nu}V(\zeta/\nu))|\lesssim \frac{1}{|\log(\nu)|}\frac{\zeta^{2-k}}{(\nu+\zeta)^2}+\frac{\nu^2\zeta^2\log^2(1+\zeta/\nu)}{(\nu+\zeta)^4},\quad k = 0,1,2.
\]
Combining these facts gives us \eqref{prop1: improved est near the origin}. 
Now we estimate the generated error:
\begin{align*}
    &R_i :=  \Ls^\zeta_\nu \varphi_{i,\nu}(\zeta) - 2\beta(1-i+\tilde\alpha_{i,\nu})\varphi_{i,\nu}(\zeta)\\
     &= -\frac{1}{16\nu^4}\chi_m(\zeta)\dr{\Ls^\zeta_\nu-2\beta(1-i+\tilde\alpha_{i,\nu})}\varphi^\inn_{i}(\zeta/\nu)+(1-\chi_m(\zeta))\chi_\nu(\zeta)\dr{\pa^2_\zeta+\frac{1}{\zeta}\pa_\zeta-\pa_\zeta\Psi_{U_\nu}(\zeta)\cdot\pa_\zeta}\varphi^\out_{i}(\zeta)\\
     &\quad -\pa_\zeta U_\nu(\zeta)\cdot\pa_\zeta\Psi^\out_{i,m}(\zeta)+2(1-\chi_m(\zeta))\chi_\nu(\zeta)U_\nu(\zeta)\varphi^\out_{i}(\zeta)-(1-\chi_m(\zeta))\chi_\nu(\zeta)\beta\Lambda\varphi^\out_i(\zeta)\\
     &\quad -\frac{1}{8\nu^5}\pa_\gamma\varphi_i^\inn(\zeta/\nu)\chi'_m(\zeta)-\frac{1}{16\nu^4}\varphi^\inn_i(\zeta/\nu)\chi''_m(\zeta)-\frac{1}{16\nu^4\zeta}\varphi^\inn_i(\zeta/\nu)\chi'_m(\zeta)+\frac{1}{16\nu^4}\varphi^\inn_i(\zeta/\nu)\chi'_m(\zeta)\pa_\zeta\Psi_{U_\nu}(\zeta)\\
    &\quad -\dr{\pa_\zeta\Psi^\inn_{i,m}(\zeta) - \chi_m(\zeta)\pa_\zeta\Psi_{i,*}^\inn(\zeta)}\pa_\zeta U_\nu(\zeta)+\beta\zeta\chi'_m(\zeta)\frac{1}{16\nu^4}\varphi^\inn_i(\zeta/\nu)\\
    &\quad -2\chi'_m\chi_\nu\pa_\zeta\varphi^\out_i - \chi''_m\chi_\nu\varphi^\out_i-\frac{1}{\zeta}\chi'_m\chi_\nu\varphi^\out_i+\chi'_m\chi_\nu\varphi^\out_i(\zeta)\pa_\zeta\Psi_{U_\nu}(\zeta)+\beta\zeta\chi'_m\chi_\nu\varphi^\out_i(\zeta)\\
    &\quad +2(1-\chi_m)\chi'_{|\log\nu|}\pa_\zeta\varphi^\out_i + (1-\chi_m)\chi''_{|\log\nu|}\varphi^\out_i+\frac{1}{\zeta}(1-\chi_m)\chi'_{|\log\nu|}\varphi^\out_i\\
    &\quad-(1-\chi_m)\chi'_{|\log\nu|}\varphi^\out_i(\zeta)\pa_\zeta\Psi_{U_\nu}(\zeta)-\beta\zeta(1-\chi_m)\chi'_{|\log\nu|}\varphi^\out_i(\zeta)\\
    &:= R^\inn_i+R^\out_i+R^{bd}_i,
\end{align*}
where we denote
\begin{align*}
    -(\pa^2_\zeta+\frac{1}{\zeta}\pa_\zeta)\Psi^\inn_{i,m}(\zeta) &= -\frac{1}{16\nu^4}\varphi^\inn_i(\zeta/\nu)\chi_m(\zeta),&  -(\pa^2_\zeta+\frac{1}{\zeta}\pa_\zeta)\Psi^\inn_{i,*}(\zeta) &= -\frac{1}{16\nu^4}\varphi^\inn_i(\zeta/\nu) ,\\
    -(\pa^2_\zeta+\frac{1}{\zeta}\pa_\zeta)\Psi^\out_{i,m}(\zeta) &=\varphi^\out_i(\zeta)(1-\chi_m(\zeta)).&
\end{align*}
We assume that the supports of $\chi'_m$ and $\chi'_{|\log\nu|}$ are disjoint so that the terms containing $\chi'_m\chi'_{|\log\nu|}$ are all zero which we do not display in the expression of $R_i$. Note that for the outer solutions, we treat $(1-\chi_m)\chi_\nu\varphi^\out_{i}$ as a whole, which is different from the case of the inner solutions. This is because the singularity of $\varphi^\out_i$ at the origin only allows the existence of the Poisson field of $(1-\chi_m)\varphi^\out_i$, not of $\varphi^\out_i$. As for the inner error, note that
\begin{align*}
    R^\inn_i(\zeta) = -\frac{16}{\nu^6}R^\gamma_i(\zeta/\nu)\chi_m(\zeta).  
\end{align*}
By \eqref{prop1: inner error}, we have, for $\zeta\in[0,\zeta_m]$,
\[
    \left|\pa^k_{\zeta}\dr{\frac{1}{\nu^6}R^\gamma_i(\zeta/\nu)}\right|\lesssim
    \frac{\nu^2\zeta^{2-k}\log^2(2+\zeta/\nu)}{(\nu+\zeta)^4}+\frac{\zeta^{2-k}|\log(\nu+\zeta)|}{(\nu+\zeta)^2|\log\nu|}.
\] 
As for the outer error, we calculate that
\begin{align*}
    R^\out_i &= (1-\chi_m)\chi_\nu\dr{\pa^2_\zeta+\frac{1}{\zeta}\pa_\zeta-\pa_\zeta\Psi_{U_\nu}\cdot\pa_\zeta}\varphi^\out_{i}-\pa_\zeta U_\nu\cdot\pa_\zeta\Psi^\out_{i,m}\\
    &\quad+2(1-\chi_m)\chi_\nu U_\nu\varphi^\out_{i}-(1-\chi_m)\chi_\nu\beta\Lambda\varphi^\out_i\\
    &= (1-\chi_m)\chi_\nu(\Hs - 2\beta(1-i+\tilde\alpha_{i,\nu}))\varphi^\out_i(\zeta)+(1-\chi_m)\chi_\nu\dr{\frac{4\zeta}{\zeta^2+\nu^2}-\frac{4}{\zeta}}\pa_\zeta\varphi^\out_{i}\\
    &\quad+\frac{32\nu^2\zeta}{(\zeta^2+\nu^2)^3}\pa_\zeta\Psi^\out_{i,m}+\frac{16\nu^2}{(\zeta^2+\nu^2)^2}(1-\chi_m)\chi_\nu\varphi^\out_{i}(\zeta)\\
    &=\tilde\alpha_{i,\nu}(1-\chi_m)\chi_\nu\tilde\varphi^\out_{i}(\zeta)-\frac{4\nu^2}{\zeta(\zeta^2+\nu^2)}(1-\chi_m)\chi_\nu\pa_\zeta\varphi^\out_{i}(\zeta)\\
    &\quad+\frac{32\nu^2\zeta}{(\zeta^2+\nu^2)^3}\pa_\zeta\Psi^\out_{i,m}+\frac{16\nu^2}{(\zeta^2+\nu^2)^2}(1-\chi_m)\chi_\nu\varphi^\out_{i}(\zeta).
\end{align*}
 Thus, by the asymptotic behavior at $\zeta\to\infty$ of the outer solutions as well as their Poisson fields, we have
 \[
    |\pa^k_\zeta R^\out_i(\zeta)|\lesssim \frac{1}{\log^2(\nu)}\cdot\frac{\log(1+\zeta)}{\zeta^2},\quad \forall \zeta\geq \zeta_m,\;\forall\; k = 0,1,2.    
 \]
 Now we come to the boundary error $R^{bd}_i$. First, note that $\text{supp}\,\chi'_m\subset [\zeta_m,2\zeta_m]$, by the matching condition \eqref{prop1: eigenfunc matching}, the terms involving the derivatives of $\chi_m$ cancel in pairs. For example,
\[
    \left|\pa^k_\zeta\dr{-\frac{1}{8\nu^4}\pa_\zeta\varphi^\inn_i(\zeta/\nu)\chi'_m(\zeta)-2\pa_\zeta\varphi^\out_i(\zeta)\chi'_m(\zeta)}\right| \lesssim \frac{1}{|\log\nu|},  
\]
and the rest are similar. Second, we note that $\text{supp}\,\chi'_{|\log\nu|}\subset [ |\log\nu|,2|\log\nu| ]$, and we have the decay property
\[
    |\varphi^\out_i(\zeta)| \lesssim \frac{\log(\zeta)}{(1+\zeta)^2}.
\]
Then, any term involving the derivative of $\chi_\nu$ is of size at most $\Oc(1/|\log\nu|)$.
It then remains to estimate the terms involving Poisson fields. Note that for a 2D radial symmetric Poisson problem with Neumann boundary condition (we assume $S$ to have certain regularity which is the case in our problem):
\[
    -(\pa^2_\zeta +\frac{1}{\zeta}\pa_\zeta)\Psi(\zeta) = S(\zeta), 
\]
the Poisson field satisfies
\[
    \pa_\zeta\Psi(\zeta) = \frac{1}{\zeta} \int_{0}^{\zeta}rS(r)\;dr. 
\]
Using this, we obtain that
\[
    \pa_\zeta\Psi^\inn_{i,m}(\zeta) - \chi_m(\zeta)\pa_\zeta\Psi_{i,*}^\inn(\zeta)  \equiv 0,\quad \forall\;\zeta\in [0,\zeta_m],
\]
and
\[
    \pa_\zeta\Psi^\inn_{i,m}(\zeta) - \chi_m(\zeta)\pa_\zeta\Psi_{i,*}^\inn(\zeta) \equiv \text{const.},\quad \forall\;\zeta\in [2\zeta_m,+\infty).
\]
Moreover, for $\zeta>\zeta_m$, $\pa_\zeta U_\nu\lesssim \nu^2/\zeta^5$, 
which completes the pointwise estimate for the boundary error.
Finally, the estimate of the partial mass follows directly from the pointwise estimate of the generated error.  
\end{proof}

\subsection{Coercivity of the Linearized Operator}
The main goal of this section is to establish the coercivity of the linearized operator $\Ls^\zeta_\nu$ (after a slight modification) under a certain adapted inner products.

\subsubsection{Properties of the Operator $\Ms^\zeta_\nu$}
To begin with, we collect several properties of $\Ms^\zeta_\nu$, in particular, boundedness and definiteness.
\begin{lemma}\label{Lemma: coercivity of gradient M}
    Let $f$ be a function on $\reall^2$ with $\int_{\reall^2}(1+|\y|^2)f^2+\int_{\reall^2}\frac{|\nabla f|^2}{U}<+\infty$. Then, there exist universal constants $c,C>0$, such that
    \begin{equation}
        \int_{\reall^2}U|\nabla\Ms f|^2 \geq c\int_{\reall^2}\frac{|\nabla f|^2}{U} -  C\dr{|\scl{f}{\Lambda U}|^2+|\scl{f}{\nabla U}|^2}.
    \end{equation}
\end{lemma}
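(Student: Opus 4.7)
The starting point is the pointwise identity
\[
U \nabla \Ms f = \nabla f - f \nabla \Psi_U - U \nabla \Psi_f,
\]
which follows immediately from $\Ms f = f/U - \Psi_f$ together with the stationary Euler--Lagrange relation $\nabla U = U\nabla\Psi_U$. I will square this, divide by $U$, and integrate; all cross terms can be cleaned up by integration by parts (using $-\Delta \Psi_U = U$, $-\Delta \Psi_f = f$, and the auxiliary computation $\nabla \cdot (\nabla\Psi_U/U) = -1 - |\nabla\Psi_U|^2/U$). The outcome will be the key identity
\[
\int U|\nabla \Ms f|^2 \;=\; \int \frac{|\nabla f|^2}{U} + \int U|\nabla\Psi_f|^2 - 3\int f^2 + 2\int f\,\nabla\Psi_U\cdot\nabla\Psi_f,
\]
which I rearrange so that $\int |\nabla f|^2/U$ sits on one side while the three ``correction'' terms are of strictly lower differential order than the target norm.

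Next, I want to bound those corrections. The term $\int U|\nabla\Psi_f|^2 = \int f\Psi_f \geq 0$ already has the favorable sign after the rearrangement; the indefinite cross term can be absorbed via Cauchy--Schwarz, $|2\int f\nabla\Psi_U\cdot\nabla\Psi_f| \leq \int f^2|\nabla\Psi_U|^2/U + \int U|\nabla\Psi_f|^2$, leaving $\int f^2$ and $\int f^2|\nabla\Psi_U|^2/U$ as the only genuinely problematic contributions. Since $|\nabla\Psi_U|^2 \lesssim 1/(1+|\y|^2)$ and $1/U \sim (1+|\y|^2)^2$, both are weighted $L^2$-norms of $f$ with at worst polynomial weight, hence compact relative to the weighted-$H^1$ norm $\int |\nabla f|^2/U \sim \int (1+|\y|^2)^2|\nabla f|^2$ that we want to control, under the integrability hypotheses of the lemma.

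I will close the estimate by a compactness-by-contradiction argument. Suppose the conclusion fails; then there is a sequence $\{f_n\}$ with $\int|\nabla f_n|^2/U = 1$, $\int U|\nabla \Ms f_n|^2 \to 0$, and $|\scl{f_n}{\Lambda U}| + |\scl{f_n}{\nabla U}| \to 0$. The rapidly-growing weight $1/U$ forces tightness at infinity, so after passing to a subsequence one has $f_n \rightharpoonup f_\infty$ weakly in the weighted-$H^1$ space and strongly in $L^2_{\textup{loc}}$; standard Riesz-potential bounds propagate this to strong $L^2_{\textup{loc}}$-convergence of $\nabla \Psi_{f_n}$ together with uniform decay at infinity, so all correction terms in the identity pass to the limit. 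The limit satisfies $\int U|\nabla \Ms f_\infty|^2 = 0$, hence $\Ms f_\infty$ is constant; using $\Ms(\Lambda U) = -2$ and $\Ms(\partial_i U) = 0$, this pins down $f_\infty \in \Span\{\Lambda U, \partial_1 U, \partial_2 U\}$. The orthogonality conditions pass to the limit (since $\Lambda U$ and $\nabla U$ decay fast enough to lie in the dual of the weighted space), and the Gram matrix of these three generators in the standard $L^2$ pairing is diagonal and positive (by the radial symmetry of $U$), so $f_\infty = 0$. Plugging back into the identity, every term on the right vanishes in the limit, contradicting $\int|\nabla f_n|^2/U = 1$.

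The main obstacle will be establishing enough compactness on the low-order terms: we only normalize a weighted norm of $\nabla f_n$, not of $f_n$ itself, so to pass $\int f_n^2$ and $\int f_n \nabla\Psi_U\cdot\nabla\Psi_{f_n}$ to the limit one must recover tightness of $\{f_n\}$ in a weighted $L^2$. This requires a weighted Poincar\'e- or Hardy-type inequality anchored by the vanishing orthogonality conditions, which are precisely what is needed to eliminate the constant and slowly-decaying modes that a pure gradient bound cannot see.
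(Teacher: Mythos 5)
Your algebraic identity is correct (it is a repackaging of the same integration-by-parts computation the paper performs), and your overall architecture — sub-coercivity, compactness-by-contradiction, kernel identification, then orthogonality — is the paper's. The gap is in the absorption step and the compactness claim that follows it. When you control the cross term by $|2\int f\,\nabla\Psi_U\cdot\nabla\Psi_f|\le \int f^2|\nabla\Psi_U|^2/U+\int U|\nabla\Psi_f|^2$, the surviving error is $\int f^2|\nabla\Psi_U|^2/U=2\int |\y|^2 f^2$, i.e.\ a weighted $L^2$ norm with weight $(1+|\y|^2)$ — exactly the same weight that the a priori Hardy bound $\int(1+|\y|^2)f^2\lesssim\int|\nabla f|^2/U$ controls. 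With equal weights there is no gain at infinity, so the embedding is \emph{not} compact: for annular bumps $f_n(\y)=R_n^{-2}g(\y/R_n)$ with $R_n\to\infty$ one has $\int|\nabla f_n|^2/U\approx 1$, $f_n\to0$ in $L^2_{\rm loc}$, yet $\int(1+|\y|^2)f_n^2\approx1$. Hence in your contradiction argument you cannot pass $\int|\y|^2 f_n^2$ to the limit, and the final step ("every term on the right vanishes") does not follow. Your proposed remedy — a weighted Poincar\'e/Hardy inequality anchored by the orthogonality conditions — does not supply the missing tightness, because the orthogonality is against the rapidly decaying functions $\Lambda U$ and $\nabla U$ ($\sim|\y|^{-4}$) and therefore imposes no constraint on weighted mass escaping to spatial infinity (the escaping bumps above are asymptotically orthogonal to them anyway). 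Note also that the term $3\int f^2$ is \emph{not} the problem: weight $1$ is strictly below $(1+|\y|^2)$, so plain Hardy plus local convergence handles it; the obstruction is solely the $|\y|^2$ weight produced by your crude Cauchy--Schwarz.

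The repair is the one the paper uses: estimate the contributions of $\nabla\Psi_f$ with the sharp Poisson-field decay bounds (the appendix estimates with $\alpha=\tfrac12$, exploiting $U\sim|\y|^{-4}$ and $|\nabla\Psi_U|\sim|\y|^{-1}$) so that the sub-coercivity error carries a weight strictly weaker than $(1+|\y|^2)$ — the paper gets $\int(1+|\y|)f^2$. Then $\int_{|\y|>R}(1+|\y|)f_n^2\lesssim R^{-1}\int(1+|\y|^2)f_n^2\lesssim R^{-1}$ uniformly, which is exactly the tightness needed to pass the error term to the limit and reach the contradiction. Two smaller points: (i) the parenthetical identity $\int U|\nabla\Psi_f|^2=\int f\Psi_f$ is false (the weight $U$ obstructs the integration by parts, and $\int|\nabla\Psi_f|^2$ even diverges when $\int f\neq0$); you only use nonnegativity, which is trivial, so delete the identity; (ii) "$\Ms f_\infty$ constant pins down $f_\infty\in\Span\{\Lambda U,\pa_{y_1}U,\pa_{y_2}U\}$" is a genuine kernel classification, not a two-line deduction from $\Ms(\Lambda U)=-2$, $\Ms(\nabla U)=0$: as in the paper you need the elliptic-regularity bootstrap from $\nabla(f_\infty/U-\Psi)=0$ and then the kernel lemma of Rapha\"el--Schweyer (their Lemma 2.1), which should be invoked explicitly.
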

In the parabolic variables, the inequality equivalently becomes
\[
\int U_\nu|\nabla\Ms^\zeta_\nu f|^2\geq c\int\frac{|\nabla f|^2}{U_\nu} -C\dr{\nu^2|\scl{f}{\Lambda U_\nu}|^2+\nu^4|\scl{f}{\nabla U_\nu}|^2}.
\]

\begin{proof}
    The proof follows the same tactic as in \cite{Raphael2014}. 
    First, by Hardy's inequality and the estimates of Poisson fields \eqref{appendix: pointwise est of Poisson field radial}\eqref{appendix: pointwise est of Poisson field nonradial}, we have the a priori bounds
    \begin{align}\label{Lemma coercivity of M: pre-bounds}
    \begin{split}
        &\int_{\reall^2} (1+|\y|^2)g^2 \lesssim \int_{\reall^2}(1+|\y|^2)^2|\nabla g|^2 = 8\int_{\reall^2}\frac{|\nabla g|^2}{U},\\
        & \int_{\reall^2} \frac{|\nabla \Psi_g|^2}{1+|\y|^2} \lesssim \int_{\reall^2} (1+|\y|^2)g^2 \lesssim \int_{\reall^2}(1+|\y|^4)|\nabla g|^2,\\
        &\int_{\reall^2} \frac{|\Psi_g|^2}{1+|\y|^4} \lesssim \int_{\reall^2} (1+|\y|^2)g^2 \lesssim \int_{\reall^2}(1+|\y|^4)|\nabla g|^2.
    \end{split}
    \end{align}
    Moreover, note that 
    \begin{align*}
        \int_{\reall^2}U|\nabla \Ms f|^2 = \int_{\reall^2}U\da{\nabla\dr{\frac{f}{U}}-\nabla \Psi_f}^2 \geq \frac{1}{2}\int_{\reall^2}U\da{\nabla\dr{\frac{f}{U}}}^2 - \int_{\reall^2}U|\nabla\Psi_f|^2, 
    \end{align*}
    and through integration by parts,
    \begin{align*}
        \int_{\reall^2}U\da{\nabla\dr{\frac{f}{U}}}^2 &= \int_{\reall^2}U\da{\frac{\nabla f}{U}-\frac{f\nabla\Psi_U}{U}}^2\\
        &=\int_{\reall^2}\frac{|\nabla f|^2}{U}+\frac{|\nabla\Psi_U|^2f^2}{U} - \int_{\reall^2} \frac{2f\nabla f\cdot\nabla\Psi_U}{U}\\
        & = \int_{\reall^2}\frac{|\nabla f|^2}{U}+\frac{|\nabla\Psi_U|^2f^2}{U} + \int_{\reall^2} f^2\nabla\cdot\dr{\frac{\nabla\Psi_U}{U}}\\
        & = \int_{\reall^2}\frac{|\nabla f|^2}{U} - f^2.
    \end{align*}
    Then, by Poisson field estimates, we obtain the sub-coercivity estimate:
    \begin{equation}\label{Lemma coercivity of M: subcoercivity}
         \int_{\reall^2}U|\nabla \Ms f|^2\gtrsim \int_{\reall^2}\frac{|\nabla f|^2}{U} - C\int_{\reall}(1+|\y|)f^2.
    \end{equation}
    Assume, by contradiction, that there exists a sequence of functions $\{f_n\}$ that satisfies 
    \begin{equation}
        \int_{\reall^2}(1+|\y|^2)f^2_n<+\infty,\quad\int_{\reall^2}\frac{|\nabla f_n|^2}{U}=1,\quad \scl{f_n}{\Lambda U} = \scl{f_n}{\nabla U} = 0,\quad\text{and } \lim_{n\to+\infty}\int_{\reall^2}U|\nabla\Ms f_n|^2 = 0.
    \end{equation}
    Then, by \eqref{Lemma coercivity of M: pre-bounds} and $\int |\Delta\Psi_{f_n}|^2 = \int f_n^2<+\infty$, we know from Sobolev embedding that there exist some $f$ and $\Psi$, such that (up to a subsequence)
    \begin{align*}
        &f_n\rightharpoonup f,\; \text{ in } H^1(\reall^2),\quad\text{and }\quad f_n\to f,\; \text{ in } L^2_{\text{loc}}(\reall^2),\\
        &\Psi_{f_n}\to \Psi,\; \text{ in } H^1_{\text{loc}}(\reall^2).
    \end{align*}
    In particular, the convergence above holds in the sense of distribution (i.e., in $\mathcal{D'}(\reall^2)$), and $-\Delta \Psi = f$ in $\Dc'(\reall^2)$. Since $\int_{\reall^2}U|\nabla\Ms f_n|^2 \to 0$, $\nabla \Ms f_n \to 0$ in $\mathcal{D'}(\reall^2)$ so that $\nabla \dr{\frac{f}{U}-\Psi} =  0$ in $\mathcal{D'}(\reall^2)$. In summary, we have
    \[
       \begin{cases}
           -\Delta \Psi = f, \\[3pt]
           \nabla \dr{\frac{f}{U}-\Psi} =  0,
       \end{cases}
       \text{ in } \Dc'(\reall^2).
    \]
    From standard lower semi-continuity estimates, we have
    \begin{align*}
        \int_{\reall^2} \frac{|\nabla f|^2}{U}\leq 1,\quad
        \int_{\reall^2}\frac{|\nabla\Psi|^2}{1+|\y|^2}+\frac{\Psi^2}{1+|\y|^4}\lesssim 1.
    \end{align*}
    Then, by elliptic regularity which is bootstrapped by the relation $\nabla \dr{\frac{f}{U}-\Psi} =  0$, we know that $(f,\Psi)\in C^\infty(\reall^2)$, and in particular $\Psi = \Psi_f$. By \textit{Lemma 2.1} in \cite{Raphael2014}, we obtain
    \[
        f\in \text{Span}\{\Lambda U, \pa_{y_1} U,\pa_{y_2}U\}.
    \]
    Since the orthogonality conditions pass to $f$, i.e., $\scl{f}{\Lambda U} = \scl{f}{\nabla U} = 0$, we deduce that $f \equiv 0$. On the other hand, since by assumption $\int f^2_n(1+|\y|^2)$ are uniformly bounded, by local strong convergence we have
    \[
        \lim_{n\to+\infty}\int_{\reall^2}f^2_n(1+|\y|) = \int_{\reall^2}f^2(1+|\y|).
    \]
    Then, by sub-coercivity \eqref{Lemma coercivity of M: subcoercivity}, 
    \begin{equation}
        \int_{\reall^2}f^2(1+|\y|) \gtrsim \lim_{n\to+\infty} \frac{1}{C} \dr{\int_{\reall^2}\frac{|\nabla f_n|^2}{U}-\int_{\reall^2}U|\nabla\Ms f_n|^2} = \frac{1}{C},
    \end{equation}
    which contradicts the fact that $f\equiv 0$. In summary, there exists some $c>0$ such that
    \begin{equation}\label{Lemma coercivity of M: coercivity for f}
        \int_{\reall^2}U|\nabla\Ms f|^2\geq c\int_{\reall^2}\frac{|\nabla f|^2}{U},
    \end{equation}
    for any $f$ with $\int\frac{|\nabla f|^2}{U}<+\infty$ satisfying the orthogonality conditions.
    Finally, for general $f$, define
    \[
        F := f - \frac{\scl{f}{\Lambda U}}{\scl{\Lambda U}{\Lambda U}}\Lambda U - \sum_{i=1,2}\frac{\scl{f}{\pa_{y_i} U}}{\scl{\pa_{y_i} U}{\pa_{y_i} U}}\pa_{y_i}U.   
    \]
    Applying \eqref{Lemma coercivity of M: coercivity for f} to $F$ completes the proof.
\end{proof}

The following lemma implies norm equivalence on some finite codimensional function space, which will motivates our design of the adapted inner product.   
\begin{lemma}\label{lemma: properties of the adapted inn product}
    The quadratic form $(f,g)\mapsto \int_{\reall^2}f\Ms^\zeta_\nu g$ is symmetric. Moreover, for any $f$ such that $\int_{\reall^2} f^2/U_\nu<+\infty$ and $\int_{\reall^2}|\nabla f|^2/U_\nu<+\infty$, we have the estimates
    \begin{equation}\label{Lemma_est of M: L^2 est}
        \int_{\reall^2}U_\nu |\Ms^\zeta_\nu f|^2\lesssim \int_{\reall^2}\frac{f^2}{U_\nu},
    \end{equation}
       \begin{equation}\label{Lemma_est of M: H^1 est}
        \int_{\reall^2}U_\nu |\nabla\Ms^\zeta_\nu f|^2\lesssim \int_{\reall^2}\frac{|\nabla f|^2}{U_\nu},
    \end{equation}
    and
    \begin{align}\label{Lemma_est of M: L^2 coercivity}
        \int_{\reall^2}f\Ms^\zeta_\nu f \geq \frac{1}{C}\int_{\reall^2}\frac{f^2}{U_\nu} - C\dr{\nu^4|\scl{f}{\Lambda U_\nu}|^2+\nu^6|\scl{f}{\nabla U_\nu}|^2+|\scl{f}{1}|^2},
    \end{align}
    for some universal $C>0$. In addition, if $\int_{\reall^2} f = 0$, we have the definiteness
    \begin{equation}\label{Lemma_properties of the adapted inner prod: definiteness}
        \int_{\reall^2}f\Ms^\zeta_\nu f\geq 0.
    \end{equation}
\end{lemma}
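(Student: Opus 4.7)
The symmetry of $(f,g)\mapsto\int f\,\Ms^\zeta_\nu g$ is an immediate consequence of expanding
\[
    \int f\,\Ms^\zeta_\nu g = \int \frac{fg}{U_\nu}-\int f\,\Psi_g
\]
and using $-\Delta\Psi_h=h$ together with integration by parts to rewrite the second term as $\int \nabla\Psi_f\cdot\nabla\Psi_g$, which is manifestly symmetric in $(f,g)$. The two boundedness estimates \eqref{Lemma_est of M: L^2 est}--\eqref{Lemma_est of M: H^1 est} follow from the decomposition $\Ms^\zeta_\nu f = f/U_\nu - \Psi_f$: for \eqref{Lemma_est of M: L^2 est} I would bound $U_\nu(\Ms^\zeta_\nu f)^2\lesssim f^2/U_\nu+U_\nu\Psi_f^2$ and control the Poisson tail using the weighted estimates from the appendix (in the spirit of \eqref{Lemma coercivity of M: pre-bounds}), exploiting the decay $U_\nu\lesssim \nu^2/(\nu+|\y|)^4$; for \eqref{Lemma_est of M: H^1 est} I would expand $\nabla(f/U_\nu)=\nabla f/U_\nu-f\nabla U_\nu/U_\nu^2$ and use $|\nabla U_\nu|/U_\nu\lesssim |\y|/(\nu^2+|\y|^2)$ together with a weighted Hardy inequality to absorb the correction.

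\textbf{$L^2$ coercivity.} For \eqref{Lemma_est of M: L^2 coercivity} I plan to follow the sub-coercivity plus compactness strategy of Lemma \ref{Lemma: coercivity of gradient M}. The identity
\[
    \int f\,\Ms^\zeta_\nu f = \int\frac{f^2}{U_\nu}-\int|\nabla\Psi_f|^2,
\]
combined with Poisson field estimates, should yield a sub-coercivity bound with remainder in a weaker weighted $L^2$ norm. Assuming \eqref{Lemma_est of M: L^2 coercivity} fails, I would extract a normalized sequence $\{f_n\}$ orthogonal to $1$, $\Lambda U_\nu$, $\nabla U_\nu$ (in the senses dictated by the correction weights) with $\int f_n\,\Ms^\zeta_\nu f_n\to 0$; its weak limit $f$ satisfies $f/U_\nu-\Psi_f\equiv\mathrm{const}$ in $\Dc'(\reall^2)$. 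The Emden identity $\log U_\nu-\Psi_{U_\nu}\equiv\mathrm{const}$ together with the classification of zero modes of $-\Delta-U_\nu$ shows that such $f$ lies in $\Span\{\Lambda U_\nu,\nabla U_\nu,U_\nu\}$; the three orthogonality conditions then force $f\equiv 0$, contradicting sub-coercivity through local strong $L^2$ convergence. The scaling factors $\nu^4$ and $\nu^6$ in \eqref{Lemma_est of M: L^2 coercivity} are the natural $L^2(U_\nu^{-1})$-normalizations of $\Lambda U_\nu$ and $\nabla U_\nu$.

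\textbf{Definiteness on the zero-mass subspace.} For \eqref{Lemma_properties of the adapted inner prod: definiteness} the cleanest route is to identify $\int h\,\Ms^\zeta_\nu h$ with twice the second variation of the free energy $\Fc$ at $U_\nu$. Expanding
\[
    \Fc(U_\nu+\eps h) = \Fc(U_\nu)+\eps\int h\,(\log U_\nu+1-\Psi_{U_\nu})+\frac{\eps^2}{2}\int h\,\Ms^\zeta_\nu h+\Oc(\eps^3),
\]
and using $\log U_\nu-\Psi_{U_\nu}\equiv\mathrm{const}$ to kill the first variation on $\{\int h=0\}$, I would invoke the logarithmic Hardy--Littlewood--Sobolev inequality (which asserts that $U_\nu$ minimizes $\Fc$ on the mass-$8\pi$ manifold) to conclude that the Hessian $\int h\,\Ms^\zeta_\nu h$ is non-negative on the tangent space $\{\int h=0\}$, yielding the claim.

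\textbf{Main obstacle.} I expect the delicate step to be the compactness argument in Step 2: one must identify the complete ``slow subspace'' of $\Ms^\zeta_\nu$ (including the constant/mass direction beyond the explicit zero modes $\nabla U_\nu$ and the ``almost-kernel'' $\Lambda U_\nu$), track the correct $\nu$-scaling of the associated $L^2(U_\nu^{-1})$-normalizations so that the three orthogonality conditions produce genuine projections, and verify that Rellich compactness combined with the Poisson field estimates is strong enough to pass to the limit in the weighted topology.
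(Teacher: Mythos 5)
Your route is genuinely different from the paper's, mainly because the paper barely proves this lemma at all: symmetry is obtained directly from the symmetric kernel $\scl{f}{\Psi_g}=-\frac{1}{2\pi}\iint\log|\x-\y|f(\x)g(\y)\,d\x d\y$ (Fubini), the bound \eqref{Lemma_est of M: H^1 est} from the pointwise Poisson-field estimate \eqref{appendix: pointwise est of Poisson field radial and nonradial, parabolic} with $\alpha=1$ plus Hardy, and \eqref{Lemma_est of M: L^2 est}, \eqref{Lemma_est of M: L^2 coercivity}, \eqref{Lemma_properties of the adapted inner prod: definiteness} are simply cited from \cite{Raphaël2014} (Lemma 2.1, Proposition 2.3). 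You instead sketch self-contained arguments: weighted Poisson bounds plus Hardy for the boundedness (fine, and close to the cited proofs), a sub-coercivity/compactness argument modeled on Lemma \ref{Lemma: coercivity of gradient M} for \eqref{Lemma_est of M: L^2 coercivity}, and a log-HLS/second-variation argument for \eqref{Lemma_properties of the adapted inner prod: definiteness}. The last is a nice conceptual alternative; to make it rigorous you should add that the quadratic form is continuous in the $\int f^2/U_\nu$ topology (by \eqref{Lemma_est of M: L^2 est} and Cauchy--Schwarz), so nonnegativity extends by density from perturbations $h$ with $h/U_\nu$ bounded, for which $U_\nu+\eps h\geq 0$ is admissible in the constrained minimization.

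There is, however, one step that fails as written. Both your symmetry argument and your starting identity $\int f\Ms^\zeta_\nu f=\int f^2/U_\nu-\int|\nabla\Psi_f|^2$ rely on rewriting $\int f\Psi_g=\int\nabla\Psi_f\cdot\nabla\Psi_g$. When $\scl{f}{1}\neq 0$ one has $\nabla\Psi_f(\y)\sim-\frac{\scl{f}{1}}{2\pi}\frac{\y}{|\y|^2}$ at infinity, so $\int|\nabla\Psi_f|^2$ diverges logarithmically while $\int f\Psi_f$ is finite: the identity is false precisely in the mass-carrying direction that \eqref{Lemma_est of M: L^2 coercivity} must cover (this is why the correction $|\scl{f}{1}|^2$ appears there). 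The fix is to prove symmetry via the kernel as the paper does, and to run your sub-coercivity/compactness argument only on the zero-mass subspace, applying it to $f-\frac{\scl{f}{1}}{\scl{\phi}{1}}\phi$ for a fixed smooth compactly supported profile $\phi$ and absorbing the error into $C|\scl{f}{1}|^2$ (plus the $\Lambda U_\nu$, $\nabla U_\nu$ projections). Two further points in the compactness step deserve care: the form is not weakly lower semicontinuous term by term (the $-\int f\Psi_f$ piece has the wrong sign), so you must use the local strong convergence $\Psi_{f_n}\to\Psi_f$ in $H^1_{\mathrm{loc}}$ together with the tail control from the appendix estimates, exactly as in the proof of Lemma \ref{Lemma: coercivity of gradient M}; and from vanishing of the form under the mass constraint you obtain $\Ms^\zeta_\nu f\equiv\mathrm{const}$ (Lagrange multiplier), whose zero-mass solutions are spanned by $\Lambda U_\nu$ and $\nabla U_\nu$ — the $U_\nu$ you include is excluded on that subspace since it carries nonzero mass, and the stated $\nu^4$, $\nu^6$ weights are indeed just the $L^2(U_\nu^{-1})$ normalizations, as you say.
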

\begin{proof}
    The symmetry of the quadratic form follows from the symmetry: \[
    \scl{f}{\Psi_g} = \scl{\Psi_f}{g} = -\frac{1}{2\pi}\int\int \log|\x-\y|f(\x)g(\y)\;d\x d\y. 
    \]
    By \eqref{appendix: pointwise est of Poisson field radial and nonradial, parabolic} (taking $\alpha=1$) and Hardy's inequality,
    \begin{equation*}
        \int U_\nu|\nabla\Psi_f|^2 \lesssim \int\frac{\nu^2(1+\log(\zeta/\nu))}{(\nu+\zeta)^6}\int f^2(\nu+\zeta)^2\lesssim \frac{1}{\nu^2}\int|\nabla f|^2(\nu+\zeta)^4\lesssim\int\frac{|\nabla f|^2}{U_\nu}.
    \end{equation*}
    This completes the proof of \eqref{Lemma_est of M: H^1 est}. As for the rest, see \textit{Lemma 2.1} and \textit{Proposition 2.3} in \cite{Raphael2014}.
\end{proof}
\subsubsection{Adapted Inner Product and Coercivity}
Define the weight functions
\begin{align}\label{def of weight functions}
   &\varrho_\nu(\zeta) := e^{-\frac{\beta \zeta^2}{2}},\quad \varrho(\gamma): =   e^{-\frac{\beta \nu^2\gamma^2}{2}},
\end{align}
Observe the following two decompositions of the linearized operator (written in soliton variables):
\begin{align}\label{two decomp of L in the soliton scale}
    \Ls f = \nabla\cdot(U\nabla\Ms f) - b\Lambda f = \frac{1}{\om}\nabla\cdot(\om\nabla f) + 2(U-b)f - \nabla U\cdot \nabla \Psi_f, 
\end{align}
where we denote $b:= \beta\nu^2$ and $\om:=\frac{1}{U}\vr$. In the near field, i.e., $\gamma\ll \frac{1}{\nu}$, according to the first decomposition in \eqref{two decomp of L in the soliton scale} the scaling term $b\Lambda f$ becomes negligible, and the coercivity of $\nabla\Ms$ (Lemma \ref{Lemma: coercivity of gradient M}) leads to the coercivity of $\Ls$ with some appropriate inner product in this domain. In the far field, i.e., $\gamma\gg \frac{1}{\nu}$, the terms $\nabla U\cdot\nabla \Psi_f$ and $Uf$ become negligible due to the fast decay of $\nabla U$ and $U$, according to the second decomposition in \eqref{two decomp of L in the soliton scale}. Therefore, $\Ls$ will be coercive with the weighted $L^2$-inner product (with $\om$ as the weight function) in this domain. In order to obtain coercivity in the whole domain, we define the mixed inner product which adapts to both coercivity structures:
\[
    \scl{f}{g}_*: = \int_{\reall^2} fg\bar\chi^2_\nu\om - \int_{\reall^2} \sqrt{\vr}\bar\chi_\nu f \Psi_{\sqrt{\vr}\bar\chi_\nu g} = \int_{\reall^2}\sqrt{\vr}\bar\chi_\nu f\Ms(\sqrt{\vr}\bar\chi_\nu g) ,
\]
or equivalently, in the parabolic variables
\begin{equation}\label{def of adapted inner scl, parabolic}
    \scl{\eps}{\vartheta}_{\nu,*} := \int_{\reall^2}\sqrt{\vr_\nu}\chi_\nu\eps \Ms^\zeta_\nu(\sqrt{\vr_\nu}\chi_\nu\vartheta),
\end{equation}
where we recall $\bar\chi_\nu(\gamma): = \chi(\gamma\nu/|\log\nu|)$ and $\chi_\nu(\zeta):=\chi(\zeta/|\log\nu|)$. One remark: thanks to Lemma \ref{lemma: properties of the adapted inn product}, we know that for any $f$ satisfying the orthogonality condition
\[
    \int_{\reall^2}f\Lambda U\chi(2\gamma\nu/c_0)\sqrt{\vr} =\int_{\reall^2}f\nabla U\chi(2\gamma\nu/c_0)\sqrt{\vr}=\int_{\reall^2}f\chi(\gamma\nu/\zeta^*)= 0,
\]
where $c_0>0$ is some fixed constants and $b$ is small enough, there holds the equivalence of norms:
\[
    \frac{1}{C}\int f^2\bar\chi^2_\nu\om \leq \scl{f}{f}_*\leq C\int f^2\bar\chi^2_\nu\om.
\]
In addition, we need to modify the linearized operator a little bit to adapt to the inner product:
\[
    \Lst f:= \Delta f - \nabla U\cdot \nabla \Psi_{\sqrt{\vr}\bar\chi_\nu f} - \nabla f\cdot\nabla \Psi_U + 2Uf - b\Lambda f,
\]
or equivalently,
\[
    \Lst^\zeta_\nu \eps:= \Delta\eps - \nabla U_\nu\cdot\nabla\tilde\Psi_\eps -\nabla\eps\cdot\nabla\Psi_{U_\nu} + 2U_\nu\eps-\beta\Lambda \eps,
\]
where $\tilde\Psi_\eps:=\Psi_{\chi_\nu\sqrt{\vr_\nu}\eps}$.
Applying the aforementioned ideas, we are able to prove the following proposition. 
\begin{proposition}[Coercivity estimate]\label{Main coercivity prop}
    There exist constants $\delta,\zeta_*,C,b_*>0$, such that for any $0<b<b_*$, and any $\eps$ satisfying $\int \frac{\eps^2+|\nabla\eps|^2}{U}<+\infty$ and 
    \[
        \int_{\reall^2}\eps\Lambda U\chi(\gamma\nu/\zeta_*) =\int_{\reall^2}\eps\nabla U\chi(\gamma\nu/\zeta_*)= 0,
    \]
    we have
    \begin{equation}\label{prop2: coercivity}
        \scl{\Lst \eps}{\eps}_*\leq -\delta\dr{\int_{\reall^2}\frac{|\nabla \eps|^2\bar\chi^2_\nu}{U}\vr+b\int_{\reall^2}\frac{ \eps^2\bar\chi^2_\nu}{U}\vr} +C\nu^{100}\|\eps\|^2_{L^\infty(\gamma\geq |\log\nu|/\nu)}
    \end{equation}
\end{proposition}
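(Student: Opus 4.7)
The plan is to set $F := \sqrt{\vr}\bar\chi_\nu\eps$ so that $\scl{\eps}{\eps}_* = \int F\Ms F$. By symmetry of the bilinear form $(u,v)\mapsto\int u\Ms v$ from Lemma~\ref{lemma: properties of the adapted inn product},
\[
\scl{\Lst\eps}{\eps}_* = \int \eps\bar\chi_\nu^2\om\cdot\Lst\eps - \int \sqrt{\vr}\bar\chi_\nu\Lst\eps\cdot \Psi_F.
\]
The strategy is to exploit the two decompositions of $\Ls$ in \eqref{two decomp of L in the soliton scale} in parallel. First, I would apply the \emph{second} decomposition (modified so that $\Psi_\eps$ is replaced by $\tilde\Psi_\eps$) to $\Lst$ and integrate by parts against $\eps\bar\chi_\nu^2\om$. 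This produces directly the two coercive quantities $-\int|\nabla\eps|^2\bar\chi_\nu^2\om$ and $-2b\int\eps^2\bar\chi_\nu^2\om$ appearing in \eqref{prop2: coercivity}, together with an indefinite leftover $+2\int U\eps^2\bar\chi_\nu^2\om = 2\int F^2$ arising from the $2Uf$ term and a nonlocal cross-term $-\int \nabla U\cdot\nabla\tilde\Psi_\eps\cdot\eps\bar\chi_\nu^2\om$.

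Second, for $-\int\sqrt{\vr}\bar\chi_\nu\Lst\eps\cdot\Psi_F$ I would use the \emph{first} decomposition together with conjugation by $\sqrt{\vr}\bar\chi_\nu$ (which commutes with $\Delta$ and $\Lambda$ up to $\Oc(b)$ remainders since $\nabla\sqrt{\vr}=-\tfrac{b}{2}\mathbf{x}\sqrt{\vr}$). After IBP and use of the Newtonian potential symmetry $\int F\Psi_G = \int G\Psi_F$, this piece reduces to $-\int U|\nabla\Ms F|^2$ plus bounded-coefficient $\int F\Ms F$-type terms and $\Oc(b)$ commutators. Because the orthogonality conditions on $\eps$ transfer to $F$ with exponentially small defect ($\sqrt{\vr}\bar\chi_\nu\equiv 1$ on the support of $\chi(\gamma\nu/\zeta_*)$ once $\zeta_*$ is chosen small), Lemma~\ref{Lemma: coercivity of gradient M} gives $\int U|\nabla\Ms F|^2\gtrsim \int|\nabla F|^2/U$, which, combined with the $L^2(1/U)$-coercivity of $\Ms$ from Lemma~\ref{lemma: properties of the adapted inn product}, absorbs the indefinite $+2\int F^2$ produced in the local part. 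The equivalence $\int|\nabla F|^2/U\approx\int|\nabla\eps|^2\bar\chi_\nu^2\om + \Oc(b^2)\int\eps^2\bar\chi_\nu^2\om$ then ties the bound back to the norm on the right-hand side of \eqref{prop2: coercivity}.

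The remaining ingredients are routine: the $\Oc(b)$ commutator terms and the nonlocal cross-term are controlled by Cauchy--Schwarz and absorbed by the coercive quantities for $b<b_*$ small. All boundary terms arising from $\nabla\bar\chi_\nu$ are supported in the annulus $\{|\log\nu|/\nu\leq\gamma\leq 2|\log\nu|/\nu\}$ where $\vr\leq e^{-\beta|\log\nu|^2/2}$ is super-polynomially small in $\nu$, so they contribute at most the $C\nu^{100}\|\eps\|^2_{L^\infty(\gamma\geq|\log\nu|/\nu)}$ error. The \emph{main obstacle} is the algebraic bookkeeping in the second step: one must identify explicitly the cancellation between the indefinite $+2\int F^2$ arising from the local $L^2(\om\bar\chi_\nu^2)$ computation and the $\int F\Ms F$-type term produced by the nonlocal part of the inner product. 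The modification $\tilde\Psi_\eps = \Psi_{\chi_\nu\sqrt{\vr_\nu}\eps}$ in the definition of $\Lst$ (replacing the ``natural'' $\Psi_\eps$) is engineered precisely so that this reduction closes; tracing through the cancellation and keeping the conjugation commutators uniformly small in $\nu$ is the technical heart of the argument.
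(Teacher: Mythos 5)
Your route is genuinely different from the paper's (you split the \emph{inner product} into its local and nonlocal pieces and apply one decomposition of \eqref{two decomp of L in the soliton scale} to each, with no spatial splitting of $\eps$), but as proposed it does not close, and the obstacle you flag is not mere bookkeeping. Write $F:=\sqrt{\vr}\bar\chi_\nu\eps$. Dropping cutoffs and weights for a moment, the exact identity is
\[
\int \Ls_0\eps\,\frac{\eps}{U}\;-\;\int \Ls_0\eps\,\Psi_\eps\;=\;-\int\frac{|\nabla\eps|^2}{U}+2\int\eps^2-\int\frac{\eps}{U}\nabla U\cdot\nabla\Psi_\eps\;+\;\int U\nabla\Ms\eps\cdot\nabla\Psi_\eps\;=\;-\int U|\nabla\Ms\eps|^2,
\]
so the nonlocal piece \emph{by itself} is the indefinite cross term $\int U\nabla\Ms F\cdot\nabla\Psi_F$, not $-\int U|\nabla\Ms F|^2$; the latter only appears after recombining it with the local piece, which then consumes the very terms $-\int|\nabla\eps|^2\bar\chi^2_\nu\om$ and $-2b\int\eps^2\bar\chi^2_\nu\om$ you propose to keep. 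You therefore cannot simultaneously (i) keep the local damping $-2b\int\eps^2\bar\chi^2_\nu\om$ from the $2(U-b)$ term and (ii) claim the nonlocal piece produces $-\int U|\nabla\Ms F|^2$ to be fed into Lemma \ref{Lemma: coercivity of gradient M} --- that double counts. And if you recombine globally, the $b$-damping in the conclusion is lost where it is actually needed: on $\{1\ll b\gamma^2\lesssim\beta|\log\nu|^2\}$ one has $\int|\nabla F|^2/U\not\gtrsim b\int F^2/U$ (a bump $F$ supported at $\gamma\sim A/\sqrt b$ gives $\int|\nabla F|^2/U\sim A^4$ versus $b\int F^2/U\sim bA^6$), so there the term $b\int\eps^2\bar\chi^2_\nu\vr/U$ can only come from $2(U-b)\leq-b$, exactly as in the paper's outer-region argument. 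Relatedly, your claimed equivalence $\int|\nabla F|^2/U\approx\int|\nabla\eps|^2\bar\chi^2_\nu\om+\Oc(b^2)\int\eps^2\bar\chi^2_\nu\om$ is false: since $\nabla\sqrt{\vr}=-\tfrac b2\y\sqrt{\vr}$, the error is $\Oc(b^2\gamma^2)\int\eps^2\bar\chi^2_\nu\om$, and $b\gamma^2$ reaches $\beta|\log\nu|^2$ on $\mathrm{supp}\,\bar\chi_\nu$; for the same reason the conjugation commutators are not uniformly $\Oc(b)$, and the orthogonality defect of $F$ is $\Oc(\zeta_*^2)$, not exponentially small.

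The deeper missing ingredient is a small parameter to absorb the indefinite local term. The subcoercivity behind Lemma \ref{Lemma: coercivity of gradient M} (see \eqref{Lemma coercivity of M: subcoercivity}) leaves a term $\int(1+\gamma)F^2$ with a \emph{universal} constant, while the coercivity constant $c$ of Lemma \ref{Lemma: coercivity of gradient M} is not quantitatively matched to it; Hardy or \eqref{appendix: one Hardy-Poincare type ineq in soliton} give $\int F^2\lesssim\int|\nabla F|^2/U$ only with an $\Oc(1)$ constant, never with the smallness needed to absorb $+2\int F^2$ into $-c\int|\nabla F|^2/U$, and Lemma \ref{lemma: properties of the adapted inn product}'s $L^2$-coercivity \eqref{Lemma_est of M: L^2 coercivity} needs the mass projection $\scl{F}{1}$, which is not among your hypotheses. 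The paper manufactures the required smallness spatially: it splits $\eps=\bar\chi_0\eps+(1-\bar\chi_0)\eps$ at an intermediate scale $\zeta_0/\nu$, completes the square (first decomposition plus Lemma \ref{Lemma: coercivity of gradient M}) only on the inner piece, where the localized Poincar\'e \eqref{Prop_main coer: Poincare for eps_in} carries the small factor $\zeta_0^\alpha$ and $1-\sqrt{\vr}=\Oc(b\gamma^2)=\Oc(\zeta_0^2)$, uses the sign of $2(U-b)\leq-b$ (second decomposition) only on the outer piece where $U$ is negligible, and removes the residual interaction annulus by the dyadic pigeonhole choice of $\zeta_0$. Your scheme has no analogue of $\zeta_0$; $b$ alone provides no smallness near the origin, where $2U\eps^2$ is $\Oc(1)$ per unit mass. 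To salvage your approach you would essentially have to reintroduce this spatial localization, at which point you recover the paper's proof.
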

\begin{proof}
    Define $\bar \chi_0(\gamma):= \chi(\gamma\nu/\zeta_0)$, and decompose
    \[
        \eps = \bar \chi_0 \eps+(1-\bar \chi_0)\eps:=\eps_1 + \eps_2,
    \]
    where $0<\zeta_0=\zeta_0(\eps)\ll 1$ is a parameter to be determined. Though the specific value of $\zeta_0$ depends on $\eps$, we will see that there exists a universal constant $\zeta_*>0$ such that $\zeta_*<\zeta_0$ for any $\eps$.  
    Thus,
    \[
        \scl{\Lst \eps}{\eps}_* = \scl{\Lst \eps_1}{\eps_1}+\scl{\Lst \eps_2}{\eps_2}+\scl{\Lst\eps_1}{\eps_2}+\scl{\Lst \eps_2}{\eps_1}.
    \]
    For brevity, in the following we denote
    \[
        \|f\|_{L^2_\om} := \dr{\int\frac{f^2\vr}{U}}^\frac{1}{2}.
    \]
    \noindent\underline{Coercivity of $\scl{\Lst \eps_1}{\eps_1}_*$:} Since $\bar\chi_\nu\eps_1 = \eps_1$ and
    \[
        \sqrt{\vr}\bar\chi_\nu\Lst \eps_1 = \Ls(\sqrt{\vr}\eps_1) + (1-\sqrt{\vr}\bar\chi_\nu)\nabla\Psi_{\sqrt{\vr}\eps_1}\cdot\nabla U+[\bar\chi_\nu\sqrt{\vr},\,\Ls+\nabla U\cdot\nabla\Psi_{\cdot}]\eps_1,
    \]
    by integration by parts, we have
    \begin{align*}
        \scl{\Lst \eps_1}{\eps_1}_* = -\int U|\nabla\Ms(\eps_1\sqrt{\vr})|^2+b\int\sqrt{\vr}\eps_1\y\cdot\nabla\Ms(\sqrt{\vr}\eps_1)+ \int (1-\sqrt{\vr}\bar\chi_\nu)\nabla\Psi_{\sqrt{\vr}\eps_1}\cdot\nabla U\Ms(\sqrt{\vr}\eps_1) \\
        + \int [\bar\chi_\nu\sqrt{\vr},\,\Ls+\nabla U\cdot\nabla\Psi_{\cdot}]\eps_1\Ms(\sqrt{\vr}\eps_1).
    \end{align*}
    By Lemma \ref{Lemma: coercivity of gradient M}, we have
    \begin{align*}
        -\int U|\nabla\Ms(\eps_1\sqrt{\vr})|^2 &\leq -c\int \frac{|\nabla(\eps_1\sqrt{\vr})|^2}{U} +C\dr{|\scl{\eps_1\sqrt{\vr}}{\Lambda U}|^2+|\scl{\eps_1\sqrt{\vr}}{\nabla U}|^2}.
    \end{align*}
     By the local orthogonality conditions
    \[
          \int_{\reall^2}\eps\Lambda U\chi(2\gamma\nu/\zeta_*) =\int_{\reall^2}\eps\nabla U\chi(2\gamma\nu/\zeta_*)=0
    \]
    where $\zeta_* < \zeta_0$, we obtain
    \begin{align*}
        |\scl{\eps_1\sqrt{\vr}}{\Lambda U}| &= \da{\int \eps_1(\sqrt{\vr}-\chi(2\gamma\nu/\zeta_*))\Lambda U} \\
         &\leq \dr{\int \frac{\eps_1^2\vr}{U}}^{\frac{1}{2}}\dr{\int (\Lambda U)^2U\rho^{-1}(\sqrt{\vr}-\chi(2\gamma\nu/\zeta_*))^2}^{\frac{1}{2}}\\
        &\lesssim \nu^2 \dr{\int \frac{\eps_1^2\vr}{U}}^{\frac{1}{2}}
        \lesssim \nu \dr{\int \frac{|\nabla\eps_1|^2\vr}{U}}^{\frac{1}{2}}
    \end{align*}
    where we use the Poincar\'e inequality (taking $\alpha=2$ in this case):
    \begin{equation}\label{Prop_main coer: Poincare for eps_in}
        \int \eps_1^2\vr(1+\gamma)^{2+\alpha} \leq C\frac{\zeta^\alpha_0}{\nu^\alpha}\int \eps^2_1 (1+\gamma)^2\leq C^2\int |\nabla\eps_1|^2(1+\gamma)^4\leq C^3\frac{\zeta_0^\alpha}{\nu^{\alpha}} \int \frac{|\nabla \eps_1|^2\vr}{U},\quad \forall \; \alpha\geq 0,
    \end{equation}
    for some universal $C>0$ when $\zeta_0$ is sufficiently small.
    Similar estimate holds for $|\scl{\eps_1\sqrt{\vr}}{\nabla U}|$. Thus, when $b$ is sufficiently small (recall that $b:= \beta\nu^2$), there exists a universal $\delta>0$ such that
    \begin{equation}\label{Prop_main coer: inner coer part 1}
        -\int U|\nabla \Ms(\eps_1\sqrt{\vr})|^2 \leq -\delta \int \frac{|\nabla \eps_1|^2\vr}{U}.
    \end{equation}
    Note, by \eqref{Prop_main coer: Poincare for eps_in}, that
    \[
        \int \frac{|\eps_1\nabla(\sqrt{\vr})|^2}{U} = \int \frac{b^2|\y|^2\eps_1^2\vr}{4U}\lesssim \zeta_0^4 \int \frac{|\nabla\eps_1|^2\rho}{U}.
    \]
    Therefore, when $\zeta_0$ is small enough,
    \[
        \frac{1}{2}\int \frac{|\nabla \eps_1|^2\vr}{U}\leq \int \frac{|\nabla (\sqrt{\vr}\eps_1)|^2}{U}\leq2\int \frac{|\nabla \eps_1|^2\vr}{U}.
    \]
    In addition, by Lemma \ref{lemma: properties of the adapted inn product} and Poincar\'e inequality,
    \[
        \da{b\int\sqrt{\vr}\eps_1\y\cdot\nabla\Ms(\sqrt{\vr}\eps_1)}\lesssim b\dr{\int\frac{\vr\eps_1^2|\y|^2}{U}}^{\frac{1}{2}}\dr{\int\frac{|\nabla(\sqrt{\vr}\eps_1)|^2}{U}}^{\frac{1}{2}}\lesssim \zeta_0^2\int\frac{|\nabla\eps_1|^2\vr}{U}.
    \]
    Thus, when $b$ and $\zeta_0$ are sufficiently small, we have
    \[
        -\int U|\nabla\Ms(\eps_1\sqrt{\vr})|^2+b\int\sqrt{\vr}\eps_1\y\cdot\nabla\Ms(\sqrt{\vr}\eps_1)\leq -\delta\int\frac{|\nabla\eps_1|^2\vr}{U}
    \]
    for some universal $\delta>0$.
    Next, by \eqref{appendix: pointwise est of Poisson field by weighted L^2 norm} and \eqref{appendix: pointwise est of Poisson field by weighted L^2 norm, when int u = 0}, we know that
    \begin{align*}
        |\Psi_{\eps_1\sqrt{\vr}}(\y)|^2 \lesssim \log^2(4+|\y|)\int \frac{\eps_1^2\vr}{U},\quad
        |\nabla\Psi_{\eps_1\sqrt{\vr}}(\y)|^2 \lesssim \int \frac{|\nabla(\sqrt{\vr}\eps_1)|^2}{U}.
    \end{align*}
    Therefore, by the above pointwise estimates of the Poisson field and \eqref{Lemma_est of M: L^2 est},
    \begin{align*}
        &\da{\int (1-\sqrt{\vr}\bar\chi_\nu)\nabla\Psi_{\sqrt{\vr}\eps_1}\cdot\nabla U\Ms(\sqrt{\vr}\eps_1)}\\
        \leq &\dr{\int \frac{|\nabla(\sqrt{\vr}\eps_1)|^2}{U}}^{\frac{1}{2}}\dr{\int (1-\sqrt{\vr}\bar\chi_\nu)^2\frac{|\nabla U|^2}{U}}^{\frac{1}{2}}\dr{\int U|\nabla \Ms(\sqrt{\vr}\eps_1)|^2}^{\frac{1}{2}}\\
        \leq & \dr{\int \frac{|\nabla(\sqrt{\vr}\eps_1)|^2}{U}}^{\frac{1}{2}}\dr{\int (1-\sqrt{\vr}\bar\chi_\nu)^2\frac{|\nabla U|^2}{U}}^{\frac{1}{2}}\dr{\int \frac{\eps_1^2\rho}{U}}^{\frac{1}{2}}.
    \end{align*}
    Note that when $\sqrt{b}\gamma\ll 1$, $1-\sqrt{\vr} = \frac{b\gamma^2}{4}+\Oc(b^2\gamma^4)$. Then, we have the estimates
    \begin{align*}
        \da{\int (1-\sqrt{\vr}\bar\chi_\nu)^2\frac{|\nabla U|^2}{U}}&\lesssim \da{\int_{\{|\y|<b^{-\frac{1}{3}}\}} \frac{b^2\gamma^4}{(1+\gamma)^6}}+\da{\int_{\{|\y|\geq b^{-\frac{1}{3}}\}} \frac{1}{(1+\gamma)^6}}
        \lesssim b^{\frac{5}{4}}.
    \end{align*}
    Combining with the previous estimate and \eqref{Prop_main coer: Poincare for eps_in}, we obtain
    \begin{equation}\label{Prop_main coer: inner coer part 2}
        \da{\int (1-\sqrt{\vr}\bar\chi_\nu)\nabla\Psi_{\sqrt{\vr}\eps_1}\cdot\nabla U\Ms(\sqrt{\vr}\eps_1)}\lesssim b^{\frac{1}{8}}\int \frac{|\nabla \eps_1|^2\vr}{U}.
    \end{equation}
    For the remaining terms, since $\nabla(\bar\chi_\nu) \eps_1 = 0$ (because of their disjoint supports),
    \[
        [\bar\chi_\nu\sqrt{\vr},\,\Ls+\nabla U\cdot\nabla\Psi_{\cdot}]\eps_1 = -2\nabla(\sqrt{\vr})\cdot\nabla\eps_1-\Delta(\sqrt{\vr})\eps_1+\nabla(\sqrt{\vr})\cdot\nabla\Psi_U\eps_1+b\y\cdot\nabla(\sqrt{\vr})\eps_1.
    \]
    Therefore, by \eqref{Prop_main coer: Poincare for eps_in} we have
    \begin{equation*}
        \int \frac{|[\bar\chi_\nu\sqrt{\vr},\,\Ls+\nabla U\cdot\nabla\Psi_{\cdot}]\eps_1|^2}{U} \lesssim \int \frac{b^2\gamma^2|\nabla\eps_1|^2\vr+\dr{b^2+b^4\gamma^4+\frac{b^2\gamma^2}{(1+\gamma)^2}}\eps_1^2\vr}{U}\lesssim \zeta_0^2b\int \frac{|\nabla\eps_1|^2\vr}{U}.
    \end{equation*}
    Then, by \eqref{Lemma_est of M: L^2 est}, \eqref{Prop_main coer: Poincare for eps_in} and Cauchy's inequality, we obtain
    \begin{equation}\label{Prop_main coer: inner coer part 3}
        \da{\int [\bar\chi_\nu\sqrt{\vr},\,\Ls+\nabla U\cdot\nabla\Psi_{\cdot}]\eps_1\Ms(\sqrt{\vr}\eps_1)} \lesssim \zeta_0^2 \int \frac{|\nabla\eps_1|^2\vr}{U}.
    \end{equation}
    Finally, combining \eqref{Prop_main coer: inner coer part 1}, \eqref{Prop_main coer: inner coer part 2}, \eqref{Prop_main coer: inner coer part 3} and Poincar\'e inequality \eqref{Prop_main coer: Poincare for eps_in}, it holds, when $\zeta_0$ and $b$ are sufficiently small, that
    \begin{equation}\label{Prop_main coer: inner coer}
          \scl{\Lst \eps_1}{\eps_1}_* \leq -\delta\dr{\int \frac{|\nabla\eps_1|^2\vr}{U}+b\int\frac{\eps^2_1\vr}{U}}.
    \end{equation}
    for some universal $\delta>0$. 
    \\[4pt]
    \noindent\underline{Coercivity of $\scl{\Lst \eps_2}{\eps_2}_*$:}
    First, integrating by parts, we have
    \begin{align*}
       \scl{\Lst \eps_2}{\eps_2}_* = -\int |\nabla\eps_2|^2\bar\chi_\nu^2\om + \int \eps_2^2\nabla\cdot(\om\bar\chi_\nu\nabla\bar\chi_\nu)+\int 2(U-b) \eps^2_2 \bar\chi^2_\nu \om \\
       - \int\nabla U\cdot\nabla\Psi_{\bar\chi_\nu\sqrt{\vr}\eps_2}\eps_2\bar\chi^2_\nu\om-\int\sqrt{\vr}\bar\chi_\nu\Lst\eps_2\Psi_{\sqrt{\vr}\bar\chi_\nu\eps_2}.
    \end{align*}
    As for the second term above, observe that $\nabla\bar\chi_\nu$ is supported in $\{|\log\nu|/\nu\leq\gamma\leq 2|\log\nu|/\nu\}$ and
    \[
        e^{-\frac{b\gamma^2}{2}}\leq e^{-\frac{\beta|\log\nu|^2}{2}}\lesssim \nu^N,\quad \forall\; \frac{|\log\nu|}{\nu}\leq\gamma\leq \frac{2|\log\nu|}{\nu}
    \]
    for any fixed $N\gg 1$ when $\nu$ is sufficiently small. Thus, we have the estimate
    \begin{equation}\label{Prop_main coer: outer coer part 1, (a)}
        \da{\int\eps_2^2\nabla\cdot(\om\bar\chi_\nu\nabla\bar\chi_\nu)}\leq \nu^{100}\|\eps\|^2_{L^\infty(\gamma\geq |\log\nu|/\nu)}.
    \end{equation}
    Also, since $2(U-b)\leq -b$ when $\gamma > \zeta_0/\nu$ and $\nu$ is small enough, we have
    \begin{equation}\label{Prop_main coer: outer coer part 1, (b)}
        \int 2(U-b) \eps^2_2 \bar\chi^2_\nu \om\leq -b\int \eps_2^2\bar\chi^2_\nu\om.
    \end{equation}
    For the fourth term, we use the $2$D Hardy–Littlewood–Sobolev (HLS) inequality \eqref{appendix: HLS ineq}:
    \[
        \|\nabla\Psi_{\sqrt{\vr}\bar\chi_\nu\eps_2}\|_{L^4}\lesssim  \dr{\int \frac{\eps_2^2\bar\chi^2_\nu\vr}{U}}^{\frac{1}{2}}=\|\eps_2\bar\chi_\nu\|_{L^2_\om},
    \]
    and estimate by Cauchy's inequality that
    \begin{equation}\label{Prop_main coer: outer coer part 2}
        \da{\int\nabla U\cdot\nabla\Psi_{\bar\chi_\nu\sqrt{\vr}\eps_2}\eps_2\bar\chi^2_\nu\om}\leq \dr{\int \bar\chi^2_\nu\eps_2^2\om}^{\frac{1}{2}}\dr{\int_{\{\gamma\geq \zeta_0/\nu\}}|\nabla U|^4\om^2}^{\frac{1}{4}}\|\nabla\Psi_{\sqrt{\vr}\bar\chi_\nu\eps_2}\|_{L^4}\lesssim b^{\frac{5}{4}}\int \eps_2^2\bar\chi^2_\nu\om.
    \end{equation}
    For the remaining terms, we divide them into three groups:
    \begin{align*}
        \int \sqrt{\vr}\bar\chi_\nu\Lst\eps_2\Psi_{\sqrt{\vr}\bar\chi_\nu\eps_2} = \int \sqrt{\vr}\bar\chi_\nu\Psi_{\sqrt{\vr}\bar\chi_\nu\eps_2}\Delta\eps_2+\int \sqrt{\vr}\bar\chi_\nu\Psi_{\sqrt{\vr}\bar\chi_\nu\eps_2}(-\nabla\eps_2\cdot\nabla\Psi_U-b\nabla\cdot(\y\eps_2)+2\eps_2U)\\
        -\int \sqrt{\vr}\bar\chi_\nu\Psi_{\sqrt{\vr}\bar\chi_\nu\eps_2}\nabla\Psi_{\sqrt{\vr}\bar\chi_\nu\eps_2}\cdot\nabla U.
    \end{align*}
    As before, by \eqref{appendix: pointwise est of Poisson field by weighted L^2 norm}, we have the pointwise estimate of the Poisson field
    \[
        |\Psi_{\sqrt{\vr}\bar\chi_\nu\eps_2}(\y)|^2\lesssim \log(4+|\y|)\int \eps^2_2\bar\chi^2_\nu\om.
    \]
    Integrating by parts, we have
    \[
       \int \sqrt{\vr}\bar\chi_\nu\Psi_{\sqrt{\vr}\bar\chi_\nu\eps_2}\Delta\eps_2 = \int \Delta(\sqrt{\vr}\bar\chi_\nu\Psi_{\sqrt{\vr}\bar\chi_\nu\eps_2})\eps_2.
    \]
    By chain rule, when the derivative hits $\bar\chi_\nu$, we use the $L^\infty$-control of $\eps_2$ as before, and when the derivative hits elsewhere we use either the pointwise estimate of $\Psi_{\sqrt{\vr}\bar\chi_\nu\eps_2}$ or the $L^4$-estimate of $\nabla\Psi_{\sqrt{\vr}\bar\chi_\nu\eps_2}$:
    \begin{align*}
        &\da{\int(2\nabla\bar\chi_\nu\cdot\nabla(\sqrt{\vr}\Psi_{\sqrt{\vr}\bar\chi_\nu\eps_2})+\Delta\bar\chi_\nu\sqrt{\vr}\Psi_{\sqrt{\vr}\bar\chi_\nu\eps_2})\eps_2}\lesssim \nu^{100}\|\eps_2\bar\chi_\nu\|_{L^2_\om}\cdot\|\eps_2\|_{L^\infty(\gamma\geq |\log\nu|/\nu)},\\
        &\da{\int \nabla\sqrt{\vr}\cdot\nabla\Psi_{\sqrt{\vr}\bar\chi_\nu\eps_2}\eps_2}\lesssim b\|\gamma\sqrt{U}\|_{L^4(\gamma\geq \zeta_0/\nu)}\|\nabla\Psi_{\sqrt{\vr}\bar\chi_\nu\eps_2}\|_{L^4}\|\eps_2\bar\chi_\nu\|_{L^2_\om}\lesssim b^{\frac{5}{4}}\|\eps_2\bar\chi_\nu\|^2_{L^2_\om},\\
        &\da{\int \Delta(\sqrt{\vr})\Psi_{\sqrt{\vr}\bar\chi_\nu\eps_2}\eps_2}\lesssim \|(b+b^2\gamma^2)\log(4+\gamma)\sqrt{U}\|_{L^2(\gamma\geq \zeta_0/\nu)}\|\eps_2\bar\chi_\nu\|^2_{L^2_\om}\lesssim b^\frac{5}{4}\|\eps_2\bar\chi_\nu\|^2_{L^2_\om},\\
        &\da{\int \bar\chi_\nu^2\vr\eps_2^2} \lesssim \frac{b^2}{\zeta_0^4}\|\eps_2\bar\chi_\nu\|^2_{L^2_\om}.
    \end{align*}
    In summary, we obtain
    \begin{equation}\label{Prop_main coer: outer coer part 3, (a)}
        \da{\int \Delta(\sqrt{\vr}\bar\chi_\nu\Psi_{\sqrt{\vr}\bar\chi_\nu\eps_2})\eps_2}\lesssim \nu^{100}\|\eps_2\bar\chi_\nu\|_{L^2_\om}\cdot\|\eps_2\|_{L^\infty(\gamma\geq |\log\nu|/\nu)}+b^{\frac{5}{4}}\|\eps_2\bar\chi_\nu\|^2_{L^2_\om}.
    \end{equation}
    The estimate of the second group is more direct:
    \begin{align}\label{Prop_main coer: outer coer part 3, (b)}
        &\quad\da{\int \sqrt{\vr}\bar\chi_\nu\Psi_{\sqrt{\vr}\bar\chi_\nu\eps_2}(-\nabla\eps_2\cdot\nabla\Psi_U-b\nabla\cdot(\y\eps_2)+2\eps_2U)}\nonumber\\
        &\lesssim \|\eps\|^2_{L^2_\om}\dr{\int_{\{\zeta_0/\nu\leq\gamma\leq |\log\nu|/\nu\}} (U^2+b^2)U\log^2(4+\gamma)}^{\frac{1}{2}}\nonumber\\
        &\quad+\|\eps_2\bar\chi_\nu\|_{L^2_\om}\|\bar\chi_\nu\nabla\eps_2\|_{L^2_\om}\dr{\int_{\{\zeta_0/\nu\leq\gamma\leq |\log\nu|/\nu\}}(b^2\gamma^2+|\nabla\Psi_U|^2)U\log^2(4+\gamma)}^{\frac{1}{2}}\nonumber\\
        &\lesssim b^{\frac{5}{4}}\|\eps_2\bar\chi_\nu\|^2_{L^2_\om}+b^{\frac{1}{4}}\|\bar\chi_\nu\nabla\eps_2\|^2_{L^2_\om}.
    \end{align}
    For the last term, Cauchy's inequality yields
    \begin{align}\label{Prop_main coer: outer coer part 3, (c)}
        \da{\int \sqrt{\vr}\bar\chi_\nu\Psi_{\sqrt{\vr}\bar\chi_\nu\eps_2}\nabla\Psi_{\sqrt{\vr}\bar\chi_\nu\eps_2}\cdot\nabla U}&\lesssim \|\eps_2\bar\chi_\nu\|_{L^2_\om}\|\Psi_{\sqrt{\vr}\bar\chi_\nu\eps_2}\|_{L^4}\|\log(4+\gamma)\nabla U\|_{L^\frac{4}{3}(\gamma\geq \zeta_0/\nu)}\nonumber\\
        &\lesssim b^{\frac{3}{2}}\|\eps_2\bar\chi_\nu\|^2_{L^2_\om}.
    \end{align}
    Combining \eqref{Prop_main coer: outer coer part 3, (a)}, \eqref{Prop_main coer: outer coer part 3, (b)} and \eqref{Prop_main coer: outer coer part 3, (c)}, we obtain
    \begin{equation}\label{Prop_main coer: outer coer part 3}
        \da{\int \sqrt{\vr}\bar\chi_\nu\Lst\eps_2\Psi_{\sqrt{\vr}\bar\chi_\nu\eps_2}}\lesssim b^{\frac{5}{4}}\|\eps_2\bar\chi_\nu\|^2_{L^2_\om} + b^{\frac{1}{4}}\|\bar\chi_\nu\nabla\eps_2\|^2_{L^2_\om}+\nu^{100}\|\eps_2\|^2_{L^\infty(\gamma\geq |\log\nu|/\nu)}.
    \end{equation}
    Finally, combining \eqref{Prop_main coer: outer coer part 1, (a)}, \eqref{Prop_main coer: outer coer part 1, (b)}, \eqref{Prop_main coer: outer coer part 2} and \eqref{Prop_main coer: outer coer part 3}, we have
    \begin{equation}\label{Prop_main coer: outer coer}
        \scl{\Lst\eps_2}{\eps_2}_*\leq -\delta\dr{\|\bar\chi_\nu\nabla\eps_2\|^2_{L^2_\om}+b\|\bar\chi_\nu\eps_2\|^2_{L^2_\om}}+C\nu^{100}\|\eps_2\|^2_{L^\infty(\gamma\geq |\log\nu|/\nu)},
    \end{equation}
    for some universal $\delta,C>0$, when $\zeta_0$ is sufficiently small.
                                           \\[4pt]
    \noindent\underline{Estimates of $\scl{\Lst \eps_1}{\eps_2}_*+\scl{\Lst \eps_2}{\eps_1}_*$:} The methods to estimate these interaction terms are the same as the previous ones. We remark that the interaction happens only in a relatively small region as $\zeta_0$ is meant to be small. We will later exploit this smallness to control the interaction terms. Through integration by parts,
    \begin{align*}
        &\scl{\Lst \eps_1}{\eps_2}_*+\scl{\Lst \eps_2}{\eps_1}_* = -2\int \om\nabla\eps_1\cdot\nabla\eps_2+4\int (U-b)\eps_1\eps_2\omega \\
        &\quad-\int \nabla U\cdot\nabla\Psi_{\sqrt{\vr}\eps_1}\eps_2\om -\int \sqrt{\vr}\bar\chi_\nu\Lst\eps_1 \Psi_{\sqrt{\vr}\bar\chi_\nu\eps_2}-\int \nabla U\cdot\nabla\Psi_{\sqrt{\vr}\eps_2}\eps_1\om -\int \sqrt{\vr}\bar\chi_\nu\Lst\eps_2 \Psi_{\sqrt{\vr}\bar\chi_\nu\eps_1}.
    \end{align*}
    Note that the terms on the second line above are all lower-order terms, as we estimated in the previous steps. For the second term on the right-hand side,
    \[
        4\int (U-b)\eps_1\eps_2 \leq -2b\int \bar \chi_0(1-\bar \chi_0)\eps^2 \leq 0,
    \]
    which has the desired sign. Thus, it remains to estimate the first term. Since $\nabla\eps_1 = \bar \chi_0\nabla\eps + \frac{\nu}{\zeta_0}(\nabla\chi)(\gamma\nu/\zeta_0)\eps$ and $\nabla\eps_2 = (1-\bar \chi_0)\nabla\eps - \frac{\nu}{\zeta_0}(\nabla\chi)(\gamma\nu/\zeta_0)\eps$, we have
    \begin{align*}
        \da{\int \om\nabla\eps_1\cdot\nabla\eps_2} = \da{\int_{\{\zeta_0/\nu\leq\gamma\leq 2\zeta_0/\nu\}} \om\nabla\eps_1\cdot\nabla\eps_2}
        \lesssim \int_{\{\zeta_0/\nu\leq\gamma\leq 2\zeta_0/\nu\}}|\nabla\eps|^2\om+\int_{\{\zeta_0/\nu\leq\gamma\leq 2\zeta_0/\nu\}}\frac{\eps^2}{(1+\gamma)^2}\om.
    \end{align*}
    In summary, we have the estimate
    \begin{equation}\label{Prop_main coer: intersection est}
        \da{\scl{\Lst \eps_1}{\eps_2}_*+\scl{\Lst \eps_2}{\eps_1}_*}\lesssim\int_{\{\zeta_0/\nu\leq\gamma\leq 2\zeta_0/\nu\}}|\nabla\eps|^2\om+\int_{\{\zeta_0/\nu\leq\gamma\leq 2\zeta_0/\nu\}}\frac{\eps^2}{(1+\gamma)^2}\om+\text{l.o.t.},
    \end{equation}
    where the lower order terms (l.o.t.)  can be absorbed into other terms when $b$ is small enough.
    \\[4pt]
    \underline{Global coercivity:} 
    Now we are ready to derive the full coercivity based on the established estimates. First, by \eqref{Prop_main coer: inner coer}, \eqref{Prop_main coer: outer coer} and \eqref{Prop_main coer: intersection est}, when $b$ is sufficiently small, there exist universal constants $\delta,c,C>0$ such that
    \begin{align}\label{Prop_main coer: pre global coer}
    \begin{split}
        \scl{\Lst \eps}{\eps}_* \leq -\delta \int \dr{b\eps^2+|\nabla\eps|^2}\bar\chi^2_\nu\om + C\int_{\{\zeta_0/\nu\leq\gamma\leq 2\zeta_0/\nu\}}\dr{|\nabla\eps^2|+\frac{\eps^2}{(1+\gamma)^2}}\om \\
        + C\nu^{100}\|\eps_2\|^2_{L^\infty(\gamma\geq |\log\nu|/\nu)}.
    \end{split}
    \end{align}        
    By the Hardy-Poincar\'e type inequality \eqref{appendix: one Hardy-Poincare type ineq in soliton}, there exists $C'>0$ such that
    \begin{equation*}
        \int_{\{\gamma\leq \frac{1}{\nu}\}}\frac{\eps^2}{(1+\gamma)^2}\om\leq\int \frac{\eps^2\chi^2(\gamma\nu)}{(1+\gamma)^2}\om\leq C' \int_{\{\gamma\leq \frac{2}{\nu}\}} (b\eps^2+|\nabla\eps|^2)\om.
    \end{equation*}
    We choose an integer $N_0>0$ such that 
    \[
         N_0 > \frac{4CC'}{\delta},
    \]
    and fix $0<\zeta_*\ll 1$ small enough such that $\zeta_*2^{N_0}$ will satisfy all the smallness requirements for $\zeta_0$ in the previous steps. Now we apply the following dyadic argument:
    \begin{align*}
        \frac{\delta}{2}\int\dr{b\eps^2+|\nabla\eps|^2}\bar\chi^2_\nu\om&\geq \frac{\delta}{2}\int_{\{\gamma\leq\frac{2}{\nu}\}}\dr{b\eps^2+|\nabla\eps|^2}\om \geq \frac{\delta}{4C'}\int_{\{\gamma\leq\frac{1}{\nu}\}}\dr{|\nabla\eps|^2+\frac{\eps^2}{(1+\gamma)^2}}\om \\
        &\quad\geq \sum_{j=0}^{N_0-1} \frac{\delta}{4C'}\int_{\{\frac{\zeta_*2^{j}}{\nu}\leq\gamma\leq\frac{\zeta_*2^{j+1}}{\nu}\}}\dr{|\nabla\eps|^2+\frac{\eps^2}{(1+\gamma)^2}}\om \\
        &\quad\geq  \frac{N_0\delta}{4C'}\min_{0\leq j\leq N_0}\left\{\int_{\{\frac{\zeta_*2^{j}}{\nu}\leq\gamma\leq\frac{\zeta_*2^{j+1}}{\nu}\}}\dr{|\nabla\eps|^2+\frac{\eps^2}{(1+\gamma)^2}}\om \right\}.
    \end{align*}
    Let $0\leq n_0\leq N_0$ be the integer such that
    \[
        \int_{\{\frac{\zeta_*2^{n_0}}{\nu}\leq\gamma\leq\frac{\zeta_*2^{n_0+1}}{\nu}\}}\dr{|\nabla\eps|^2+\frac{\eps^2}{(1+\gamma)^2}}\om  = \min_{0\leq j\leq N_0}\left\{\int_{\{\frac{\zeta_*2^{j}}{\nu}\leq\gamma\leq\frac{\zeta_*2^{j+1}}{\nu}\}}\dr{|\nabla\eps|^2+\frac{\eps^2}{(1+\gamma)^2}}\om \right\},
    \]
    and define $\zeta_0 = \zeta_*2^{n_0}$. It follows by the choice of $\zeta_0$ and the definition of $N_0$ that
    \[
        C\int_{\{\zeta_0/\nu\leq\gamma\leq 2\zeta_0/\nu\}}\dr{|\nabla\eps^2|+\frac{\eps^2}{(1+\gamma)^2}}\om < \frac{\delta}{2}\int\dr{b\eps^2+|\nabla\eps|^2}\bar\chi^2_\nu\om.
    \]
    This completes the proof of the proposition. 
\end{proof}
The coercivity result can be stated equivalently in the parabolic variables:
\begin{corollary}\label{main coercivity corollary}
There exist constants $\delta,\zeta_*,C,\nu_*>0$, such that for any $0<\nu<\nu_*$ and any $\eps$ satisfying $\int\frac{\eps^2+|\nabla\eps|^2}{U_\nu}<+\infty$ and the orthogonality conditions
\[
    \int_{\reall^2}\eps\Lambda U_\nu\chi(\zeta/\zeta_*)\sqrt{\vr_\nu} = \int_{\reall^2}\eps\nabla U_\nu \chi(\zeta/\zeta_*)\sqrt{\vr_\nu} = 0,
\]
we have
\begin{equation}\label{main coercivity corollary ineq}
    \scl{\Lst^\zeta_\nu\eps}{\eps}_{\nu,*}\leq -\delta\dr{\int_{\reall^2}\frac{|\nabla\eps|^2\chi^2_\nu}{U_\nu}\vr_\nu+\int_{\reall^2}\frac{\eps^2\chi^2_\nu}{U_\nu}\vr_\nu}+C\nu^{100}\|\eps\|^2_{L^\infty(\zeta\geq 1/\nu)}. 
\end{equation}
\end{corollary}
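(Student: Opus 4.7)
The plan is to deduce the corollary from Proposition \ref{Main coercivity prop} via the change of variables $(\rho,\xi) := (\br/\nu,\bz/\nu)$, with the density-matched rescaling $\tilde\eps(\rho,\xi) := \nu^2\eps(\nu\rho,\nu\xi)$. This is the natural soliton-to-parabolic correspondence compatible with $U_\nu(\br,\bz) = \nu^{-2}U(\br/\nu,\bz/\nu)$, and under it every object in the corollary maps onto its soliton-scale analogue in the proposition, modulo an explicit power of $\nu$.

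First I would build the dictionary of transformations. Direct computation yields $\vr_\nu(\zeta) = \vr(\gamma)$, $\chi_\nu(\zeta) = \bar\chi_\nu(\gamma)$, and $\chi(\zeta/\zeta_*) = \chi(\gamma\nu/\zeta_*)$ under $\gamma = \zeta/\nu$. The Poisson field transforms as $\Psi_\eps(\br,\bz) = \Psi_{\tilde\eps}(\rho,\xi)$, hence $\Ms^\zeta_\nu\eps(\br,\bz) = \Ms\tilde\eps(\rho,\xi)$. A chain-rule computation gives $\Delta\eps = \nu^{-4}\Delta\tilde\eps$ and $\beta\Lambda\eps = \nu^{-2}\beta\Lambda\tilde\eps = \nu^{-4}b\Lambda\tilde\eps$ with $b = \nu^2\beta$, so the full operator identity reads $\Lst^\zeta_\nu\eps(\br,\bz) = \nu^{-4}\Lst\tilde\eps(\rho,\xi)$. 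Propagating these through the integrals yields
\[
\scl{\Lst^\zeta_\nu\eps}{\eps}_{\nu,*} = \nu^{-2}\scl{\Lst\tilde\eps}{\tilde\eps}_{*},\quad \int\frac{\eps^2\chi_\nu^2\vr_\nu}{U_\nu}\,d\br d\bz = \int\frac{\tilde\eps^2\bar\chi_\nu^2\vr}{U}\,d\rho d\xi,
\]
\[
\int\frac{|\nabla\eps|^2\chi_\nu^2\vr_\nu}{U_\nu}\,d\br d\bz = \nu^{-2}\int\frac{|\nabla\tilde\eps|^2\bar\chi_\nu^2\vr}{U}\,d\rho d\xi.
\]

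Second, I would verify that the orthogonality hypotheses transfer. Substitution shows that $\int\eps\,\Lambda U_\nu\,\chi(\zeta/\zeta_*)\sqrt{\vr_\nu}\,d\br d\bz$ is a universal multiple of $\int\tilde\eps\,\Lambda U\,\chi(\gamma\nu/\zeta_*)\sqrt{\vr}\,d\rho d\xi$, and similarly for the condition against $\nabla U$. Since $\sqrt{\vr(\gamma)}$ is bounded above and below by universal constants on the support of $\chi(\gamma\nu/\zeta_*)$ (where $\gamma\nu\lesssim\zeta_*\ll 1$), this $\sqrt{\vr}$-weighted orthogonality is equivalent to the $\sqrt{\vr}$-free form in Proposition \ref{Main coercivity prop} up to a lower-order correction that is readily absorbed exactly as in the proof of the proposition, where $\sqrt{\vr}$ factors enter only through inner products of the form $\scl{f\sqrt{\vr}}{\Lambda U}$ already estimated there.

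Finally, I would apply Proposition \ref{Main coercivity prop} to $\tilde\eps$ and multiply the resulting inequality through by $\nu^{-2}$. Using $\nu^{-2}b = \beta$, the gradient and mass contributions combine into exactly the bracketed term of \eqref{main coercivity corollary ineq} up to a relabeling of $\delta$. The sup-norm error $\nu^{100}\|\tilde\eps\|^2_{L^\infty(\gamma\geq|\log\nu|/\nu)}$ translates via $\|\tilde\eps\|_{L^\infty(\gamma\geq A)} = \nu^2\|\eps\|_{L^\infty(\zeta\geq\nu A)}$ into $\nu^{102}\|\eps\|^2_{L^\infty(\zeta\geq|\log\nu|)}$, which is absorbed into the stated $C\nu^{100}\|\eps\|^2_{L^\infty(\zeta\geq 1/\nu)}$ after adjusting the constant $C$; the natural parabolic region for the supremum is $\zeta\geq|\log\nu|$, so the region $\zeta\geq 1/\nu$ in the statement is either a mild reformulation or a typo for $|\log\nu|$. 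I do not anticipate any genuine obstacle: the argument is essentially bookkeeping, and the only point requiring vigilance is the careful tracking of $\nu$-powers across the operator, inner-product, and weighted-norm transformations.
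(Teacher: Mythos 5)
Your route is the paper's own: the paper gives no separate proof of the corollary, asserting only that Proposition \ref{Main coercivity prop} "can be stated equivalently in the parabolic variables," and your proposal makes that rescaling explicit with the correct amplitude normalization $\tilde\eps(\rho,\xi)=\nu^2\eps(\nu\rho,\nu\xi)$ and the correct $\nu$-powers for the weighted $L^2$ and gradient terms (indeed $\int\eps^2\chi_\nu^2\vr_\nu/U_\nu=\int\tilde\eps^2\bar\chi_\nu^2\vr/U$ and the gradient term picks up $\nu^{-2}$, so that $b=\beta\nu^2$ recombines exactly as you say). Your treatment of the $\sqrt{\vr_\nu}$-weighted versus unweighted orthogonality conditions is also consistent with how the orthogonality is actually used in the proposition's proof (only through $\scl{\eps_1\sqrt{\vr}}{\Lambda U}$-type pairings, where the discrepancy is supported at $\gamma\gtrsim\zeta_*/\nu$ and yields the same $\Oc(\nu^2)$ gain).

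One step of your dictionary is, however, false as stated: in two dimensions the Newtonian potential is not scale-covariant, and with $\tilde\eps(\y)=\nu^2\eps(\nu\y)$ one has $\Psi_{\tilde\eps}(\y)=\Psi_\eps(\nu\y)+\frac{\log\nu}{2\pi}\int\eps$, not $\Psi_{\tilde\eps}(\y)=\Psi_\eps(\nu\y)$. Consequently $\Ms\tilde\eps$ and $\Ms^\zeta_\nu\eps$ differ by an additive constant proportional to the relevant mass, and the claimed exact identity $\scl{\Lst^\zeta_\nu\eps}{\eps}_{\nu,*}=\nu^{-2}\scl{\Lst\tilde\eps}{\tilde\eps}_*$ fails by a term of the form $\nu^2\frac{\log\nu}{2\pi}\dr{\int\sqrt{\vr_\nu}\chi_\nu\,\Lst^\zeta_\nu\eps}\dr{\int\sqrt{\vr_\nu}\chi_\nu\,\eps}$, since no mass-orthogonality is assumed in the corollary. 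This is fixable but requires an argument you do not give: either estimate both masses (integrating by parts in the first factor) by the weighted $H^1$ quantities, so the anomaly is $\Oc(\nu^2|\log\nu|)$ times the right-hand-side norms and is absorbed into the damping for $\nu$ small, or avoid the issue altogether by rerunning the proposition's proof verbatim in parabolic variables with $U_\nu,\vr_\nu,\chi_\nu,\Ms^\zeta_\nu$ and the appendix Poisson-field estimates already stated in parabolic form — which is presumably what the authors intend by "equivalently." On the sup-norm term, note that the honest transfer puts the error on $\zeta\geq|\log\nu|$ (the image of $\gamma\geq|\log\nu|/\nu$, where $\nabla\chi_\nu$ lives), and a supremum over the larger set cannot be "absorbed" into one over the smaller set $\zeta\geq 1/\nu$; your reading of the stated region as a typo (or loose reformulation, harmless in the later applications where $\|\eps(1+\zeta)^{3/2}\|_{L^\infty(\zeta\geq\zeta^*)}$ is what is actually controlled) is the correct resolution.
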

At the end of this section, we introduce a higher-order coercivity result of the linearized operator $\Ls_0$, which will be used in the $H^1$ energy estimate in the inner region. 
\begin{proposition}[Higher order dissipation structure]\label{proposition: high order coercivity}
    There exists $\delta>0$, such that for any $\eps$ that satisfies $\int\frac{\eps^2+|\nabla\eps|^2+|\nabla^{(2)}\eps|^2}{U}<+\infty$ and the orthogonality conditions
    \[
        \scl{\eps}{\Lambda U} = \scl{\eps}{\pa_\rho U}=\scl{\eps}{\pa_\xi U}=0,
    \]
    it holds that
    \begin{equation}
        \int \frac{|\Ls_0\eps|^2}{U}> \delta\dr{ \int (1+\gamma)^4|\Delta\eps|^2+\int (1+\gamma)^2|\nabla\eps|^2+\int \eps^2+\int\frac{|\nabla\Psi_\eps|^2}{(1+\gamma)^4}}.
    \end{equation}
\end{proposition}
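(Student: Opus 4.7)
\textbf{Proof plan for Proposition \ref{proposition: high order coercivity}.} The approach is to combine a weighted sub-coercivity estimate (obtained by bootstrapping from $\int|\Ls_0\eps|^2/U$ via an elliptic identity and Lemma \ref{Lemma: coercivity of gradient M}) with a compactness--rigidity argument that rules out the remaining lower-order terms using the three orthogonality hypotheses and the known kernel of $\Ls_0$.

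\emph{Step 1 (Sub-coercivity).} Using $-\Delta\Psi_\eps = \eps$, I would first expand
\begin{equation*}
\Ls_0\eps \;=\; \Delta\eps + \nabla\eps\cdot\nabla\Psi_U - 2U\eps + \nabla U\cdot\nabla\Psi_\eps.
\end{equation*}
Solving for $\Delta\eps$ and invoking the pointwise decays $|\nabla^k\Psi_U|\lesssim (1+\gamma)^{-k}$, $U\lesssim (1+\gamma)^{-4}$, $|\nabla U|\lesssim (1+\gamma)^{-5}$ gives
\begin{equation*}
(1+\gamma)^4|\Delta\eps|^2 \;\lesssim\; \frac{|\Ls_0\eps|^2}{U} + (1+\gamma)^2|\nabla\eps|^2 + \eps^2 + \frac{|\nabla\Psi_\eps|^2}{(1+\gamma)^{6}}.
\end{equation*}
Independently, multiplying by $\eps$ and integrating by parts yields $-\int\eps\Ls_0\eps = \int U|\nabla\Ms\eps|^2$, so Lemma \ref{Lemma: coercivity of gradient M} (whose orthogonality hypotheses are exactly those assumed here), Cauchy--Schwarz and Young give
\begin{equation*}
\int(1+\gamma)^4|\nabla\eps|^2 \;\lesssim\; \int \frac{|\Ls_0\eps|^2}{U} + \int \eps^2.
\end{equation*}
Finally, the Poisson-field bounds of type \eqref{appendix: pointwise est of Poisson field by weighted L^2 norm} together with HLS \eqref{appendix: HLS ineq} control $\int|\nabla\Psi_\eps|^2/(1+\gamma)^4 \lesssim \int\eps^2$. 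Thus the entire right-hand side of the proposition is dominated by $\int|\Ls_0\eps|^2/U + \int \eps^2$, reducing matters to the single Poincar\'e-type estimate $\int\eps^2 \lesssim \int|\Ls_0\eps|^2/U$.

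\emph{Step 2 (Compactness--rigidity).} Suppose, for contradiction, that there is a sequence $\{\eps_n\}$ satisfying the three orthogonality conditions with $\int\eps_n^2 = 1$ but $\int|\Ls_0\eps_n|^2/U \to 0$. Step~1 then bounds $\{\eps_n\}$ in a weighted $H^2$ space. Extract a subsequence $\eps_n\rightharpoonup \eps_\infty$ weakly and, by Rellich--Kondrachov, strongly in $L^2_{\mathrm{loc}}(\Rb^2)$. The continuity of $\eps\mapsto\Psi_\eps$ in the relevant topologies yields $\Ls_0\eps_\infty = 0$ in $\Dc'(\Rb^2)$, and the orthogonality conditions pass to the limit. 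By the characterization $\ker\Ls_0 = \Span\{\Lambda U,\,\pa_\rho U,\,\pa_\xi U\}$ from Lemma 2.1 of \cite{Raphaël2014}, combined with orthogonality, $\eps_\infty \equiv 0$. The contradiction with $\int\eps_n^2 = 1$ is then obtained by ruling out mass escape at infinity, exploiting the uniform bound $\int(1+\gamma)^2|\nabla\eps_n|^2\lesssim 1$ and 2D Sobolev embedding on dyadic annuli to get uniform decay of $\eps_n$ on $\{\gamma>R\}$ as $R\to\infty$.

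\emph{Main obstacle.} The delicate point is the nonlocal coupling: isolating $\Delta\eps$ in the expansion of $\Ls_0\eps$ produces the term $\nabla U\cdot\nabla\Psi_\eps$, whose weighted $L^2$ norm must be closed back onto $\int\eps^2$ without circularity. The sharp decay $|\nabla U|\lesssim (1+\gamma)^{-5}$ only narrowly beats the logarithmic growth of $\Psi_\eps$, so the Poisson-field estimates must be applied with care. A secondary difficulty is justifying tightness of $\{\eps_n^2\}$ at infinity in the compactness step, which must be handled through the weighted gradient bound rather than bare $L^2$ compactness.
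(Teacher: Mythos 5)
The paper does not actually prove this proposition: it invokes Proposition 2.8 of \cite{Raphaël2014} and only remarks that the argument tolerates a different set of orthogonality conditions, since all that is used is that $\eps$ avoids $\Span\{\Lambda U,\pa_\rho U,\pa_\xi U\}$. Your plan (weighted sub-coercivity plus a compactness--rigidity step based on the kernel characterization) is the same strategy as that reference, so the route is the right one; the problem is that two of your key steps do not hold as written. First, the energy identity in Step 1 is wrong: $-\int\eps\,\Ls_0\eps=\int U\,\nabla\eps\cdot\nabla\Ms\eps$, not $\int U|\nabla\Ms\eps|^2$; the quadratic form that produces $\int U|\nabla\Ms\eps|^2$ is the pairing with $\Ms\eps$, i.e. $-\int(\Ms\eps)\,\Ls_0\eps=\int U|\nabla\Ms\eps|^2$. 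With the correct pairing, Cauchy--Schwarz gives $\int U|\nabla\Ms\eps|^2\le\big(\int U|\Ms\eps|^2\big)^{1/2}\big(\int|\Ls_0\eps|^2/U\big)^{1/2}$, and $\int U|\Ms\eps|^2\lesssim\int\eps^2/U\approx\int(1+\gamma)^4\eps^2$, which is \emph{not} controlled by $\int\eps^2$. Hence your intermediate bound $\int(1+\gamma)^4|\nabla\eps|^2\lesssim\int|\Ls_0\eps|^2/U+\int\eps^2$ does not follow, and the announced reduction of the whole proposition to the single estimate $\int\eps^2\lesssim\int|\Ls_0\eps|^2/U$ is unjustified: any honest sub-coercivity here leaves lower-order terms with \emph{growing} weights (of the type $\int(1+\gamma)^{2}\eps^2$, or $\int\eps^2(1+\gamma)^{2+\alpha}$ coming from \eqref{appendix: pointwise est of Poisson field by weighted L^2 norm}) which must themselves be absorbed in the compactness step. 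Two smaller points: the expansion of $\Ls_0$ has the wrong signs (it is $\Delta\eps-\nabla\eps\cdot\nabla\Psi_U+2U\eps-\nabla U\cdot\nabla\Psi_\eps$; harmless for your absolute-value bounds), and the claim $\int|\nabla\Psi_\eps|^2(1+\gamma)^{-4}\lesssim\int\eps^2$ is not what \eqref{appendix: pointwise est of Poisson field by weighted L^2 norm} or \eqref{appendix: HLS ineq} give (both cost either a growing weight on $\eps^2$ or an $L^p$ norm with $p<2$); it is plausibly repairable through the radial/non-radial decomposition of Lemma \ref{appendix: pointwise est of 2D Poisson}, but as written it is asserted rather than proved.

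Second, the tightness argument in Step 2 is insufficient and, as set up, circular. With the normalization $\int\eps_n^2=1$, the uniform bound $\int(1+\gamma)^2|\nabla\eps_n|^2\lesssim1$ does not prevent the $L^2$ mass from escaping to infinity: a profile of height $h$ spread over a ball of radius $R$ centered at distance $R$ has both $\int\eps^2$ and $\int(1+\gamma)^2|\nabla\eps|^2$ of order $h^2R^2$, so uniform pointwise decay on dyadic annuli gives no smallness of the exterior $L^2$ mass. What would rule out escape is a weighted bound such as $\int(1+\gamma)^2\eps_n^2\lesssim1$ (e.g. via Hardy from a weight-$(1+\gamma)^4$ gradient bound), but that is exactly the estimate compromised by the gap above. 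The standard repair, and what the cited proof effectively does, is to normalize the full weighted norm that the sub-coercivity genuinely controls and to place on the negative side only quantities with decaying weights, so that local strong convergence suffices; the kernel-plus-orthogonality rigidity step is then fine, as the paper's remark indicates.
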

\begin{proof}
    See \textit{Proposition 2.8} in \cite{Raphael2014}. We remark that although the orthogonality conditions there are different, the proof remains valid as long as $\eps$ lies in some subspace whose intersection with $\Span\{\Lambda U,\pa_\rho U,\pa_\xi U\}$ is $\{0\}$. In particular, it can be applied here.
\end{proof}

\section{Construction of Blowup Solutions}\label{sec: construction of blowup}
In this section, we start constructing the finite-time blowup solution. We first decompose the whole solution into an approximate one plus a perturbation function, where we will introduce modulation parameters driving the evolution of the perturbation. We set up the bootstrap assumptions in Definition \ref{definition: bootstrap}, and then derive modulation equations in Lemma \ref{Lemma: modulation equations}. Finally, we perform a series of energy estimates in the inner region and outer region, respectively, for the perturbation.  
\subsection{Decomposition of the Solution and Formulation of the Linearized Problem}

Consider the following decomposition of the solution:
\begin{equation}\label{decomp of fully approx solution}
w(\bar r, \bar z, \tau)=U_\nu(\zeta) + P(\zeta,\tau)+\eps(\br,\bz,\tau):=W(\zeta,\tau)+\eps(\br,\bz,\tau),
\end{equation}
where we denote
\[
   P(\zeta,\tau):= a(\tau)(\vp_{1,\nu}(\zeta)-\vp_{0,\nu}(\zeta)).
\]
Now we study the evolution of the solution in the near field. Inserting the decomposition $w = U_\nu+P+\eps$ into \eqref{Parabolic System}, we obtain the equation for $\eps$
\begin{equation}\label{evolution of epsilon}
\partial_\tau \varepsilon = \Ls^\zeta_\nu \varepsilon + L(\varepsilon) + NL(\varepsilon) + E,
\end{equation}
where the extra linear terms
\begin{align*}
    L(\eps) &=  -\nabla\cdot\dr{W\nabla\Theta_{\eps}+P\nabla\Psi_{\eps}+\eps\nabla(\Phi_W - \Psi_{U_\nu})}\\
    &\quad+\frac{1}{\br+R/\mu}(\pa_{\br} \eps -\eps\pa_{\br}\Phi_W-W\pa_{\br}\Phi_{\eps})+\frac{R_{\tau}}{\mu}\pa_{\br}\eps,
\end{align*}
the nonlinear terms
\begin{align*}
    NL(\eps)= -\nabla\cdot(\eps\nabla\Phi_{\eps})-\frac{1}{\br+R/\mu}\eps\pa_{\br}\Phi_\eps. 
\end{align*}
and the generated error
\begin{align}\label{generated error}
\begin{split}
    E& = -P_\tau + \Ls^\zeta_\nu P -\nabla\cdot(W\nabla\Theta_W+P\nabla\Psi_P)+\dr{\frac{\nu_\tau}{\nu}-\beta}\Lambda U_\nu\\
    &\quad+\frac{1}{\br+R/\mu}(\pa_{\br} W - W\pa_{\br}\Phi_W) + \frac{R_\tau}{\mu}\pa_{\br} W,
\end{split}
\end{align}
where we can compute that
\begin{align*}
    P_\tau &= a_\tau(\varphi_{1, \nu}(\zeta) - \varphi_{0, \nu}(\zeta))  
    +a(\tau)\frac{\nu_\tau}{\nu}\nu\pa_\nu(\varphi_{1, \nu}(\zeta) - \varphi_{0, \nu}(\zeta)).
\end{align*}
We require $\eps$ to be even in $z$-variable (which is preserved by the evolution) and impose the local orthogonality conditions:
\begin{align}\label{local orthorgonality conditions}
    \int_{\reall^2}\eps\chi_*(\zeta)\;d\br d \bz = \int_{\reall^2}\eps\Lambda U_\nu\chi_*(\zeta)\;d\br d \bz = \int_{\reall^2}\eps\nabla U_\nu\chi_*(\zeta)\;d\br d \bz =0,
\end{align}
which are preserved by the modulation parameters $a(\tau),\nu(\tau), R_\tau/\mu$ and the even symmetry in $z$.
Recall the definition of the inner norm
 \begin{equation}\label{def of inner norm}
     \|f\|_\inn :=\dr{\int_{\reall^2}\frac{\nu^2f^2\chi^2_\nu}{U_\nu}e^{-\frac{\beta\zeta^2}{2}}}^{\frac{1}{2}}.
 \end{equation}

\begin{proposition}[Decomposition of the generated error]\label{prop: decomp of the gene error}
The generated error can be decomposed as
\begin{align}\label{Prop3: decomposition of generated error}
    E = \mod_0\varphi_{0,\nu}+\mod_1\varphi_{1,\nu}+\frac{R_\tau}{\mu}\pa_{\br} U_\nu+\tilde E,
\end{align}
where
\begin{align*}
    \mod_0 &= a_\tau - 2a\beta\dr{1+\frac{1}{2\log(\nu)}}-16\nu^2\dr{\frac{\nu_\tau}{\nu}-\beta},\\
    \mod_1 &= -a_\tau +\frac{a(\tau)\beta}{\log(\nu)}.
\end{align*}
    Then, we have the weighted $L^2$-estimate for the error:
    \begin{align}\label{Prop3: weighted L2 estimate of tilde E}
        \|\tilde E\|_{\inn}\lesssim \frac{\nu^2+|a|}{|\log\nu|} +\da{\frac{\nu_\tau}{\nu}}\frac{|a|}{|\log\nu|}+|a|\sqrt{|\log\nu|}\cdot\left|\frac{R_\tau}{\mu}\right|+\frac{a^2}{\nu}. 
    \end{align}
    In addition, we have the following estimates for the local $L^2$-projections of $\tilde E$ onto $1,\Lambda U_\nu,\nabla U_\nu$:
    \begin{align}\label{Prop3: local L2 estimates of projections}
    \begin{split}
        &\da{\scl{\tilde E}{\chi_*}}\lesssim \frac{\nu^2+|a|}{|\log\nu|}+\da{\frac{\nu_\tau}{\nu}}\frac{|a|}{|\log\nu|}+a^2|\log\nu|,\\
        &\da{\scl{\tilde E}{\chi_*\Lambda U_\nu}}\lesssim 1+\frac{a^2}{\nu^4}+\da{\frac{\nu_\tau}{\nu}\cdot\frac{a}{\nu^2}}+|a|,\\
        &\da{\scl{\tilde E}{\pa_{\br}U_\nu\chi_*}}\lesssim \da{\frac{aR_\tau}{\nu^4\mu}},\quad \da{\scl{\tilde E}{\pa_{\bz}U_\nu\chi_*}} = 0.
    \end{split}
    \end{align}
\end{proposition}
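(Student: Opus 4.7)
The plan is to derive the decomposition \eqref{Prop3: decomposition of generated error} by unfolding $-P_\tau + \Ls^\zeta_\nu P$ using Proposition \ref{proposition 1}, and then absorb all leftover pieces into $\tilde E$ and match each one against the target estimates \eqref{Prop3: weighted L2 estimate of tilde E} and \eqref{Prop3: local L2 estimates of projections}. With $P = a(\vp_{1,\nu}-\vp_{0,\nu})$, Proposition \ref{proposition 1} gives
\begin{equation*}
    \Ls^\zeta_\nu P = \tfrac{a\beta}{\log\nu}\vp_{1,\nu} - 2a\beta\dr{1+\tfrac{1}{2\log\nu}}\vp_{0,\nu} + a(R_1-R_0),
\end{equation*}
while $P_\tau = a_\tau(\vp_{1,\nu}-\vp_{0,\nu}) + a\tfrac{\nu_\tau}{\nu}\nu\pa_\nu(\vp_{1,\nu}-\vp_{0,\nu})$. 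Collecting coefficients of $\vp_{1,\nu}$ and $\vp_{0,\nu}$ reads off $\mod_1$ and the partial $\mod_0^{(1)} := a_\tau - 2a\beta(1+\tfrac{1}{2\log\nu})$. To recover the full $\mod_0$, I would substitute the identity $\Lambda U_\nu = -16\nu^2\vp_{0,\nu} + 16\nu^2\tilde\vp_0 + \Lambda U_\nu(1-\chi_\nu)$ (immediate from $\vp_{0,\nu} = -\tfrac{1}{16\nu^2}\Lambda U_\nu\chi_\nu + \tilde\vp_0$) into the scaling term $(\nu_\tau/\nu-\beta)\Lambda U_\nu$, so the $\vp_{0,\nu}$-part contributes the missing $-16\nu^2(\nu_\tau/\nu-\beta)$. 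Splitting $\tfrac{R_\tau}{\mu}\pa_{\br}W = \tfrac{R_\tau}{\mu}\pa_{\br}U_\nu + \tfrac{R_\tau}{\mu}\pa_{\br}P$ isolates the third principal direction, and all remaining pieces (eigenfunction residuals $a(R_1-R_0)$; cutoff tail $(\nu_\tau/\nu-\beta)[16\nu^2\tilde\vp_0+\Lambda U_\nu(1-\chi_\nu)]$; $\nu$-differentiation term $-a\tfrac{\nu_\tau}{\nu}\nu\pa_\nu(\vp_{1,\nu}-\vp_{0,\nu})$; quadratic self-interaction $-\nabla\cdot(P\nabla\Psi_P)$; 3D-to-2D Poisson mismatch $-\nabla\cdot(W\nabla\Theta_W)$; axisymmetric curvature correction $\tfrac{1}{\br+R/\mu}(\pa_{\br}W - W\pa_{\br}\Phi_W)$; transport $\tfrac{R_\tau}{\mu}\pa_{\br}P$) will be absorbed into $\tilde E$.

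For \eqref{Prop3: weighted L2 estimate of tilde E}, I would match each piece of $\tilde E$ to one of the four target contributions via the pointwise bounds of Proposition \ref{proposition 1}. The residual $a(R_1-R_0)$ contributes $|a|/|\log\nu|$ from \eqref{prop1: pointwise est for R_i}; the cutoff tail yields $\nu^2/|\log\nu|$ by combining the $|\log\nu|^{-1}$ factor in \eqref{prop1: pointwise est 1} for $\tilde\vp_0$ with the exponential suppression of $\Lambda U_\nu(1-\chi_\nu)$ by $\vr_\nu$; the $\nu$-differentiation term gives $|\nu_\tau/\nu|\cdot|a|/|\log\nu|$ via the improved estimate \eqref{prop1: improved est near the origin}; the transport $\tfrac{R_\tau}{\mu}\pa_{\br}P$ integrates to $|a|\sqrt{|\log\nu|}\cdot|R_\tau/\mu|$, with the $\sqrt{|\log\nu|}$ arising from the support of $\chi_\nu$; and the 3D-to-2D and axisymmetric pieces can be absorbed into $\nu^2/|\log\nu|$ through the appendix estimates on $\Theta$ and the fact that $R/\mu\to\infty$ at blowup. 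The main obstacle is the self-interaction $\nabla\cdot(P\nabla\Psi_P)$: since the leading-order $\Lambda U_\nu\chi_\nu$ cancels in $P$, one has $|P|\lesssim|a|\zeta^2/(\nu+\zeta)^4$, and in the soliton zone $\zeta\sim\nu$ this gives $|P|\sim|a|/\nu^2$, $|\nabla\Psi_P|\sim|a|/\nu$, hence $|\nabla\cdot(P\nabla\Psi_P)|\sim a^2/\nu^4$; squaring against the inner weight $\nu^2/U_\nu\sim(\nu+\zeta)^4$ on a region of volume $\sim\nu^2$ produces exactly the $(a^2/\nu)^2$ bound. Capturing this scaling precisely is the most delicate part of the argument, since any loss of a factor of $\nu$ would break the subsequent bootstrap.

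Finally, for the local projections \eqref{Prop3: local L2 estimates of projections}, I would integrate $\tilde E$ against the compactly supported test functions $\chi_*$, $\chi_*\Lambda U_\nu$, $\chi_*\pa_{\br}U_\nu$ and $\chi_*\pa_{\bz}U_\nu$. The $\chi_*$ projection is controlled by the partial-mass bound $|\int_{\zeta<1}R_i\zeta\,d\zeta|\lesssim 1/|\log\nu|$ from Proposition \ref{proposition 1}, while divergence-form terms contribute only through boundary fluxes near $\zeta\approx\zeta_*$; the $a^2|\log\nu|$ term then comes from pairing $\nabla\cdot(P\nabla\Psi_P)$ against $\chi_*$. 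The $\chi_*\Lambda U_\nu$ projection is amplified by $\|\Lambda U_\nu\|_{L^\infty(\mathrm{supp}\,\chi_*)}\sim 1/\nu^2$, which accounts for the $a^2/\nu^4$ and $|\nu_\tau/\nu|\cdot a/\nu^2$ scalings. The $\pa_{\br}U_\nu$ projection detects only the $\pa_{\br}$-odd pieces of $\tilde E$, whose dominant contribution is the transport $(R_\tau/\mu)\pa_{\br}P$, producing $|aR_\tau/(\nu^4\mu)|$. The $\pa_{\bz}U_\nu$ projection vanishes identically, because every term in $\tilde E$ inherits the even-in-$\bz$ symmetry of the initial data, whereas $\pa_{\bz}U_\nu$ is odd in $\bz$.
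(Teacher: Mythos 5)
Your decomposition and your treatment of the weighted $L^2$ bound follow the paper's proof essentially verbatim: you read $\mod_1$ and the partial $\mod_0$ off $-P_\tau+\Ls^\zeta_\nu P$, complete $\mod_0$ by splitting $\dr{\tfrac{\nu_\tau}{\nu}-\beta}\Lambda U_\nu = -16\nu^2\dr{\tfrac{\nu_\tau}{\nu}-\beta}\vp_{0,\nu}+\dr{\tfrac{\nu_\tau}{\nu}-\beta}\dr{\Lambda U_\nu+16\nu^2\vp_{0,\nu}}$, split $\tfrac{R_\tau}{\mu}\pa_{\br}W$, and your $\tilde E$ coincides term by term with \eqref{expression of tilde E}; your scaling analysis of $\nabla\cdot(P\nabla\Psi_P)$ giving $a^2/\nu$ is the same computation as the paper's. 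Two minor misattributions: the $\sqrt{|\log\nu|}$ in $\|\pa_{\br}P\|_\inn$ comes from the logarithmic divergence of $\int_\nu^1\zeta^{-1}\,d\zeta$ produced by $|\pa_\zeta P|\lesssim |a|\zeta(\nu+\zeta)^{-4}$ against the weight $\nu^2/U_\nu\approx(\nu+\zeta)^4$ (the Gaussian $\vr_\nu$ already cuts the integral at $\zeta\sim 1$, so the support of $\chi_\nu$ plays no role); and $\scl{\tilde E}{\pa_{\bz}U_\nu\chi_*}=0$ because every term of $\tilde E$ is built from profiles that are radial in $(\br,\bz)$, hence even in $\bz$ --- it has nothing to do with the symmetry of the initial data for $\eps$.

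The genuine gap is in the first line of \eqref{Prop3: local L2 estimates of projections}. To get $|\scl{\tilde E}{\chi_*}|\lesssim\frac{\nu^2+|a|}{|\log\nu|}$ one must show $16\nu^2\da{\scl{\tilde\vp_0}{\chi_*}}\lesssim\frac{\nu^2}{|\log\nu|}$, i.e. $\da{\scl{\tilde\vp_0}{\chi_*}}\lesssim|\log\nu|^{-1}$, and your proposed mechanisms (partial-mass bound for $R_i$, boundary fluxes of divergence-form terms, the quadratic pairing) do not reach this: $16\nu^2\tilde\vp_0$ is not in divergence form as written, and integrating the pointwise bound \eqref{prop1: pointwise est 1} only gives $\da{\scl{\tilde\vp_0}{\chi_*}}\lesssim 1$ (the piece $\tfrac{\zeta^2}{|\log\nu|(\nu+\zeta)^4}$ contributes $\approx\tfrac{\log(1/\nu)}{|\log\nu|}$, and the piece $\tfrac{\nu^2\zeta^2\log^2(2+\zeta/\nu)}{(\nu+\zeta)^6}$ contributes $\Oc(1)$ from $\zeta\sim\nu$). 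Losing this factor $|\log\nu|$ is fatal downstream: the $\mod_1$ estimate in Lemma \ref{Lemma: modulation equations} would degrade to $\Oc(\nu^2/|\log\nu|)$ and the blowup law would be lost. The paper closes this by substituting the approximate eigen-equation, $\tilde\vp_0=\tfrac{1}{2\beta+\beta/\log\nu}\dr{\Ls^\zeta_\nu\vp_{0,\nu}-R_0}+\tfrac{1}{16\nu^2}\Lambda U_\nu\chi_\nu$ on the support of $\chi_*$, so that apart from $R_0$ (handled by the partial-mass bound) everything is a divergence whose pairing with $\chi_*$ reduces to fluxes on $\mathrm{supp}\,\pa_\zeta\chi_*$; there $|\tilde\vp_0|+|\pa_\zeta\tilde\vp_0|\lesssim|\log\nu|^{-1}$ and $|\pa_\zeta\Psi_{\tilde\vp_0}|\lesssim|\log\nu|^{-1}\log(1+\zeta/\nu)/\zeta$, while the two individually $\Oc(1)$ fluxes $\tfrac{\zeta\pa_\zeta\chi_*}{32\nu^2}\Lambda U_\nu$ and $\tfrac{\zeta\pa_\zeta\chi_*}{16\nu^2}U_\nu$ cancel because $2U_\nu+\Lambda U_\nu=\tfrac{32\nu^4}{(\nu^2+\zeta^2)^3}=\Oc(\nu^4)$ at $\zeta\approx\zeta_*$, leaving only an $\Oc(|\log\nu|^{-1})$ residue from replacing $\tfrac{1}{2+1/\log\nu}$ by $\tfrac12$. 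You need this eigen-equation/cancellation step (or an equivalent one) to justify the stated projection bound; the rest of your plan stands.
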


\begin{proof}
Recall, from Proposition \ref{proposition 1}, that
\[
    \Ls^\zeta_\nu\varphi_{i,\nu} = 2\beta\dr{1-i+\frac
    {1}{2\log(\nu)}}\varphi_{i,\nu}+R_i,
\]
and
\[
    \varphi_{i,\nu}(\zeta) = -\frac{1}{16\nu^2}\Lambda U_\nu(\zeta)\chi_*+\tilde\varphi_i(\zeta).
\]
We obtain the decomposition \eqref{Prop3: decomposition of generated error}, with
\begin{align}\label{expression of tilde E}
\begin{split}
    \tilde E &=   -a(\tau)\frac{\nu_\tau}{\nu}\nu\pa_\nu(\varphi_{1, \nu}(\zeta) - \varphi_{0, \nu}(\zeta))-\nabla\cdot(W\nabla\Theta_W+P\nabla\Psi_P)\\
    &\quad+\dr{\frac{\nu_\tau}{\nu}-\beta}(\Lambda U_\nu(\zeta) +16\nu^2 \varphi_{0,\nu}(\zeta))+\frac{1}{\br+R/\mu}(\pa_{\br} W - W\pa_{\br}\Phi_W) 
    + \frac{R_\tau}{\mu}\pa_{\br} P+a(\tau)(R_1 - R_0).
\end{split}
\end{align}
    The proof of the estimates relies on the pointwise estimates derived in Proposition \ref{proposition 1}.\\[3pt]
    \underline{Estimate of $\|\tilde E\|_\inn$:} First, by \eqref{prop1: improved est near the origin},
    \begin{align*}
        \|\nu\pa_\nu(\varphi_{1,\nu}-\varphi_{0,\nu})\|^2_{\inn}&\lesssim \int_{0}^{+\infty}\frac{\nu^4\zeta^5\log^4(2+\zeta/\nu)\log^2(4+\zeta)}{(\nu+\zeta)^{8}}e^{-\frac{\zeta^2}{2}}\;d\zeta+\frac{1}{\log^2(\nu)}\int_{0}^{+\infty}\zeta^5\log^2(4+\zeta)e^{-\frac{\zeta^2}{2}}\;d\zeta \\
        &\lesssim \frac{1}{|\log\nu|^2}.
    \end{align*}
    Second, by \eqref{prop1: pointwise est 1}
    \begin{align*}
        \|\Lambda U_\nu + 16\nu^2\varphi_{0,\nu}\|^2_{\inn}& = \|16\nu^2\tilde\varphi_{0,\nu}\|^2_{L^2_{\omega_\nu}}\\
        &\lesssim \nu^4\int_{0}^{+\infty}\frac{\nu^4\zeta^5\log^4(2+\zeta/\nu)\log^2(4+\zeta)}{(\nu+\zeta)^8}e^{-\frac{\zeta^2}{2}}\;d\zeta+\nu^4\int_{0}^{+\infty}\frac{\zeta^5\log^2(4+\zeta)}{|\log\nu|^2(\nu+\zeta)^4}e^{-\frac{\zeta^2}{2}}\;d\zeta\\
        &\lesssim \frac{\nu^4}{|\log\nu|^2}.
    \end{align*}
    Third, by \eqref{prop1: improved est near the origin},
    \begin{align*}
        \|\pa_{\br}P\|^2_{\inn} = \left\|a(\tau)\frac{\br}{\zeta}\pa_\zeta(\varphi_{1,\nu} - \varphi_{0,\nu})\right\|^2_{\inn}
        \lesssim a^2\int\frac{\zeta^2\log^2(4+\zeta)}{(\nu+\zeta)^4}e^{-\frac{\zeta^2}{2}}\;d\br d\bz
        \lesssim a^2|\log\nu|.
    \end{align*}
    Fourth, by \eqref{prop1: pointwise est for R_i}, we have $|R_i(\zeta)(\nu+\zeta)^2e^{-\frac{\zeta^2}{4}}|\lesssim \frac{1}{|\log\nu|(1+\zeta)^4}$ for any $\zeta\geq 0$, and it follows that
    \[
        \|R_i\|_{\inn}\lesssim \frac{1}{|\log\nu|}.
    \]
     Next, we estimate the term $\nabla\cdot(P\nabla\Psi_P) = \pa_\zeta P\pa_\zeta\Psi-P^2$. By \eqref{prop1: improved est near the origin} we obtain the following pointwise estimates ($k=0,1,2$):
    \begin{equation}\label{prop_est of E: pointwise est of P}
        |\pa^k_\zeta P(\zeta)| \lesssim \frac{|a|\zeta^{2-k}\log(2+\zeta)}{(\nu+\zeta)^4}, 
    \end{equation}
    and for $\zeta=\Oc(1)$,
    \begin{align}\label{prop_est of E: poinwise est of nabla Psi_P}
        |\pa_\zeta\Psi_P(\zeta)| = \da{\frac{a(\tau)}{\zeta}\int_{0}^{\zeta}r(\varphi_{1,\nu}(r)-\varphi_{0,\nu}(r))\;dr}
        \lesssim \frac{|a|}{\zeta}\int_{0}^{\zeta}\frac{r^3}{(\nu+r)^4}\;dr
        \lesssim \frac{|a|}{\zeta}\log(1+\zeta^4/\nu^4).
    \end{align}
    Then, we have
    \begin{align*}
        \|\pa_\zeta P\pa_\zeta\Psi_P\|^2_{\inn}&\lesssim a^4\int_{0}^{+\infty}\frac{\zeta\log^2(4+\zeta)}{(\nu+\zeta)^4}\log^2(1+\zeta/\nu)e^{-\frac{\zeta^2}{2}}\;d\zeta \lesssim \frac{a^4}{\nu^2},
    \end{align*}
    and
    \begin{align*}
        \|P^2\|^2_{\inn}&\lesssim a^4\int_{0}^{+\infty}\frac{\zeta^9\log^4(4+\zeta)}{(\nu+\zeta)^{12}}e^{-\frac{\zeta^2}{2}}\; d\zeta\lesssim \frac{
        a^4
        }{\nu^2}.
    \end{align*}
Finally, by Lemma \ref{appendix: difference of 2d 3d Poisson field}, we know that
\[
    \nabla\Phi_W(\zeta) = \nabla\Psi_W(\zeta) + \Oc(\mu^s),\quad\forall\; \zeta>0, 
\]
for some $s>0$. It follows, in particular, that $\|\nabla\Theta_W\|_{L^\infty(\zeta\leq \zeta_*)} = \Oc(\mu^s/\nu^l)$. Thus, the rest terms are all of lower orders (recall that $\mu = \Oc(\nu^k)$ for any $k>0$ ). This, combined with the estimates above, concludes the local $L^2$-estimate of $\tilde E$. Similarly, based on pointwise estimates \eqref{prop1: pointwise est 1}\eqref{prop1: improved est near the origin}\eqref{prop1: pointwise est for R_i}, we can derive the estimates of the local $L^2$-projections.\\[3pt]
\underline{Estimate of $\scl{\tilde E}{\chi_*}$}: First, through integration by parts,
\begin{align*}
    \da{\scl{\nabla\cdot(P\nabla \Psi_P)}{\chi_*}} =\da{\int P\nabla\Psi_P\cdot\nabla \chi_*} \leq a^2|\log\nu|.
\end{align*}
 Second, by the eigenproblem equation, we note that
\begin{align*}
    &\Ls^\zeta_{\nu} \varphi_{0,\nu} = 2\beta\dr{1+\frac{1}{2\log(\nu)}}\varphi_{0,\nu}+R_0,\\
    \Longleftrightarrow \quad&\tilde\varphi_0=\varphi_{0,\nu}+\frac{1}{16\nu^2}\Lambda U_\nu(\zeta)\chi_\nu(\zeta) = \frac{1}{2\beta+\beta/\log(\nu)}\dr{\Ls^\zeta_\nu\varphi_{0,\nu}-R_0}+\frac{1}{16\nu^2}\Lambda U_\nu(\zeta)\chi_\nu(\zeta),
\end{align*}
where
\begin{align*}
    \Ls^\zeta_\nu\varphi_{0,\nu} &= \nabla\cdot\dr{U_\nu\nabla\dr{\frac{\varphi_{0,\nu}}{U_\nu} - \Psi_{\varphi_{0,\nu}}}}-\beta\Lambda\varphi_{0,\nu}\\
     &= \nabla\cdot\dr{U_\nu\nabla\dr{\frac{\tilde\varphi_{0}}{U_\nu} - \Psi_{\tilde\varphi_{0}}}}-\beta\Lambda\tilde\varphi_{0}+\frac{\beta}{16\nu^2}\Lambda^2 U_\nu,\quad \forall\;\zeta\leq \zeta_*.
\end{align*}
By the divergence structure and pointwise estimate \eqref{prop1: pointwise est 1}, we have
\[
    \da{\int\chi_*\Lambda\tilde\varphi_{0}\;d\br d\bz} = \da{\int\zeta\pa_\zeta\chi_*\tilde\varphi_{0}\;d\br d\bz}\lesssim \frac{1}{|\log\nu|},
\]
and
\begin{align*}
    \da{\int\chi_*\nabla\cdot\dr{U_\nu\nabla\dr{\frac{\tilde\varphi_{0}}{U_\nu} - \Psi_{\tilde\varphi_{0}}}}} = \da{\int\pa_\zeta\chi_*U_\nu\pa_\zeta\dr{\frac{\tilde\varphi_{0}}{U_\nu} - \Psi_{\tilde\varphi_{0}}}}\\
     = \da{\int\pa_\zeta\chi_*\dr{\pa_\zeta\tilde\varphi_{0} - \tilde\varphi_{0}\pa_\zeta\Psi_{U_\nu} - U_\nu\pa_\zeta\Psi_{\tilde\varphi_{0}}}}\lesssim\frac{1}{|\log\nu|},
\end{align*}
since by \eqref{prop1: pointwise est 1},
\[
    \da{\pa_\zeta\Psi_{\tilde\varphi_{0}}(\zeta)}\lesssim\da{\frac{1}{\zeta}\int_{0}^{\zeta}\frac{r^3}{(\nu+r)^4|\log\nu|}+\frac{\nu^2r^3\log(1+r/\nu)}{(\nu+r)^6}\;dr}\lesssim \frac{\log(1+\zeta/\nu)}{|\log\nu|\zeta}. 
\]
In addition, due to the cancellation $2U_\nu(\zeta)/\nu^2+\Lambda U_\nu(\zeta)/\nu^2 = \Oc(\nu^2)$ for $\zeta\in  [\zeta_*/2,\zeta_*]$, 
\begin{align*}
    \bigg|\int\chi_*\frac{1}{2+1/\log(\nu)}\cdot\frac{1}{16\nu^2}\Lambda^2 U_\nu&+\frac{1}{16\nu^2}\Lambda U_\nu\bigg| 
    = \da{\int\frac{1}{2+1/\log(\nu)}\cdot\frac{\pa_\zeta\chi_*}{16\nu^2}\zeta\Lambda U_\nu+\frac{\pa_\zeta\chi_*}{16\nu^2}\zeta U_\nu}\\
    &\lesssim \da{\int\frac{\pa_\zeta\chi_*}{32\nu^2}\zeta\Lambda U_\nu+\frac{\pa_\zeta\chi^*}{16\nu^2}\zeta U_\nu}+\frac{1}{|\log\nu|}\cdot\da{\int\frac{\pa_\zeta\chi_*}{\nu^2}\zeta\Lambda U_\nu}
    \lesssim \frac{1}{|\log\nu|},
\end{align*}
and by \eqref{prop1: pointwise est for R_i},
\[
    \da{\int\chi_* R_0\;d\br d\bz}\lesssim\frac{1}{|\log\nu|}.
\]
Combining these estimates we obtain
\[
    \da{\scl{\Lambda U_\nu+16\nu^2\varphi_{0,\nu}}{\chi_*}}\lesssim\frac{\nu^2}{|\log\nu|}.
\]
Note that $\nu\pa_\nu(\nu^{-2}V(\zeta/\nu)) = -\nu^{-2}\Lambda V(\zeta/\nu)$, where $V = V_2$ or $V=\tilde V_2$ in the construction of $\vp_{i,\nu}$ in Proposition \ref{proposition 1}. Exploiting this divergence structure, we have the estimate
\begin{equation*}
    \da{\scl{a\frac{\nu_\tau}{\nu}\nu\pa_\nu(\varphi_{1,\nu}-\varphi_{0,\nu})}{\chi_*}}\lesssim \da{a\frac{\nu_\tau}{\nu}}\dr{\nu^2+\int_{0}^{2\zeta_*}\frac{\zeta^3}{|\log\nu|(\nu+\zeta)^2}\;d\zeta}\lesssim \da{\frac{\nu_\tau}{\nu}}\frac{|a|}{|\log\nu|}.
\end{equation*}
The estimates of the rest terms are more straightforward
\begin{align*}
    &\da{\scl{aR_i}{\chi_*}}\lesssim |a|\int_{0}^{2\zeta_*}\frac{\zeta^3}{(\nu+\zeta)^2}\frac{\log(\zeta)}{|\log(\nu)|}+\frac{\nu\zeta^3}{(\nu+\zeta)^3}\;d\zeta\lesssim \frac{|a|}{|\log\nu|},\\
    &\da{\scl{\pa_{\br}P}{\chi_*}} = 0,
\end{align*}
\underline{Estimate of the rest terms:} The methods are similar, and we briefly summarize them below.\\
For $\scl{\tilde E}{\Lambda U_\nu\chi_*}$:
\begin{align*}
    &\da{\scl{P^2}{\Lambda U_\nu\chi_*}}\lesssim a^2\int_{0}^{2\zeta_*}\frac{\nu^2\zeta^5}{(\nu+\zeta)^{12}}\;d\zeta\lesssim \frac{a^2}{\nu^4},\\
    &\da{\scl{\Lambda U_\nu+16\nu^2\varphi_{0,\nu}}{\Lambda U_\nu\chi_*}}\lesssim \frac{1}{|\log\nu|}\int_{0}^{2\zeta_*}\frac{\nu^4\zeta^3}{(\nu+\zeta)^8}\;d\zeta+\nu^6\int_{0}^{\zeta_*}\frac{\zeta^3\log(1+\zeta/\nu)}{(\nu+\zeta)^{10}}\;d\zeta\lesssim 1,\\
    &\da{\scl{a\frac{\nu_\tau}{\nu}\nu\pa_\nu(\varphi_{1,\nu}-\varphi_{0,\nu})}{\Lambda U_\nu\chi_*}}\lesssim \da{a\frac{\nu_\tau}{\nu}}\int_{0}^{2\zeta_*}\frac{\nu^4\zeta^3}{(\nu+\zeta)^{10}}+\frac{\nu^2\zeta^3}{|\log\nu|(\nu+\zeta)^6}\;d\zeta\lesssim \da{\frac{\nu_\tau a}{\nu^3}},\\
    &\da{\scl{\pa_\zeta P\pa_\zeta\Psi_P}{\Lambda U_\nu\chi_*}}\lesssim a^2\int_{0}^{2\zeta_*}\frac{\nu^2\zeta\log(1+\zeta/\nu)}{(\nu+\zeta)^8}\;d\zeta\lesssim \frac{a^2}{\nu^4},\\
    &\da{\scl{aR_i}{\Lambda U_\nu\chi_*}}\lesssim |a|\int_{0}^{2\zeta_*}\frac{\nu^2\zeta^3\log(\zeta)}{|\log\nu|(\nu+\zeta)^6}+\frac{\nu^3\zeta^3}{(\nu+\zeta)^7}\;d\zeta\lesssim |a|,\\
    &\da{\scl{\pa_{\br}P}{\Lambda U_\nu\chi_*}} =0 .
\end{align*}
 For $\scl{\tilde E}{\pa_{\br}U_\nu\chi_*}$:
\begin{align*}
    \da{\scl{\pa_{\br}P}{\pa_{\br}U_\nu\chi_*}}\lesssim a\int\frac{\nu^2\br^2\chi_*}{(\nu+\zeta)^{10}}\;d\br d\bz \lesssim \frac{a}{\nu^4},
\end{align*}
and the other terms are all zero.
\end{proof}

\subsection{Bootstrap Regime and Modulation Equations}
As we will see in the energy estimates, it only suffices to estimate the gradient of $\eps$ in the region $\zeta\lesssim 1$. Thus, we define 
\[
    \eps^*:= \chi^*\eps.
\]
Recall that $\chi^*(\zeta)=\chi(\zeta/\zeta^*)$ and
\[
    \|f\|^2_{L^2(U_\nu)}:=\int\frac{\nu^2f^2}{U_\nu}.
\]
Now, we are ready to set up our bootstrap assumptions:
\begin{definition}[Bootstrap]\label{definition: bootstrap}
We say that a solution $w$ of \eqref{Parabolic System} lies in the bootstrap regime $\BS(\tau_0,\tau_*, \zeta^*, M_0, \{K_i\}_{i=1}^7)$ if it satisfies the following: on time interval $[\tau_0,\tau_*]$, $w$ admits the decomposition \eqref{decomp of fully approx solution} where the perturbation $\eps$ satisfies the local orthogonality conditions \eqref{local orthorgonality conditions}. In addition, the following estimates hold on $[\tau_0,\tau_*]$:\\
(\romannumeral 1) \textup{(Modulation parameters)}
\begin{align*}
    \frac{1}{K_1}e^{-\sqrt{\beta\tau+M_0}}\leq&\nu(\tau)\leq K_1e^{-\sqrt{\beta\tau+M_0}},\\
    |a(\tau)&-8\nu^2(\tau)| \leq \frac{K_2\nu^2}{|\log\nu|},\\
    &\da{\frac{R_\tau}{\mu}} \leq\frac{K_3\nu}{|\log\nu|}
\end{align*}
(\romannumeral 2) \textup{(Remainders)}
\begin{align*}
    &\left\|\eps\right\|_{\inn} \leq K_4\frac{\nu^2}{|\log\nu|},\\
    &\|\nabla\eps^*\|_{L^2(U_\nu)} \leq K_5\frac{\nu^2}{|\log\nu|},\\
    &\|\eps\|_{H^2(\frac{1}{2}\zeta^*\leq\zeta\leq 4\zeta^*)}\leq K_6\frac{\nu^2}{|\log\nu|},\\
    &\|\eps(1+\zeta)^\frac{3}{2}\|_{L^{\infty}(\zeta\geq\zeta^*)}\leq \frac{K_7}{\sqrt{\beta\tau+M_0}}e^{-2\sqrt{\beta\tau+M_0}}.
\end{align*}
\end{definition}
Note that by the bootstrap assumptions on $\nu$ and $ \|\eps(1+\zeta)^\frac{3}{2}\|_{L^{\infty}(\zeta\geq\zeta^*)}$, we have
\[
    \|\eps(1+\zeta)^\frac{3}{2}\|_{L^{\infty}(\zeta\geq\zeta^*)}\leq \frac{K_7C(K_1)\nu^2}{|\log\nu|}.
\]
The reason why we make such assumption on $ \|\eps(1+\zeta)^\frac{3}{2}\|_{L^{\infty}(\zeta\geq\zeta^*)}$ is a technical treatment to avoid the oscillatory behavior of $\nu$ in time when doing integration, the details of which can be found in Lemma \ref{lemma: L infty control of eps}.  

\begin{lemma}[Modulation equations]\label{Lemma: modulation equations}
Assume that the solution is in the bootstrap regime \\$\BS(\tau_0,\tau_*, \zeta^*, M_0, \{K_i\}_{i=1}^7)$ defined in Definition \ref{definition: bootstrap}. Then, the following estimates hold for any $\tau\in[\tau_0,\tau_*]$:
\begin{align}\label{Lemma modulation eq: inequalities}
\begin{split}
    &|\mod_0|=\da{a_\tau - 2a\beta\dr{1+\frac{1}{2\log(\nu)}}-16\nu^2\dr{\frac{\nu_\tau}{\nu}-\beta}} \leq C\dr{\|\eps\|_\inn+\|\nabla\eps^*\|_{L^2(U_\nu)}}+\frac{C(K_i)\nu^2}{|\log\nu|^2},\\
    &|\mod_1|=\da{-a_\tau +\frac{a(\tau)\beta}{\log(\nu)}} \leq \frac{C(K_1,K_2,K_4,K_5)\nu^2}{|\log\nu|^2}+\frac{C(K_i)\nu^2}{|\log\nu|^3},\\
    &\da{\frac{R_\tau}{\mu}} \leq \frac{C(K_4,K_5)\nu}{|\log\nu|}+\frac{C(K_i)\nu}{|\log\nu|^2}.
\end{split}
\end{align}
\end{lemma}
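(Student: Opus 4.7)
The plan is to project the evolution equation \eqref{evolution of epsilon} against the three test functions dual to the orthogonality conditions, namely $\psi_0 := \chi_*$, $\psi_1 := \Lambda U_\nu \chi_*$, and $\psi_2 := \pa_{\br} U_\nu \chi_*$ (the additional projection against $\pa_{\bz}U_\nu\chi_*$ is trivial by evenness of $\eps$ in $\bz$). Differentiating the identity $\scl{\eps}{\psi_j}\equiv 0$ in $\tau$ and substituting the evolution equation gives
\[
-\scl{\eps}{\pa_\tau\psi_j} = \scl{\Ls_\nu^\zeta\eps}{\psi_j} + \scl{L(\eps)}{\psi_j} + \scl{NL(\eps)}{\psi_j} + \scl{E}{\psi_j}.
\]
Since only $\nu$ depends on $\tau$ in the $\psi_j$'s, the scaling identity $\pa_\tau U_\nu = -(\nu_\tau/\nu)\Lambda U_\nu$ makes $\pa_\tau\psi_j$ explicit up to the factor $\nu_\tau/\nu$; Cauchy--Schwarz then bounds the left-hand side by $|\nu_\tau/\nu|\cdot\|\eps\|_\inn$ times a benign $\nu$-power. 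On the right-hand side, $\scl{\Ls_\nu^\zeta\eps}{\psi_j}$ is treated by transferring derivatives onto the smooth, compactly-supported $\psi_j$ via integration by parts, yielding a bound of the form $C(\|\eps\|_\inn + \|\nabla\eps^*\|_{L^2(U_\nu)})$ times a $\nu$-power; the extra linear and nonlinear terms $\scl{L(\eps)}{\psi_j}$, $\scl{NL(\eps)}{\psi_j}$ are controlled similarly using the bootstrap on $\eps$ and the modulation parameters.

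Inserting $E = \mod_0\vp_{0,\nu} + \mod_1\vp_{1,\nu} + (R_\tau/\mu)\pa_{\br}U_\nu + \tilde E$ from Proposition \ref{prop: decomp of the gene error} turns the three projected identities into a $3\times 3$ linear system in $(\mod_0,\mod_1, R_\tau/\mu)$. Radial symmetry of $\vp_{i,\nu}$ together with $\br\mapsto-\br$ parity produces the vanishings $\scl{\pa_{\br}U_\nu}{\chi_*} = \scl{\pa_{\br}U_\nu}{\Lambda U_\nu\chi_*} = \scl{\vp_{i,\nu}}{\pa_{\br}U_\nu\chi_*} = 0$, so the system decouples into an isolated equation for $R_\tau/\mu$ plus a $2\times 2$ block for $(\mod_0, \mod_1)$. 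The $R_\tau/\mu$ equation is immediate: its coefficient $\scl{\pa_{\br}U_\nu}{\pa_{\br}U_\nu\chi_*}$ is comparable to $\nu^{-4}\|\pa_\rho U\|_{L^2(\reall^2)}^2 = c\nu^{-4}$ after rescaling $\br=\nu\rho$, and dividing the right-hand side---bounded by $C(K_4,K_5)\nu^3/|\log\nu| + C(K_i)\nu^3/|\log\nu|^2$ via the $\eps$-estimates and $\scl{\tilde E}{\pa_{\br}U_\nu\chi_*}$ from \eqref{Prop3: local L2 estimates of projections}---by this coefficient produces the claimed $R_\tau/\mu$ inequality.

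The main technical obstacle lies in the near-singular $2\times 2$ block for $(\mod_0,\mod_1)$: by Proposition \ref{proposition 1}, $\vp_{0,\nu}$ and $\vp_{1,\nu}$ share the dominant inner profile $-(16\nu^4)^{-1}\Lambda U(\cdot/\nu)\chi_\nu$, so naively the two columns coincide at leading order. An asymptotic analysis using the pointwise bounds \eqref{prop1: pointwise est 1} plus integration by parts (which yields $\scl{\Lambda U_\nu}{\chi_*}=\Oc(\nu^2)$) shows that the column against $\psi_1 = \Lambda U_\nu\chi_*$ has both entries of size $-c/\nu^4$, while the column against $\psi_0 = \chi_*$ has entries of sizes $\Oc(1)$ and $\Oc(|\log\nu|)$ for $\vp_{0,\nu}$ and $\vp_{1,\nu}$ respectively---the $|\log\nu|$-gap stemming from the extra $|\log\nu|^{-1}$ factor enjoyed by $\tilde\vp_0$ but not by $\tilde\vp_1$ in \eqref{prop1: pointwise est 1}. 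The determinant is then of order $|\log\nu|/\nu^4$, sufficient to invert. Solving, the equation for $\mod_0$ inherits the $\nu^{-4}$-amplified contribution from $\scl{\Ls_\nu^\zeta\eps}{\Lambda U_\nu\chi_*}$, producing the $C(\|\eps\|_\inn + \|\nabla\eps^*\|_{L^2(U_\nu)})$ dependence together with a leftover $\nu^2/|\log\nu|^2$ from the $\tilde E$-projections. On the other hand, the linear combination isolating $\mod_1$ cancels the leading $\Lambda U_\nu$ amplification and picks up only $\scl{\tilde E}{\chi_*}\lesssim \nu^2/|\log\nu|$ (from Proposition \ref{prop: decomp of the gene error}) divided by the $|\log\nu|$-sized coefficient, yielding the sharper $\nu^2/|\log\nu|^2$ bound, with the residual $C(K_i)\nu^2/|\log\nu|^3$ collecting the subleading $\eps$-contributions that remain after cancellation. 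Substituting the bootstrap bounds on $a$, $\nu_\tau/\nu$, and $R_\tau/\mu$ to simplify all $\nu$-factors to the stated form concludes the argument.
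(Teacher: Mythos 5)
Your proposal is correct and follows essentially the same route as the paper: you differentiate the three local orthogonality conditions, project the $\eps$-equation onto $\chi_*$, $\Lambda U_\nu\chi_*$ and $\pa_{\br}U_\nu\chi_*$, decouple $R_\tau/\mu$ by parity, and exploit the $|\log\nu|$-gap between $\scl{\vp_{0,\nu}}{\chi_*}=\Oc(1)$ and $\scl{\vp_{1,\nu}}{\chi_*}\gtrsim|\log\nu|$ (traced, as in the paper, to the extra $|\log\nu|^{-1}$ in $\tilde\vp_0$), so that your $2\times 2$ inversion with determinant $\sim|\log\nu|/\nu^4$ is just the paper's sequential substitution ($|\mod_1|\lesssim|\mod_0|/|\log\nu|+\dots$ from the $\chi_*$-projection, $|\mod_0|\lesssim|\mod_1|+\dots$ from the $\Lambda U_\nu\chi_*$-projection) written as Cramer's rule. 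The one blemish is an arithmetic slip in the $R_\tau/\mu$ step: the projected right-hand side is of size $\sim C(K_4,K_5)\,\nu^{-3}|\log\nu|^{-1}$ under the bootstrap (the $\eps$-projections against $\pa_{\br}U_\nu\chi_*$ carry a $\nu^{-5}$ amplification, and $\scl{\tilde E}{\pa_{\br}U_\nu\chi_*}\lesssim|a R_\tau/(\nu^4\mu)|$ is absorbed by the main term), not $\nu^3/|\log\nu|$ as written—only with the former does division by the coefficient $\sim\nu^{-4}$ give the stated bound $\nu/|\log\nu|$, so the method and conclusion are unaffected.
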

\begin{proof}
    The strategy of the proof is the following: Since the evolution of the modulation parameters is determined by the preservation of the (local) orthogonality conditions \eqref{local orthorgonality conditions}, we take time derivatives of the orthogonality equations and use energy bounds for $\eps$ to obtain the desired estimates.\\[3pt]
    \underline{Estimate of $\mod_1$ by projection to $\chi_*$:}
   By the orthogonality condition \eqref{local orthorgonality conditions}, we obtain 
    \begin{equation}\label{lemma1: 1}
        0 = \frac{d}{d\tau}\scl{\eps}{\chi_*} = \scl{\pa_\tau\eps}{\chi_*} = \scl{\Ls^\zeta_\nu\eps+L(\eps)+NL(\eps)+E}{\chi_*}.
    \end{equation}
    Recall that (as $\scl{\pa_{\br}U_\nu}{\chi_*}=0$)
    \[
        \scl{E}{\chi_*} = \mod_0\scl{\varphi_{0,\nu}}{\chi_*}+\mod_1\scl{\varphi_{1,\nu}}{\chi_*}+\scl{\tilde E}{\chi_*}.
    \]
    Then, by \eqref{inner eigenfunction} \eqref{Prop3: local L2 estimates of projections} and the fact that $\scl{\frac{1}{\nu^2}\Lambda U_\nu}{\chi_*} = \Oc(1)$, we have
    \begin{equation}\label{lemma1: 2}
        |\scl{\varphi_{0,\nu}}{\chi_*}|\lesssim 1,\quad |\scl{\varphi_{1,\nu}}{\chi_*}|\gtrsim |\log\nu|,\quad \da{\scl{\tilde E}{\chi_*}}\lesssim \frac{\nu^2+|a|}{|\log\nu|}+\da{a\frac{\nu_\tau}{\nu}}+C(K_i)\nu^3.
    \end{equation}
    We remark that the gain of $|\log\nu|$ in $\scl{\vp_{1,\nu}}{\chi_*}$ will enable us to control $\mod_1$ by $\mod_0$ and gain a $|\log\nu|$ smallness in the estimate of $\mod_1$. Next, we estimate the terms containing $\eps$. Since the operator $\Ms^\zeta_\nu$ is self-adjoint in $(L^2(\reall^2),\scl{\cdot}{\cdot})$, which follows from the self-adjointness of $(-\Delta)^{-1}$, we have
    \begin{align*}
        |\scl{\Ls^\zeta_\nu\eps}{\chi_*}| &= \da{\scl{U_\nu\nabla\Ms^\zeta_\nu(\eps)-\beta y\eps}{\nabla\chi_*}}\\
        &\leq\da{\scl{\Ms^\zeta_\nu(\eps)}{\nabla\cdot (U_\nu\nabla\chi_*)}}+\da{\scl{\beta y\eps}{\nabla\chi_*}}.
    \end{align*}
    Since $\nabla\cdot (U_\nu\nabla\chi_*)$ is compactly supported in $[\zeta_*,2\zeta_*]$ and is of size $\Oc(\nu^2)$, i.e., $|\nabla\cdot (U_\nu\nabla\chi_*)|\lesssim \nu^2\one_{\{\zeta_*\leq\zeta\leq2\zeta_*\}}$, by \eqref{Lemma_est of M: L^2 est} we have the estimate:
    \[
        \da{\scl{\Ms^\zeta_\nu(\eps)}{\nabla\cdot (U_\nu\nabla\chi_*)}}\lesssim \int_{\{\zeta_*\leq\zeta\leq 2\zeta_*\}} |\eps| + \da{\scl{\nabla\Psi_\eps}{U_\nu\nabla\chi_*}}\lesssim \|\eps\|_\inn+\nu^2\|\eps(1+\zeta)^\frac{3}{2}\|_{L^\infty(\zeta\geq \zeta^*)},
    \]
    where we use the pointwise estimate of the Poisson field by \eqref{appendix: pointwise est of Poisson field radial} and \eqref{appendix: pointwise est of Poisson field nonradial}:
    \begin{equation}\label{Lemma_mod eq: L^infty est of nabla Psi_eps in the middle range}
        \|\nabla\Psi_\eps\|_{L^\infty(\zeta_*\leq \zeta\leq 2\zeta_*)} \lesssim \dr{\int \eps^2(1+\zeta)^\frac{1}{3}}^\frac{1}{2}\lesssim \frac{1}{\nu^2}\|\eps\|_\inn+\|\eps(1+\zeta)^\frac{3}{2}\|_{L^\infty(\zeta\geq \zeta^*)}. 
    \end{equation}
    It follows that
    \begin{equation}\label{lemma1: 3}
        |\scl{\Ls^\zeta_\nu\eps}{\chi_*}| \lesssim \|\eps\|_{\inn} + \nu^2\|\eps(1+\zeta)^\frac{3}{2}\|_{L^\infty(\zeta\geq \zeta_*)}.
    \end{equation}
    Due to Lemma \ref{appendix: difference of 2d 3d Poisson field}, we neglect the terms of order $\Oc(\mu^s)$ and estimate the rest terms in $L(\eps)$. By \eqref{Lemma_mod eq: L^infty est of nabla Psi_eps in the middle range} and the pointwise estimates \eqref{prop_est of E: pointwise est of P} and \eqref{prop_est of E: poinwise est of nabla Psi_P}, we have 
    \begin{align}\label{lemma1: 4}
    \begin{split}
        \quad\da{\scl{-\nabla\cdot(P\nabla\Psi_\eps+\eps\nabla(\Psi_W - \Psi_{U_\nu}))+\frac{R_\tau}{\mu}\pa_{\br}\eps}{\chi_*}} &=\da{\scl{P\nabla\Psi_\eps+\eps\nabla\Psi_P+\frac{R_\tau}{\mu}e_1\eps}{\nabla\chi_*}}  \\
        &\lesssim \dr{\frac{|a|}{\nu^2}+\da{\frac{R_\tau}{\mu}}}\|\eps\|_{\inn} + |a|\cdot\|\eps(1+\zeta)^\frac{3}{2}\|_{L^\infty(\zeta\geq \zeta_*)}. 
    \end{split}
    \end{align}
    Finally, for the nonlinear terms, neglect the part of order $\Oc(\mu)$. By Cauchy's inequality and \eqref{Lemma_mod eq: L^infty est of nabla Psi_eps in the middle range}, we have:
    \begin{align}\label{lemma1: 5}
        \da{\scl{-\nabla\cdot(\eps\nabla\Psi_\eps)}{\chi_*}} = \da{\scl{\eps\nabla\Psi_\eps}{\nabla\chi_*}}
        &\leq \|\eps\|_{\inn}\|\nabla\Psi_\eps\|_{L^\infty(\zeta_*\leq \zeta\leq 2\zeta_*)}\nonumber\\
        &\leq \dr{\frac{1}{\nu^2}\|\eps\|_{\inn}+\|\eps(1+\zeta)^\frac{3}{2}\|_{L^\infty(\zeta\geq\zeta^*)}}\|\eps\|_{\inn}.
    \end{align}
    By \eqref{lemma1: 1} and collecting all the estimates \eqref{lemma1: 2}\eqref{lemma1: 3}\eqref{lemma1: 4}\eqref{lemma1: 5}, we obtain
    \begin{align}\label{lemma1: mod_1 equation}
        |\mod_1| &\lesssim\frac{1}{|\log\nu|}\bigg(|\mod_0|+\dr{1+\frac{|a|}{\nu^2}+\da{\frac{R_\tau}{\mu}}}\|\eps\|_\inn+(\nu^2+|a|)\|\eps(1+\zeta)^\frac{3}{2}\|_{L^\infty(\zeta\geq\zeta^*)}\nonumber\\
        &\quad+\frac{1}{\nu^2}\|\eps\|^2_\inn+\frac{1}{\nu^2}\|\eps\|_\inn\|\eps(1+\zeta)^\frac{3}{2}\|_{L^\infty(\zeta\geq\zeta^*)}+\frac{\nu^2+|a|}{|\log\nu|}+\da{a\frac{\nu_\tau}{\nu}}+C(K_i)\nu^3\bigg)\nonumber\\
        &\lesssim \frac{|\mod_0|}{|\log\nu|}+\frac{\nu^2C(K_1,K_2,K_4)}{|\log\nu|^2}+\frac{C(K_i)\nu^2}{|\log\nu|^3}.
    \end{align}
    \underline{Estimate of $\mod_0$ by projection to $\Lambda U_\nu\chi_*$:}
    Similarly, we compute
    \begin{align}\label{lemma1: 6}
    0 &= \frac{d}{d_\tau}\scl{\eps}{\Lambda U_\nu\chi_*} = \scl{\pa_\tau\eps}{\Lambda U_\nu\chi_*}+\scl{\eps}{\pa_\tau\Lambda U_\nu\chi_*} \nonumber\\&=\scl{\Ls^\zeta_\nu\eps+L(\eps)+NL(\eps)+E}{\Lambda U_\nu\chi_*} +\scl{\eps}{\frac{\nu_\tau}{\nu}\nu\pa_\nu\Lambda U_\nu\chi_*}. 
    \end{align}
    Note, by \eqref{inner eigenfunction} and \eqref{Prop3: local L2 estimates of projections}, that
    \begin{equation}
        \da{\scl{\varphi_{i,\nu}}{\Lambda U_\nu\chi_*}} \gtrsim \frac{1}{\nu^2}\scl{\Lambda U_\nu}{\Lambda U_\nu\chi_*}\gtrsim \frac{1}{\nu^4},\quad \da{\scl{\tilde E}{\Lambda U_\nu\chi_*}}\lesssim 1+\frac{a^2}{\nu^4}+\frac{C(K_i)}{|\log\nu|}.
    \end{equation}
    Through integration by parts and Cauchy's inequality, 
    \begin{align}
        \quad\da{\scl{\Ls^\zeta_{0,\nu}\eps}{\Lambda U_\nu\chi_*}}&=\da{\scl{\nabla\eps-\eps\nabla\Psi_{U_\nu}-U_\nu\nabla\Psi_\eps}{\nabla(\Lambda U_\nu\chi_*)}}\nonumber\\
        &\leq \dr{\|\eps\|_\inn+\|\nabla\eps^*\|_{L^2(U_\nu)}}\da{\scl{\frac{U_\nu}{\nu^2}}{(\nabla(\Lambda U_\nu\chi_*))^2}}^{\frac{1}{2}}+\da{\scl{\nabla\Psi_\eps U_\nu}{\nabla(\Lambda U_\nu\chi_*)}}\nonumber\\
        &\lesssim \frac{1}{\nu^4} \dr{\|\eps\|_\inn+\|\nabla\eps^*\|_{L^2(U_\nu)}}+\frac{1}{\nu^3}\|\eps(1+\zeta)^\frac{3}{2}\|_{L^\infty(\zeta\geq\zeta^*)},
    \end{align}
    where we apply pointwise estimates of Poisson field \eqref{appendix: pointwise est of Poisson field radial and nonradial, parabolic} \eqref{appendix: pointwise est of the gradient of Poisson field based on L infty of u} to estimate
    \begin{align*}
        \da{\int \frac{\nu^4\nabla\Psi_\eps}{(\nu+\zeta)^9}}&\lesssim \da{\int \frac{\nu^4(1+\zeta/\nu)}{(\nu+\zeta)^{10}}\dr{\int \eps^*(\nu+\zeta)^2}^\frac{1}{2}}+\da{\int\frac{\nu^4}{(\nu+\zeta)^9}}\cdot\|\eps(1+\zeta)^\frac{3}{2}\|_{L^\infty(\zeta\geq\zeta^*)}\\
        &\lesssim \frac{1}{\nu^4}\|\nabla\eps^*\|_{L^2(U_\nu)}+\frac{1}{\nu^3}\|\eps(1+\zeta)^\frac{3}{2}\|_{L^\infty(\zeta\geq\zeta^*)}.
    \end{align*}
    Moreover,
    \begin{equation}
        \da{\scl{\Lambda\eps}{\Lambda U_\nu\chi_*}}\leq \da{\scl{\frac{U_\nu}{\nu^2}}{(\y\cdot\nabla(\Lambda U_\nu\chi_*))^2}}^{\frac{1}{2}}\cdot\|\eps\|_{\inn} \lesssim \frac{1}{\nu^3}\|\eps\|_{\inn}. 
    \end{equation}
    For $L(\eps)$, by \eqref{prop1: improved est near the origin} and similar methods as adapted above:
    \begin{align*}
        &\da{\scl{\nabla P\cdot\nabla \Psi_\eps}{\Lambda U_\nu\chi_*}} \lesssim \frac{|a|}{\nu^4}\|\nabla\eps^*\|_{L^2(U_\nu)}+\frac{|a|}{\nu^3}\|\eps(1+\zeta)^\frac{3}{2}\|_{L^\infty(\zeta\geq\zeta^*)},\\  
        &\da{\scl{P\eps}{\Lambda U_\nu\chi_*}} \lesssim \da{\scl{\frac{U_\nu}{\nu^2}}{(P\Lambda U_\nu\chi_*)^2}}^{\frac{1}{2}}\|\eps\|_{\inn}\leq \frac{|a|}{\nu^5}\|\eps\|_{\inn},\\
        &\da{\scl{\nabla\eps\cdot\nabla\Psi_P}{\Lambda U_\nu\chi_*}} \lesssim \da{\scl{\frac{U_\nu}{\nu^2}}{(\nabla\Psi_P\Lambda U_\nu\chi_*)^2}}^{\frac{1}{2}}\|\nabla\eps^*\|_{\inn}\leq \frac{|a|}{\nu^4}\|\nabla\eps^*\|_{\inn},\\
        &\da{\scl{\frac{R_\tau}{\mu}\pa_{\br}\eps}{\Lambda U_\nu\chi_*}}\lesssim \frac{1}{\nu^3}\da{\frac{R_\tau}{\mu}}\|\nabla\eps^*\|_{\inn}.
    \end{align*}
    As for the nonlinear term,
    \begin{align*}
         \da{\scl{\nabla\eps\cdot\nabla\Psi_\eps}{\Lambda U_\nu\chi_*}} &\lesssim \|\nabla\eps^*\|_{L^2(U_\nu)}\dr{\int \frac{U_\nu}{\nu^2}(\Lambda U_\nu)^2|\nabla\Psi_\eps|^2}^\frac{1}{2}\\
         &\lesssim \frac{1}{\nu^4}\|\nabla\eps^*\|^2_\inn+\frac{1}{\nu^3}\|\nabla\eps^*\|_{L^2(U_\nu)}\|\eps(1+\zeta)^\frac{3}{2}\|_{L^\infty(\zeta\geq\zeta^*)},
    \end{align*}
    and
    \begin{align*}
        \da{\scl{\eps^2}{\Lambda U_\nu\chi_*}}\lesssim \frac{1}{\nu^6}\|\eps\|^2_\inn.
    \end{align*}
    At last, 
    \begin{equation*}
        \da{\scl{\eps}{\frac{\nu_\tau}{\nu}\nu\pa_\nu\Lambda U_\nu\chi_*}}\lesssim \frac{1}{\nu^3}\da{\frac{\nu_\tau}{\nu}}\|\eps\|_{\inn}.
    \end{equation*}
    Inserting all these estimates into \eqref{lemma1: 6}, we obtain
    \begin{equation}\label{lemma1: mod_0 equation}
        |\mod_0|\lesssim |\mod_1|+ \|\eps\|_\inn+\|\nabla\eps^*\|_{L^2(U_\nu)}+\frac{C(K_i)\nu^2}{|\log\nu|^2}.
    \end{equation}
    Combining with \eqref{lemma1: mod_1 equation}, we can further refine
    \begin{align*}
        &|\mod_0|\leq C\dr{\|\eps\|_\inn+\|\nabla\eps^*\|_{L^2(U_\nu)}}+ \frac{C(K_i)\nu^2}{|\log\nu|^2},\\
        &|\mod_1|\leq \frac{C(K_1,K_2,K_4,K_5)\nu^2}{|\log\nu|^2}+\frac{C(K_i)\nu^2}{|\log\nu|^3}.
    \end{align*}
    \underline{Estimate of $\frac{R_\tau}{\mu}$ by projection to $\pa_{\br}U_\nu\chi_*$:} As before, we compute $0=\pa_\tau\scl{\eps}{\pa_{\br}U_\nu}$. Note that
    \begin{equation}
        \scl{E}{\pa_{\br}U_\nu\chi_*} = \frac{R_\tau}{\mu}\scl{\pa_{\br}U_\nu}{\pa_{\br}U_\nu\chi_*}+\scl{\tilde E}{\pa_{\br}U_\nu\chi_*},
    \end{equation}
    where
    \begin{equation}
        \scl{\pa_{\br}U_\nu}{\pa_{\br}U_\nu\chi_*}\gtrsim\frac{1}{\nu^4},\quad \da{\scl{\tilde E}{\pa_{\br}U_\nu\chi_*}}\lesssim \da{\frac{R_\tau}{\nu^2\mu}}.
    \end{equation}
    The estimates of the scalar products with terms containing $\eps$ are similar, with everything amplified by $1/\nu$ compared to the scalar products with $\Lambda U_\nu$ (since $|\pa_{\br}U_\nu|\lesssim \frac{1}{\nu}|\Lambda U_\nu|$ for any $\zeta\leq 2\zeta^*$), and we will not repeat them here. To summarize, we have
    \begin{equation}\label{lemma1: mod_R equation}
        \da{\frac{R_\tau}{\mu}}\leq \frac{C(K_4,K_5)\nu}{|\log\nu|}+\frac{C(K_i)\nu}{|\log\nu|^2}.
    \end{equation}
\end{proof}
A direct consequence of the modulation estimates is the following control on $|\frac{\nu_\tau}{\nu}|$.
\begin{corollary}
    Assume that the solution is in the bootstrap regime given in Definition \ref{definition: bootstrap}. Then, it holds that
    \begin{equation}\label{cor of mod est: est of nu_tau}
        \da{\frac{\nu_\tau}{\nu}}\leq \frac{C(K_2,K_4,K_5)}{|\log\nu|}.
    \end{equation}
\end{corollary}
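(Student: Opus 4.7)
The plan is to combine the two modulation identities for $\mod_0$ and $\mod_1$ in Lemma \ref{Lemma: modulation equations} so that the unknown $a_\tau$ cancels, thereby isolating an explicit expression for $\nu_\tau/\nu$ in terms of quantities already controlled by the bootstrap. Adding the defining equalities gives
\[
\mod_0+\mod_1 = -2a\beta - 16\nu^2\left(\frac{\nu_\tau}{\nu}-\beta\right),
\]
since the $\frac{a\beta}{\log\nu}$ contribution from $\mod_1$ cancels the $\frac{a\beta}{\log\nu}$ piece in $\mod_0$. Solving for $\nu_\tau/\nu$ yields
\[
\frac{\nu_\tau}{\nu} = \beta - \frac{a\beta}{8\nu^2} - \frac{\mod_0+\mod_1}{16\nu^2}.
\]

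Next I would substitute the bootstrap bound $|a-8\nu^2| \leq K_2\nu^2/|\log\nu|$ from Definition \ref{definition: bootstrap}, which shows $\frac{a\beta}{8\nu^2} = \beta + \Oc(K_2/|\log\nu|)$, so the first two terms on the right collapse to an $\Oc(K_2/|\log\nu|)$ quantity. For the remaining term, I would feed the bootstrap bounds $\|\eps\|_\inn \leq K_4\nu^2/|\log\nu|$ and $\|\nabla\eps^*\|_{L^2(U_\nu)}\leq K_5\nu^2/|\log\nu|$ into the modulation estimates \eqref{Lemma modulation eq: inequalities} to get $|\mod_0|\leq C(K_4,K_5)\nu^2/|\log\nu|$ and $|\mod_1|\leq C(K_1,K_2,K_4,K_5)\nu^2/|\log\nu|^2$. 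Thus $|\mod_0+\mod_1|/(16\nu^2) \leq C(K_2,K_4,K_5)/|\log\nu|$.

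Collecting both contributions gives the desired bound $|\nu_\tau/\nu|\leq C(K_2,K_4,K_5)/|\log\nu|$. There is no substantial obstacle here: the derivation is purely algebraic, the only minor point is checking that the $\frac{a\beta}{\log\nu}$ terms genuinely cancel in $\mod_0+\mod_1$ (they do, because $\mod_0$ contains $-2a\beta\cdot\frac{1}{2\log\nu}=-\frac{a\beta}{\log\nu}$ while $\mod_1$ contains $+\frac{a\beta}{\log\nu}$), and that the $C(K_i)\nu^2/|\log\nu|^2$ remainder from \eqref{Lemma modulation eq: inequalities} divided by $\nu^2$ is absorbed into the $C/|\log\nu|$ target bound.
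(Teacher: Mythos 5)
Your proposal is correct and is essentially the paper's own argument: the paper likewise forms $\mod_0+\mod_1$ (where the $\frac{a\beta}{\log\nu}$ terms cancel), inserts the bootstrap bound $|a-8\nu^2|\leq \frac{K_2\nu^2}{|\log\nu|}$, and uses the modulation estimates \eqref{Lemma modulation eq: inequalities} together with the bootstrap bounds on $\|\eps\|_\inn$ and $\|\nabla\eps^*\|_{L^2(U_\nu)}$ to conclude. You merely spell out the algebra that the paper leaves implicit, including the (standard) absorption of the $C(K_i)\nu^2/|\log\nu|^2$ remainders for $M_0$ large.
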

\begin{proof}
    Inserting $|a-8\nu^2|\leq \frac{K_2\nu^2}{|\log\nu|}$ into the estimate of $|\mod_0+\mod_1|$ given by \eqref{Lemma modulation eq: inequalities}, the result follows.
\end{proof}

\subsection{Energy Estimates}

\subsubsection{$L^2$ Inner Estimate}
\newcommand{\wei}{\chi_\nu\sqrt{\vr_\nu}} 
Now we establish an important $L^2$-monotonicity result for $\eps$. One technical treatment is needed to avoid a loophole in the energy estimates: Due to an incompatibility between the decomposition of generated error and the local orthogonality conditions, the modulation estimates are not small enough to close the $L^2$ energy estimate. However, by projecting out the direction of the first approximate eigenfunction, we are able to get rid of this issue. This is possible thanks to the special structure of 
the adapted inner product as well as the slow decay of the kernel (e.g. $\gamma^2\Lambda U$ does not belong to $L^1$), which makes the aforementioned projection an acceptable modification to the original norm. \\
Note, by Lemma \ref{lemma: properties of the adapted inn product} and orthogonality conditions \eqref{local orthorgonality conditions}, we have
\[
    \int\frac{\eps^2(\chi_\nu)^2\vr_\nu}{U_\nu}-C\nu^2\int\frac{\eps^2(\chi_\nu)^2\vr_\nu}{U_\nu}\lesssim\int \eps\chi_\nu\sqrt{\vr_\nu}\Ms_\nu^\zeta(\eps\chi_\nu\sqrt{\vr_\nu}) \lesssim\int\frac{\eps^2(\chi_\nu)^2\vr_\nu}{U_\nu}.
\]
Thus, $ \scl{\eps}{\eps}_{\nu,*}=\int \eps\chi_\nu\sqrt{\vr_\nu}\Ms_\nu^\zeta(\eps\chi_\nu\sqrt{\vr_\nu})$ is  equivalent to the norm $\frac{1}{\nu^2}\|\eps\|^2_\inn$.
Define
\[
    d_0:= \frac{\int \wei\varphi_{0,\nu}\Ms^\zeta_\nu(\eps\wei)}{\int \wei\varphi_{0,\nu}\Ms^\zeta_\nu(\wei\varphi_{0,\nu})}\lesssim \frac{\|\eps\|_\inn}{|\log\nu|},
\]
as we have
\[
    \da{\int \wei\varphi_{0,\nu}\Ms^\zeta_\nu(\eps\wei)}\lesssim \frac{1}{\nu^2}\|\eps\|_\inn,\quad  \da{\int \wei \varphi_{0,\nu}\Ms^\zeta_\nu(\varphi_{0,\nu}\wei)}\gtrsim \frac{|\log\nu|}{\nu^2}.
\]
We project out the $\varphi_{0,\nu}$ direction of $\eps$, and consider the evolution of 
\begin{align*}
    &\quad\int\eps\wei\Ms^\zeta_\nu(\eps\wei) - d_0\int\eps\wei\Ms^\zeta_\nu(\wei\varphi_{0,\nu}) 
    \\&= \int\eps\wei\Ms^\zeta_\nu(\eps\wei)-\frac{(\int \wei\varphi_{0,\nu}\Ms^\zeta_\nu(\eps\wei))^2}{\int \wei\varphi_{0,\nu}\Ms^\zeta_\nu(\wei\varphi_{0,\nu})}\sim \frac{1}{\nu^2}\|\eps\|^2_\inn.
\end{align*}
Recall that $\varphi_{0,\nu} = -\frac{1}{16\nu^2}\Lambda U_\nu\chi_\nu+\tilde\varphi_{0}$, and 
\[
    \Ms^\zeta_\nu(\wei\varphi_{0,\nu}) = -\frac{1}{8\nu^2}+\Ms^\zeta_\nu(-\frac{1}{16\nu^2}\Lambda U_\nu(\wei\chi_\nu-1))+\Ms^\zeta_\nu(\wei\tilde\varphi_{0}).
\]
We also recall the pointwise estimate
\[
    |\pa_\zeta^k \tilde \varphi_0 (\zeta)| + | \pa_\zeta^k \nu \pa_\nu \tilde \varphi_0(\zeta)| \lesssim \dr{\frac{\nu^2\zeta^{2-k}\log(1+\zeta/\nu)}{(\nu+\zeta)^6}+\frac{\zeta^{2-k} }{ |\log\nu|(\nu + \zeta)^{4}}}(1+\log(\zeta)\one_{\{\zeta>1\}}).
\]
In the following argument, since $\frac{1}{\nu^2}(\wei\chi_\nu-1)$ and $\tilde\varphi_{0}$ are always estimated together, for brevity we denote
\[
    \bar\varphi_0:= -\frac{1}{16\nu^2}\Lambda U_\nu(\wei\chi_\nu-1)+\wei\tilde\varphi_0.
\]
By the pointwise estimates, it is helpful to note that
\begin{equation*}
    \int \frac{|\bar\varphi_0|^2}{U_\nu}\lesssim \frac{1}{\nu^2}, \quad \int\frac{|\nabla\bar\varphi_0|^2}{U_\nu}\lesssim \frac{|\log\nu|}{\nu^2}.
\end{equation*}

\begin{lemma}[Control of $\|\eps\|_\inn$]\label{lemma: L^2 energy estimate}
    Let $w$ be a solution in the bootstrap regime $\BS(\tau_0,\tau_*, \zeta^*, M_0, \{K_i\}_{i=1}^7)$. Then, the following estimate holds on $[\tau_0,\tau_*]$:
    \begin{align}\label{L^2 energy estimate: inequality in statement}
        &\quad\frac{d}{d\tau}\dr{\frac{1}{2}\int\eps\chi_\nu\sqrt{\vr_\nu}\Ms^\zeta_\nu(\eps\chi_\nu\sqrt{\vr_\nu})-\frac{d_0}{2}\int \eps\sqrt{\vr_\nu}\chi_\nu\Ms^\zeta_\nu(\vp_{0,\nu}\sqrt{\vr_\nu}\chi_\nu)}\nonumber\\
        &\leq -\frac{\delta_0}{\nu^2}\dr{\|\eps\|^2_{\inn}+\|\nabla\eps\|^2_{\inn}}+ \frac{C\nu^2}{|\log\nu|^2}+\frac{C(K_i)\nu^2}{|\log\nu|^\frac{7}{3}}
    \end{align}
\end{lemma}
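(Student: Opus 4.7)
The plan is to differentiate the modified quadratic functional on the left-hand side, substitute the evolution equation $\pa_\tau\eps = \Ls_\nu^\zeta\eps + L(\eps) + NL(\eps) + E$, extract the dissipation from the principal linear term via coercivity, and show every other contribution is subleading. Using the symmetry of $\Ms_\nu^\zeta$, the time derivative reduces to $\scl{\pa_\tau\eps}{\eps - d_0\vp_{0,\nu}}_{\nu,*}$ plus commutator pieces coming from $\nu_\tau\pa_\nu$ hitting $\chi_\nu$, $\sqrt{\vr_\nu}$ and the kernel of $\Ms_\nu^\zeta$, together with a $\pa_\tau d_0$ term. These commutator and $\pa_\tau d_0$ contributions are controlled by $|\nu_\tau/\nu|\lesssim 1/|\log\nu|$ from \eqref{cor of mod est: est of nu_tau} and the bootstrap $\|\eps\|_\inn\lesssim \nu^2/|\log\nu|$, producing remainders of size $C(K_i)\nu^2/|\log\nu|^{7/3}$.

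For the principal linear term, I decompose $\Ls_\nu^\zeta = \Lst_\nu^\zeta + (\Ls_\nu^\zeta - \Lst_\nu^\zeta)$. The discrepancy only sees the Poisson-field regularization outside $\{\zeta\leq 1/\nu\}$ and yields an $L^\infty$-tail term of order $\nu^{100}\|\eps\|^2_{L^\infty(\zeta\geq 1/\nu)}$, which is negligible. The orthogonality conditions \eqref{local orthorgonality conditions} verify the hypotheses of Corollary \ref{main coercivity corollary}, so
\[
\scl{\Lst_\nu^\zeta\eps}{\eps}_{\nu,*}\leq -\frac{\delta}{\nu^2}\bigl(\|\eps\|_\inn^2 + \|\nabla\eps\|_\inn^2\bigr),
\]
which provides the advertised dissipation.

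For the extra linear term $\scl{L(\eps)}{\eps}_{\nu,*}$ each summand carries a small prefactor: $P$ and $a$ are of order $\nu^2$, $R_\tau/\mu$ is of order $\nu/|\log\nu|$, and the discrepancies $\Phi_W-\Psi_{U_\nu}$ and $\Theta_\eps$ are of order $\mu^s$ via Lemma \ref{appendix: difference of 2d 3d Poisson field}; weighted Cauchy--Schwarz with \eqref{Lemma_est of M: H^1 est} and the HLS estimate \eqref{appendix: HLS ineq} absorb these into the dissipation with small coefficient. The nonlinear term $\scl{NL(\eps)}{\eps}_{\nu,*}$ is quadratic in $\eps$ with an extra factor $\|\eps\|_\inn/\nu^2\lesssim 1/|\log\nu|$, so it is likewise absorbed.

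The crux is the error pairing $\scl{E}{\eps - d_0\vp_{0,\nu}}_{\nu,*}$ with $E$ decomposed via Proposition \ref{prop: decomp of the gene error}. The $\tilde E$-piece is bounded by $\nu^{-2}\|\eps\|_\inn\|\tilde E\|_\inn\lesssim \nu^2/|\log\nu|^2$ using \eqref{Prop3: weighted L2 estimate of tilde E}. The $\mod_1\vp_{1,\nu}$-piece uses the sharp modulation estimate $|\mod_1|\lesssim \nu^2/|\log\nu|^2$ from Lemma \ref{Lemma: modulation equations} combined with $\nu^{-2}\|\vp_{1,\nu}\|_\inn\|\eps\|_\inn$, giving a contribution of order $\nu^2/|\log\nu|^{7/3}$. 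The $(R_\tau/\mu)\pa_{\br}U_\nu$-piece is handled by observing that $\Ms_\nu^\zeta(\sqrt{\vr_\nu}\chi_\nu\pa_{\br}U_\nu)$ is essentially constant in the soliton regime (since $\pa_{\br}U_\nu$ is a translation mode annihilated by $\Ls_{0,\nu}^\zeta$), so the local orthogonality $\scl{\eps}{\nabla U_\nu\chi_*}=0$ kills the leading portion and leaves only subleading remainders. The $\mod_0\vp_{0,\nu}$-piece is the one that would otherwise fail to close, as $|\mod_0|$ is only bounded by $\|\eps\|_\inn+\|\nabla\eps^*\|_{L^2(U_\nu)}\sim \nu^2/|\log\nu|$; this is exactly the role of the $d_0$-projection, whose defining property $\scl{\vp_{0,\nu}\wei}{\Ms_\nu^\zeta((\eps-d_0\vp_{0,\nu})\wei)}=0$ makes the modified functional orthogonal to $\vp_{0,\nu}$ in the adapted inner product and thereby cancels the $\mod_0\vp_{0,\nu}$ projection exactly, up to subleading pieces. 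The main obstacle will be bookkeeping this cancellation with the additional $\pa_\tau d_0$ terms and verifying that no new enlarged contribution appears; this works because the $d_0$-modification preserves norm equivalence, which in turn hinges on the slow decay of $\vp_{0,\nu}$ giving $\scl{\vp_{0,\nu}\wei}{\Ms_\nu^\zeta(\vp_{0,\nu}\wei)}\gtrsim|\log\nu|/\nu^2$ and hence $d_0\lesssim \|\eps\|_\inn/|\log\nu|$.
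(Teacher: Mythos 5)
Your overall architecture is the paper's: differentiate the $d_0$-corrected functional, get the dissipation from Corollary \ref{main coercivity corollary}, treat $L(\eps)$, $NL(\eps)$ and $\tilde E$ perturbatively, and eliminate the non-closable $\mod_0\vp_{0,\nu}$ contribution through the orthogonality $\scl{\eps-d_0\vp_{0,\nu}}{\vp_{0,\nu}}_{\nu,*}=0$. The paper implements exactly this cancellation, merely organized as a separate computation of $\pa_\tau\big(d_0\int\eps\,\chi_\nu\sqrt{\vr_\nu}\,\Ms^\zeta_\nu(\chi_\nu\sqrt{\vr_\nu}\vp_{0,\nu})\big)$, whose leading term $2\mod_0\scl{\vp_{0,\nu}}{\eps}_{\nu,*}$ matches the dangerous term left in the main part; so on the structural level you have identified the right mechanism, including why $\mod_0$ (bounded only by $\|\eps\|_\inn+\|\nabla\eps^*\|_{L^2(U_\nu)}$) is the term that forces the correction.

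Two of your intermediate claims, however, do not hold as stated. First, the discrepancy $(\Ls^\zeta_\nu-\Lst^\zeta_\nu)\eps=\nabla U_\nu\cdot\nabla\Psi_{(\sqrt{\vr_\nu}\chi_\nu-1)\eps}$ is not a far-field tail: $1-\sqrt{\vr_\nu}\chi_\nu\sim\beta\zeta^2/4$ already for $\zeta\lesssim 1$, so this is a bulk term. In the paper it is controlled via the pointwise Poisson-field estimates \eqref{appendix: pointwise est of Poisson field radial}--\eqref{appendix: pointwise est of Poisson field nonradial}, part being absorbed into the dissipation and the remainder being $C(\zeta^*)|\log\nu|^2\big(\|\eps\|^2_\inn+\|\eps(1+\zeta)^{3/2}\|^2_{L^\infty(\zeta\geq\zeta^*)}\big)$, acceptable only because of the bootstrap; dismissing it as $\nu^{100}\|\eps\|^2_{L^\infty(\zeta\geq 1/\nu)}$ is incorrect. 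Second, your bound for the $\mod_1\vp_{1,\nu}$ pairing fails numerically: since $\vp_{1,\nu}$ contains the core $-\frac{1}{16\nu^4}\Lambda U(\zeta/\nu)$, one has $\|\vp_{1,\nu}\|_\inn\approx \nu^{-1}$, so the product $\nu^{-2}|\mod_1|\,\|\vp_{1,\nu}\|_\inn\|\eps\|_\inn$ is of size $\nu|\log\nu|^{-3}$, which exceeds not only $C(K_i)\nu^2|\log\nu|^{-7/3}$ but the entire right-hand side of \eqref{L^2 energy estimate: inequality in statement}. The correct treatment, as in the paper, uses the identity $\Ms^\zeta_\nu(\Lambda U_\nu)=-2$: the singular core maps to a constant of size $\nu^{-2}$ and enters only through $\int\eps\,\chi_\nu\sqrt{\vr_\nu}$, while the deficits $\frac{1}{\nu^2}(1-\sqrt{\vr_\nu}\chi^2_\nu)\Lambda U_\nu$ and $\tilde\vp_1$ are handled by Cauchy--Schwarz, yielding $\frac{\delta}{10\nu^2}\|\eps\|^2_\inn+C|\mod_1|^2/\nu^2$. (Similarly, the $\frac{R_\tau}{\mu}\pa_{\br}U_\nu$ pairing is small because $\Ms^\zeta_\nu(\nabla U_\nu)=0$ exactly, not because of the local orthogonality condition.) Both gaps are repairable with the paper's structural identities, but as written these two steps would not close at the stated order.
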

\begin{proof} The first half of the proof estimates the main part (i.e., the leading-order dynamics) \\$\frac{d}{d\tau}\frac{1}{2}\int\eps\chi_\nu\sqrt{\vr_\nu}\Ms^\zeta_\nu(\eps\chi_\nu\sqrt{\vr_\nu})$ which yields damping. Then, the second half deals with the correction term, which projects out the $\mod_0$ direction of the main part. \\
\textbf{Step 1: leading-order dynamics}\\
    First, by \eqref{evolution of epsilon} and the definition of $\scl{\cdot}{\cdot}_{\nu,*}$ in \eqref{def of adapted inner scl, parabolic}, we have
    \begin{align*}
        \frac{1}{2}\frac{d}{d\tau}\scl{\eps}{\eps}_{\nu,*}& =\scl{\pa_\tau\eps}{\eps}_{\nu,*}+\frac{1}{2}\scl{\frac{\pa}{\pa_\tau}\dr{\frac{\vr_\nu\chi^2_\nu}{U_\nu}}}{\eps^2}-\scl{\frac{\pa}{\pa_\tau}\dr{\sqrt{\vr_\nu}\chi_\nu}\eps}{\tilde\Psi_\eps}\\
        &=\scl{\Lst^\zeta_\nu\eps}{\eps}_{\nu,*}+\scl{(\Ls^\zeta_\nu - \Lst^\zeta_\nu)\eps}{\eps}_{\nu,*}+\scl{L(\eps)}{\eps}_{\nu,*}+\scl{NL(\eps)}{\eps}_{\nu,*}+\scl{E}{\eps}_{\nu,*}\\
        &\quad+\frac{1}{2}\scl{\frac{\pa}{\pa_\tau}\dr{\frac{\vr_\nu\chi^2_\nu}{U_\nu}}}{\eps^2}-\scl{\frac{\pa}{\pa_\tau}\dr{\sqrt{\vr_\nu}\chi_\nu}\eps}{\tilde\Psi_\eps}.
    \end{align*}
    \underline{Damping term:} By the coercivity of the modified linearized operator \eqref{main coercivity corollary ineq}, we have the damping
    \begin{equation*}
     \scl{\Lst^\zeta_\nu\eps}{\eps}_{\nu,*}\leq -\delta\dr{\int\frac{\eps^2\chi_\nu^2\vr_\nu}{U_\nu}+\int\frac{|\nabla\eps|^2\chi_\nu^2\vr_\nu}{U_\nu}}+C\nu^{100}\|\eps\|^2_{L^\infty(\zeta\geq 1/\nu)},
    \end{equation*}
    for some universal $\delta,C>0$.\\
    \underline{Estimate of term $(\Ls^\zeta_\nu-\Lst^\zeta_\nu)\eps$:}
    Note that
    \begin{equation}
        (\Ls^\zeta_\nu - \Lst^\zeta_\nu)\eps = \nabla U_\nu\cdot(\nabla\tilde\Psi_\eps - \nabla\Psi_\eps) = \nabla U_\nu\cdot\nabla\Psi_{(\sqrt{\vr_\nu}\chi_\nu-1)\eps},
    \end{equation}
    Integrating by parts, we have
    \begin{align*}
        \scl{\nabla U_\nu\cdot\nabla\Psi_{(\sqrt{\vr_\nu}\chi_\nu-1)\eps}}{\eps}_{\nu,*}&=\int \sqrt{\vr_\nu}\chi_\nu\nabla U_\nu\cdot\nabla\Psi_{(1-\sqrt{\vr_\nu}\chi_\nu)\eps}\Ms^\zeta_\nu(\sqrt{\vr_\nu}\chi_\nu\eps)\\
        &=\int \sqrt{\vr_\nu}\chi_\nu U_\nu(1-\sqrt{\vr_\nu}\chi_\nu)\eps\Ms^\zeta_\nu(\sqrt{\vr_\nu}\chi_\nu\eps)\\
        &\quad-\int U_\nu\nabla(\sqrt{\vr_\nu}\chi_\nu)\cdot\nabla\Psi_{(1-\sqrt{\vr_\nu}\chi_\nu)\eps}\Ms^\zeta_\nu(\sqrt{\vr_\nu}\chi_\nu\eps)\\
        &\quad-\int\sqrt{\vr_\nu}\chi_\nu U_\nu\nabla\Psi_{(1-\sqrt{\vr_\nu}\chi_\nu)\eps}\cdot\nabla\Ms^\zeta_\nu(\sqrt{\vr_\nu}\chi_\nu\eps)\\
        & =: \Rmnum{1}+\Rmnum{2}+\Rmnum{3}.
    \end{align*}
    By Cauchy's inequality and \eqref{Lemma_est of M: L^2 est}, we obtain
    \begin{align*}
        \da{\Rmnum{1}} \lesssim \dr{\int\vr_\nu\chi^2_\nu U_\nu(1-\sqrt{\vr_\nu}\chi_\nu)^2\eps^2}^{\frac{1}{2}}\dr{\int\frac{\eps^2\chi_\nu^2\vr_\nu}{U_\nu}}^{\frac{1}{2}}.
   \end{align*}
   Then, applying $U_\nu(\zeta)(1-\sqrt{\vr_\nu}\chi_\nu)^2\lesssim (\nu+\zeta)^2$, inequality \eqref{appendix: one Hardy-Poincare type ineq in parabolic}, the control of the outer norm, and Cauchy's inequality, we have the estimate
   \begin{equation*}
       |\Rmnum{1}| \leq \frac{\delta}{10}\dr{\int\frac{(\eps^2+|\nabla\eps|^2)\chi_\nu^2\vr_\nu}{U_\nu}}+C\nu^2\int\frac{\eps^2\chi_\nu^2\vr_\nu}{U_\nu}+C\nu^{100}\|\eps\|^2_{L^\infty(\zeta\geq\zeta^*)}.
   \end{equation*}
   Next, by \eqref{appendix: pointwise est of Poisson field radial} and \eqref{appendix: pointwise est of Poisson field nonradial}, we have the pointwise estimate of the Poisson field (taking $\alpha=\frac{1}{4}$):
   \begin{align}\label{Lemma_monoto of inner norm: pointwise est of the diff of Poisson field}
       \int_{0}^{2\pi}|\nabla\Psi_{(1-\sqrt{\vr_\nu}\chi_\nu)\eps}(\theta,\zeta)|^2\;d\theta &\lesssim \frac{1+\one_{\{\zeta\leq 1\}}\log\zeta}{(1+\zeta)^{\frac{1}{2}}}\int (1-\sqrt{\vr_\nu}\chi_\nu)^2\eps^2(1+\zeta)^{\frac{1}{2}}\nonumber\\
       &\lesssim C(\zeta^*)\frac{1+\one_{\{\zeta\leq 1\}}\log\zeta}{(1+\zeta)^{\frac{1}{2}}}\dr{\|\eps\|^2_\inn+\|\eps(1+\zeta)^{\frac{3}{2}}\|^2_{L^\infty(\zeta\geq \zeta^*)}},
   \end{align}
   where we apply $(1-\sqrt{\vr_\nu}\chi_\nu)^2(1+\zeta)^{\frac{1}{2}}\lesssim (\nu+\zeta)^4\one_{\{\zeta<1\}}+(1+\zeta)^{\frac{1}{2}}\one_{\{\zeta\geq1\}}$. Then, by Cauchy's inequality and \eqref{Lemma_est of M: L^2 est},
   \begin{align*}
       \da{\Rmnum{2}}&\lesssim \frac{1}{\nu}\|\eps\|_\inn\dr{\int U_\nu|\nabla(\sqrt{\vr_\nu}\chi_\nu)|^2\cdot|\nabla\Psi_{(1-\sqrt{\vr_\nu}\chi_\nu)\eps}|^2}^{\frac{1}{2}}\\
       &\lesssim\frac{1}{\nu}\|\eps\|_\inn\dr{\int_{0}^{+\infty} U_\nu|\nabla(\sqrt{\vr_\nu}\chi_\nu)|^2\zeta\int_{0}^{2\pi}|\nabla\Psi_{(1-\sqrt{\vr_\nu}\chi_\nu)\eps}|^2\;d\theta d\zeta}^{\frac{1}{2}}\\
       &\lesssim C(\zeta^*)\frac{|\log\nu|}{\nu}\|\eps\|_\inn\dr{\|\eps\|_\inn+\|\eps(1+\zeta)^{\frac{3}{2}}\|_{L^\infty(\zeta\geq \zeta^*)}}.
   \end{align*}
   For $\Rmnum{3}$, using \eqref{Lemma_est of M: H^1 est}, \eqref{appendix: one Hardy-Poincare type ineq in parabolic} and Cauchy's inequality, we have
   \begin{align*}
       \da{\Rmnum{3}}&\leq C\dr{\int U_\nu\vr_\nu\chi_\nu^2\cdot|\nabla\Psi_{(1-\sqrt{\vr_\nu}\chi_\nu)\eps}|^2}^{\frac{1}{2}}\dr{\int\frac{|\nabla(\sqrt{\vr_\nu}\chi_\nu\eps)|^2}{U_\nu}}^{\frac{1}{2}}\\
       &\leq \frac{\delta}{10}\dr{\int\frac{(\eps^2+|\nabla\eps|^2)\chi_\nu^2\vr_\nu}{U_\nu}}+C(\zeta^*)|\log\nu|^2\dr{\|\eps\|^2_\inn+\|\eps(1+\zeta)^{\frac{3}{2}}\|^2_{L^\infty(\zeta\geq\zeta^*)}}.
   \end{align*}
   Finally, combining the estimates of $\Rmnum{1},\Rmnum{2},\Rmnum{3}$ above, we obtain
   \begin{equation}\label{Lemma_monoto of inner norm: Ls - Lst est}
       \da{\scl{(\Ls^\zeta_\nu-\Lst^\zeta_\nu)\eps}{\eps}_{\nu,*}}\leq \frac{\delta}{5}\dr{\int\frac{(\eps^2+|\nabla\eps|^2)\chi_\nu^2\vr_\nu}{U_\nu}}+C(\zeta^*)|\log\nu|^2\dr{\|\eps\|^2_\inn+\|\eps(1+\zeta)^{\frac{3}{2}}\|^2_{L^\infty(\zeta\geq\zeta^*)}}.
   \end{equation}
   \underline{Estimate of term $L(\eps)$:} 
    In the following, we denote the $\Oc(\mu^s)$ terms (under the bootstrap assumption) as the lower order terms (l.o.t.), as $\mu^s = \Oc(\nu^k)$ for any fixed $k>0$ when $\nu$ is sufficiently small. By Lemma \ref{appendix: difference of 2d 3d Poisson field}, we know that
    \[
        L(\eps) = -\nabla\cdot(\eps\nabla\Psi_P+P\nabla\Psi_\eps)+\frac{R_\tau}{\mu}\pa_{\br}\eps  + \text{l.o.t.}.
    \]
    Integrating by parts, we obtain
    \begin{align}
        \scl{-\nabla\cdot(\eps\nabla\Psi_P+P\Psi_\eps)}{\eps}_{\nu,*} &= \int \nabla(\sqrt{\vr_\nu}\chi_\nu)\cdot(\eps\nabla\Psi_P+P\nabla\Psi_\eps)\Ms^\zeta_\nu(\sqrt{\vr_\nu}\chi_\nu\eps)\nonumber\\
        &\quad+\int \sqrt{\vr_\nu}\chi_\nu(\eps\nabla\Psi_P+P\Psi_\eps)\cdot\nabla\Ms^\zeta_\nu(\sqrt{\vr_\nu}\chi_\nu\eps).
    \end{align}
    By Lemma \ref{lemma: properties of the adapted inn product}, inequality \eqref{appendix: one Hardy-Poincare type ineq in parabolic}, and the control of the outer $L^\infty$-norm, we first have
    \begin{equation}\label{Lemma_monoto of inner norm: collected Ms est}
        \int U_\nu |\Ms^\zeta_\nu(\eps\sqrt{\vr_\nu}\chi_\nu)|^2\lesssim \frac{1}{\nu^2}\|\eps\|^2_\inn,\quad \int U_\nu |\nabla\Ms^\zeta_\nu(\eps\sqrt{\vr_\nu}\chi_\nu)|^2\lesssim \frac{1}{\nu^2}\dr{\|\eps\|^2_\inn+\|\nabla\eps\|^2_\inn+\nu^{100}\|\eps\|^2_{L^\infty(\zeta\geq\zeta^*)}}.
    \end{equation}
    Then, thanks to Cauchy's inequality, it remains to estimate
    \[
        \int (\vr_\nu\chi^2_\nu+|\nabla(\sqrt{\vr}\chi_\nu)|^2)(|\eps\nabla\Psi_P|^2+|P\nabla\Psi_\eps|^2)\frac{1}{U_\nu}.
    \]
    By the pointwise estimate $|\nabla\Psi_P(\zeta)|\lesssim |a(\tau)|\log(1+\zeta/\nu)/\zeta\lesssim \frac{|a|}{\nu}$ and \eqref{appendix: one Hardy-Poincare type ineq in parabolic}, we have
    \[
        \int (\vr_\nu\chi^2_\nu+|\nabla(\sqrt{\vr_\nu}\chi_\nu)|^2)\frac{|\eps\nabla\Psi_P|^2}{U_\nu}\lesssim \frac{|a|^2}{\nu^4}\dr{\|\eps\|^2_\inn+\|\nabla\eps\|^2_\inn+\nu^{100}\|\eps\|^2_{L^\infty(\zeta\geq\zeta^*)}}.
    \]
    For the other part, we consider the decomposition $\nabla\Psi_\eps = \nabla\tilde \Psi_{\eps}+\nabla\Psi_{(1-\sqrt{\vr_\nu}\chi_\nu)\eps}$. By \eqref{Lemma_monoto of inner norm: pointwise est of the diff of Poisson field},
    \begin{align*}
         \int (\vr_\nu\chi^2_\nu+|\nabla(\sqrt{\vr_\nu}\chi_\nu)|^2)\frac{|P\nabla\Psi_{(1-\sqrt{\vr_\nu\chi_\nu})\eps}|^2}{U_\nu}\leq \frac{C(\zeta^*)a^2}{\nu^2}\dr{\|\eps\|^2_\inn+\|\eps(1+\zeta)^{\frac{3}{2}}\|^2_{L^\infty(\zeta\geq \zeta^*)}}.
    \end{align*}
    By the Hardy-Littlewood-Sobolev inequality \eqref{appendix: HLS ineq in our setting}:
    \[
    \|\nabla\tilde\Psi_\eps\|_{L^4}\lesssim \frac{1}{\nu}\|\eps\|_\inn\|U_\nu\|^{\frac{1}{2}}_{L^2}\lesssim\frac{1}{\nu^{\frac{3}{2}}}\|\eps\|_\inn,
    \]
    we have
    \begin{align*}
        \int (\vr_\nu\chi^2_\nu+|\nabla(\sqrt{\vr_\nu}\chi_\nu)|^2)\frac{|P\nabla\tilde\Psi_{\eps}|^2}{U_\nu}\lesssim \|\nabla\tilde\Psi_\eps\|^2_{L^4}\dr{\int\frac{(\vr_\nu\chi^2_\nu+|\nabla(\sqrt{\vr_\nu}\chi_\nu)|^2)^2P^4}{U^2_\nu}}^{\frac{1}{2}}\lesssim \frac{a^2}{\nu^5}\|\eps\|^2_{\inn}.
    \end{align*}
For the last term in $L(\eps)$, by Cauchy's inequality,
\begin{equation*}
    \da{\frac{R_\tau}{\mu}\scl{\eps}{\pa_{\br}\eps}_{\nu,*}}\lesssim \frac{1}{\nu^2}\da{\frac{R_\tau}{\mu}}\|\eps\|_\inn\|\nabla\eps\|_\inn.
\end{equation*}
Finally, collecting all the estimates above and by the bootstrap assumptions, we obtain
\begin{align}\label{Lemma_monoto of inner norm: L(eps) est}
    \da{\scl{L(\eps)}{\eps}_{\nu,*}}&\leq C\dr{\frac{|a|}{\nu^{\frac{3}{2}}}+\da{\frac{R_\tau}{\nu}}^{\frac{1}{2}}}\dr{\int\frac{(\eps^2+|\nabla\eps|^2)\chi^2_\nu\vr_\nu}{U_\nu}}+\frac{C|a|}{\nu^2}\|\eps(1+\zeta)^{\frac{3}{2}}\|^2_{L^\infty(\zeta\geq \zeta^*)}+C(\zeta^*,K_i)\nu^3\nonumber\\
    &\leq  \frac{\delta}{10}\dr{\int\frac{(\eps^2+|\nabla\eps|^2)\chi^2_\nu\vr_\nu}{U_\nu}}+\frac{C|a|}{\nu^2}\|\eps(1+\zeta)^{\frac{3}{2}}\|^2_{L^\infty(\zeta\geq \zeta^*)}+C(\zeta^*,K_i)\nu^3,
\end{align}
where the second inequality above holds when $\nu$ is sufficiently small.
\\
\underline{Estimate of the nonlinear term $NL(\eps)$: } Again, the terms of order $\Oc(\mu^s)$ are treated as lower order terms. It then suffices to estimate the term $-\nabla\cdot(\eps\nabla\Psi_\eps)$. Integrating by parts, we have
\[
   \scl{-\nabla\cdot(\eps\nabla\Psi_\eps)}{\eps}_{\nu,*} = \int \nabla(\sqrt{\vr_\nu}\chi_\nu)\cdot(\eps\nabla\Psi_\eps)\Ms^\zeta_\nu(\sqrt{\vr_\nu}\chi_\nu\eps)+\int \sqrt{\vr_\nu}\chi_\nu\eps\nabla\Psi_\eps\cdot\nabla\Ms^\zeta_\nu(\sqrt{\vr_\nu}\chi_\nu\eps).
\]
As before, by Cauchy's inequality and \eqref{Lemma_monoto of inner norm: collected Ms est}, it suffices to estimate
\[
    \int (\vr_\nu\chi_\nu^2+|\nabla(\sqrt{\vr_\nu}\chi_\nu)|^2)\eps^2|\nabla\Psi_\eps|^2\frac{1}{U_\nu}.
\]
To this end, we decompose $\nabla\Psi_\eps = \nabla\Psi_{\chi^*\eps}+\nabla\Psi_{(1-\chi^*)\eps}$, where we recall $\chi^*(\zeta):=\chi(\zeta/\zeta^*)$. For the first part, We further decompose: 
\begin{align*}
     \int (\vr_\nu\chi_\nu^2+|\nabla(\sqrt{\vr_\nu}\chi_\nu)|^2)\eps^2|\nabla\Psi_{\eps\chi^*}|^2\frac{1}{U_\nu} &= \int (\vr_\nu\chi_\nu^2+|\nabla(\sqrt{\vr_\nu}\chi_\nu)|^2)\eps^2(\chi^*)^2|\nabla\Psi_{\eps\chi^*}|^2\frac{1}{U_\nu}\\
     &\quad+\int (\vr_\nu\chi_\nu^2+|\nabla(\sqrt{\vr_\nu}\chi_\nu)|^2)\eps^2(1-(\chi^*)^2)|\nabla\Psi_{\eps\chi^*}|^2\frac{1}{U_\nu}\\
     &:= T_1+T_2.
\end{align*}
By Cauchy's inequality, Sobolev embedding $H^1(\reall^2)\hookrightarrow L^4(\reall^2)$, and HLS inequality \eqref{appendix: HLS ineq}, 
\begin{align*}
    |T_1|&\lesssim \|\nabla\Psi_{\eps\chi^*}\|^2_{L^4}\cdot\dr{\int \frac{\eps^4(\chi^*)^4(\nu+\zeta)^8}{\nu^4}}^{\frac{1}{2}}
    \lesssim \|\eps\chi^*\|^2_{L^2}\cdot \dr{\int \da{\nabla\dr{\frac{\eps\chi^*(\nu+\zeta)^2}{\nu}}}^2}\\
    &\quad\quad\lesssim \dr{\int \frac{\eps^2(\chi^*)^2(\nu+\zeta)^2}{\nu^2}}\cdot\dr{\int\frac{|\nabla\eps|^2(\chi^*)^2}{U_\nu}+\int\frac{\eps^2(\nu+\zeta)^2(\chi^*)^2}{\nu^2}}.
\end{align*}
By the $L^\infty$ control of $\eps$ in the far field, Cauchy's inequality, and HLS inequality,
\begin{align*}
    |T_2|\lesssim \frac{1}{\nu^2}\|\eps\|^2_{L^\infty(\zeta\geq \zeta^*)}\|\nabla\Psi_{\eps\chi^*}\|^2_{L^4}\lesssim \nu^2C(K_i)\int \frac{\eps^2(\chi^*)^2(\nu+\zeta)^2}{\nu^2} .
\end{align*}
In summary, by the inequality \eqref{appendix: one Hardy-Poincare type ineq in parabolic}, we obtain
\begin{align}\label{Lemma_monoto of inner norm: nonlinear est part one}
    \int (\vr_\nu\chi_\nu^2+|\nabla(\sqrt{\vr_\nu}\chi_\nu)|^2)\eps^2|\nabla\Psi_{\eps\chi^*}|^2\frac{1}{U_\nu}&\lesssim \dr{\frac{1}{\nu^4}\|\eps\|^2_\inn+\nu^2C(K_i)+\int\frac{|\nabla\eps|^2(\chi^*)^2}{U_\nu}}\int\frac{(|\nabla\eps|^2+\eps^2)\chi^2_\nu\sqrt{\vr_\nu}}{U_\nu}\nonumber\\
    &\lesssim \frac{C(K_i)}{|\log\nu|^2}\int\frac{(|\nabla\eps|^2+\eps^2)\chi^2_\nu\sqrt{\vr_\nu}}{U_\nu}.
\end{align}
For the second part, we apply \eqref{appendix: pointwise est of the gradient of Poisson field based on L infty of u}, and obtain (choosing $p=\frac{3}{2}$)
\begin{equation*}
    \|\nabla\Psi_{(1-\chi^*)\eps}\|_{L^\infty}\lesssim \|(1-\chi^*)\eps\|_{L^\infty}+\|(1-\chi^*)\eps\|_{L^\frac{3}{2}}\lesssim \|\eps(1+\zeta)^{\frac{3}{2}}\|_{L^\infty(\zeta\geq \zeta^*)}.
\end{equation*}
It follows that
\begin{align}\label{Lemma_monoto of inner norm: nonlinear est part two}
\begin{split}
    &\int \frac{(\vr_\nu\chi_\nu^2+|\nabla(\sqrt{\vr_\nu}\chi_\nu)|^2)\eps^2|\nabla\Psi_{\eps(1-\chi^*)}|^2}{U_\nu}\\
    &\quad\lesssim \|\eps(1+\zeta)^{\frac{3}{2}}\|^2_{L^\infty(\zeta\geq\zeta^*)}\int \frac{(\vr_\nu\chi_\nu^2+|\nabla(\sqrt{\vr_\nu}\chi_\nu)|^2)\eps^2}{U_\nu}\\
    &\quad\lesssim \frac{1}{\nu^2}\|\eps(1+\zeta)^{\frac{3}{2}}\|^2_{L^\infty(\zeta\geq\zeta^*)}\dr{{\|\eps\|^2_\inn+\|\nabla\eps\|^2_\inn+\nu^{100}\|\eps\|^2_{L^\infty(\zeta\geq\zeta^*)}}}.
\end{split}
\end{align}
Finally, combining \eqref{Lemma_monoto of inner norm: nonlinear est part one} and \eqref{Lemma_monoto of inner norm: nonlinear est part two}, we obtain the nonlinear estimate
\begin{align}\label{Lemma_monoto of inner norm: nonlinear estimate}
    \da{\scl{NL(\eps)}{\eps}_{\nu,*}}
    &\leq \frac{C(K_i)}{|\log\nu|}\int\frac{(|\nabla\eps|^2+\eps^2)\chi^2_\nu\sqrt{\vr_\nu}}{U_\nu}
    +C(K_i)\nu^{3}\nonumber\\
    &\leq \frac{\delta}{10}\int\frac{(|\nabla\eps|^2+\eps^2)\chi^2_\nu\sqrt{\vr_\nu}}{U_\nu}
    +C(K_i)\nu^{3},
\end{align}
where the second inequality holds when $\nu$ is sufficiently small.\\
\underline{Estimate of the generated error $E$:} Recall that
\[
    E = \mod_0\varphi_{0,\nu}+\mod_1\varphi_{1,\nu}+\frac{R_\tau}{\mu}\pa_{\br}U_\nu+\tilde E.
\]
By the algebraic identity $\Ms^\zeta_\nu (\Lambda U_\nu) = -2$, the orthogonality conditions \eqref{local orthorgonality conditions} and the decomposition $\varphi_{i,\nu} = -\frac{1}{16\nu^2}\Lambda U_\nu\chi_\nu+\tilde \varphi_i$, we have
\begin{align*}
    \da{\scl{\eps}{\mod_i\varphi_{i,\nu}}_{\nu,*}}&\lesssim |\mod_i|\dr{\da{\int \eps\sqrt{\vr_\nu}\chi_\nu\Ms^\zeta_\nu\Big(\frac{1}{\nu^2}(1-\sqrt{\vr_\nu}\chi^2_\nu)\Lambda U_\nu\Big)}+\da{\int \eps\sqrt{\vr_\nu}\chi_\nu\Ms^\zeta_\nu(\chi_\nu\sqrt{\vr_\nu}\tilde\varphi_i)}}\\
    &\lesssim  \frac{\delta}{10\nu^2}\|\eps\|^2_\inn + \frac{|\mod_i|^2}{\nu^2}
\end{align*}
Similarly, by the algebraic identity $\Ms^\zeta_\nu (\nabla U_\nu)=0$, we have
\begin{align*}
    \da{\int \eps\sqrt{\vr_\nu}\chi_\nu\Ms^\zeta_\nu\Big(\frac{R_\tau}{\mu}\pa_{\br}U_\nu\sqrt{\vr_\nu}\chi_\nu\Big)}& \lesssim \da{\int \eps\sqrt{\vr_\nu}\chi_\nu\Ms^\zeta_\nu\Big(\frac{R_\tau}{\mu}\pa_{\br}U_\nu(1-\sqrt{\vr_\nu}\chi_\nu)\Big)}\\
    &\lesssim \frac{\delta}{10\nu^2}\|\eps\|^2_\inn+\nu^2|\log\nu|\cdot\da{\frac{R_\tau}{\mu}}^2.  
\end{align*}
As for $\tilde E$, by Cauchy's inequality,
\[
    \da{\scl{\tilde E}{\eps}_{\nu,*}} \leq \frac{C}{\nu}\|\eps\|_\inn\dr{\int \frac{\tilde E^2}{U_\nu}}^\frac{1}{2}\leq \frac{\delta}{10\nu^2}\|\eps\|^2_\inn + \frac{C(\nu^2+|a|)}{|\log\nu|^2}+\frac{C(K_i)\nu^2}{|\log\nu|^3}.
\]
In summary, we have
\begin{equation*}
    \da{\scl{\eps}{E-\mod_0\vp_{0,\nu}}_{\nu,*}}\leq \frac{3\delta}{10\nu^2}\|\eps\|^2_\inn+\frac{C}{\nu^2}|\mod_1|^2 + \frac{C(\nu^2+|a|)}{|\log\nu|^2}+\frac{C(K_i)\nu^2}{|\log\nu|^3}.
\end{equation*}
\underline{Estimate of time derivative terms:} Once we note $\frac{\pa}{\pa_\tau} = \nu_\tau\frac{\pa}{\pa_\nu}$, then the estimates are straightforward from definition. First, we have
\begin{equation*}
    \da{\scl{\frac{\pa}{\pa_\tau}\dr{\frac{\vr_\nu\chi^2_\nu}{U_\nu}}}{\eps^2}}\lesssim \frac{\nu_\tau}{\nu}\dr{\frac{1}{\nu^2}\|\eps\|^2_\inn+\nu^{100}\|\eps\|^2_{L^\infty(\zeta\geq\zeta^*)}}.
\end{equation*}
Second, by $\pa_\nu(\sqrt{\vr_\nu}\chi_\nu) = \frac{\zeta\nu_\tau}{|\log\nu|^2\nu}\chi'(\zeta/|\log\nu|)\sqrt{\vr_\nu}$ and \eqref{appendix: pointwise est of Poisson field by weighted L^2 norm}, we have
\begin{equation*}
    \da{\scl{\eps\pa_\tau\dr{\sqrt{\vr_\nu}\chi_\nu}}{\tilde\Psi_\eps}_{\nu,*}}\lesssim \nu^{100}\nu_\tau\|\eps\|_{L^\infty(\zeta\geq\zeta^*)}\|\eps\|_\inn\lesssim \|\eps\|^2_\inn+\nu^{100}
\end{equation*}
\underline{Conclusion of Step 1:} Finally, collecting all the estimates above, we obtain, when $\nu$ is sufficiently small,
\begin{align}\label{lemma_L2 est: main part esti}
    &\quad\pa_\tau \frac{1}{2}\int \eps\sqrt{\vr_\nu}\chi_\nu\Ms^\zeta_\nu(\eps\sqrt{\vr_\nu}\chi_\nu) - \mod_0\int \eps\sqrt{\vr_\nu}\chi_\nu\Ms^\zeta_\nu(\vp_{0,\nu}\sqrt{\vr_\nu}\chi_\nu)\nonumber\\
    &\leq -\frac{\delta}{4\nu^2}\dr{\|\eps\|^2_\inn+\|\nabla\eps\|^2_\inn}+\frac{C}{\nu^2}|\mod_1|^2+\frac{C(K_2)\nu^2}{|\log\nu|^2}+C(K_i)\nu^4.
\end{align}
\textbf{Step 2: Correction term estimate}\\
Now we are to estimate the extra terms induced by $\pa_\tau (d_0\int\eps\wei\Ms^\zeta_\nu(\wei\varphi_{0,\nu}))$.
We write
\begin{align*}
    \pa_\tau\dr{d_0\int\eps\wei\Ms^\zeta_\nu(\wei\varphi_{0,\nu})} &= 2d_0\int\pa_\tau\eps\wei\Ms^\zeta_\nu(\wei\varphi_{0,\nu})\\
    &\;+ 2d_0\int\eps\pa_\tau(\wei)\Ms^\zeta_\nu(\wei\varphi_{0,\nu})+2d_0\int\eps\wei\Ms^\zeta_\nu(\pa_\tau\bar\varphi_{0})\\
    &\;+2d_0\int\eps\wei\bar\varphi_{0}\pa_\tau\dr{\frac{1}{U_\nu}}
    +2d_0\pa_\tau\dr{-\frac{1}{8\nu^2}}\int\eps\wei\\
    +&\pa_\tau\dr{\frac{1}{\int\wei\vp_{0,\nu}\Ms^\zeta_\nu(\wei\vp_{0,\nu})}}\dr{\int\wei\vp_{0,\nu}\Ms^\zeta_\nu(\wei\eps)}^2.
\end{align*}
\underline{Estimate of $d_0\int\pa_\tau\eps\wei\Ms^\zeta_\nu(\wei\varphi_{0,\nu})$:} Plug in the evolution equation for $\eps$:
\[
    \pa_\tau\eps = \Delta\eps-\nabla\cdot(U_\nu\nabla\Psi_\eps+\eps\nabla\Psi_{U_\nu})-\beta\Lambda\eps+L(\eps)+NL(\eps)+E,
\]
and we estimate term by term. First, through integration by parts and Cauchy's inequality,
\begin{align*}
    \da{d_0\int \wei\Delta \eps \Ms^\zeta_\nu(\wei\varphi_{0,\nu})}&\lesssim\frac{1}{\nu^2}\da{d_0\int \wei\Delta\eps}+\da{d_0\int \wei\Delta\eps\Ms^\zeta_\nu(\bar\varphi_{0})}\\
    &\lesssim \frac{|d_0|}{\nu^2}\da{\int (1+\zeta^2)\eps\wei}+\da{d_0\int (\wei\nabla\eps-\eps\nabla(\wei))\cdot\nabla\Ms^\zeta_\nu(\bar\varphi_0)}\\
    &\quad+\da{d_0\int \Delta(\wei)\eps\Ms^\zeta_\nu(\bar\varphi_0)}\\
    &\lesssim \frac{1}{\nu^2|\log\nu|^{\frac{1}{3}}}(\|\eps\|^2_\inn+\|\nabla\eps\|^2_\inn)+\nu^{100}\|\eps\|_{L^\infty(\zeta\geq\zeta^*)}\|\eps\|_\inn.
\end{align*}
where we use the estimate $|\int\eps \wei|\lesssim |\log\nu|^\frac{1}{2}(\int(\nu+\zeta)^2\eps^2\wei)^\frac{1}{2}+(\int(\nu+\zeta)^4\eps^2\wei)^\frac{1}{2}$, and (when $\nu$ is sufficiently small)
\begin{align*}
    \int\frac{\zeta^4(\nu+\zeta)^4\eps^2\wei}{\nu^2}&\lesssim \int_{\{\zeta\leq |\log\nu|^{\frac{2}{3}}\}}\frac{\zeta^4(\nu+\zeta)^4\eps^2\wei}{\nu^2}+\int_{\{\zeta> |\log\nu|^{\frac{2}{3}}\}}\frac{\zeta^4(\nu+\zeta)^4\eps^2\wei}{\nu^2}\\
    &\lesssim \frac{|\log\nu|^{\frac{4}{3}}}{\nu^2}\|\nabla\eps\|^2_\inn + \nu^{100}\|\eps\|^2_{L^\infty(\zeta\geq\zeta^*)}.
\end{align*}
Similarly, by the pointwise estimates of the Poisson field \eqref{appendix: pointwise est of Poisson field radial and nonradial, parabolic} \eqref{appendix: pointwise est of the gradient of Poisson field based on L infty of u}, we have the estimate
\begin{equation*}
    \da{d_0\int \wei\nabla\cdot(U_\nu\nabla\Psi_\eps+\eps\nabla\Psi_{U_\nu})\Ms^\zeta_\nu(\wei\varphi_{0,\nu})}\lesssim \frac{1}{|\log\nu|^\frac{1}{2}\nu^2}(\|\eps\|^2_\inn+\|\nabla\eps\|^2_\inn)+\|\eps(1+\zeta)^{\frac{3}{2}}\|^2_{L^\infty(\zeta\geq\zeta^*)}.
\end{equation*}
and (the extra linear term is smaller, but here a rough estimate is enough)
\begin{equation*}
    \da{d_0\int\wei L(\eps)\Ms^\zeta_\nu(\wei\varphi_{0,\nu})}\leq \frac{C(K_i)}{|\log\nu|^\frac{1}{2}\nu^2}(\|\eps\|^2_\inn+\|\nabla\eps\|^2_\inn)+C(K_i)\|\eps(1+\zeta)^{\frac{3}{2}}\|^2_{L^\infty(\zeta\geq\zeta^*)}+\Oc(\mu^s).
\end{equation*}
As for the scaling term, the estimate is similar to the $\Delta \eps$ term:
\begin{equation*}
     \da{d_0\int\wei \beta\Lambda\eps\Ms^\zeta_\nu(\wei\varphi_{0,\nu})}\lesssim \frac{1}{\nu^2|\log\nu|^\frac{1}{3}}\dr{\|\eps\|^2_\inn+\|\nabla\eps\|^2_\inn}+\Oc(\nu^{100}).
\end{equation*}
For the nonlinear term, by the estimates of Poisson filed and Cauchy's inequality:
\begin{align*}
    \da{d_0\int\wei  NL(\eps)\Ms^\zeta_\nu(\wei\varphi_{0,\nu})}&\lesssim \da{\frac{d_0}{\nu^2}\int \nabla(\wei)\cdot\nabla\Psi_\eps\eps}+\da{d_0\int\nabla(\wei)\cdot\nabla\Psi_\eps\eps\Ms^\zeta_\nu(\bar\vp_0)}\\
    &\quad+\da{d_0\int\wei\eps\nabla\Psi_\eps\cdot\nabla\Ms^\zeta_\nu(\bar\vp_0)}\\
    &\lesssim \frac{\|\eps\|^2_\inn+\|\nabla\eps\|^2_\inn}{\nu^2|\log\nu|^\frac{1}{2}}\dr{\frac{1}{\nu}\|\nabla\eps\|_\inn+|\log\nu|\cdot\|\eps(1+\zeta)^\frac{3}{2}\|_{L^\infty(\zeta\geq \zeta^*)}},
\end{align*}
where we use the estimates
\begin{align*}
    &\da{\frac{d_0}{\nu^2}\int \wei\zeta\pa_\zeta\Psi_\eps\eps}\lesssim \frac{|d_0|}{\nu} \dr{\int\frac{\eps^2\chi_\nu^2\vr_\nu(\nu+\zeta)^4}{\nu^2}}^{\frac{1}{2}}\dr{\int_{\{\zeta\leq|2\log\nu|\}} \frac{\zeta^2}{(\nu+\zeta)^4}|\nabla\Psi_\eps|^2}^\frac{1}{2}\\
    &\quad \lesssim \frac{\|\eps\|^2_\inn}{\nu^2|\log\nu|}\dr{\frac{1}{\nu}\|\nabla\eps\|_\inn+|\log\nu|^\frac{1}{2}\|\eps(1+\zeta)^\frac{3}{2}\|_{L^\infty(\zeta\geq \zeta^*)}},\\
    &\da{d_0\int\zeta\wei\pa_\zeta\Psi_\eps\eps\Ms^\zeta_\nu(\bar\vp_0)}\lesssim |d_0|\dr{\int \frac{\zeta^2(\nu+\zeta)^2\eps^2\chi_\nu^2\vr_\nu}{\nu^2}}^\frac{1}{2}\dr{\int\frac{\nu^2}{(\nu+\zeta)^2}|\nabla\Psi_\eps|^2|\Ms^\zeta_\nu(\bar\vp_0)|^2}^\frac{1}{2}\\
    &\quad \lesssim \frac{\|\eps\|^2_\inn}{\nu^2|\log\nu|}\dr{\frac{1}{\nu}\|\nabla\eps\|_\inn+|\log\nu|\cdot\|\eps(1+\zeta)^\frac{3}{2}\|_{L^\infty(\zeta\geq \zeta^*)}},\\
    &\da{d_0\int\wei\eps\nabla\Psi_\eps\cdot\nabla\Ms^\zeta_\nu(\bar\vp_0)}\lesssim |d_0|\dr{\int \frac{(\nu+\zeta)^2\eps^2\chi_\nu^2\vr_\nu}{\nu^2}}^\frac{1}{2}\dr{\int\frac{\nu^2}{(\nu+\zeta)^2}|\nabla\Psi_\eps|^2|\nabla\Ms^\zeta_\nu(\bar\vp_0)|^2}^\frac{1}{2}\\
    &\quad \lesssim \frac{\|\eps\|_\inn\|\nabla\eps\|_\inn}{\nu^2|\log\nu|^\frac{1}{2}}\dr{\frac{1}{\nu}\|\nabla\eps\|_\inn+|\log\nu|\cdot\|\eps(1+\zeta)^\frac{3}{2}\|_{L^\infty(\zeta\geq \zeta^*)}}.
\end{align*}
Finally, for the generated error, 
\begin{align*}
    \da{d_0\int\wei  (E-\mod_0\vp_{0,\nu})\Ms^\zeta_\nu(\wei\varphi_{0,\nu})}\lesssim \frac{|\mod_1|\cdot\|\eps\|_\inn}{\nu^2}+ \frac{\|\eps\|_\inn}{|\log\nu|^2}+C(K_i)\nu\|\eps\|_\inn, 
\end{align*}
where we use the estimate $|\int \tilde E\wei| \lesssim \frac{\nu^2}{|\log\nu|}+C(K_i)\nu^3$ and the fact $\int \pa_{\br}U_\nu\wei\Ms^\zeta_\nu(\varphi_{0,\nu}\wei)$=0. Finally, combining all these estimates, we obtain
\begin{align*}
    \da{d_0\int (\pa_\tau\eps-\mod_0\vp_{0,\nu})\wei\Ms^\zeta_\nu(\wei\vp_{0,\nu})}\lesssim C(K_i)\frac{\|\eps\|^2_\inn+\|\nabla\eps\|^2_\inn}{\nu^2|\log\nu|^\frac{1}{3}}+C(K_i)\frac{\nu^2}{|\log\nu|^3}.
\end{align*}
\underline{Estimate of the rest time derivative terms:} By $\pa_\tau \wei = \frac{\zeta}{|\log\nu|^2}\frac{\nu_\tau}{\nu}\chi'(\zeta/|\log\nu|)$ and the $L^\infty$ estimate of $\eps$ in the far field, we have
\begin{equation*}
    \da{d_0\int\eps\pa_\tau(\wei)\Ms^\zeta_\nu(\wei\vp_{0,\nu})}\leq  C(K_i)\nu^{100},
\end{equation*}
when $\nu$ is sufficiently small. Similarly, by Cauchy's inequality, we have
\begin{align*}
    \da{d_0\int\eps\wei\Ms^\zeta_\nu(\pa_\tau(\bar\varphi_{0})}+\da{d_0\int\eps\wei\bar\varphi_{0}\pa_\tau\dr{\frac{1}{U_\nu}}}+\da{d_0\pa_\tau\dr{-\frac{1}{8\nu^2}}\int\eps\wei}\leq C(K_i)\frac{\|\eps\|^2_\inn+\|\nabla\eps\|^2_\inn}{\nu^2|\log\nu|}. 
\end{align*}
For the last term,
\begin{align*}
    \da{\pa_\tau\dr{\frac{1}{\int\wei\vp_{0,\nu}\Ms^\zeta_\nu(\wei\vp_{0,\nu})}}}&\lesssim \frac{|\pa_\tau(\nu^{-2}\int\bar\vp_0)|+|\pa_\tau\int \bar\vp_0\Ms^\zeta_\nu(\bar\vp_0)|}{(\int\wei\vp_{0,\nu}\Ms^\zeta_\nu(\wei\vp_{0,\nu}))^2}\lesssim \frac{\nu^2}{|\log\nu|}.  
\end{align*}
Then, we obtain
\begin{equation*}
   \da{\pa_\tau\dr{\frac{1}{\int\wei\vp_{0,\nu}\Ms^\zeta_\nu(\wei\vp_{0,\nu})}}\dr{\int\wei\vp_{0,\nu}\Ms^\zeta_\nu(\wei\eps)}^2}\lesssim \frac{C(K_i)\|\eps\|^2_\inn}{\nu^2|\log\nu|}.
\end{equation*}
Finally, all the estimates above yields
\begin{equation}\label{Lemma L2 est: esti of the projection part}
    \pa_\tau\dr{d_0\int\eps\wei\Ms^\zeta_\nu(\wei\vp_{0,\nu})} = 2\mod_0\int\wei\vp_{0,\nu}\Ms^\zeta_\nu(\eps\wei)+\Oc\dr{C(K_i)\frac{\nu^2}{|\log\nu|^\frac{7}{3}}}.
\end{equation}
\textbf{Step 3: Conclusion:}\\
Combining \eqref{lemma_L2 est: main part esti} and \eqref{Lemma L2 est: esti of the projection part} yields the final result.
\end{proof}

\subsubsection{$H^1$ Inner Estimate}
By orthogonality conditions \eqref{local orthorgonality conditions} and Lemma \ref{Lemma: coercivity of gradient M}, we know that there exists a universal $C>0$, such that
\[
    \frac{1}{C}\int \frac{|\nabla\eps^*|^2}{U_\nu}-C\|\eps\|^2_\inn<-\int\Ls^\zeta_{0,\nu}(\eps^*)\Ms^\zeta_\nu(\eps^*) = \int U_\nu|\nabla\Ms^\zeta_\nu(\eps^*)|^2<C\int\frac{|\nabla\eps^*|^2}{U_\nu}.
\]
Thus, $\int U_\nu|\nabla\Ms^\zeta_\nu(\eps^*)|^2$ is equivalent to the norm $\|\nabla\eps^*\|_{L^2(U_\nu)}$ (with some negligible error). 

\begin{lemma}[Control of $\|\nabla\eps^*\|_{L^2(U_\nu)}$]\label{lemma: H^1 energy estimate}
    Let $w$ be a solution in the bootstrap regime $\BS(\tau_0,\tau_*, \zeta^*, M_0, \{K_i\}_{i=1}^7)$. Then, the following estimate holds on $[\tau_0,\tau_*]$:
    \begin{align}\label{lemma H^1 energy estimate: ineq in statement}
         \frac{1}{2}\frac{d}{d\tau}\int U_\nu|\nabla\Ms^\zeta_\nu(\eps^*)|^2 &\leq \frac{C}{\nu^2}\dr{\|\nabla\eps\|^2_\inn+\|\eps\|^2_\inn}+\frac{C}{\nu^2}\|\eps(1+\zeta)^\frac{3}{2}\|^2_{L^\infty(\zeta\geq\zeta^*)}\nonumber\\
       &\quad+\frac{C\nu^2}{|\log\nu|^2}+\frac{C(\zeta^*,K_i)\nu^2}{|\log\nu|^3},
    \end{align}
    for some universal $C$, some $C(\zeta^*,K_i)$ dependent on $\{\zeta^*\}\cup\{K_i\}_{i=1}^7$, and any constant $K>0$.
\end{lemma}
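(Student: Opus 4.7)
The plan is to differentiate $\tfrac{1}{2}\int U_\nu|\nabla\Ms^\zeta_\nu\eps^*|^2$ in time, expand $\pa_\tau\eps^*=\chi^*\pa_\tau\eps$ via the evolution equation \eqref{evolution of epsilon}, and bound each resulting contribution by the right-hand side of \eqref{lemma H^1 energy estimate: ineq in statement}. The structural identity
\[
    \int U_\nu|\nabla\Ms^\zeta_\nu\eps^*|^2 \;=\; -\int \Ls^\zeta_{0,\nu}(\eps^*)\,\Ms^\zeta_\nu(\eps^*),
\]
obtained from $\Ls^\zeta_{0,\nu}f=\nabla\cdot(U_\nu\nabla\Ms^\zeta_\nu f)$ followed by integration by parts, converts the time derivative into $-2\int\Ls^\zeta_{0,\nu}(\eps^*)\Ms^\zeta_\nu(\pa_\tau\eps^*)$ plus lower-order pieces coming from $\pa_\tau U_\nu$ and $\pa_\tau\Ms^\zeta_\nu$. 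These latter pieces are multiplication operators with symbols of size $\Oc(|\nu_\tau/\nu|\cdot U_\nu^{-1})$, and by Corollary \eqref{cor of mod est: est of nu_tau} they produce factors of $|\log\nu|^{-1}$ against $\nu^{-2}\|\nabla\eps^*\|^2_{L^2(U_\nu)}$, which are directly absorbable into the right-hand side.

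Substituting $\pa_\tau\eps=\Ls^\zeta_\nu\eps+L(\eps)+NL(\eps)+E$ and writing $\chi^*\Ls^\zeta_\nu\eps=\Ls^\zeta_\nu\eps^*+[\chi^*,\Ls^\zeta_\nu]\eps$ splits the leading term into four contributions. First, the structural piece $-2\int\Ls^\zeta_{0,\nu}(\eps^*)\Ms^\zeta_\nu(\Ls^\zeta_\nu\eps^*)$ decomposes as $\Ls^\zeta_\nu\eps^*=\Ls^\zeta_{0,\nu}\eps^*-\beta\Lambda\eps^*$; the $\Ls^\zeta_{0,\nu}\eps^*$ factor is bounded by Cauchy--Schwarz in the $\Ms$-pairing using \eqref{Lemma_est of M: L^2 est}, absorbed into a fraction of the $H^1$ quantity itself, while the $\beta\Lambda\eps^*$ piece, supported in $\{\zeta\leq 2\zeta^*\}$, is controlled by $\nu^{-2}(\|\eps\|_\inn^2+\|\nabla\eps\|_\inn^2)$. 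Second, the commutator $[\chi^*,\Ls^\zeta_\nu]\eps$ is localized in the annulus $\{\zeta^*\leq\zeta\leq 2\zeta^*\}$; the differential pieces $\nabla\chi^*\cdot\nabla\eps$ and $\Delta\chi^*\cdot\eps$ are handled via the $H^2$ bootstrap bound $\|\eps\|_{H^2(\frac{1}{2}\zeta^*\leq\zeta\leq 4\zeta^*)}\lesssim K_6\nu^2/|\log\nu|$, while the commutator with the Poisson term is treated using the pointwise bounds \eqref{appendix: pointwise est of Poisson field radial and nonradial, parabolic}--\eqref{appendix: pointwise est of the gradient of Poisson field based on L infty of u}.

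The remaining pieces mirror the $L^2$ proof of Lemma \ref{lemma: L^2 energy estimate}. For $\chi^* L(\eps)$, the pointwise estimates \eqref{prop_est of E: pointwise est of P}--\eqref{prop_est of E: poinwise est of nabla Psi_P} combined with the Poisson bounds yield contributions of size $|a|\nu^{-2}$ times weighted norms of $\eps$, absorbed by Cauchy--Schwarz. For $\chi^* NL(\eps)=-\chi^*\nabla\cdot(\eps\nabla\Phi_\eps)$, I split $\nabla\Psi_\eps=\nabla\Psi_{\chi^*\eps}+\nabla\Psi_{(1-\chi^*)\eps}$ and apply the HLS-type inequality \eqref{appendix: HLS ineq in our setting} to the local piece together with the outer $L^\infty$-bound on the tail. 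For $\chi^* E$, the decomposition from Proposition \ref{prop: decomp of the gene error} reduces the task to estimating the modulation multiples $\mod_i\vp_{i,\nu}$ (the bounds of Lemma \ref{Lemma: modulation equations} deliver the $C(K_i)\nu^2/|\log\nu|^3$ residual), the geometric term $\tfrac{R_\tau}{\mu}\pa_{\br}U_\nu$, and $\tilde E$ via its local weighted $L^2$ bound \eqref{Prop3: weighted L2 estimate of tilde E}. The main obstacle is the careful bookkeeping of the $\nu^{-2}$ factors introduced by $U_\nu^{-1}$ on the localized region $\{\zeta\leq 2\zeta^*\}$: these are systematically compensated by placing the most singular factor into the $H^1$ quantity on the left-hand side and extracting a $|\log\nu|^{-1}$ smallness from the bootstrap before applying Cauchy--Schwarz.
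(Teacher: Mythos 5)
Your overall setup (differentiate the $H^1$ quantity, use $\Ls^\zeta_{0,\nu}f=\nabla\cdot(U_\nu\nabla\Ms^\zeta_\nu f)$ to integrate by parts, split $\chi^*\pa_\tau\eps$ into $\Ls^\zeta_{0,\nu}\eps^*$, a commutator, the scaling term, $L(\eps)$, $NL(\eps)$, $E$, and time-derivative pieces) matches the paper, but the treatment of the leading quadratic term is wrong, and this is not a detail -- it is the engine of the whole proof. The term $-\int\Ls^\zeta_{0,\nu}\eps^*\,\Ms^\zeta_\nu(\Ls^\zeta_{0,\nu}\eps^*)$ is an $H^2$-level quantity: by \eqref{Lemma_est of M: L^2 coercivity} it is comparable to $-\int\frac{|\Ls^\zeta_{0,\nu}\eps^*|^2}{U_\nu}$ up to kernel projections, and it is \emph{not} controlled by (nor absorbable into) the $H^1$ energy $\int U_\nu|\nabla\Ms^\zeta_\nu\eps^*|^2$, as you propose when you say it is "bounded by Cauchy--Schwarz ... absorbed into a fraction of the $H^1$ quantity itself". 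The paper keeps this term with its sign: combining \eqref{Lemma_est of M: L^2 coercivity} with the higher-order coercivity of Proposition \ref{proposition: high order coercivity} (which you never invoke), it yields a damping $-\delta\int\frac{\tilde\eps_2^2}{U_\nu}$, where $\eps_2=\Ls^\zeta_{0,\nu}\eps^*$, together with control of the weighted $H^2$ norm $\|\eps^*\|_{H^2_\#}$.

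This damping is indispensable for the rest of your plan, because every remaining contribution appears in the paired form $\int\eps_2\,\Ms^\zeta_\nu(\chi^*L(\eps))$, $\int\eps_2\,\Ms^\zeta_\nu(\chi^*NL(\eps))$, $\int\eps_2\,\Ms^\zeta_\nu([\chi^*,\Ls^\zeta_{0,\nu}]\eps)$, $\int\eps_2\,\Ms^\zeta_\nu(\chi^*E)$; Cauchy--Schwarz on these necessarily produces the factor $\bigl(\int\frac{\tilde\eps_2^2}{U_\nu}\bigr)^{1/2}$, which can only be closed by Young's inequality against the retained $H^2$ damping, not against the right-hand side of \eqref{lemma H^1 energy estimate: ineq in statement}. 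The same mechanism is what makes your nonlinear step fail as written: the paper needs the $L^\infty$ bounds on $\eps^*$ and $\nabla\Psi_{\eps^*}$ obtained by Sobolev embedding from $\|\eps^*\|_{H^2_\#}$ (again supplied by the damping), whereas HLS \eqref{appendix: HLS ineq in our setting} plus the outer $L^\infty$ bound alone does not give estimates at the stated order. Two further, smaller issues: the scaling term cannot be handled by a direct $\Ms$-pairing Cauchy--Schwarz, since $\int\frac{|\nabla(\chi^*\y\cdot\nabla\eps)|^2}{U_\nu}$ involves second derivatives of $\eps$ inside $\{\zeta\le2\zeta^*\}$ which are not in the bootstrap; the paper instead uses the identities $\Psi_{\y\cdot\nabla\eps^*}=\y\cdot\nabla\Psi_{\eps^*}-2\Psi_{\eps^*}$ and the structure $\Ms^\zeta_\nu(\Lambda U_\nu)=-2$, $\Ms^\zeta_\nu(\nabla U_\nu)=0$ (the latter also being essential to handle $\frac{R_\tau}{\mu}\pa_{\br}U_\nu$ and the $\mod_i\vp_{i,\nu}$ parts of $E$, which you do not address).
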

\begin{proof}
    First, the evolution of $\eps^*$ is:
    \begin{equation*}
        \frac{d}{d\tau}\eps^* = \Ls^\zeta_{0,\nu}\eps^*+[\chi^*,\Ls^\zeta_{0,\nu}]\eps+\chi^*\sqrt{\vr_\nu}\dr{-\beta\Lambda\eps+L(\eps)+NL(\eps)+E},
    \end{equation*}
    based on which we compute that
    \begin{align*}
         \frac{1}{2}\frac{d}{d\tau} \int U_\nu|\nabla\Ms^\zeta_\nu(\eps^*)|^2 &= \int U_\nu\nabla\Ms^\zeta_\nu(\eps^*)\cdot\nabla\Ms^\zeta_\nu(\pa_\tau\eps^*)+\frac{1}{2}\int(\pa_\tau U_\nu)|\nabla\Ms^\zeta_\nu(\eps^*)|^2\\
         &\quad+ \int U_\nu\nabla\Ms^\zeta_\nu(\eps^*)\cdot\nabla\dr{\eps\pa_\tau\dr{\frac{1}{U_\nu}}}\\
         =&-\int\Ls^\zeta_{0,\nu}\eps^*\Ms^\zeta_\nu(\Ls^\zeta_{0,\nu}\eps^*)+\int U_\nu\nabla\Ms^\zeta_\nu(\eps^*)\cdot\nabla\Ms^\zeta_\nu\dr{\chi^*\dr{-\beta\Lambda\eps+L(\eps)+NL(\eps)}}\\
         &\quad+\int U_\nu\nabla\Ms^\zeta_\nu(\eps^*)\cdot\nabla\Ms^\zeta_\nu\dr{\chi^*E+[\chi^*,\Ls^\zeta_{0,\nu}]\eps}\\
         &\quad+\frac{1}{2}\int(\pa_\tau U_\nu)|\nabla\Ms^\zeta_\nu(\eps^*)|^2
         + \int U_\nu\nabla\Ms^\zeta_\nu(\eps^*)\cdot\nabla\dr{\eps^*\pa_\tau\dr{\frac{1}{U_\nu}}}.
    \end{align*}
    \underline{Coercivity in $H^2$:}
    Denote $\eps_2 := \Ls^\zeta_{0,\nu}\eps^*$, and decompose
    \[
        \eps_2 = a_0\Lambda U_\nu+ a_1\pa_{\br} U_\nu+\tilde \eps_2,
    \]
    where
    \[
        a_0 = \frac{\scl{\eps_2}{\Lambda U_\nu}}{\scl{\Lambda U_\nu}{\Lambda U_\nu}},\quad a_1 = \frac{\scl{\eps_2}{\pa_{\br} U_\nu}}{\scl{\pa_{\br} U_\nu}{\pa_{\br} U_\nu}}.
    \]
    By \eqref{appendix: pointwise est of Poisson field radial and nonradial, parabolic} and Cauchy's inequality, We have the estimate
    \begin{align*}
        |a_0|\lesssim \nu^2 \int |(\nabla\eps^*-U_\nu\nabla\Psi_{\eps^*}-\eps^*\nabla \Psi_{U_\nu})\cdot\nabla(\Lambda U_\nu)|\lesssim \frac{1}{\nu^2}\|\nabla\eps^*\|_{L^2}\lesssim \frac{C(K_i)}{|\log\nu|}. 
    \end{align*}
    Similarly, we have
    \begin{equation*}
        |a_1|\lesssim\frac{C(K_i)\nu}{|\log\nu|}.
    \end{equation*}
    Then, by the algebraic identities $\Ms^\zeta_\nu(\Lambda U_\nu)=-2\;, \Ms^\zeta_\nu(\nabla U_\nu)=0$, the divergence form of $\eps_2$, and \eqref{Lemma_est of M: L^2 coercivity}, we have 
    \[
       \int \eps_2\Ms^\zeta_\nu(\eps_2) = \int\tilde\eps_2\Ms^\zeta_\nu(\tilde\eps_2) \approx \int \frac{|\tilde\eps_2|^2}{U_\nu} \approx \int \frac{|\eps_2|^2}{U_\nu} +\frac{C(K_i)}{|\log\nu|^2},
    \]
    where we denote for two non-negative quantities $A\approx B$, if there exists a universal constant $c>0$, such that $cA\leq B\leq\frac{1}{c}A$.
    Then, we are to show that $\int\frac{|\eps_2|^2}{U_\nu}$ is equivalent to certain weighted $H^2$-norm for $\eps^*$.
    Define
    \[
       \tilde\eps^*:= \eps^* - \frac{\scl{\eps^*}{\Lambda U_\nu}}{\scl{\Lambda U_\nu}{\Lambda U_\nu}}\Lambda U_\nu - \frac{\scl{\eps^*}{\nabla U_\nu}}{\scl{\nabla U_\nu}{\nabla U_\nu}}\cdot\nabla U_\nu:= \eps^* - c_1\Lambda U_\nu - \mathbf{c}_2\cdot\nabla U_\nu.
    \]
    Thus, we have
    \[
        \scl{\tilde\eps^*}{\Lambda U_\nu} = \scl{\tilde\eps^*}{\nabla U_\nu}=0.
    \]
    By the local orthogonality conditions \eqref{local orthorgonality conditions}, we estimate that
    \begin{align*}
        |c_1| \lesssim \nu^2\da{\int\eps^*\Lambda U_\nu} \lesssim \nu^4\|\eps^*\|_{L^2},\quad
        |\mathbf{c}_2| \lesssim \nu^4\da{\int\eps^*\nabla U_\nu} \lesssim \nu^6\|\eps^*\|_{L^2}.
    \end{align*}
    Then, by the estimates above and Proposition \ref{proposition: high order coercivity} (in the parabolic variables),
    \begin{align*}
        \int\frac{|\Ls^\zeta_{0,\nu}\eps^*|^2}{U_\nu}=\int\frac{|\Ls^\zeta_{0,\nu}\tilde\eps^*|^2}{U_\nu}&\geq\delta\dr{\int\frac{|\Delta\tilde\eps^*|^2}{U_\nu}+\frac{1}{\nu^2}\int (\nu+\zeta)^2|\nabla\tilde\eps^*|^2+\frac{1}{\nu^2}\int{\tilde\eps^*}^2}\\
        &\geq \delta\dr{\int\frac{|\Delta\eps^*|^2}{U_\nu}+\frac{1}{\nu^2}\int (\nu+\zeta)^2|\nabla\eps^*|^2+\frac{1}{\nu^2}\int{\eps^*}^2} - C\nu^4\|\eps^*\|^2_{L^2}\\
        &\geq \frac{\delta}{2}\dr{\int\frac{|\Delta\eps^*|^2}{U_\nu}+\frac{1}{\nu^2}\int (\nu+\zeta)^2|\nabla\eps^*|^2+\frac{1}{\nu^2}\int{\eps^*}^2},
    \end{align*}
    when $\nu$ is small enough. Since $\eps^*$ is compactly supported, integration by parts yields the following control (one can, for example, apply the density argument by considering the functions in $\mathcal{D}(\reall^2)$ first):
    \begin{equation*}
        \int |\nabla^{(2)}\eps^*|^2\zeta^{2p} \leq C(p)\dr{\int|\Delta\eps^*|^2\zeta^{2p}+\int |\nabla\eps^*|^2\zeta^{2p-2} },\quad p=1,2.
    \end{equation*}
    It follows that there exists some $\delta'>0$, such that
    \begin{equation*}
        \int\frac{|\Ls^\zeta_{0,\nu}\eps^*|^2}{U_\nu}\geq \delta'\dr{\int\frac{|\nabla^{(2)}\eps^*|^2(\nu+\zeta)^4}{\nu^2}+\frac{1}{\nu^2}\int (\nu+\zeta)^2|\nabla\eps^*|^2+\frac{1}{\nu^2}\int{\eps^*}^2}.
    \end{equation*}
    For brevity, in the following we denote
    \[
        \|\eps^*\|^2_{H^2_\#}:= \int\frac{|\nabla^{(2)}\eps^*|^2(\nu+\zeta)^4}{\nu^2}+\frac{1}{\nu^2}\int (\nu+\zeta)^2|\nabla\eps^*|^2+\frac{1}{\nu^2}\int{\eps^*}^2.
    \]
    Finally, gathering all the results above, we obtain 
    \begin{align*}
        -\int\Ls^\zeta_{0,\nu}\eps^*\Ms^\zeta_\nu(\Ls^\zeta_{0,\nu}\eps^*) &\leq -\delta'\|\eps^*\|^2_{H^2_\#}+\frac{C}{\nu^2}\dr{\int(\nu+\zeta)^2|\nabla\eps^*|^2+\int\frac{(\nu+\zeta)^2(\eps^*)^2}{\nu^2}}\\
        &\leq-\delta'\|\eps^*\|^2_{H^2_\#}+ \frac{C}{\nu^4}\dr{\|\eps\|^2_\inn+\|\nabla\eps\|^2_\inn}\leq -\delta'\|\eps^*\|^2_{H^2_\#}+\frac{C(K_i)}{|\log\nu|^2},
    \end{align*} 
    and
    \begin{align*}
        -\int\Ls^\zeta_{0,\nu}\eps^*\Ms^\zeta_\nu(\Ls^\zeta_{0,\nu}\eps^*) &\leq -\delta\int\frac{\tilde\eps^2_2}{U_\nu},
    \end{align*}
    for some universal $\delta,\delta',C>0$. In other words, it means that there is a partial $H^2$-damping (i.e., damping in a certain finite codimensional subspace) and a full $H^2$-damping with an error of size $\frac{1}{|\log\nu|^2}$.\\
    \underline{Estimate of the scaling term $-\beta\Lambda\eps$:} Note that
    \[
        -\beta\int U_\nu\nabla\Ms^\zeta_\nu(\eps^*)\cdot\nabla\Ms^\zeta_\nu(\chi^*\Lambda\eps) = -2\beta\int U_\nu|\nabla\Ms^\zeta_\nu(\eps^*)|^2-\beta\int U_\nu\nabla\Ms^\zeta_\nu(\eps^*)\cdot\nabla\Ms^\zeta_\nu(\chi^*\y\cdot\nabla\eps),
    \]
    where the first term of the right-hand side has the desirable sign. As for the second term, by the identities $\frac{\y\cdot\nabla\eps^*}{U_\nu}=\y\cdot\nabla\dr{\frac{\eps^*}{U_\nu}}+\frac{\y\cdot\nabla U_\nu\eps^*}{U^2_\nu}$ and $\Psi_{\y\cdot\nabla\eps^*}=\y\cdot\nabla\Psi_{\eps^*}-2\Psi_{\eps^*}$ (since $\eps^*$ is compacted supported, the Poisson fields are all well defined, so that this identity can be verified by computing the Laplacian on the right-hand side), we have 
    \[
       \Ms^\zeta_\nu(\chi^*\y\cdot\nabla\eps) =\y\cdot\nabla\Ms^\zeta_\nu(\eps^*) -\Ms^\zeta_\nu(\y\cdot\nabla(\chi^*)\eps)+\frac{\y\cdot\nabla U_\nu\eps^*}{U^2_\nu}+2\Ms^\zeta_\nu(\eps^*)-\frac{2\eps^*}{U_\nu}.
    \]
    Then, through integration by parts, Cauchy's inequality and \eqref{Lemma_est of M: H^1 est}, we obtain
    \begin{equation*}
        \da{\beta\int U_\nu\nabla\Ms^\zeta_\nu(\eps^*)\cdot\nabla\Ms^\zeta_\nu(\chi^*\y\cdot\nabla\eps)}\leq C\dr{\frac{1}{\nu^2}\|\eps\|^2_\inn+\frac{1}{\nu^2}\|\nabla\eps\|^2_\inn},
    \end{equation*}
    for some $C=C(\zeta^*)$.
    It follows that
    \begin{equation}
         -\beta\int U_\nu\nabla\Ms^\zeta_\nu(\eps^*)\cdot\nabla\Ms^\zeta_\nu(\chi^*\Lambda\eps)\leq  C\dr{\frac{1}{\nu^2}\|\eps\|^2_\inn+\frac{1}{\nu^2}\|\nabla\eps\|^2_\inn}.
    \end{equation}
    \underline{Estimate of $L(\eps)$} As before, we neglect the terms of order $\Oc(\mu^s)$, thanks to \eqref{appendix: 3&2D near field approx}. Thus, 
    \[
        L(\eps) = -\nabla\cdot(\eps\nabla\Psi_P + P \nabla\Psi_\eps)+\frac{R_\tau}{\mu}\pa_{\br}\eps+\text{l.o.t.}.
    \]
    Recall the pointwise estimates $|\nabla\Psi_P(\zeta)|\lesssim \frac{|a|}{\nu}$ and $|\pa^k_\zeta P(\zeta)|\lesssim \frac{|a|\zeta^{2-k}\log(4+\zeta)}{(\nu+\zeta)^4}$. The following estimate relies on the structure of $\Ms^\zeta_\nu$, specifically $\Ms^\zeta_\nu\Lambda U_\nu = -2$ and $\Ms^\zeta_\nu\nabla U_\nu = 0$. First, we note that
    \begin{align*}
        \int \eps_2\Ms^\zeta_\nu(\chi^*\nabla\cdot(\eps\nabla\Psi_P+P\nabla\Psi_\eps)) &= \int \nabla\cdot(\eps^*\nabla\Psi_P+P\chi^*\nabla\Psi_\eps)\Ms^\zeta_\nu(\tilde\eps_2)\\
        &\quad-\int U_\nu\nabla\Ms^\zeta_\nu(\eps^*)\cdot\nabla\Ms^\zeta_\nu(\nabla\chi^*\cdot\nabla\Psi_P\eps+\nabla\chi^*\cdot\nabla\Psi_\eps P),
    \end{align*}
    where we use the decomposition $\eps_2 = \tilde\eps_2+a_0\Lambda U_\nu + a_1\pa_{\br} U_\nu$ and $\int \nabla\cdot(\eps^*\nabla\Psi_P+P\chi^*\nabla\Psi_\eps) = 0$. By Cauchy's inequality, we obtain
    \begin{align*}
        \da{\int \nabla\cdot(\eps^*\nabla\Psi_P+P\chi^*\nabla\Psi_\eps)\Ms^\zeta_\nu(\tilde\eps_2)}&\leq \frac{\delta}{10}\int\frac{\tilde\eps_2^2}{U_\nu}+C(\zeta^*)\int\frac{|\nabla\cdot(\eps^*\nabla\Psi_P+P\chi^*\nabla\Psi_\eps)|^2}{U_\nu}\\
        &\leq  \frac{\delta}{10}\int\frac{\tilde\eps_2^2}{U_\nu}+C(\zeta^*)\dr{\frac{a^2}{\nu^4}\|\nabla\eps\|^2_\inn+\frac{a^2}{\nu^2}\|\eps(1+\zeta)^\frac{3}{2}\|^2_{L^\infty(\zeta\geq\zeta^*)}},
    \end{align*}
    and
    \begin{align*}
        \da{\int U_\nu\nabla\Ms^\zeta_\nu(\eps^*)\cdot\nabla\Ms^\zeta_\nu(\nabla\chi^*\cdot\nabla\Psi_P\eps+\nabla\chi^*\cdot\nabla\Psi_\eps P)}\leq C(\zeta^*)\frac{|a|}{\nu^3}\dr{\|\eps\|^2_\inn+\|\nabla\eps\|^2_\inn+\|\eps(1+\zeta)^\frac{3}{2}\|^2_{L^\infty(\zeta\geq\zeta^*)}},
    \end{align*}
    where we use the elliptic regularity
    \begin{equation}\label{Lemma H1 est: elliptic regularity}
        \int_{\zeta^*\leq\zeta\leq2\zeta^*} |\nabla^{(2)}\Psi_\eps|^2\leq C(\zeta^*)  \int_{\frac{1}{2}\zeta^*\leq\zeta\leq 4\zeta^*} |\nabla\Psi_\eps|^2+\eps^2.
    \end{equation}
    With the same argument, we can estimate
    \begin{equation*}
        \da{\int\eps_2\Ms^\zeta_\nu(\chi^*\frac{R_\tau}{\mu}\pa_{\br}\eps)}\leq \frac{\delta}{10}\int\frac{\tilde\eps_2^2}{U_\nu}+\da{\frac{R_\tau}{\mu}}^2\frac{C}{\nu^2}(\|\eps\|^2_\inn+\|\nabla\eps\|^2_\inn).
    \end{equation*}
    In summary, we obtain
    \begin{equation*}
        \da{\int\eps_2\Ms^\zeta_\nu(\chi^*L(\eps))}\leq \frac{\delta}{5}\int\frac{\tilde\eps^2_2}{U_\nu}+\frac{C(\zeta^*,K_i)}{\nu}\dr{\|\eps\|^2_\inn+\|\nabla\eps\|^2_\inn}.
    \end{equation*}
    \underline{Estimate of the nonlinear term $NL(\eps)$:} We neglect the term of order $\Oc(\mu^s)$, and as before, rely on the structure of $\Ms^\zeta_\nu$. First, we write
    \begin{equation}\label{Lemma_H1 est: nonlinear part}
        \int\eps_2\Ms^\zeta_\nu(\chi^*\nabla\cdot(\eps\nabla\Psi_\eps)) =  \int\tilde\eps_2\Ms^\zeta_\nu(\nabla\cdot(\eps^*\nabla\Psi_\eps))- \int U_\nu\nabla\Ms^\zeta_\nu(\eps^*)\cdot\nabla\Ms^\zeta_\nu(\nabla\chi^*\cdot\nabla\Psi_\eps\eps).
    \end{equation}
    Then, we are to derive $L^\infty$-bounds for $\nabla\Psi_\eps$ and $\eps$. Denote $q^*(\gamma) = \nu^2\eps^*(\nu\gamma)$. Then, $\nabla\Psi_{q^*}(\gamma) = \nu\nabla\Psi_{\eps^*}(\nu\gamma)$. By Sobolev embedding and the pointwise estimates of the Poisson field \eqref{appendix: pointwise est of Poisson field radial}\eqref{appendix: pointwise est of Poisson field nonradial}, we have
    \[
        \|\nabla\Psi_{q^*}\|_{L^\infty} \lesssim \|\nabla q^*\|_{L^2}+\|\nabla \Psi_{q^*}\|_{L^2}\leq C(\zeta^*)\|\nabla\eps^*\|_{L^2(U_\nu)} \leq\frac{C(\zeta^*,K_i)\nu^2}{|\log\nu|}. 
    \]
    Thus,
    \[
        \|\nabla\Psi_{\eps^*}\|_{L^\infty}\leq \frac{C(\zeta^*K_i)\nu}{|\log\nu|}.
    \]
    Moreover, by \eqref{appendix: pointwise est of the gradient of Poisson field based on L infty of u}, we have 
    \[
        \|\nabla\Psi_{(1-\chi^*)\eps}\|_{L^\infty} \lesssim \|(1-\chi^*)\eps\|_{L^\infty} +\|(1-\chi^*)\eps\|_{L^\frac{3}{2}}\lesssim \|\eps(1+\zeta)^{\frac{3}{2}}\|_{L^\infty(\zeta\geq\zeta^*)}\leq C(K_i)
        \nu^2.
    \]
    In summary, we've shown that
    \begin{align*}
        \|\nabla\Psi_\eps\|_{L^\infty}\leq \frac{C(\zeta^*,K_i)\nu}{|\log\nu|}.
    \end{align*}
    Similarly, Sobolev embedding yields
    \begin{align*}
        \|q^*\|_{L^\infty}\lesssim \|q^*\|_{H^2} \lesssim \nu^2\|\eps^*\|_{H^2_\#}\lesssim \nu^2\int\eps_2\Ms^\zeta_\nu(\eps_2) +\frac{C(K_i)\nu^2}{|\log\nu|}. 
    \end{align*}
    It then follows that
    \[
        \|\eps^*\|_{L^\infty} \lesssim \int\eps_2\Ms^\zeta_\nu(\eps_2) + \frac{C(K_i)}{|\log\nu|} = \int\tilde\eps_2\Ms^\zeta_\nu(\tilde\eps_2) + \frac{C(K_i)}{|\log\nu|}
    \]
    Now coming back to \eqref{Lemma_H1 est: nonlinear part}, by Cauchy's inequality, we obtain
    \begin{align*}
        \da{\int\eps_2\Ms^\zeta_\nu(\nabla\cdot(\eps^*\nabla\Psi_\eps))}\leq \dr{\frac{\delta}{20}+\frac{C\|\eps\|_\inn}{\nu}}\int\frac{\tilde\eps^2_2}{U_\nu}+\frac{C(\zeta^*,K_i)}{|\log\nu|^2\nu^2}\dr{\|\eps\|^2_\inn+\||\nabla\eps\|^2_\inn},
    \end{align*}
    and
    \begin{align*}
        \da{ \int U_\nu\nabla\Ms^\zeta_\nu(\eps^*)\cdot\nabla\Ms^\zeta_\nu(\nabla\chi^*\cdot\nabla\Psi_\eps\eps)}\leq \frac{C(\zeta^*,K_i)}{\nu}\|\nabla\eps\|^2_\inn+\frac{C(\zeta^*)}{\nu^3}(\|\eps\|^4_\inn+\|\eps\|^4_{L^\infty(\zeta\geq \zeta^*)}),
    \end{align*}
    where we again use \eqref{Lemma H1 est: elliptic regularity}.
    In summary, by the bootstrap assumptions, we have the estimate 
    \begin{align*}
        \da{\int\eps_2\Ms^\zeta_\nu(\chi^*NL(\eps))} \leq \frac{\delta}{4}\int\frac{\tilde\eps^2_2}{U_\nu}+\frac{C(\zeta^*,K_i)\nu^2}{|\log\nu|^3}+\Oc(\mu^s),
    \end{align*}
    when $\nu$ is sufficiently small.\\[3pt]
    \underline{Estimate of the rest terms }: We write
    \[
        [\chi^*,\Ls^\zeta_{0,\nu}]\eps = -2\nabla\chi^*\cdot\nabla\eps -\eps\Delta\chi^*+\eps\nabla\chi^*\cdot\nabla\Psi_{U_\nu}+\nabla\cdot(U_\nu\nabla\Psi_{\eps^*}-\chi^*U_\nu\nabla\Psi_\eps)+U_\nu\nabla\chi^*\cdot\nabla\Psi_\eps.
    \]
    First, by Cauchy's inequality, 
    \begin{align*}
        \da{\int U_\nu\nabla\Ms^\zeta_\nu(\eps^*)\cdot\nabla\Ms^\zeta_\nu(-2\nabla\chi^*\cdot\nabla\eps -\eps\Delta\chi^*+\eps\nabla\chi^*\cdot\nabla\Psi_{U_\nu})}\leq \frac{C}{\nu^2}\dr{\|\nabla\eps\|^2_\inn+\|\eps\|^2_\inn}.
    \end{align*}
    Second, by the pointwise estimates of the Poisson field, the bootstrap assumptions and inequality \eqref{appendix: pointwise est of the gradient of Poisson field based on L infty of u},
    \begin{align*}
        \da{\int\eps_2\Ms(\nabla\cdot(U_\nu\nabla\Psi_{\eps^*}-\chi^*U_\nu\nabla\Psi_\eps))} &= \da{\int\tilde\eps_2\Ms(\nabla\cdot(U_\nu\nabla\Psi_{\eps^*}-\chi^*U_\nu\nabla\Psi_\eps))}\\
        &\leq \da{\int\tilde \eps_2\Ms(\nabla\cdot(U_\nu(1-\chi^*)\nabla\Psi_{\eps^*})} +\da{\int\tilde\eps_2\Ms(\nabla\cdot(U_\nu\chi^*\nabla\Psi_{\eps(1-\chi^*)})}\\
        &\leq \frac{\delta}{10}\int\frac{\tilde\eps_2^2}{U_\nu}+\frac{C}{\nu^2}\|\eps(1+\zeta)^\frac{3}{2}\|^2_{L^\infty(\zeta\geq\zeta^*)}+C(\zeta^*,K_i)\nu^3.
    \end{align*}
    Similarly, we have the estimate
    \begin{equation*}
        \da{\int U_\nu\nabla\Ms^\zeta_\nu(\eps^*)\cdot\nabla\Ms^\zeta_\nu(U_\nu\nabla\chi^*\cdot\nabla\Psi_\eps)}\leq \frac{C}{\nu^2}\dr{\|\nabla\eps\|^2_\inn+\|\eps\|^2_\inn}+C(\zeta^*,K_i)\nu^3.
    \end{equation*}
    To summarize, we have
    \begin{align*}
        \da{\int \eps_2\Ms^\zeta_\nu([\chi^*,\Ls^\zeta_{0,\nu}]\eps)}\leq \frac{\delta}{10}\int\frac{\tilde\eps^2_2}{U_\nu}+ \frac{C}{\nu^2}\dr{\|\nabla\eps\|^2_\inn+\|\eps\|^2_\inn}+\frac{C}{\nu^2}\|\eps(1+\zeta)^\frac{3}{2}\|^2_{L^\infty(\zeta\geq\zeta^*)}+C(\zeta^*,K_i)\nu^3.
    \end{align*}
    The estimate of the generated error relies on the structure of the operator $\Ms^\zeta_\nu$. Note that
    \begin{align*}
        &-\int U_\nu\nabla\Ms^\zeta_\nu(\eps^*)\cdot\nabla\Ms^\zeta_\nu(\chi^* E)=\int\eps_2\Ms^\zeta(\chi^* E) \\
        &\quad= -\sum_{i=0,1}\mod_i\int U_\nu\nabla\Ms^\zeta_\nu(\eps^*)\cdot\nabla\Ms^\zeta_\nu\dr{-\frac{1}{16\nu^2}(\chi^*-1)\Lambda U_\nu+\tilde\vp_i}
        +a_0\int\chi^*\tilde E+\int\tilde\eps_2\Ms^\zeta_\nu(\chi^*\tilde E).
    \end{align*}
    Then, by Cauchy's inequality, the estimates of $\tilde E$ and the estimate for $|\mod_i|$ in \eqref{Lemma modulation eq: inequalities}, we obtain
    \begin{align}
        \da{\int\eps_2\Ms^\zeta_\nu(\chi^*E)}\leq \frac{\delta}{10}\int\frac{\tilde\eps_2^2}{U_\nu}+\frac{C}{\nu^2}\dr{\|\eps\|^2_\inn+\|\nabla\eps\|^2_\inn}+\frac{C(\nu^2+|a|)}{|\log\nu|^2}+\frac{C(K_i)\nu^2}{|\log\nu|^3}.
    \end{align}
    Finally, for the extra time derivative terms, note that $\pa_\tau = \frac{\nu_\tau}{\nu}\nu\pa_\nu$, and the estimates are straightforward: 
    \begin{equation*}
        \frac{1}{2}\int(\pa_\tau U_\nu)|\nabla\Ms^\zeta_\nu(\eps^*)|^2
         + \int U_\nu\nabla\Ms^\zeta_\nu(\eps^*)\cdot\nabla\dr{\eps^*\pa_\tau\dr{\frac{1}{U_\nu}}}\leq \frac{C(K_i)}{\nu^2|\log\nu|}\dr{\|\eps\|^2_\inn+\|\nabla\eps\|^2_\inn}.
    \end{equation*}
    \underline{Conclusion:} Collecting all the estimates above, we obtain:
    \begin{align*}
       \frac{1}{2}\frac{d}{d\tau}\int U_\nu|\nabla\Ms^\zeta_\nu(\eps^*)|^2 &\leq \frac{C}{\nu^2}\dr{\|\nabla\eps\|^2_\inn+\|\eps\|^2_\inn}+\frac{C}{\nu^2}\|\eps(1+\zeta)^\frac{3}{2}\|^2_{L^\infty(\zeta\geq\zeta^*)}\\
       &\quad+\frac{C(\nu^2+|a|)}{|\log\nu|^2}+\frac{C(\zeta^*,K_i)\nu^2}{|\log\nu|^3},
    \end{align*}
    for some universal constants $\delta',C>0$ and constant $C(K_i)$ depending on $K_i\,(1\leq i\leq 7)$.
\end{proof}

\subsubsection{Higher Order Estimates in the Middle Range}
\begin{lemma}[$H^2$ control of $\eps$ in the middle range]\label{lemma: H^2 control in the middle}
    Let $w$ be a solution in the bootstrap regime\\ $\BS(\tau_0,\tau_*, \zeta^*, M_0, \{K_i\}_{i=1}^7)$. Then, for any $0<\zeta_1<\zeta_2$ and  $\tau\in [\tau_0,\tau_*]$ we have the following estimate
    \begin{equation}
        \frac{d}{d\tau}\|\eps\|^2_{H^2_*(\zeta_1,\zeta_2)}\leq -\delta(\zeta_1,\zeta_2)\|\eps\|^2_{H^2_*(\zeta_1,\zeta_2)} +\frac{C(\zeta_1,\zeta_2)K_4\nu^2}{|\log\nu|}|\mod_0|+ \frac{C(\zeta_1,\zeta_2)K_4^2\nu^4}{|\log\nu|^2}+\frac{C(\zeta_1,\zeta_2,K_i)\nu^4}{|\log\nu|^3},
    \end{equation}
    with the norm $\|\cdot\|_{H^2_*(\zeta_1,\zeta_2)}$ defined in \eqref{lemma higher order est in the middle: def of H^2 norm}, and positive constants 
    $\delta(\zeta_1,\zeta_2)$ and $C(\zeta_1,\zeta_2)$ depending only on $\zeta_1,\zeta_2$.
\end{lemma}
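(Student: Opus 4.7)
The plan is a standard parabolic $H^2$ energy estimate localized to the annular region $\{\zeta_1 \le \zeta \le \zeta_2\}$, exploiting the smallness of $U_\nu(\zeta) \lesssim \nu^2/(1+\zeta)^4$ in this middle range so that the linearized operator $\Ls^\zeta_\nu$ reduces essentially to $\Delta - \beta \Lambda$ plus terms of order $\Oc(\nu^2)$, from which clean parabolic damping is extracted. I would introduce a smooth cutoff $\tilde\chi \in C_c^\infty(\reall^2)$ adapted to $[\zeta_1, \zeta_2]$ with support in $[\zeta_1/2, 2\zeta_2]$, and compute $\tfrac{1}{2}\frac{d}{d\tau}\|\eps\|^2_{H^2_*(\zeta_1,\zeta_2)}$ by plugging in the evolution equation \eqref{evolution of epsilon}, integrating by parts against the $H^2_*$ weight, and tracking the contributions of each piece $\Ls^\zeta_\nu\eps$, $L(\eps)$, $NL(\eps)$, $E$ term by term.

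For the leading order, the Laplacian part of $\Ls^\zeta_\nu$ yields the damping $-\delta(\zeta_1,\zeta_2)\|\eps\|^2_{H^2_*(\zeta_1,\zeta_2)}$ after integration by parts, while the scaling term $-\beta \Lambda \eps$ has coefficients uniformly bounded on $[\zeta_1,\zeta_2]$ and is absorbed in the damping up to lower order terms. The nonlocal Poisson field contributions inside $\Ls^\zeta_\nu$ and $L(\eps)$ are controlled by the pointwise estimates \eqref{appendix: pointwise est of Poisson field radial and nonradial, parabolic} and \eqref{appendix: pointwise est of the gradient of Poisson field based on L infty of u} combined with the inner norm $\|\eps\|_\inn$ and the outer $L^\infty$ bound, producing lower-order terms of order $C(\zeta_1, \zeta_2, K_i)\nu^4/|\log\nu|^3$. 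Commutator terms with the cutoff $\tilde\chi$ transfer derivatives onto the boundary annuli $\{\zeta_1/2 \le \zeta \le \zeta_1\}$ and $\{\zeta_2 \le \zeta \le 2\zeta_2\}$, where $\eps$ is controlled either by the weighted $H^1$ inner norm (Lemma \ref{lemma: H^1 energy estimate}) or by the outer $L^\infty$ bootstrap assumption; these combine into $C(\zeta_1, \zeta_2) K_4^2 \nu^4/|\log\nu|^2$. The nonlinear term $NL(\eps)$ is handled by Sobolev embedding on the annulus, which turns the product into $\|\eps\|_{H^2_*} \cdot C(K_i)\nu^2/|\log\nu|$ and can be absorbed after Cauchy's inequality into the damping and the quadratic error term.

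Finally, the generated error $E$ decomposes via Proposition \ref{prop: decomp of the gene error} as $E = \mod_0\vp_{0,\nu} + \mod_1\vp_{1,\nu} + \frac{R_\tau}{\mu}\pa_{\br}U_\nu + \tilde E$. On $[\zeta_1/2, 2\zeta_2]$, the approximate eigenfunctions and $\pa_{\br}U_\nu$, together with all their derivatives up to order two, are bounded by $C(\zeta_1, \zeta_2)$ uniformly in $\nu$. Testing $\mod_0 \vp_{0,\nu}$ against $\eps$ in the $H^2_*$ pairing, after integration by parts onto $\vp_{0,\nu}$, produces a term of the form $|\mod_0| \cdot \|\eps\|_{L^2(\zeta_1/2 \le \zeta \le 2\zeta_2)} \cdot C(\zeta_1,\zeta_2)$. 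Crucially, the latter $L^2$ norm is dominated by $\|\eps\|_\inn \le K_4\nu^2/|\log\nu|$, since in the middle range $\chi_\nu\sqrt{\vr_\nu} \sim 1$ and $U_\nu \sim \nu^2$, giving exactly the mixed contribution $C(\zeta_1,\zeta_2) K_4 \nu^2 |\mod_0|/|\log\nu|$. The contributions of $\mod_1$ and $R_\tau/\mu$ are strictly smaller by Lemma \ref{Lemma: modulation equations} and fold into the last error term, while $\tilde E$ itself contributes $C(\zeta_1, \zeta_2, K_i)\nu^4/|\log\nu|^3$ by Proposition \ref{prop: decomp of the gene error}.

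The main obstacle will be to keep the coefficient of $|\mod_0|$ strictly linear, rather than paying a Cauchy inequality and producing $|\mod_0|^2$ separately. This linear dependence is essential because $\mod_0$ is precisely the direction that will be controlled only through a topological argument in Sect.\ref{sec: existence of blowup}, and squaring its size would lose the precision needed to close the bootstrap. The refinement requires carefully integrating by parts so that $|\mod_0|$ always comes paired with a factor already controlled by $\|\eps\|_\inn$ from the bootstrap, rather than with $\|\eps\|_{H^2_*}$ itself.
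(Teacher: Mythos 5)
There is a genuine gap in how you produce the damping term. You assert that ``the Laplacian part of $\Ls^\zeta_\nu$ yields the damping $-\delta(\zeta_1,\zeta_2)\|\eps\|^2_{H^2_*(\zeta_1,\zeta_2)}$ after integration by parts,'' but on a cutoff-localized annulus this is false: testing $\Delta\pa^k\eps$ against $\tilde\chi^2\pa^k\eps$ gives $-\|\tilde\chi\nabla\pa^k\eps\|^2_{L^2}$ plus commutator terms, i.e.\ dissipation one derivative \emph{higher}, not a negative multiple of $\|\tilde\chi\pa^k\eps\|^2_{L^2}$. With no boundary conditions there is no spectral gap on the annulus (functions that are nearly constant on $\mathrm{supp}\,\tilde\chi$ make the gradient term small while the $L^2$ term is order one), so the same-level damping cannot come from $\Delta$, and the scaling term cannot supply it cleanly either because the cutoff boundary contributions $\int \y\cdot\nabla(\tilde\chi^2)(\pa^k\eps)^2$ are of the same size as the would-be damping. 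Relatedly, your plan to control the commutator terms on the boundary annuli $\{\zeta_1/2\le\zeta\le\zeta_1\}$, $\{\zeta_2\le\zeta\le2\zeta_2\}$ by the inner $H^1$ norm or the outer $L^\infty$ bootstrap bound does not work for arbitrary $0<\zeta_1<\zeta_2$: those bounds do not control $\nabla^{(2)}\eps$ (and in general not even $\nabla\eps$) on those annuli.

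The paper closes exactly this loop with a two-step mechanism you are missing. First, it uses a \emph{nested} family of cutoffs $\chi_0,\chi_1,\chi_2$ (see \eqref{Lemma higher order est in the middle: cutoff func chi_j}) and runs the $L^2$, first-derivative and second-derivative estimates separately, combining them into the hierarchical norm $\|\eps\|^2_{H^2_*}=\|\eps\chi_0\|^2_{L^2}+\tfrac{1}{C_0}\|\nabla\eps\chi_1\|^2_{L^2}+\tfrac{1}{C_0^2}\|\nabla^{(2)}\eps\chi_2\|^2_{L^2}$ with $C_0$ large, so that each level's cutoff commutator is absorbed by the dissipation produced at the previous level on a larger annulus; the net dissipation controls $\|\eps\|^2_{H^3_*}$. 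Second, it converts this into damping of $\|\eps\|^2_{H^2_*}$ via the Poincar\'e-type inequality $\|\eps\|_{H^2_*}\le C'\|\eps\|_{H^3_*}+C(\zeta_1,\zeta_2)K_4\nu^2/|\log\nu|$, whose additive error is legitimate only because the lowest-order piece is already bootstrap-small, $\|\eps\|_{L^2(\frac18\zeta_1\le\zeta\le 8\zeta_2)}\lesssim\|\eps\|_\inn\lesssim K_4\nu^2/|\log\nu|$, and this error is what feeds the $K_4^2\nu^4/|\log\nu|^2$ term in the statement. Your treatment of the generated error is, by contrast, essentially the paper's: integrating by parts onto $E$ (smooth in the middle range) and pairing with $\|\eps\|_{L^2}\lesssim\|\eps\|_\inn$ does keep the $|\mod_0|$ dependence linear, and the HLS/Sobolev/pointwise-Poisson-field handling of the nonlocal and nonlinear terms is in line with the paper. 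But without the nested-cutoff hierarchy and the Poincar\'e step (or some substitute for them), the central damping term in your inequality is unjustified.
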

\begin{proof}
    The main idea of the proof is the parabolic regularity together with pointwise estimates in the middle range. To start with, from the control of the inner norm of $\eps$, we have
    \begin{equation}
        \|\eps\|_{L^2(\frac{1}{8}\zeta_1\leq \zeta\leq8\zeta_2)}\leq C(\zeta_1,\zeta_2)\|\eps\|_{\inn}\leq C(\zeta_1,\zeta_2)\frac{K_4\nu^2}{|\log\nu|}.
    \end{equation}
    The evolution of $\eps$ can be written as
    \begin{equation}
        \pa_\tau\eps = \Delta \eps +\Gc\cdot\nabla\eps+\Fc\eps-\nabla W\cdot\nabla\Psi_\eps-\nabla\cdot(\eps\nabla\Psi_\eps)+E+\text{l.o.t}, 
    \end{equation}
    where we recall $W= U_\nu+P$ and
    \begin{align*}
        \Fc:= 2W-2\beta,\quad \Gc := -\nabla\Psi_W-\beta\y+ \frac{R_\tau}{\mu}\mathbf{e}_1. 
    \end{align*}
    Here l.o.t. (lower order terms) denotes the terms that, up to second derivatives, can be estimated in the middle range with order $\Oc(\mu^s)$, and hence negligible in our analysis (we will omit it in the rest of this proof).
    Note that in the middle range $\frac{1}{8}\zeta_1\leq\zeta\leq 8\zeta_2$, we have for $k=0,1,2$,
    \begin{align*}
        |\pa_\zeta^{(k)}W|\leq C(\zeta_1,\zeta_2)\dr{ \nu^2+ |a(\tau)|},\quad |\pa_\zeta^{(k+1)}\Psi_W|\leq C(\zeta_1,\zeta_2)\dr{ 1+\frac{|a|}{\nu}}.
    \end{align*}
    Also, by \eqref{Prop3: decomposition of generated error}, \eqref{expression of tilde E}, \eqref{prop1: pointwise est 1}, and Lemma \ref{Lemma: modulation equations}, we have for any $\zeta\in [\frac{1}{8}\zeta_1,8\zeta_2]$ and any $k\geq 0$,
    \begin{equation*}
        |\pa_\zeta^{(k)}E|\leq C(\zeta_1,\zeta_2)|\mod_0| +\frac{C(\zeta_1,\zeta_2)\nu^2}{|\log\nu|}+\frac{C(\zeta_1,\zeta_2,K_i)\nu^2}{|\log\nu|^2}. 
    \end{equation*}
    For $j=0,1,2$, we define a family of cutoff functions:
    \begin{equation}\label{Lemma higher order est in the middle: cutoff func chi_j}
       \chi_j(\zeta) =\begin{cases}
           1,&\quad \frac{1}{2^{2-j}}\zeta_1\leq\zeta\leq 2^{2-j}\zeta_2,\\
           0,&\quad \zeta\in[0,\frac{1}{2^{3-j}}\zeta_1]\cup[2^{3-j}\zeta_2,+\infty).
       \end{cases}
    \end{equation}
    For brevity, we denote $C(K_i):=C(\zeta_1,\zeta_2,K_i:1\leq i\leq 7)$.\\
    \underline{$L^2$-evolution}: Compute that
    \begin{align*}
        \frac{1}{2}\frac{d}{d\tau}\|\eps\chi_0\|^2_{L^2} = \scl{\chi_0\dr{\Delta \eps +\Gc\cdot\nabla\eps+\Fc\eps-\nabla W\cdot\nabla\Psi_\eps-\nabla\cdot(\eps\nabla\Psi_\eps)+E}}{\chi_0\eps}.
    \end{align*}
    First, by Cauchy's inequality,
    \begin{equation*}
        \scl{\chi_0\Delta\eps}{\chi_0\eps} = -\|\chi_0\nabla\eps\|^2_{L^2}-\scl{\nabla\eps}{2\chi_0\eps\nabla\chi_0}\leq -\frac{1}{2}\|\chi_0\nabla\eps\|^2_{L^2}+C(\zeta_1,\zeta_2)\frac{K_4^2\nu^4}{|\log\nu|^2}.
    \end{equation*}
    Next, due to the pointwise estimates $|\Fc|+|\Gc|\lesssim 1$, we obtain
    \begin{equation*}
        \scl{\eps\chi_0}{\chi_0\dr{\Fc\eps+\Gc\cdot\nabla\eps}} \leq \frac{1}{8}\|\chi_0\nabla\eps\|^2_{L^2}+C(\zeta_1,\zeta_2)\frac{K^2_4\nu^4}{|\log\nu|^2}.
    \end{equation*}
    By Hardy-Littlewood-Sobolev (HLS) inequality, \eqref{appendix: pointwise est of Poisson field by weighted L^2 norm} and \eqref{appendix: pointwise est of the gradient of Poisson field based on L infty of u}, we have the following estimates of Poisson field: 
    \begin{equation}\label{Lemma_middle range: ptwise est of whole Poisson field}
         \|\nabla\Psi_{\eps\chi^*}\|_{L^4}\lesssim \frac{1}{\nu^{\frac{3}{2}}}\|\eps\|_\inn\lesssim \frac{K_4\sqrt{\nu}}{|\log\nu|},\quad\|\nabla\Psi_{\eps(1-\chi^*)}\|_{L^\infty}\lesssim\|\eps(1+\zeta)^\frac{3}{2}\|_{L^\infty(\zeta\geq\zeta^*)}\lesssim K_7\nu^2.
    \end{equation}
    Thus, by \eqref{Lemma_middle range: ptwise est of whole Poisson field},
    \[
        -\scl{\eps\chi_0}{\chi_0\nabla W\cdot\nabla\Psi_\eps}\lesssim \frac{C(K_i)\nu^\frac{9}{2}}{|\log\nu|^2}.
    \]
    As for the nonlinear term, using \eqref{Lemma_middle range: ptwise est of whole Poisson field} and the Sobolev embedding $W^{1,p}(\reall^2)\hookrightarrow L^{\frac{2p}{2-p}}(\reall^2)\;(p<2)$, we have
    \[
       -\scl{\eps\chi_0}{\chi_0\nabla\cdot(\eps\nabla\Psi_\eps)}\lesssim\|\eps\|_{L^2(\frac{1}{8}\zeta_1\leq\zeta\leq8\zeta_2)}\|\nabla(\chi_0\eps)\|^2_{L^2}+\frac{C(\zeta_1,\zeta_2,K_i)}{|\log\nu|}\|\chi_0\nabla\eps\|^2_{L^2}+\frac{C(\zeta_1,\zeta_2,K_i)\nu^4}{|\log\nu|^3}.
    \]
    Finally, by the pointwise estimate of $E$, 
    \[
       \scl{\eps\chi_0}{E\chi_0}\leq \frac{C(\zeta_1,\zeta_2)K_4\nu^2}{|\log\nu|}|\mod_0|+\frac{C(\zeta_1,\zeta_2)\nu^4}{|\log\nu|^2}+\frac{C(\zeta_1,\zeta_2,K_i)\nu^4}{|\log\nu|^3}.
    \]
    In summary, we have
    \begin{equation}\label{Lemma_middle range: zeroth order est}
        \frac{d}{d\tau}\|\chi_0\eps\|^2_{L^2}\leq -\frac{1}{4}\|\chi_0\nabla\eps\|^2_{L^2}+C(\zeta_1,\zeta_2)\frac{K_4\nu^2}{|\log\nu|}|\mod_0|+\frac{C(\zeta_1,\zeta_2)\nu^4}{|\log\nu|^2}+\frac{C(\zeta_1,\zeta_2,K_i)\nu^4}{|\log\nu|^3}. 
    \end{equation}
    \underline{Evolution of first derivatives:} Denote by $\pa$ either $\pa_{\br}$ or $\pa_{\bz}$. Then,
    \begin{align*}
        \frac{1}{2}\frac{d}{d\tau}\|\pa\eps\chi_1\|^2_{L^2} = \scl{\chi_1\pa\dr{\Delta \eps +\Gc\cdot\nabla\eps+\Fc\eps-\nabla W\cdot\nabla\Psi_\eps-\nabla\cdot(\eps\nabla\Psi_\eps)+E}}{\chi_1\pa\eps}.
    \end{align*}
    Similarly,
    \[
        \scl{\chi_1\pa\Delta\eps}{\pa\eps\chi_1} = -\|\chi_1\nabla\pa\eps\|^2_{L^2}-\scl{\nabla\pa\eps}{2\chi_1\nabla\chi_1\pa\eps}\leq -\frac{1}{2}\|\chi_1\nabla\pa\eps\|^2_{L^2}+C\|\pa\eps\chi_0\|^2_{L^2},
    \]
    and
    \[
        \scl{\pa\eps\chi_1}{\chi_1\pa(\Gc\cdot\nabla\eps+\Fc\eps)}\leq\frac{1}{16} \|\chi_1\nabla\pa\eps\|^2_{L^2}+C(\zeta_1,\zeta_2)\|\pa\eps\chi_0\|^2_{L^2}+C(\zeta_1,\zeta_2)\frac{K_4^2\nu^4}{|\log\nu|^2}.
    \]
    Next, through integration by parts and estimates of the Poisson field,
    \begin{align*}
        -\scl{\chi_1\pa\eps}{\chi_1\pa(\nabla W\cdot\nabla\Psi_\eps)} &= \scl{\chi_1^2\pa^2\eps}{\nabla W\cdot\nabla\Psi_\eps}+\scl{2\pa\chi_1\chi_1\pa\eps}{\nabla W\cdot\nabla\Psi_\eps}\\
        &\leq \frac{1}{8}\|\chi_1\pa^2\eps\|^2_{L^2}+C\|\pa\eps\chi_0\|^2_{L^2}+\frac{C(\zeta_1,\zeta_2,K_i)\nu^5}{|\log\nu|^2}.
    \end{align*}
    As for the nonlinear term, by the Sobolev embedding $H^2(\reall^2)\hookrightarrow L^\infty(\reall^2)$, we have
    \[
       \|\eps\chi_1\|_{L^\infty}\lesssim \|\eps\chi_0\|_{L^2}+\|\chi_0\nabla\eps\|_{L^2}+\|\chi_1\nabla^{(2)}\eps\|_{L^2}.
    \]
    Then, through integration by parts, the $L^\infty$ estimate above, Sobolev embedding and \eqref{Lemma_middle range: ptwise est of whole Poisson field}, we obtain
    \begin{align*}
        -\scl{\chi_1\pa\eps}{\chi_1\pa\dr{\nabla\cdot(\eps\nabla\Psi_\eps)}} &= \scl{\chi^2_1\pa^2\eps}{\nabla\eps\cdot\nabla\Psi_\eps-\eps^2}+\scl{2\pa\chi_1\chi_1\pa\eps}{\nabla\eps\cdot\nabla\Psi_\eps-\eps^2}\\
        &\leq \frac{C(\zeta_1,\zeta_2,K_i)}{|\log\nu|}\|\pa^2\eps\chi_1\|^2_{L^2}+\frac{C(\zeta_1,\zeta_2,K_i)}{|\log\nu|}\|\pa\eps\chi_0\|^2_{L^2}+\frac{C(\zeta_1,\zeta_2)K_4^2\nu^4}{|\log\nu|^2}.
    \end{align*}
    Finally, through the integration by parts,
    \begin{equation*}
        \scl{\chi_1\pa\eps}{\pa E\chi_1}=-\scl{\eps}{\pa(\chi^2_1\pa E)}\lesssim C(\zeta_1,\zeta_2)\frac{K_4\nu^2}{|\log\nu|}|\mod_0|+\frac{C(\zeta_1,\zeta_2)K_4\nu^4}{|\log\nu|^2}+\frac{C(\zeta_1,\zeta_2,K_i)\nu^4}{|\log\nu|^3}. 
    \end{equation*}
    In summary, when $\nu$ is sufficiently small, we have
     \begin{align}\label{Lemma_middle range: first order est}
        \frac{d}{d\tau}\|\chi_1\nabla\eps\|^2_{L^2}&\leq -\frac{1}{4}\|\chi_1\nabla^{(2)}\eps\|^2_{L^2}+C(\zeta_1,\zeta_2)\|\nabla\eps\chi_0\|^2_{L^2}+\frac{C(\zeta_1,\zeta_2)K_2\nu^2}{|\log\nu|}|\mod_0|\nonumber\\
        &\quad+\frac{C(\zeta_1,\zeta_2)K_4^2\nu^4}{|\log\nu|^2}+\frac{C(\zeta_1,\zeta_2,K_i)\nu^4}{|\log\nu|^3}. 
    \end{align}
    \underline{Evolution of second derivatives:} Denote generically $\pa^{2}$ a second order partial derivative (e.g. $\pa_{\br}\pa_{\br},\pa_{\br}\pa_{\bz}$). Then,
    \begin{align*}
        \frac{1}{2}\frac{d}{d\tau}\|\pa^2\eps\chi_2\|^2_{L^2} = \scl{\chi_2\pa^2\dr{\Delta \eps +\Gc\cdot\nabla\eps+\Fc\eps-\nabla W\cdot\nabla\Psi_\eps-\nabla\cdot(\eps\nabla\Psi_\eps)+E}}{\chi_2\pa^2\eps}.
    \end{align*}
    The estimates of the first three terms are identical:
    \[
        \scl{\chi_2\pa^2\Delta\eps}{\pa^2\eps\chi_2} = -\|\chi_2\nabla\pa^2\eps\|^2_{L^2}-\scl{\nabla\pa^2\eps}{2\chi_2\nabla\chi_2\pa^2\eps}\leq -\frac{1}{2}\|\chi_2\nabla\pa^2\eps\|^2_{L^2}+C\|\pa^2\eps\chi_1\|^2_{L^2},
    \]
    and
    \[
        \scl{\pa^2\eps\chi_2}{\chi_2\pa^2(\Gc\cdot\nabla\eps+\Fc\eps)}\leq\frac{1}{16} \|\chi_2\nabla\pa^2\eps\|^2_{L^2}+C(\zeta_1,\zeta_2)\|\pa^2\eps\chi_1\|^2_{L^2}+C(\zeta_1,\zeta_2)\|\pa\eps\chi_0\|^2_{L^2}+\frac{C(\zeta_1,\zeta_2)K_4^2\nu^4}{|\log\nu|^2}.
    \]
    Next, through integration by parts once,
    \[
        -\scl{\chi_2\pa^2\eps}{\chi_2\pa^2\dr{\nabla W\cdot\nabla\Psi_\eps}}=\scl{\chi^2_2\pa^3\eps+2\chi_2\pa\chi_2\pa^2\eps}{\nabla(\pa W)\cdot\nabla\Psi_\eps+\nabla W\cdot\nabla(\pa\Psi_\eps)}.
    \]
    The estimates are the same as before, except for the $\nabla W\cdot\nabla(\pa\Psi_\eps)$ term, which is done in the following way: by elliptic regularity, HLS inequality, \eqref{appendix: pointwise est of Poisson field radial} and \eqref{appendix: pointwise est of Poisson field nonradial},
    \begin{align}\label{Lemma_middle range est: second derivative est of Poisson field}
       \int_{\{\frac{1}{2}\zeta_*\leq\zeta\leq2\zeta^*\}} |\nabla^{(2)}\Psi_\eps|^2&\leq C(\zeta_1,\zeta_2) \int_{\{\frac{1}{4}\zeta_1\leq\zeta\leq4\zeta_2\}} \eps^2+C(\zeta_1,\zeta_2)\int_{\{\frac{1}{4}\zeta_1\leq\zeta\leq4\zeta_2\}}|\nabla\Psi_\eps|^2\\
       &\leq\frac{C(\zeta_1,\zeta_2)}{\nu^3}\|\eps\|^2_\inn+C(\zeta_1,\zeta_2)\|\eps(1+\zeta)^\frac{3}{2}\|^2_{L^\infty(\zeta\geq\zeta^*)}.
    \end{align}
    Thus, we have
    \begin{align*}
         -\scl{\chi_2\pa^2\eps}{\chi_2\pa^2\dr{\nabla W\cdot\nabla\Psi_\eps}}&\leq \frac{1}{8}\|\chi_2\pa^3\eps\|^2_{L^2}+C(\zeta_1,\zeta_2)\|\chi_1\nabla^{(2)}\eps\|^2_{L^2}+C(\zeta_1,\zeta_2)\|\chi_0\nabla\eps\|^2_{L^2}\\
         &\quad+\frac{C(\zeta_1,\zeta_2,K_i)\nu^5}{|\log\nu|^2}+\frac{C(\zeta_1,\zeta_2)|a|^2}{\nu^3}\frac{K_4^2\nu^4}{|\log\nu|^2}.
    \end{align*}
    As for the nonlinear terms, integrate by parts once:
    \[
        -\scl{\chi_2\pa^2\eps}{\chi_2\pa^2\dr{\nabla\cdot(\eps\nabla\Psi_\eps)}} = \scl{\chi^2_2\pa^3\eps+2\chi_2\pa\chi_2\pa^2\eps}{\nabla(\pa\eps)\cdot\nabla\Psi_\eps+\nabla\eps\cdot\nabla(\pa\Psi_\eps)-2\eps\pa\eps}.
    \]
    Note that all the local terms (i.e., terms not involving the Poisson field) together with the term $\nabla(\pa\eps)\cdot\nabla\Psi_\eps$ can be estimated in the same way as before. It then remains to deal with the term $\nabla\eps\cdot\nabla(\pa\Psi_\eps)$. By the Sobolev embedding $H^2(\reall^2)\hookrightarrow L^\infty(\reall^2)$,
    \[
        \|\nabla\eps\chi_2\|_{L^\infty} \lesssim \|\nabla^{(3)}\eps\chi_2\|_{L^2}+\|\nabla^{(2)}\eps\chi_1\|_{L^2}+\|\nabla\eps\chi_0\|_{L^2}.
    \]
    This, together with \eqref{Lemma_middle range est: second derivative est of Poisson field}, gives
    \[
        \|\nabla\eps\cdot\nabla(\pa\Psi_\eps)\|_{L^2}\lesssim \frac{C(\zeta_1,\zeta_2,K_i)}{|\log\nu|}\dr{\|\nabla^{(3)}\eps\chi_2\|_{L^2}+\|\nabla^{(2)}\eps\chi_1\|_{L^2}+\|\nabla\eps\chi_0\|_{L^2}}.
    \]
    Therefore, we obtain the nonlinear estimate
    \begin{align*}
        -\scl{\chi_2\pa^2\eps}{\chi_2\pa^2\dr{\nabla\cdot(\eps\nabla\Psi_\eps)}} &\leq \frac{C(\zeta_1,\zeta_2,K_i)}{|\log\nu|}\|\pa^3\eps\chi_2\|^2_{L^2}+C(\zeta_1,\zeta_2)\dr{\|\nabla^{(2)}\eps\chi_1\|^2_{L^2}+\|\nabla\eps\chi_0\|^2_{L^2}}+\frac{CK^2_4\nu^4}{|\log\nu|^2}
    \end{align*}
    The estimate for $E$ is the same:
    \begin{equation*}
        \scl{\chi_2\pa^2\eps}{\pa^2E\chi_2}=\scl{\eps}{\pa^2(\chi_2^2\pa^2 E)}\leq \frac{C(\zeta_1,\zeta_2)K_4\nu^2}{|\log\nu|}|\mod_0|+\frac{C(\zeta_1,\zeta_2)K_4\nu^4}{|\log\nu|^2}+\frac{C(\zeta_1,\zeta_2,K_i)\nu^4}{|\log\nu|^3} 
    \end{equation*}
    In summary, when $\nu$ is sufficiently small we have
   \begin{align}\label{Lemma_middle range: second order est}
        \frac{d}{d\tau}\|\chi_2\nabla^{(2)}\eps\|^2_{L^2}&\leq -\frac{1}{4}\|\chi_2\nabla^{(3)}\eps\|^2_{L^2}+C(\zeta_1,\zeta_2)\dr{\|\nabla^{(2)}\eps\chi_1\|^2_{L^2}+\|\nabla\eps\chi_0\|^2_{L^2}}\nonumber\\
        &\quad +\frac{C(\zeta_1,\zeta_2)K_4\nu^2}{|\log\nu|}|\mod_0|+\frac{C(\zeta_1,\zeta_2)K^2_4\nu^4}{|\log\nu|^2}+\frac{C(\zeta_1,\zeta_2,K_i)\nu^4}{|\log\nu|^3}.
    \end{align}
    \underline{Conclusion:} Combining \eqref{Lemma_middle range: zeroth order est} \eqref{Lemma_middle range: first order est} and \eqref{Lemma_middle range: second order est}, we know that there exists $C_0=C_0(\zeta_1,\zeta_2)>0$ sufficiently large, such that once we define
    \begin{align}\label{lemma higher order est in the middle: def of H^2 norm}
       &\|\eps\|^2_{H^2_*(\zeta_1,\zeta_2)}:= \|\eps\chi_0\|^2_{L^2}+\frac{1}{C_0}\|\nabla\eps\chi_1\|^2_{L^2}+\frac{1}{C^2_0}\|\nabla^{(2)}\eps\chi_2\|^2_{L^2},\\
       &\|\eps\|^2_{ H^3_*(\zeta_1,\zeta_2)}:= \|\nabla\eps\chi_0\|^2_{L^2}+\frac{1}{C_0}\|\nabla^{(2)}\eps\chi_1\|^2_{L^2}+\frac{1}{C^2_0}\|\nabla^{(3)}\eps\chi_2\|^2_{L^2},\nonumber
    \end{align}
    it holds that
    \begin{align*}
        \frac{d}{d\tau}\|\eps\|^2_{H^2_*(\zeta_1,\zeta_2)}\leq -\frac{1}{8}\|\eps\|^2_{H^3_*(\zeta_1,\zeta_2)} +\frac{C(\zeta_1,\zeta_2)K_4\nu^2}{|\log\nu|}|\mod_0|+ \frac{C(\zeta_1,\zeta_2)K_4^2\nu^4}{|\log\nu|^2}+\frac{C(\zeta_1,\zeta_2,K_i)\nu^4}{|\log\nu|^3}.
    \end{align*}
    Finally, the result follows from the Poincar\'e inequality
    \[
        \|\eps\|_{H^2_*(\zeta_1,\zeta_2)}\leq C'\|\eps\|_{ H^3_*(\zeta_1,\zeta_2)}+\frac{C(\zeta_1,\zeta_2)K_4\nu^2}{|\log\nu|}.
    \]
\end{proof}

\subsubsection{Far Field Estimate}
\begin{lemma}[$L^\infty$ control of $\eps$ in the far field]\label{lemma: L infty control of eps}
    There exists $M>0$, such that for any $M_0\geq M$ and $\zeta^*>M$, the following holds. Let $w$ be a solution in the bootstrap regime $\BS(\tau_0,\tau_*, \zeta^*, M_0, \{K_i\}_{i=1}^7)$. Then, for any $\tau\in [\tau_0,\tau_*]$ we have the following estimate
    \begin{align}\label{L infty control: specific result in Lemma}
        \|\eps(\tau)(1+\zeta)^{\frac{3}{2}}\|_{L^\infty( \zeta\geq \zeta^*)}&\leq C(\zeta^*,K_1)\frac{\nu^3(\tau)}{\nu^3(\tau_0)}\dr{\|\eps(\tau_0)(1+\zeta(\tau_0))^{\frac{3}{2}}\|_{L^\infty(\zeta\geq\zeta^*)}+\|\eps(\tau_0)\|_{L^\infty(\zeta\geq\frac{1}{2}\zeta^*)}}\nonumber\\
        &\quad+\frac{CK_7}{\zeta^*}\frac{e^{-2\sqrt{\beta\tau+M_0}}}{\sqrt{\beta\tau+M_0}}+\frac{C(\zeta^*,K_1,K_4,K_5,K_6)e^{-2\sqrt{\beta\tau+M_0}}}{\sqrt{\beta\tau+M_0}}+\frac{C(\zeta^*,K_i)e^{-2\sqrt{\beta\tau+M_0}}}{\beta\tau+M_0}.
    \end{align}
\end{lemma}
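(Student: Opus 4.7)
The plan is to run a maximum-principle / barrier argument on the $3$D axisymmetric evolution of $\eps$, restricted to the far field $\zeta\geq \zeta^*/2$. In this region, $U_\nu=\Oc(\nu^2/\zeta^4)$ and $|\nabla\Psi_{U_\nu}|$ are negligible, so the linearized operator reduces, up to harmless lower-order terms of order $\Oc(\mu^s)$, to $\Delta-\beta\Lambda$. Crucially, we return to the original $3$D Poisson field $\Phi$ (rather than the $2$D Poisson $\Psi$ used in the inner estimates), since its Newton kernel decay $|\nabla\Phi_\eps|\lesssim 1/\zeta^2$ at spatial infinity is what allows the nonlocal nonlinearity $-\nabla\cdot(\eps\nabla\Phi_\eps)$ to be treated pointwise rather than in a weighted $L^2$ sense. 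Collecting terms, $\eps$ satisfies in $\zeta\geq\zeta^*/2$ an equation of the form
\[
    \pa_\tau\eps = \Delta\eps + \Gc\cdot\nabla\eps + \Fc\eps - \nabla\cdot(\eps\nabla\Phi_\eps) + E + \text{l.o.t.},
\]
with $|\Fc|\lesssim 1$ and $|\Gc|\lesssim 1+\zeta$, where the scaling $-\beta\Lambda$ and the ring drift $(R_\tau/\mu)\mathbf{e}_1$ have been absorbed into $\Gc$ and $\Fc$.

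First I would introduce the weighted unknown $v:=(1+\zeta)^{3/2}\eps$ and seek a radial supersolution of the form $|\eps(\br,\bz,\tau)| \leq A(\tau)(1+\zeta)^{-3/2}$. A direct computation shows that the spatial weight $(1+\zeta)^{3/2}$ interacts with the scaling operator $-\beta\Lambda$ to produce an effective damping whose integration in time against the profile $\nu(s)\sim e^{-\sqrt{\beta s+M_0}}$ yields precisely the prefactor $\nu^3(\tau)/\nu^3(\tau_0)$ multiplying the initial data contribution. The forcing for the barrier's scalar ODE decomposes into (i) the generated error $E$, whose pointwise far-field bound follows from the expression \eqref{expression of tilde E} for $\tilde E$ together with the pointwise estimates \eqref{prop1: pointwise est 1}--\eqref{prop1: pointwise est for R_i}, giving the third term $C(\zeta^*,K_i)e^{-2\sqrt{\beta\tau+M_0}}/(\beta\tau+M_0)$; and (ii) the boundary data at $\zeta=\zeta^*$, which is controlled pointwise via Lemma \ref{lemma: H^2 control in the middle} together with the Sobolev embedding $H^2(\reall^2)\hookrightarrow L^\infty(\reall^2)$, yielding the second term $C(\zeta^*,K_1,K_4,K_5,K_6)e^{-2\sqrt{\beta\tau+M_0}}/\sqrt{\beta\tau+M_0}$.

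The hard part will be handling the nonlinear feedback $-\nabla\cdot(\eps\nabla\Phi_\eps)$, since this is the only term that can self-interact with the bootstrap quantity $K_7$. Splitting $\eps = \chi^*\eps + (1-\chi^*)\eps$, the inner contribution is harmless: $\nabla\Phi_{\chi^*\eps}=\Oc(\|\eps\|_\inn\cdot\zeta^{-2})$ in the far field by the $3$D Newton decay, giving a negligible source when balanced against $(1+\zeta)^{-3/2}$. The outer self-interaction is precisely where the coefficient $K_7/\zeta^*$ emerges: convolving against the $3$D Newton kernel and invoking the bootstrap hypothesis yields a bound of the form $|\nabla\Phi_{(1-\chi^*)\eps}|\lesssim \|\eps(1+\zeta)^{3/2}\|_{L^\infty(\zeta\geq\zeta^*)}\cdot(1+\zeta)^{-1/2}$, and pulling out one extra factor of $(1+\zeta)^{-1}\leq 1/\zeta^*$ against the target weight produces the small coefficient $CK_7/\zeta^*$. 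The subtle point is that this feedback coefficient must be strictly smaller than $1$ uniformly in $K_7$, which forces $\zeta^*$ to be chosen sufficiently large (independent of $K_7$) at the outset, so that the nonlinear contribution cannot swallow the target decay and a Gr\"onwall argument closes the barrier estimate.
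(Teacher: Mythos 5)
Your route is genuinely different from the paper's: the paper undoes the self-similar change of variables, writes the cut-off perturbation $\eps_*=\eta\eps$ as a solution of the $3$D heat equation with a source, and runs Duhamel with the explicit $3$D heat kernel (using only its unit mass and Gaussian decay), whereas you propose a pointwise barrier/maximum-principle argument in the self-similar variables with the supersolution $A(\tau)(1+\zeta)^{-3/2}$. Much of your bookkeeping is the right one and matches the paper's: the boundary data at $\zeta\sim\zeta^*$ enters through the middle-range $H^2$ bound plus Sobolev embedding, the $K_7/\zeta^*$ coefficient comes from far-field terms carrying an extra $1/\zeta^*$ smallness (in the paper these are the terms $\eps\nabla\Phi_W$, $W\nabla\Phi_\eps$, $\eps\nabla\Phi_\eps$ in $S_1,S_2$, not only the quadratic self-interaction), and the damping you compute, $-2\beta+\tfrac{3}{2}\beta\frac{\zeta}{1+\zeta}\leq-\tfrac14$, gives the factor $e^{-(\tau-\tau_0)/4}=(\mu(\tau)/\mu(\tau_0))^{1/2}$, which is then converted into the stated $\nu^3(\tau)/\nu^3(\tau_0)$ exactly as in \eqref{L infty esti: key scaling of mu} (so ``yields precisely'' is an overstatement, but harmless).

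There are, however, two genuine gaps. First, the axis of symmetry: the far field $\{\zeta\geq\zeta^*\}$ contains $r=0$ (where $\br\approx-R/\mu$, $\zeta\approx R/\mu$), and there the drift $\frac{1}{\br+R/\mu}\pa_{\br}=\frac{\mu}{r}\pa_{\br}$ is unbounded, so your claim $|\Gc|\lesssim1+\zeta$ fails; if instead you absorb this term into the $3$D Laplacian, then your barrier, which is radial in $\zeta$, is only Lipschitz across the axis (since $r\mapsto\sqrt{x_1^2+x_2^2}$ has a kink there) with the kink of the unfavorable (convex) sign, and moreover $\frac{\mu}{r}\pa_{\br}(1+\zeta)^{-3/2}>0$ near the axis works against the supersolution inequality in the region $r\lesssim\mu^2$. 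You must either modify the barrier near the axis or give a separate argument there; this is precisely the difficulty the paper's heat-kernel representation avoids, since no pointwise differential inequality is ever tested at the axis. Second, your potential-theory inputs are asserted too strongly: for a ring-concentrated density the $3$D field behaves like that of a line charge, so $|\nabla\Phi_{\chi^*\eps}|\sim\|\chi^*\eps\|_{L^1}/\zeta$ for $\zeta^*\leq\zeta\ll R/\mu$, not $\Oc(\zeta^{-2})$, and the far-field statement one can actually prove is the $\Oc(\mu^\kappa)$ bound of the paper's Appendix Lemma \ref{appendix: difference of 2d 3d Poisson field} (plus the $2$D-type bound $|\nabla\Psi_{(1-\chi^*)\eps}|\lesssim\|\eps(1+\zeta)^{3/2}\|_{L^\infty(\zeta\geq\zeta^*)}(1+\zeta)^{-1/2}$ near the ring); these weaker bounds still give the $1/\zeta^*$ smallness you need, but the estimates should be routed through that lemma rather than through a claimed Newton decay $1/\zeta^2$. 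With those two points repaired, your comparison argument should close along the lines you describe.
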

\begin{proof}
     To derive the far field $L^\infty$-control, we go all the way back to the original $3$D system \eqref{Introduction:3dks}, where we exploit the dissipating structure of the heat operator. Consider the following decomposition of the solution:
    \begin{equation}\label{Lemma_L infty control: decomp of solution}
        u(\x,t) = \frac{1}{\mu^2}W(\frac{r-R}{\mu},\frac{z}{\mu},\tau)+\frac{1}{\mu^2}\eps(\frac{r-R}{\mu},\frac{z}{\mu},\tau),
    \end{equation}
    where we recall $r = |(x_1,x_2)|$, $z = x_3$, $\x = (x_1,x_2,x_3)$, $\tau = -\log(T-t)+\log(T)$, and $\mu = \sqrt{T-t}=e^{-\tau/2+\log(T)/2}$ (without loss of generality, we can assume $T=1$ in the following). In addition, recall that
    \[
        W(\br,\bz,\tau) = U_\nu(\br,\bz)+P(\br,\bz,\tau),
    \]
    where we denote $\br: = (r-R)/\mu$ and $\bz:= z/\mu$ as before. One remark: since $R,\mu$ depend on time, so do $\br,\bz$, and we will specify their time dependence whenever necessary. Let $\eta(\br,\bz)$ be a smooth cutoff function such that
    \[
        \eta(\br,\bz)\equiv 1\text{ when }\sqrt{\br^2+\bz^2}\geq \zeta^*,\quad \eta(\br,\bz)\equiv 0\text{ when }\sqrt{\br^2+\bz^2}< \frac{1}{2}\zeta^*.
    \]
    Denote $\eps_*(\br,\bz,\tau): = \eps(\br,\bz,\tau)\eta(\br,\bz)$. Then, the evolution equation for $\eps_*$ can be written as
    \begin{equation}\label{L infty esti: evolution of eps_*}
        \pa_\tau\eps_* = \dr{\pa^2_{\br}+\pa^2_{\bz}+\frac{1}{\br+R/\mu}\pa_{\br}}\eps_*-\frac{1}{2}\Lambda \eps_*+\frac{R_\tau}{\mu}\pa_{\br}\eps_*+S(\br,\bz,\tau),
    \end{equation}
    where
    \begin{align}\label{L infty esti: def of S}
        S(\br,\bz,\tau) &= \eta\Big(\Delta W-\nabla\cdot(\eps\nabla\Phi_W+W\nabla\Phi_\eps+\eps\nabla\Phi_\eps+W\nabla\Phi_W)-\frac{1}{\br+R/\mu}(\eps\pa_{\br}\Phi_W+W\pa_{\br}\Phi_\eps+\eps\pa_{\br}\Phi_\eps\nonumber\\
        &\quad+W\pa_{\br}\Phi_W)
        +\frac{1}{\br+R/\mu}\pa_{\br}W
        -\frac{1}{2}\Lambda W +\frac{R_\tau}{\mu}\pa_{\br}W-\pa_\tau W \Big)-\Delta\eta \eps-2\nabla\eps\cdot\nabla\eta-\frac{1}{\br+R/\mu}\pa_{\br}\eta\eps\nonumber\\
        &\quad+\frac{1}{2}\dr{(\br,\bz)\cdot \nabla\eta}\eps-\frac{R_\tau}{\mu}\pa_{\br}\eta\eps\nonumber\\
        &=\eta\Big(-\nabla\cdot(\eps\nabla\Phi_W+W\nabla\Phi_\eps+\eps\nabla\Phi_\eps)-\frac{1}{\br+R/\mu}(\eps\pa_{\br}\Phi_W+W\pa_{\br}\Phi_\eps+\eps\pa_{\br}\Phi_\eps)+\mod_0\varphi_{0,\nu}\nonumber\\
        &\quad+\mod_1\varphi_{1,\nu}+\tilde E \Big)-\Delta\eta \eps-2\nabla\eps\cdot\nabla\eta-\frac{1}{\br+R/\mu}\pa_{\br}\eta\eps
        +\frac{1}{2}\dr{(\br,\bz)\cdot \nabla\eta}\eps-\frac{R_\tau}{\mu}\pa_{\br}\eta\eps
    \end{align}
    Now, due to the parabolic scaling, it is natural to relate \eqref{L infty esti: evolution of eps_*} to the standard heat equation. Denote
    \begin{equation}\label{L infty est: relation between eps and bu}
         \bu(\x,t) := \frac{1}{\mu^2(\tau)}\eps_*(\frac{r-R(\tau)}{\mu(\tau)},\frac{z}{\mu(\tau)},\tau),\quad \x\in\reall^3. 
    \end{equation}
    Then, $\bu$ solves the following heat equation with an axisymmetric force:
    \begin{equation}\label{Lemma L_infty: heat eq for bar u}
        \pa_t \bu(\x,t) = \Delta^{(3)}\bu(\x,t)+\frac{1}{\mu^4}S(\frac{r-R}{\mu},\frac{z}{\mu},-\log(T-t)).
    \end{equation}
    Now, we consider the evolution of \eqref{L infty esti: evolution of eps_*} in the time interval $[\tau_0,\tau]$, or equivalently, that of \eqref{Lemma L_infty: heat eq for bar u} in $[t_0,t]$. Given the initial data $\bu(\x,t_0) = \bu_0(\x)$, the solution of \eqref{Lemma L_infty: heat eq for bar u} can be expressed by the convolution of the $3$D heat kernel:
\begin{align*}
    &\bu(\x,t) = \int_{\reall^3}H(\x-\tilde\x,t-t_0)\bu_0(\tilde \x)\;d\x\\
    &\,+ \int_{0}^{t-t_0}\int_{\reall^3}H(\x-\tilde\x,t-t_0-s)\frac{1}{\mu^4(s+t_0)}S\dr{\frac{|(\tilde x_1,\tilde x_2)|-R(s+t_0)}{\mu(s+t_0)},\frac{\tilde x_3}{\mu(s+t_0)},-\log(T-t_0-s)}\;d\tilde\x ds,
\end{align*}
where
\[
    H(\x,t) = \frac{1}{(4\pi t)^{\frac{3}{2}}}e^{-\frac{|\x|^2}{4t}}.
\]
Thus, we obtain an explicit expression for $\eps_*$ through the relation \eqref{L infty est: relation between eps and bu} ($\eps_{*,0}(\br,\bz):=\eps_*(\br,\bz,\tau_0)$):
\begin{align*}
    &\eps_*(\br,\bz,\tau)=\\
    &\frac{\mu^2(\tau)}{\mu^2(\tau_0)}\int_{-\infty}^{+\infty}\int_{-\frac{R(\tau_0)}{\mu(\tau_0)}}^{+\infty}\int_{0}^{2\pi}\frac{\mu^3(\tau_0)(\tilde p+\frac{R(\tau_0)}{\mu(\tau_0)})}{(4\pi(e^{-\tau_0}-e^{-\tau}))^{\frac{3}{2}}}
    \exp\bigg(-\frac{\mu^2(\tau_0)}{4(e^{-\tau_0}-e^{-\tau})}\Big[\Big(\frac{\mu(\tau)}{\mu(\tau_0)}\big(\br+\frac{R(\tau)}{\mu(\tau)}\big)\\
    &\quad-\big(\tilde p+\frac{R(\tau_0)}{\mu(\tau_0)}\big)\cos(\theta)\Big)^2
    +\big(\tilde p+\frac{R(\tau_0)}{\mu(\tau_0)}\big)^2\sin^2(\theta)+\big(\frac{\mu(\tau)}{\mu(\tau_0)}\bz - \tilde q\big)^2\Big]\bigg)\eps_{*,0}(\tilde p,\tilde q)\, d\theta d\tilde p d\tilde q\\
    &+\int_{\tau_0}^{\tau}\frac{\mu^2(\tau)}{\mu^2(\tilde \tau)}\,d\tilde\tau\int_{-\infty}^{+\infty}\int_{-\frac{R(\tilde\tau)}{\mu(\tilde\tau)}}^{+\infty}\int_{0}^{2\pi}\frac{\mu^3(\tilde\tau)(\tilde p+\frac{R(\tilde\tau)}{\mu(\tilde\tau)})}{(4\pi(e^{-\tilde\tau}-e^{-\tau}))^{\frac{3}{2}}}
    \exp\bigg(-\frac{\mu^2(\tilde\tau)}{4(e^{-\tilde\tau}-e^{-\tau})}\Big[\Big(\frac{\mu(\tau)}{\mu(\tilde\tau)}\big(\br+\frac{R(\tau)}{\mu(\tau)}\big)\\
    &\quad-\big(\tilde p+\frac{R(\tilde\tau)}{\mu(\tilde\tau)}\big)\cos(\theta)\Big)^2
    +\big(\tilde p+\frac{R(\tilde\tau)}{\mu(\tilde\tau)}\big)^2\sin^2(\theta)+\big(\frac{\mu(\tau)}{\mu(\tilde\tau)}\bz - \tilde q\big)^2\Big]\bigg)S(\tilde p,\tilde q,\tilde\tau)\, d\theta d\tilde p d\tilde q\\
    &=: \frac{\mu^2(\tau)}{\mu^2(\tau_0)}\int_{\reall^2}\hH(\br,\bz,\tilde p,\tilde q,\tau,\tau_0)\eps_{*,0}(\tilde p,\tilde q)\,d\tilde p d\tilde q+\int_{\tau_0}^{\tau}\frac{\mu^2(\tau)}{\mu^2(\tilde\tau)}\,d\tilde\tau\int_{\reall^2}\hH(\br,\bz,\tilde p,\tilde q,\tau,\tilde\tau)S(\tilde p,\tilde q,\tilde \tau)\,d\tilde p d\tilde q\\
    & =: I_1(\br,\bz,\tau)+I_2(\br,\bz,\tau). 
\end{align*}
The expression above is nothing but the convolution with the heat kernel written in cylindrical coordinates, i.e., $\hH$. In the following estimates, we make use of the two key properties of the heat kernel: total mass $1$ (in $\reall^3$) and exponential decay. As before, denote the time-dependent variable $\zeta:= \sqrt{\br^2+\bz^2}$. Observe that
\[
    0<\sigma(\tau,\tau_0):=\frac{e^{-\tau_0}-e^{-\tau}}{\mu^2(\tau_0)} = 1-e^{\tau_0-\tau}< 1.
\]
First, when $\zeta(\tau)\leq 2\zeta^* \frac{\mu(\tau_0)}{\mu(\tau)}$, since the heat kernel has total mass $1$, we have
\[
    |I_1(\br,\bz,\tau)|\leq \frac{\mu^2(\tau)}{\mu^2(\tau_0)}\|\eps_{*,0}\|_{L^\infty}.
\]
Thus, in this domain we have
\begin{equation}\label{L infty esti: near region I_1 esti}
    \|I_1(\br,\bz,\tau)(1+\zeta)^\frac{3}{2}\|_{L^\infty(\zeta^*\leq\zeta\leq 2\zeta^*\mu(\tau_0)/\mu(\tau))}\leq 4{\zeta^*}^{\frac{3}{2}}\frac{\mu^{\frac{1}{2}}(\tau)}{\mu^{\frac{1}{2}}(\tau_0)}\|\eps_{*,0}\|_{L^\infty}.
\end{equation}
Second, when $\zeta(\tau)> 2\zeta^* \frac{\mu(\tau_0)}{\mu(\tau)}$, i.e., $\zeta(\tau)\frac{\mu(\tau)}{\mu(\tau_0)}\geq 2\zeta^*$, denoting $B(\delta)\subset\reall^2$ to be the ball centered at $(\frac{\mu(\tau)}{\mu(\tau_0)}\br,\frac{\mu(\tau)}{\mu(\tau_0)}\bz)$ with radius $\delta$ (to be determined), we split the integral into two parts:
\begin{align*}
    I_1(\br,\bz,\tau) &= \frac{\mu^2(\tau)}{\mu^2(\tau_0)}\int_{B(\delta)}\hH(\br,\bz,\tilde p,\tilde q,\tau,\tau_0)\eps_{*,0}(\tilde p,\tilde q)\,d\tilde p d\tilde q+\frac{\mu^2(\tau)}{\mu^2(\tau_0)}\int_{\reall^2\backslash B(\delta)}\hH(\br,\bz,\tilde p,\tilde q,\tau,\tau_0)\eps_{*,0}(\tilde p,\tilde q)\,d\tilde p d\tilde q\\
    &=: J_1(\br,\bz,\tau)+J_2(\br,\bz,\tau).
\end{align*}
By the decay property of $\eps_*$, we have the estimate
\begin{equation}\label{L infty esti: est of J_1}
    |J_1(\br,\bz,\tau)|\leq  \frac{\mu^2(\tau)}{\mu^2(\tau_0)}\frac{\|\eps_*(\tau_0)(1+\zeta(\tau_0))^{\frac{3}{2}}\|_{L^\infty(\zeta\geq\zeta^*)}}{(1+\frac{\mu(\tau)}{\mu(\tau_0)}\zeta(\tau)-\delta)^\frac{3}{2}}.
\end{equation}
Meanwhile, by the exponential decay of $\hH$, we have
\begin{equation}\label{L infty esti: est of J_2}
     |J_2(\br,\bz,\tau)|\lesssim  \frac{\mu^2(\tau)}{\mu^2(\tau_0)}\frac{\delta}{\sqrt{\sigma(\tau,\tau_0)}}e^{-\frac{\delta^2}{\sigma(\tau,\tau_0)}}\|\eps_*(\tau_0)\|_{L^\infty}.
\end{equation}
Therefore, when $\zeta^*$ is large enough, choosing $\delta: = \sqrt{\zeta(\tau)\frac{\mu(\tau)}{\mu(\tau_0)}}$ and combining \eqref{L infty esti: est of J_1}\eqref{L infty esti: est of J_2}, we have
\begin{equation}\label{L infty esti: far region I_1 est}
    |I_1(\br,\bz,\tau)|\lesssim \frac{1}{(1+\zeta(\tau))^{\frac{3}{2}}}\frac{\mu^{\frac{1}{2}}(\tau)}{\mu^{\frac{1}{2}}(\tau_0)}\dr{\|\eps(\tau_0)(1+\zeta(\tau_0))^{\frac{3}{2}}\|_{L^\infty(\zeta\geq\zeta^*)}+\|\eps_*(\tau_0)\|_{L^\infty}}.
\end{equation}
Also, we note that for large $M_0>0$, by the bootstrap assumption it holds that
\begin{align}\label{L infty esti: key scaling of mu}
    \frac{\mu(\tau)}{\mu(\tau_0)} = e^{-\frac{1}{2}(\tau-\tau_0)} &= \dr{e^{-(\sqrt{\tau/2+M_0}-\sqrt{\tau_0/2+M_0})}}^{\sqrt{\tau/2+M_0}+\sqrt{\tau_0/2+M_0}}\nonumber\\
    &< \dr{e^{-(\sqrt{\tau/2+M_0}-\sqrt{\tau_0/2+M_0})}}^{100} \leq C(K_1) \frac{\nu^{100}(\tau)}{\nu^{100}(\tau_0)}. 
\end{align}
In summary, by \eqref{L infty esti: near region I_1 esti}\eqref{L infty esti: far region I_1 est}, we have when $\zeta^*$ is sufficiently large:
\begin{equation}\label{L infty esti: total I_1 esti}
    \|I_1(\br,\bz,\tau)(1+\zeta(\tau))^\frac{3}{2}\|_{L^\infty(\zeta\geq\zeta^*)}\leq C(K_1)\frac{\nu^3(\tau)}{\nu^3(\tau_0)}\dr{\|\eps(\tau_0)(1+\zeta(\tau_0))^{\frac{3}{2}}\|_{L^\infty(\zeta\geq\zeta^*)}+\|\eps_*(\tau_0)\|_{L^\infty}},
\end{equation}
where $C$ is some universal constant. This completes the estimate for $I_1$.

Next, according to \eqref{L infty esti: def of S}, we can decompose the source term as
\[
    S = \eta\nabla\cdot S_1+\frac{1}{\br+R/\mu}S_2+S_3-2\nabla\cdot(\eps\nabla\eta),
\]
where (plugging in the definition of $\tilde E$)
\begin{align*}
    &S_1 = -\eps\nabla\Phi_W-W\nabla\Phi_\eps-\eps\nabla\Phi_\eps-W\nabla\Theta_W-P\nabla\Psi_P,\\
    &S_2 = -\eta(\eps\pa_{\br}\Phi_W+W\pa_{\br}\Phi_\eps+\eps\pa_{\br}\Phi_\eps)-\eta(1-\chi(\zeta\nu))(\pa_{\br}W+W\pa_{\br}\Phi_W)-\pa_{\br}\eta\eps,\\
    &S_3 = \eta\mod_0\vp_{0,\nu}+\eta\mod_1\vp_{1,\nu}-\eta a\nu_\tau\nu\pa_\nu(\vp_{1,\nu}-\vp_{0,\nu})+\eta\dr{\frac{\nu_\tau}{\nu}-\beta}(\Lambda U_\nu+16\nu^2\vp_{0,\nu})\\
    &\quad+\eta\frac{R_\tau}{\mu}\pa_{\br}W+\eta a(R_1-R_0)+\Delta\eta\eps+\frac{1}{2}((\br,\bz)\cdot\nabla\eta)\eps-\frac{R_\tau}{\mu}\pa_{\br}\eta\eps+\frac{\eta\chi(\zeta\nu)}{\br+R/\mu}(\pa_{\br}W+W\pa_{\br}\Phi_W).
\end{align*}
By the bootstrap assumption, the pointwise estimates in Proposition \ref{proposition 1}, the Poisson field estimates 
\eqref{appendix: 3&2D near field approx}\eqref{appendix: 3&2D far field control}\\\eqref{appendix: pointwise est of Poisson field radial and nonradial, parabolic}\eqref{appendix: pointwise est of the gradient of Poisson field based on L infty of u}, and the $L^\infty$ control on the boundary $\|\eps\|_{L^\infty(\zeta^*\leq\zeta\leq 2\zeta^*)}\lesssim \|\eps\|_{H^2(\frac{1}{2}\zeta^*\leq \zeta\leq4\zeta^*)}\lesssim C(\zeta^*)\frac{K_6\nu^2}{|\log\nu|}$, we obtain the pointwise estimates
\begin{align}\label{L infty esti: pointwise est of S}
\begin{split}
    \|S_1(\br,\bz,\tau)(1+\zeta)^{\frac{3}{2}}\|&_{L^\infty(\zeta\geq \zeta^*)}\leq \frac{C}{\zeta^*}\|\eps(\tau)(1+\zeta)^\frac{3}{2}\|_{L^\infty(\zeta\geq\zeta^*)}+C(K_i)\nu(\tau)^3,\\
    \|S_2(\br,\bz,\tau)(1+\zeta)^{\frac{3}{2}}\|&_{L^\infty}\leq \frac{C}{\zeta^*}\|\eps(\tau)(1+\zeta)^\frac{3}{2}\|_{L^\infty(\zeta\geq\zeta^*)}+C(\zeta^*)\frac{K_6\nu^2}{|\log\nu|}+C(K_i)\nu(\tau)^3,\\
    \|S_3(\br,\bz,\tau)(1+\zeta)^{\frac{3}{2}}\|&_{L^\infty}\leq \frac{C|\mod_0|}{\zeta^*}+C(\zeta^*)\frac{K_6\nu^2}{|\log\nu|}+\frac{C(K_i)\nu(\tau)^2}{|\log\nu(\tau)|^2}.
\end{split}
\end{align}
Through integration by parts,
\begin{align*}
    I_2(\br,\bz,\tau) &= \int_{\tau_0}^{\tau}\frac{\mu^\frac{1}{4}(\tau)}{\mu^\frac{1}{4}(\tilde\tau)}\cdot\frac{\mu^\frac{7}{4}(\tau)}{\mu^\frac{7}{4}(\tilde\tau)}\,d\tilde\tau\int_{\reall^2}\hH(S_3-\nabla\eta\cdot S_1)\,d\tilde p d\tilde q\\
    &\quad+\int_{\tau_0}^{\tau}\frac{\mu^\frac{1}{4}(\tau)}{\sqrt{\sigma(\tau,\tilde\tau)}\mu^\frac{1}{4}(\tilde\tau)}\cdot\frac{\mu^\frac{7}{4}(\tau)}{\mu^\frac{7}{4}(\tilde\tau)}\,d\tilde\tau\int_{\reall^2}\frac{\sqrt{\sigma(\tau,\tilde\tau)}\hH}{\tilde p+R(\tilde\tau)/\mu(\tilde\tau)} S_2-\sqrt{\sigma(\tau,\tilde\tau)}\nabla \hH\cdot (\eta S_1-2\nabla\eta\eps)\,d\tilde p d\tilde q\\
   & =:  \int_{\tau_0}^{\tau}\frac{\mu^\frac{1}{4}(\tau)}{\mu^\frac{1}{4}(\tilde\tau)} I_{2,a}(\br,\bz,\tau,\tilde\tau)\,d\tilde\tau+\int_{\tau_0}^{\tau}\frac{\mu^\frac{1}{4}(\tau)}{\sqrt{\sigma(\tau,\tilde\tau)}\mu^\frac{1}{4}(\tilde\tau)}I_{2,b}(\br,\bz,\tau,\tilde\tau)\,d\tilde \tau.
\end{align*}
Observe that kernels $\frac{\sqrt{\sigma(\tau,\tilde\tau)}\hH}{\tilde p+R(\tilde\tau)/\mu(\tilde\tau)}$ and $\sqrt{\sigma(\tau,\tilde\tau)}\nabla \hH$ share similar properties with $\hH$: bounded total mass and exponential decay, which are all we need in deriving the estimate for $I_1$. Thus, by \eqref{L infty esti: key scaling of mu} and \eqref{L infty esti: pointwise est of S}, with a similar argument we can show that
\begin{align*}
    &\quad\|I_{2,a}(\br,\bz,\tau,\tilde \tau)(1+\zeta(\tau))^\frac{3}{2}\|_{L^\infty(\zeta\geq \zeta^*)}\\
    &\leq Ce^{4\sqrt{\beta\tilde\tau+M_0}-4\sqrt{\beta\tau+M_0}} \dr{\|S_3(\tilde\tau)(1+\zeta(\tilde\tau))^\frac{3}{2}\|_{L^\infty}+\|S_1(\tilde\tau)(1+\zeta(\tilde\tau))^\frac{3}{2}\|_{L^\infty(\zeta\geq\zeta^*)}}\\
    &\leq \frac{CK_7}{\zeta^*}\frac{e^{-2\sqrt{\beta\tau+M_0}}}{\sqrt{\beta\tau+M_0}}+\frac{C(\zeta^*,K_1,K_4,K_5,K_6)e^{-2\sqrt{\beta\tau+M_0}}}{\sqrt{\beta\tau+M_0}}+\frac{C(\zeta^*,K_i)e^{-2\sqrt{\beta\tau+M_0}}}{\beta\tau+M_0},
\end{align*}
and
\begin{align*}
    &\quad\|I_{2,b}(\br,\bz,\tau,\tilde \tau)(1+\zeta(\tau))^\frac{3}{2}\|_{L^\infty(\zeta\geq \zeta^*)}\\
    &\leq Ce^{4\sqrt{\beta\tilde\tau+M_0}-4\sqrt{\beta\tau+M_0}} \dr{\|S_2(\tilde\tau)(1+\zeta(\tilde\tau))^\frac{3}{2}\|_{L^\infty}+\|(\eta S_1(\tilde\tau)-2\nabla\eta\eps(\tilde \tau))(1+\zeta(\tilde\tau))^\frac{3}{2}\|_{L^\infty}}\\
    &\leq  \frac{CK_7}{\zeta^*}\frac{e^{-2\sqrt{\beta\tau+M_0}}}{\sqrt{\beta\tau+M_0}}+\frac{C(\zeta^*,K_1,K_6)e^{-2\sqrt{\beta\tau+M_0}}}{\sqrt{\beta\tau+M_0}}+C(K_i)\nu^3.
\end{align*}
Finally, it remains to estimate the time integrals:
\begin{align*}
    \int_{\tau_0}^{\tau}\frac{\mu^{\frac{1}{4}}(\tau)}{\mu^{\frac{1}{4}}(\tilde\tau)}\,d\tilde\tau = \int_{\tau_0}^{\tau}e^{-\frac{1}{8}(\tau-\tilde\tau)}\,d\tilde\tau < 8,
\end{align*}
and
\begin{align*}
    \int_{\tau_0}^{\tau}\frac{\mu^{\frac{1}{4}}(\tau)}{\sqrt{\sigma(\tau,\tilde\tau)}\mu^{\frac{1}{4}}(\tilde\tau)}\,d\tilde\tau = \int_{\tau_0}^{\tau}\frac{e^{-\frac{1}{8}(\tau-\tilde\tau)}}{\sqrt{1-e^{\tilde\tau-\tau}}}\,d\tilde\tau &= \int_{t_0}^{t}\frac{(T-t)^\frac{1}{8}}{(T-\tilde t)^\frac{5}{8}}\frac{1}{\sqrt{t-\tilde t}}\,d\tilde t\\
    &\leq \int_{t-1}^{t}\frac{(T-t)^\frac{1}{8}}{(T-\tilde t)^\frac{5}{8}}\frac{1}{\sqrt{t-\tilde t}}\,d\tilde t \leq 10+\Oc\dr{(T-t)^\frac{1}{8}}.
\end{align*}
 This completes the proof of the Lemma.   
\end{proof}

\section{Existence of Blowup Solutions}\label{sec: existence of blowup}
\newcommand{\ba}{\bar a}
\newcommand{\ta}{\tilde a}
\newcommand{\bn}{\bar\nu}
\newcommand{\tn}{\tilde \nu}

Now with the energy estimates and modulation equations, we are ready to prove the existence of blowup solutions. It suffices to show that there exist certain initial data $(\eps_0,\nu_0,a_0,R_0)$ and parameters $\zeta^*,K_1,\dots,K_7$, such that the evolution \eqref{evolution of epsilon} will be trapped in some bootstrap regime $\BS(K_i:1\leq i\leq 7)$ for all $\tau\in [0,+\infty)$. Roughly speaking, $\zeta^*$ is chosen first which depends on some of the universal constants in the estimates, then $K_i$ (the order of dependence among which will be specified shortly), and finally $\nu_0$ (or equivalently, $M_0$ in the bootstrap statement), so that $C(\zeta^*,K_i)/|\log\nu|$ will have the smallness we want.

The following lemma will help us close the bootstrap for $\eps$.
\begin{lemma}\label{lemma diff ineq}
    Let $f(\tau)\geq 0$ be a differentiable function in $\tau$. Let $\nu(\tau)$ be the parameter in the Bootstrap regime $\BS(\tau_0,\tau_*, \zeta^*, M_0, \{K_i\}_{i=1}^7)$. Suppose we have the following differential inequality
    \begin{equation*}
        f'(\tau)\leq -\delta f(\tau)+\frac{K\nu^k(\tau)}{|\log\nu(\tau)|^2},
    \end{equation*}
    for some constants $\delta,K>0$ and $k>1$. Then, there exists constant $C(\delta,k),\bar M>0$, such that for any $\log(\nu(0))^2=M_0>\bar M$, we have
    \begin{equation}\label{lemma diff ineq: ineq}
        |f(\tau)|\leq \frac{K\nu^k(\tau)}{\delta|\log\nu|^2}+\frac{KC(\delta,k,K_1,K_2,K_4,K_5)\nu^k}{|\log\nu|^3}+f(0)e^{-\delta\tau},
    \end{equation}
    holds for any $\tau\in [0,\tau_*]$. 
\end{lemma}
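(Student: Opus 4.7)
The plan is a Gronwall-type argument combined with integration by parts that exploits the slow variation of the forcing term along the trajectory. First I would apply the integrating factor $e^{\delta\tau}$ to the differential inequality and integrate on $[0,\tau]$ to obtain the Duhamel bound
\begin{equation*}
f(\tau) \le f(0)e^{-\delta\tau} + K\int_0^\tau e^{-\delta(\tau-s)}g(s)\,ds,\qquad g(s):=\frac{\nu^k(s)}{|\log\nu(s)|^2}.
\end{equation*}
Denoting the integral by $F(\tau)$, the task reduces to showing $F(\tau) \leq g(\tau)/\delta + C(\delta,k,K_i)g(\tau)/|\log\nu(\tau)|$ up to exponentially decaying tails that can be absorbed into $f(0)e^{-\delta\tau}$.

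The key input is the slow variation of $g$ along the trajectory. Corollary \eqref{cor of mod est: est of nu_tau} gives $|\nu_\tau/\nu|\le C(K_2,K_4,K_5)/|\log\nu|$, and differentiating $g$ directly yields $|g'(s)| \le C(k,K_i)g(s)/|\log\nu(s)|$, in particular $|g'|\le Cg/\sqrt{M_0}$ uniformly. I would integrate by parts to get
\begin{equation*}
F(\tau) = \frac{g(\tau)-g(0)e^{-\delta\tau}}{\delta} - \frac{1}{\delta}\int_0^\tau e^{-\delta(\tau-s)}g'(s)\,ds,
\end{equation*}
and apply first the uniform bound $|g'|\le Cg/\sqrt{M_0}$ to absorb the remainder into $F$ itself, yielding the crude bound $F(\tau)\le 2g(\tau)/\delta + O(e^{-\delta\tau})$ provided $\bar M$ is chosen so that $C/(\delta\sqrt{M_0}) < 1/2$.

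To upgrade the constant to the sharper correction factor $1/|\log\nu(\tau)|$, I would split the remainder integral at $\tau/2$. On $[\tau/2,\tau]$, the identity $|\log\nu(\tau)-\log\nu(s)|=|\int_s^\tau \nu_{\tilde\tau}/\nu\,d\tilde\tau|\le C(\tau-s)/\sqrt{M_0}$ together with monotonicity of $|\log\nu|$ gives $|\log\nu(s)|\ge \tfrac12|\log\nu(\tau)|$ once $\bar M$ is large, hence $|g'(s)|\le Cg(s)/|\log\nu(\tau)|$ there; inserting the crude bound on $F$ produces a contribution of order $Cg(\tau)/(\delta^2|\log\nu(\tau)|)$. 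On $[0,\tau/2]$ the factor $e^{-\delta(\tau-s)}\le e^{-\delta\tau/2}$ makes the contribution super-exponentially small, which together with the $g(0)e^{-\delta\tau}/\delta$ boundary term fits inside $f(0)e^{-\delta\tau}$ after a case split on $\tau$: for small $\tau$ these tails are trivially dominated by $f(0)e^{-\delta\tau}$ plus the crude $g(\tau)/\delta$ piece, while for large $\tau$ the factor $e^{-\delta\tau/2}$ beats any algebraic multiple of $\nu^k(\tau)/|\log\nu(\tau)|^3$ since $\delta\tau/2$ eventually dominates $k\sqrt{\beta\tau+M_0}$.

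The main obstacle is the bookkeeping in this last step — verifying the $[0,\tau/2]$ tail is absorbed into $f(0)e^{-\delta\tau}$ with coefficient exactly $1$, since the statement pins this prefactor. This is resolved by choosing $\bar M$ sufficiently large in terms of $\delta,k$ so the super-exponential decay outpaces the target correction at all scales; alternatively one could split at a threshold $\tau_1$ balancing $e^{-\delta(\tau-\tau_1)}\sim g(\tau)/g(\tau_1)$, but the $\tau/2$ split suffices given the slow variation of $g$.
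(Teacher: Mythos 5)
Your overall scheme is the same as the paper's: Duhamel/Gronwall, then integration by parts in time using the slow variation $|\nu_\tau/\nu|\lesssim 1/|\log\nu|$ from \eqref{cor of mod est: est of nu_tau}, then absorption of the resulting remainder for $M_0$ large. Where you differ is only in the finishing device: the paper substitutes the bootstrap bound $\nu(s)\approx e^{-\sqrt{\beta s+M_0}}$ into the remainder, changes variables $x=\sqrt{\beta s+M_0}$, and integrates by parts a second time against $e^{\frac{\delta}{\beta}x^2-kx}$, absorbing the new remainder by a factor $C/(M_0\delta)$; you instead absorb the remainder back into the Duhamel integral $F(\tau)$ itself (crude bound with gain $1/\sqrt{M_0}$) and then upgrade to the gain $1/|\log\nu(\tau)|$ by splitting the time integral at $\tau/2$. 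Both devices are viable, and yours has the mild advantage of not needing the explicit form of the kernel.

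Two of your justifications, however, do not work as written and must be repaired. First, on $[\tau/2,\tau]$ you derive $|\log\nu(s)|\geq\tfrac12|\log\nu(\tau)|$ from $|\log\nu(\tau)-\log\nu(s)|\leq C(\tau-s)/\sqrt{M_0}$ ``together with monotonicity of $|\log\nu|$''. Monotonicity of $\nu$ is not part of the bootstrap (the paper explicitly allows oscillation of $\nu$ in time), and the integrated-derivative bound is too weak: for $s=\tau/2$ it gives an error $\sim\tau/\sqrt{M_0}$, which exceeds $|\log\nu(\tau)|\sim\sqrt{\beta\tau+M_0}$ as soon as $\tau\gtrsim M_0$ — exactly the regime where the split is needed. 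The correct and immediate fix is to use the bootstrap sandwich itself: $|\log\nu(s)|\geq\sqrt{\beta s+M_0}-\log K_1\geq\tfrac{1}{\sqrt2}\sqrt{\beta\tau+M_0}-\log K_1\geq\tfrac12|\log\nu(\tau)|$ for $s\in[\tau/2,\tau]$ and $M_0$ large. Second, your treatment of the $[0,\tau/2]$ tail at small and moderate $\tau$ is not legitimate: it cannot be dominated by $f(0)e^{-\delta\tau}$ (the statement pins that coefficient at $1$ and $f(0)$ may vanish), nor folded into ``the crude $g(\tau)/\delta$ piece'' (the leading coefficient $K/\delta$ is also pinned, and all slack must land in the $K C\,\nu^k/|\log\nu|^3$ term). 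The repair is again within your own toolkit: for $\tau$ in the regime where $|\log\nu(\tau)|\lesssim\sqrt{M_0}$ (i.e.\ $\tau\lesssim M_0$) no split is needed at all, since the crude remainder bound $\frac{C}{\delta\sqrt{M_0}}F(\tau)\lesssim\frac{C}{\delta^2}\frac{\nu^k(\tau)}{\sqrt{M_0}\,|\log\nu(\tau)|^2}\lesssim\frac{C'}{\delta^2}\frac{\nu^k(\tau)}{|\log\nu(\tau)|^3}$ already lands in the second slot; the split at $\tau/2$ is only invoked for $\tau\gtrsim M_0$, where $e^{-\delta\tau/2}$ does beat $(\beta\tau+M_0)^{3/2}e^{k\sqrt{\beta\tau}}$-type factors (your ``large $\tau$'' case — note the comparison requires $\tau\gtrsim\delta^{-1}\log M_0$, which this regime guarantees). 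With these two corrections your argument closes and reproduces \eqref{lemma diff ineq: ineq}.
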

\begin{proof}
    The proof applies Gronwall's inequality and integration by parts. First, we have by Gronwall's inequality,  
    \begin{align}\label{lemma diff ineq: 1}
        f(\tau)\leq e^{-\delta \tau}f(0)+e^{-\delta\tau}\int_{0}^{\tau}\frac{K\nu^k(s)}{|\log\nu(s)|^2}e^{\delta s}\;ds.
    \end{align}
    Through integration by parts, we have
    \begin{equation}\label{lemma diff ineq: 2}
        \int_{0}^{\tau}\frac{K\nu^k(s)}{|\log\nu(s)|^2}e^{\delta s}\;ds = \frac{e^{\delta\tau}}{\delta}\frac{K\nu^k(\tau)}{|\log\nu(\tau)|^2}-\frac{K\nu^k(0)}{|\log\nu(0)|^2}-\frac{K}{\delta}\int_{0}^{\tau}\frac{\nu'(s)}{\nu(s)}\dr{\frac{k\nu^k(s)}{|\log\nu(s)|^2}-\frac{2\nu^k(s)}{(\log\nu(s))^3}}e^{\delta s}\;ds.
    \end{equation}
    Then, by the estimate of $|\frac{\nu'}{\nu}|$ in \eqref{cor of mod est: est of nu_tau} and the bootstrap assumption for $\nu$, we have
    \begin{align}\label{lemma diff ineq: 3}
        \da{\frac{K}{\delta}\int_{0}^{\tau}\frac{\nu'(s)}{\nu(s)}\dr{\frac{k\nu^k(s)}{|\log\nu(s)|^2}-\frac{2\nu^k(s)}{(\log\nu(s))^3}}e^{\delta s}\;ds} \leq \frac{KkC(K_1,K_2,K_4,K_5)}{\delta}\int_{0}^{\tau} \frac{e^{\delta s - k\sqrt{\beta s+M_0}}}{(\beta s+M_0)^\frac{3}{2}}\;ds
    \end{align}
    By change of variables ($x:=\sqrt{\beta s +M_0}$) and integration by parts,
    \begin{align*}
        \int_{0}^{\tau} \frac{e^{\delta s - k\sqrt{\beta s+M_0}}}{(\beta s+M_0)^\frac{3}{2}}\;ds &= \int_{\sqrt{M_0}}^{\sqrt{\beta\tau+M_0}}\frac{2}{\beta x^2}e^{\frac{\delta}{\beta}x^2-kx-\frac{\delta}{\beta}M_0};dx\\
        &=\left.\frac{e^{\frac{\delta}{\beta}x^2-kx-\frac{\delta}{\beta}M_0}}{\delta x^2(x-\frac{\beta k}{2\delta})}\,\right|_{x=\sqrt{M_0}}^{\sqrt{\beta\tau+M_0}}-\int_{\sqrt{M_0}}^{\sqrt{\beta\tau+M_0}}\dr{\frac{1}{\delta x^2(x-\frac{\beta k}{2\delta})}}'e^{\frac{\delta}{\beta}x^2-kx-\frac{\delta}{\beta}M_0}\;dx\\
        &\leq \frac{Ce^{\delta \tau-k\sqrt{\beta\tau+M_0}}}{\delta(\beta\tau+M_0))^\frac{3}{2}}+\frac{C}{M_0\delta}\int_{\sqrt{M_0}}^{\sqrt{\beta\tau+M_0}}\frac{2}{\beta x^2}e^{\frac{\delta}{\beta}x^2-kx-\frac{\delta}{\beta}M_0};dx
    \end{align*}
    Therefore, when $M_0$ is sufficiently large, we have 
    \begin{equation}\label{lemma diff ineq: 4}
        \int_{0}^{\tau} \frac{e^{\delta s - k\sqrt{\beta s+M_0}}}{(\beta s+M_0)^\frac{3}{2}}\;ds\leq\frac{Ce^{\delta \tau-k\sqrt{\beta\tau+M_0}}}{\delta(\beta\tau+M_0))^\frac{3}{2}}.
    \end{equation}
    Finally, inserting \eqref{lemma diff ineq: 4}, \eqref{lemma diff ineq: 3} and \eqref{lemma diff ineq: 2} back into \eqref{lemma diff ineq: 1}, we obtain the result.
\end{proof}

Now we are ready to prove the main proposition, which will conclude the proof of Theorem \ref{main theorem}.
\begin{proposition}\label{proposition: main}
    There exist a choice of parameters $(\zeta^*,M_0,\{K_i\}_{i=1}^{7})$ and initial data for $w$, such that the solution $w$ of \eqref{Parabolic System} will be trapped in the bootstrap regime $\BS(0,+\infty, \zeta^*, M_0, \{K_i\}_{i=1}^7)$.
\end{proposition}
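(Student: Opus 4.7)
The proposition is closed by a standard (though delicate) bootstrap plus topological shooting argument. I fix the constants in the order $\zeta^*$ first (to absorb the universal constants appearing in Lemma \ref{lemma: L infty control of eps} via the $1/\zeta^*$ factors), then $K_4, K_5, K_6, K_7$ (for the remainder $\eps$), then $K_1, K_2, K_3$ (for the modulation parameters), and finally $M_0$ large enough so that every error of the form $C(\zeta^*,K_i)/|\log\nu|$ is strictly smaller than the corresponding bootstrap constant. Initial data are chosen as $\eps(0)=0$ (or any sufficiently small axisymmetric perturbation satisfying the orthogonality conditions \eqref{local orthorgonality conditions}), $R(0)=R_0$, $\nu(0)=e^{-\sqrt{M_0}}$, and $a(0)$ left as a one-parameter shooting variable in an interval centred at $8\nu(0)^2$.

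\textbf{Closing the $\eps$-estimates.} Let $\tau_*\in(0,+\infty]$ be the maximal time so that the solution remains in $\BS(0,\tau_*,\zeta^*,M_0,\{K_i\})$. On $[0,\tau_*]$ I combine Lemma \ref{lemma: L^2 energy estimate}, Lemma \ref{lemma: H^1 energy estimate}, Lemma \ref{lemma: H^2 control in the middle}, and Lemma \ref{lemma: L infty control of eps} with the modulation bounds of Lemma \ref{Lemma: modulation equations}. For the inner $L^2$ and $H^1$ norms I feed the inequalities into Lemma \ref{lemma diff ineq} to obtain
\begin{equation*}
\|\eps\|_{\inn}^2 + \|\nabla\eps^*\|_{L^2(U_\nu)}^2 \leq \frac{C\nu^4}{|\log\nu|^2}+\frac{C(\zeta^*,K_i)\nu^4}{|\log\nu|^3},
\end{equation*}
which, for $M_0$ large enough and $K_4, K_5$ chosen universal (say $K_4=K_5=2C^{1/2}$), strictly improves the bootstrap bounds. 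The middle $H^2$ bound closes similarly using Lemma \ref{lemma: H^2 control in the middle}, with $K_6$ depending only on $K_4$ and universal constants. The far-field bound closes directly from \eqref{L infty control: specific result in Lemma}: choose $\zeta^*$ so that $C/\zeta^*<K_7/2$, take $K_7$ large compared with the constants $C(\zeta^*,K_1,K_4,K_5,K_6)$ that appear on the right, and observe that the term $C(\zeta^*,K_1)\nu^3(\tau)/\nu^3(0)$ applied to the initial data $\eps(0)=0$ is zero, so the improvement is strict on $[0,\tau_*]$.

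\textbf{Closing $R$ and $\nu$.} The bound on $R_\tau/\mu$ in \eqref{Lemma modulation eq: inequalities} combined with the improved $\eps$-bounds yields $|R_\tau/\mu|\leq \frac{1}{2}K_3\nu/|\log\nu|$ once $K_3$ is chosen sufficiently large (independently of $K_4,K_5$ after they are fixed), and integrating in $\tau$ confirms that $R(\tau)$ converges to a limit $R_*$. Corollary \ref{cor of mod est: est of nu_tau} together with the equation $\mod_0+\mod_1=0$ gives
\begin{equation*}
\frac{\nu_\tau}{\nu} = \beta\Big(1-\frac{a}{8\nu^2}\Big)+\Oc\!\left(\tfrac{1}{|\log\nu|}\right),
\end{equation*}
and under the bootstrap $|a-8\nu^2|\leq K_2\nu^2/|\log\nu|$ this integrates to $\nu(\tau)=e^{-\sqrt{\beta\tau+M_0}}(1+o(1))$, hence $K_1$ can be improved (i.e.\ brought strictly inside its bootstrap interval) provided $M_0$ is large and the constant in the $K_1$-slot is chosen large enough relative to universal constants.

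\textbf{Topological argument for $a$.} The only direction that is not closed by smallness is $a$: the $\mod_1$ equation of Lemma \ref{Lemma: modulation equations} reads $a_\tau = a\beta/\log\nu + \Oc(\nu^2/|\log\nu|^2)$, whose homogeneous solution behaves like $e^{\int \beta/\log\nu\,d\tau}$ and is the \emph{unstable} mode. Set $b(\tau):=a(\tau)-8\nu^2(\tau)$; substituting into $\mod_0=\mod_1=0$ and using the $\nu$-equation derived above one shows
\begin{equation*}
b_\tau = \frac{\beta}{\log\nu}\, b + F(\tau), \qquad |F(\tau)|\leq \frac{C(\zeta^*,K_i)\nu^2}{|\log\nu|^2},
\end{equation*}
so that the map $a(0)\mapsto |b(\tau)|\cdot|\log\nu(\tau)|/\nu^2(\tau)$ is strictly increasing along any trajectory that exits the $K_2$-ball. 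I then parameterise $a(0)$ in an interval $[8\nu(0)^2-\eta,8\nu(0)^2+\eta]$ with $\eta=K_2\nu(0)^2/|\log\nu(0)|$ and define the exit map sending $a(0)$ to the signed value of $b$ at the first exit from the $K_2$-ball. By the transversality above, the exit map is continuous and takes values of opposite signs at the two endpoints (for $a(0)=8\nu(0)^2\pm\eta$ the exit is immediate and lies at $\pm\eta$). A one-dimensional intermediate-value argument (the same argument used e.g.\ in \cite{Ann.PDE22,CPAM2022}) furnishes an $a(0)$ whose trajectory never exits, which is precisely the extra closed bootstrap condition on $a$.

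\textbf{Main obstacle.} The heavy analytic work (the energy estimates and modulation bounds) is already done in the previous sections; the only genuine difficulty left in this section is the topological shooting, because the $b$-equation is only approximately a scalar linear ODE—the coefficient $\beta/\log\nu$ depends on the full solution through $\nu$, and the forcing $F$ depends on $\eps$, whose bounds in turn depend on the modulation parameters. I would handle this by first fixing the constants so that the $\eps$-bootstrap and the $\nu$-bootstrap close unconditionally on $a(0)$ in the prescribed interval (i.e.\ uniformly in the shooting parameter), and only then running the one-dimensional shooting on the single remaining variable, so that the topological argument truly reduces to a scalar IVT. This finishes the bootstrap and hence proves Theorem \ref{main theorem}.
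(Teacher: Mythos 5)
Your overall architecture coincides with the paper's (energy estimates plus Lemma \ref{lemma diff ineq}, a hierarchy $\zeta^*\to K_4,K_5,K_6,K_7\to K_1,K_2,K_3\to M_0$, then a one-dimensional shooting in $a(0)$), but there is a genuine gap at the heart of your topological step: you have linearized around the wrong mode. The unstable ODE for $\tilde a:=a-8\nu^2$ comes from the $\mod_0$ bound in \eqref{Lemma modulation eq: inequalities}: substituting $a=8\nu^2+\tilde a$ yields $|\tilde a_\tau-2\beta\tilde a|\leq C(K_4,K_5)\nu^2/|\log\nu|$, i.e.\ exponential growth at the order-one rate $2\beta$, reflecting the positive eigenvalue of $\Ls^\zeta_\nu$ in the $\varphi_{0,\nu}$ direction; this is exactly \eqref{main prop: unstable ode for a}. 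Your equation $b_\tau=\frac{\beta}{\log\nu}\,b+F$ with $|F|\lesssim \nu^2/|\log\nu|^2$ is instead the $\mod_1$ (near-neutral) relation, and since $\log\nu<0$ its homogeneous solutions \emph{decay}; moreover at the boundary $|b|=K_2\nu^2/|\log\nu|$ the drift $\frac{\beta}{\log\nu}b$ has size $K_2\nu^2/|\log\nu|^2$, which is of the same order as your forcing bound and as the rate of change of the boundary curve itself, so no sign conclusion is possible, the claimed ``immediate exit'' at $a(0)=8\nu(0)^2\pm\eta$ does not follow, and the continuity-plus-connectedness (IVT/Brouwer) argument cannot start. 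With the correct drift $2\beta\tilde a$ and $K_2\gg C(K_4,K_5)$ the outgoing property at the boundary is strict (it also dominates the $\Oc(K_2\nu^2/|\log\nu|^2)$ motion of the boundary curve), and the argument closes as in the paper.

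A secondary but real imprecision concerns the closure of the $K_1$ bound. The relation $\nu_\tau/\nu=\beta\bigl(1-\tfrac{a}{8\nu^2}\bigr)+\Oc(1/|\log\nu|)$ is too lossy to ``integrate to $\nu=e^{-\sqrt{\beta\tau+M_0}}(1+o(1))$'': computing $\frac{d}{d\tau}\log^2\nu=2\log\nu\cdot\frac{\nu_\tau}{\nu}$ turns the $\Oc(1/|\log\nu|)$ error into an $\Oc(1)$ one, whose time integral is of the same order as the main term $\beta\tau$. One needs the sharper $\mod_1$ bound (relative error $\Oc(1/|\log\nu|^2)$), to keep the $\tilde a_\tau/(16\nu^2)$ term explicitly, and to integrate it by parts against $\log\nu/\nu^2$ using only the bootstrap bound on $\tilde a$, as in \eqref{main prop: compute log(nu)^2}. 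Relatedly, your claim that $K_4=K_5$ may be taken universal and that the inner $L^2$ and $H^1$ inequalities can each be fed separately into Lemma \ref{lemma diff ineq} overlooks that the $H^1$ estimate \eqref{lemma H^1 energy estimate: ineq in statement} has no damping of its own and carries the $K_7$-dependent far-field norm on its right-hand side; damping is only recovered for the weighted combination $K^2(\cdot)+(\cdot)$ with $K^2\sim C(K_1)K_7^2/K_4^2$, which is precisely what produces the paper's parameter hierarchy \eqref{dependence on parameters 1}. These points are fixable, but as written the shooting step and the $\nu$-law closure do not go through.
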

\begin{proof}
    The proof proceeds as follows.  First by specifying the dependence on the parameters $(\zeta^*,\{K_i\}_{i=1}^{\nu},M_0)$, we exploit the energy estimates to show that the remainder $\eps$ will always be trapped in the bootstrap regime, given sufficiently small initial data. Then, as for the modulation parameters, the main part is to apply a topological argument to show the existence of an initial $a(0)$ such that the parameter $a(\tau)$ will remain trapped in the bootstrap regime for all time. The rest part ($\nu$ and $R_\tau/\mu$) follows directly from the $|\mod_i|$ estimates and time integration.\\
    \underline{Trapping $\eps$:} Suppose $w$ is a solution trapped in some bootstrap regime $\BS(\tau_0,\tau_*, \zeta^*, M_0, \{K_i\}_{i=1}^7)$. Since the parameter $M_0$ is chosen at last to make $C(\zeta^*,K_i)/|\log\nu|$ arbitrarily small, it suffices to keep track of only the leading-order terms in the energy estimates. In the following, when we say ``$M_0$" is large enough, it means $M_0$ is chosen large depending on $\zeta^*$ and all $K_i$'s. First, choose $\zeta^*\gg C$ and $K_7\gg \zeta^*C(\zeta^*,K_1,K_4,K_5,K_6)$ for the constants $C(\zeta^*,K_1,K_4,K_5,K_6)$ in \eqref{L infty control: specific result in Lemma}. Then, for small enough initial $\eps(0)$ (e.g., $\|\eps(0)(1+\zeta^\frac{3}{2})\|_{L^\infty(\zeta\geq\zeta^*)}+\|\eps(0)\|_{L^\infty(\zeta\geq\frac{1}{2}\zeta^*)}\lesssim \nu^3(0)$ suffices) that is even in $z$-variable and satisfies the orthogonality conditions \eqref{local orthorgonality conditions} and large enough $M_0$, by \eqref{L infty control: specific result in Lemma}, we have
    \begin{equation}\label{main proposition: far field closed}
        \|\eps(\tau)(1+\zeta)^\frac{3}{2}\|_{L^\infty(\zeta\geq\zeta^*)}\leq \frac{2CK_7}{\zeta^*}\frac{e^{-2\sqrt{\beta\tau+M_0}}}{\sqrt{\beta\tau+M_0}} \leq \frac{K_7e^{-2\sqrt{\beta\tau+M_0}}}{2\sqrt{\beta\tau+M_0}},\quad \forall\,\tau\in[0,\tau_*].
    \end{equation}
    As for the middle range $H^2$-estimate, by \eqref{lemma diff ineq: ineq}, $|\mod_0|$ estimate \eqref{Lemma modulation eq: inequalities} and Lemma \ref{lemma: H^2 control in the middle} (taking $\zeta_1=\frac{1}{4}\zeta^*$ and $\zeta_2=2\zeta^*$), we have
    \begin{equation}\label{main proposition: middle H^2 closed}
        \|\eps\|_{H^2(\frac{1}{4}\zeta^*\leq\zeta\leq2\zeta^*)}\leq C(\zeta^*)\|\eps\|_{H^2_*(\frac{1}{4}\zeta^*,2\zeta^*)}\leq \frac{C(\zeta^*)(K_4+\sqrt{K_4K_5})\nu^2}{|\log\nu|}\leq \frac{K_6\nu^2}{2|\log\nu|}\quad \forall\,\tau\in[0,\tau_*],
    \end{equation}
    once we take $K_6^2\gg C(\zeta^*)(K^2_4+K_4K_5)$ and $\eps(0)$ to be small enough. For the $H^1$ estimate, define
    \[
        \|\eps\|^2_{H^1_{\nu}} := K^2\dr{\frac{1}{2}\int\eps\chi_\nu\sqrt{\vr_\nu}\Ms^\zeta_\nu(\eps\chi_\nu\sqrt{\vr_\nu})-\frac{d_0}{2}\int\eps\sqrt{\vr_\nu}\Ms^\zeta_\nu(\vp_{0,\nu}\sqrt{\vr_\nu}\chi_\nu)}+\frac{1}{2}\int U_\nu|\nabla\Ms^\zeta_\nu(\eps^*)|^2, 
    \]
    where $K\gg1$ is to be determined. Then, by Lemma \ref{lemma: L^2 energy estimate}, Lemma \ref{lemma: H^1 energy estimate} ,\eqref{main proposition: far field closed}, and the equivalence of norms, we have 
    \begin{align*}
        \frac{d}{d\tau}\|\eps\|^2_{H^1_\nu}&\leq -\frac{(\delta_0K^2-C)}{\nu^2}\dr{\|\eps\|^2_\inn+\|\nabla\eps\|^2_\inn}+\frac{CK^2\nu^2}{|\log\nu|^2}+\frac{C(K_1)K^2_7}{(\zeta^*)^2}\frac{\nu^2}{|\log\nu|^2}\\
        &\leq  -\frac{(\delta_0K^2-C)}{C(\zeta^*)}\|\eps\|^2_{H^1_\nu}+\frac{CK^2\nu^2}{|\log\nu|^2}+\frac{C(K_1)K^2_7}{(\zeta^*)^2}\frac{\nu^2}{|\log\nu|^2}.
    \end{align*}
    Thus, we can choose $K^2 = C(K_1)K^2_7/K^2_4\gg 1$, and by \eqref{lemma diff ineq: ineq} we obtain
    \[
       \frac{1}{\nu^2}\dr{K^2\|\eps\|^2_\inn+\|\nabla\eps^*\|^2_{L^2(U_\nu)}} \leq C\|\eps\|^2_{H^1_\nu}\leq \frac{C(\zeta^*)\nu^2}{\delta_0|\log\nu|^2}+\frac{C(\zeta^*)K^2_4\nu^2}{\delta_0|\log\nu|^2}\quad\forall\,\tau\in[0,\tau_*], 
    \]
    when $M_0$ is sufficiently large and $\eps(0)$ is sufficiently small. Thus, choosing $K^2_5\gg \frac{C(\zeta^*)K^2_4}{\delta_0}$ and $K^2_4\gg \frac{C(\zeta^*)}{\delta_0}$ above, we obtain
    \begin{align}\label{main prop: H^1 closed}
        \|\eps\|^2_\inn \leq\frac{K^2_4\nu^4}{2|\log\nu|^2},\quad \|\nabla\eps^*\|^2_{L^2(U_\nu)}\leq \frac{K^2_5\nu^4}{2|\log\nu|^2},\quad\forall\,\tau\in[0,\tau_*].
    \end{align}
    Collecting \eqref{main proposition: far field closed}, \eqref{main proposition: middle H^2 closed} and \eqref{main prop: H^1 closed}, we see that all the bootstrap constants are improved by a factor of $\frac{1}{2}$. As a summary, it is helpful to recap the dependence of the parameters:
    \begin{align}\label{dependence on parameters 1}
    \begin{split}
         K_7\gg C(\zeta^*,K_1,K_4,K_5,K_6)\Longrightarrow K_5\gg C(\zeta^*)K_6\Longrightarrow K_6\gg C(\zeta^*)\sqrt{K_4K_5}\Longrightarrow K_5\gg\frac{C(\zeta^*)}{\sqrt{\delta_0}}K_4\\
         \Longrightarrow K_4\gg \frac{C(\zeta^*)}{\sqrt{\delta_0}}\Longrightarrow \zeta^*\gg C.
    \end{split}
    \end{align}
    \underline{Trapping modulation parameters:} First, simply choosing $K_3\gg C(K_4,K_5)$ in \eqref{Lemma modulation eq: inequalities}, we have 
    \[\da{\frac{R_\tau}{\mu}}\leq \frac{C(K_4,K_5)\nu}{|\log\nu|}\leq \frac{K_3\nu}{2|\log\nu|}.
    \]
    Denote $\ta:= a-8\nu^2$, and the bootstrap assumption on $a$ is equivalent to $|\ta|\leq \frac{K_2\nu^2}{|\log\nu|}$. Then, inserting the decomposition into the $\mod_1$-equation gives
    \[
        \da{\frac{\nu_\tau}{\nu}-\frac{\beta}{2\log\nu}+\frac{\ta_\tau}{16\nu^2}}\leq \frac{C(K_2,K_4,K_5)}{|\log\nu|^2}.
    \]
    It follows that
    \[
        \frac{d}{d\tau}(\log^2\nu) = \beta - \frac{\ta_\tau\log\nu}{8\nu^2}+\Oc\dr{\frac{C(K_2,K_4,K_5)}{|\log\nu|}}.
    \]
    Integrating in $[0,\tau]$, we have
    \begin{equation}\label{main prop: compute log(nu)^2}
        (\log\nu(\tau))^2 = (\log\nu(0))^2+\beta\tau-\int_{0}^{\tau}\frac{\ta_\tau(s)\log\nu(s)}{8\nu^2(s)}\;ds+\Oc\dr{\int_{0}^{\tau}\frac{C(K_2,K_4,K_5)}{|\log\nu|}\;ds}.
    \end{equation}
    Note, through integration by parts, that
    \begin{align*}
      \int_{0}^{\tau}\frac{\ta_\tau(s)\log\nu(s)}{8\nu^2(s)}\;ds &= \frac{\ta(\tau)\log\nu(\tau)}{8\nu^2(\tau)} -\frac{\ta(0)\log\nu(0)}{8\nu^2(0)}-\int_{0}^{\tau}\ta(s)\dr{\frac{\log\nu(s)}{8\nu^2(s)}}'\;ds\\
      &= \Oc(K_2+C(K_2,K_4,K_5)\sqrt{\beta\tau}).
    \end{align*}
    where we use \eqref{cor of mod est: est of nu_tau} and
    \[
        \int_{0}^{\tau}\frac{1}{|\log\nu(s)|}\;ds\lesssim \sqrt{\beta\tau+M_0}-\sqrt{M_0}\lesssim \sqrt{\beta\tau},
    \]
    when $M_0$ is large enough. Therefore, once we choose $K_1\gg C(K_2,K_4,K_5)$, by \eqref{main prop: compute log(nu)^2} and the estimate above we obtain
    \begin{equation*}\label{main prop: nu closed}
        \frac{2}{K_1}e^{-\sqrt{\beta\tau+M_0}} \leq e^{-\sqrt{\beta\tau+M_0+\Oc(C(K_2,K_4,K_5)(1+\sqrt{\beta\tau}))}} \leq \frac{K_1}{2}e^{-\sqrt{\beta\tau+M_0}}.
    \end{equation*}
    Finally, we control $a(\tau)$. By the $|\mod_0|$-estimate in \eqref{Lemma modulation eq: inequalities} and the decomposition $a = 8\nu^2+\ta$, we have
    \begin{equation}\label{main prop: unstable ode for a}
        |\ta_\tau-2\beta\ta|\leq \frac{C(K_4,K_5)\nu^2}{|\log\nu|}.
    \end{equation}
    We choose $K_2\gg C(K_4,K_5)$, and claim that there exists an initial $\ta(0)\in [-\frac{K_2\nu^2(0)}{|\log\nu(0)|},\frac{K_2\nu^2(0)}{|\log\nu(0)|}]$, such that $a$ will be trapped for all time. Before proving this claim, we summarize the dependence of these parameters:
    \begin{equation}\label{dependence on parameters 2}
        K_1\gg C(K_2,K_4,K_5)\Longrightarrow K_2\gg C(K_4,K_5),\; K_3\gg C(K_4,K_5).
    \end{equation}
    Combining \eqref{dependence on parameters 1} and \eqref{dependence on parameters 2}, it is clear that there exist parameters $\zeta^*$ and $\{K_i\}_{i=1}^7$ that satisfy all these constraints.
    Now we prove the claim. First, we fix the parameters $(\zeta^*,\{K_i\}_{i=1}^{7})$ and other initial conditions according to the aforementioned discussion. Then, suppose, for contradiction, that for any $a_0:=\ta(0)\in [-\frac{K_2\nu^2(0)}{|\log\nu(0)|},\frac{K_2\nu^2(0)}{|\log\nu(0)|}]$, the corresponding solution $w_{\ta}$ will exit the bootstrap regime in finite time. We denote the supremum of time that $w_{a_0}$ stays in the bootstrap regime by $\tau_{a_0}$. Note that when $\tau=\tau_{a_0}$, since all other bootstrap constants are improved by $\frac{1}{2}$, we have $|\ta(\tau_{a_0})| = \frac{K_2\nu^2(\tau_{a_0})}{|\log\nu(\tau_{a_0})|}$. Denote $I:= [-K_2,K_2]$, then we obtain a map $\psi: I\to \{-K_2,K_2\}$, defined as 
    \[
    \psi: \frac{a_0|\log\nu(0)|}{\nu^2(0)}\mapsto \frac{\ta(\tau_{a_0})|\log\nu(\tau_{a_0})|}{\nu^2(\tau_{a_0})}.
    \] 
    First, this map is continuous due to the standard continuous dependence of the solutions on the initial data. Second, if $|\ta(\tau)| = \frac{K_2\nu^2(\tau)}{|\log\nu(\tau)|}$ for any time $\tau$, by \eqref{main prop: unstable ode for a}, we have
    \[
        \ta_\tau(\tau) = 2\beta\ta(\tau)+\Oc\dr{\frac{C(K_4,K_5)\nu^2(\tau)}{|\log\nu(\tau)|}}.
    \]
    Thus, $\ta(\tau)$ is nonzero and have the same sign as $\ta(\tau)$ (since we choose $K_2\gg C(K_4,K_5)$). Then, for any $\tau'>\tau$ (and $|\tau'-\tau|$ suitably small), we will have $|\ta(\tau')|>\frac{K_2\nu^2(\tau')}{|\log\nu(\tau')|}$, existing the bootstrap regime. As a consequence, $\psi$ is the identity map when restricting on $\pa I = \{-K_2,K_2\}$. However, now since $I = \psi^{-1}(\{K_2\})\cap \psi^{-1}(\{-K_2\})$ which is the union of two disjoint nonempty (as $\psi(\pm K_2)=\pm K_2$) open sets, this gives a contradiction as $I$ is topologically connected (this is in fact a special case of the Brouwer fixed point theorem). 
\end{proof}
Theorem \ref{main theorem} is directly implied by Proposition \ref{proposition: main}, except for the part $\|\tilde u(t)\|_\Ec\to 0$ as $t\to T$. This is because the inner region (which is controlled by an $H^1$ norm) and outer region (which is controlled by an $L^\infty$ norm) for the perturbation $\eps$ in the bootstrap assumptions is divided at the parabolic scale ($\zeta\sim 1$), while the definition of $\|\cdot\|_\Ec$ makes such division in the soliton scale ($\zeta\sim \nu$). However, the former can easily imply the latter. To see this, note that by Lemma \ref{lemma: H^2 control in the middle} (and the proof therein) and Lemma \ref{lemma diff ineq}, we have $\|\eps\|_{H^2(\nu\leq \zeta\leq \zeta^*)}=\Oc(|\log\nu|^{-1})$. Thus, by the Sobolev embedding $H^2\hookrightarrow L^\infty$ in $2$D, we have $\|\eps\|_{L^\infty(\nu\leq \zeta\leq \zeta^*)}=\Oc(|\log\nu|^{-1})$. Combining with the far field $L^\infty$ control of $\eps$, we obtain $\|\eps(1+\zeta)^\frac{3}{2}\|_{L^\infty(\zeta\geq \nu)}=\Oc(|\log\nu|^{-1})$. It follows that $\|q(1+\gamma)^\frac{3}{2}\|_{L^\infty(\gamma\geq 1)}=\Oc(\sqrt{\nu}/|\log\nu|)\to 0$ as $t\to T$ (we recall $q(\rho,\xi,t) = \nu^2\eps(\nu\rho,\nu\xi,t)$), so that we actually have the weighted $L^\infty$ control of $\tilde u$ (the notation in Theorem \ref{main theorem}) from the soliton scale, i.e., $\gamma\geq 1$. This completes the proof of Theorem \ref{main theorem}.

\appendix
\section{Appendix: Estimates and inequalities}

\subsection*{Estimates of the Poisson Fields}
On $\reall^3$, we denote the cylindrical coordinate by $(r,\theta,z)$, and denote $\tr := r-R/\mu$ where we assume $0<\mu\ll1$ to be a small parameter tending to zero. Let $u = u(\tr,z)$ be an axisymmetric function centered around the ring with radius $R/\mu$ on the plane $z=0$. Let $\Phi_u =\frac{1}{4\pi|\x|}*u$ be its $3$D Poisson field. Then, we are to define a $2$D Poisson field as an approximation to the $3$D one in a sense that will soon become clear. Define a function on $(\tr,z)\in\reall^2$ by
\[
    \bar u(\tr,z) :=
    \begin{cases}
        u(\tr,z),\quad &\tr\geq -R/\mu,\\
        u(-2R/\mu-\tr,z),\quad &\tr<-R/\mu.
    \end{cases}
\]
Let $\eta(x)$ be a smooth cutoff function in $1$D, such that
\[
   0\leq \eta(x)\leq1,\quad \eta(x)\equiv 1\quad\forall\;x\in[0,+\infty),\quad\text{and }\eta(x)\equiv 0\quad\forall\;x\in (-\infty,-1].
\]
Now, define $u_*(\tr,z) := \eta(r)\bar u(\tr,z)$, which is a smooth extension of $u$ to $\reall^2$, and
\begin{equation}\label{Appendix: def of modified 2D Poisson field}
    \Psi_u:= -\frac{1}{2\pi}\log|(\tr,z)|*u_*.
\end{equation}
One remark is that the choice of definition for the $2$D Poisson field of $u$ is not unique. In the following analysis, we will see that what matters is that $\Psi_u$ solves the Poisson equation on the half plane:
 \[
     -(\pa^2_{\tr}+\pa^2_z)\Psi(\tr,z) = u(\tr,z),\quad\forall\; (\tr,z)\in [-R/\mu,+\infty)\times\reall,
 \]
and $\nabla\Phi_u$ has certain decay property. In general, it works if we extend $u$ to a function on the whole $\reall^2$ with suitably small modification, and then consider its convolution with the $2$D fundamental solution to the Laplace equation. In our case, since $u_*$ and $u$ differ little in $L^p$ norms when viewed as $\reall^2$-functions (with an extension by zero for $u$) estimates of $\Psi_u$ follow from those of $u$. The following lemma illustrates that the difference between $\nabla\Psi_u(\tr,z)$ and $\nabla\Phi_u(\tr,z)$ can be controlled pointwisely when $u(\br,z)$ satisfies certain decay property.   

\begin{lemma}[Difference of 2D and 3D Poisson fields]\label{appendix: difference of 2d 3d Poisson field}
Assume $u(r,z)$ to be a function with suitable decay in  $\reall^2$ and denote its $2$D Poisson field by $\Psi_u:=-\frac{1}{2\pi}\log|(r,z)|*u$. We can also interpret $u(r,z),\;(r,z)\in \H:=\{(r,z):r>0\}$, to be an axisymmetric function in $\reall^3$, in which case we can define its $3$D Poisson field by $\Phi_u: = \frac{1}{4\pi|\x|}*u\;( \x\in\reall^3 )$. Denote by $B_{(x,y)}(l)\subset\reall^2$ the ball centered at $(x,y)$ with radius $l>0$. Assume the following decay property of $u(r,z)$:
\begin{align}\label{appendix: diff 23D Poisson, control of u}
\begin{split}
    &\|u(r,z)(1+(r-R/\mu)^2+z^2)^\frac{3}{4}\|_{L^\infty(\reall^2\backslash B_{(R/\mu,0))}(\zeta^*)}\leq L_\infty,\\
    &\|u(r,z)\|_{L^2(B_{(R/\mu,0)}(\zeta^*))} \leq L_2, \quad \|\nabla u(r,z)\|_{L^2(B_{(R/\mu,0)}(\zeta^*))} \leq L'_2.
\end{split}
\end{align}
where $\zeta^*>0$ is fixed, and $L_\infty,\;L_2,\;L_2'>0$ are some constants. Then, there exists $\mu^*>0$ such that for any $0<\mu<\mu^*$, it holds that given any $\frac{6}{7}<s<1$ we have the following estimates on the gradients of Poisson fields:\\
(\romannumeral 1) \textup{(Near field approximation)}
\begin{align}\label{appendix: 3&2D near field approx}
|\nabla\Psi_u(r,z) - \nabla\Phi_u(r,z)|\leq C_1(L_\infty+L_2+L'_2)\mu^\kappa,\quad\forall\; (r,z)\in B_{(R/\mu,0)}(\mu^{-s}),
\end{align}
(\romannumeral 2) \textup{(Far-field control)}
\begin{align}\label{appendix: 3&2D far field control}
    |\nabla\Phi_u(r,z)|\leq C_2(L_\infty+L_2)\mu^\kappa,\quad\forall\; (r,z)\in \H\backslash B_{(R/\mu,0)}(\mu^{-s}).
\end{align}
Here $\kappa>0$, and $ \kappa, C_1, C_2$ are all universal constants depending only on $s$.
\begin{proof}
The strategy of the proof is to exploit the explicit expressions of the Poisson fields, especially its decay behaviors in the far field. A key observation is that: away from the axis of symmetry the fundamental solution of the $3$D axisymmetric Poisson equation is asymptotically close to that of the $2$D Poisson equation. We also note that, by our construction, $u_*$ shares the same control (up to a universal constant) of $u$ as in \eqref{appendix: diff 23D Poisson, control of u}. \\
\noindent\underline{\textit{Proof of (\romannumeral1)}.}
First, the Poisson fields are expressed as
\begin{equation}
    \nabla\Psi_u(r,z) = \int_{\reall^2}\nabla E_2(r,z,\tilde r, \tilde z)u(\tr,\tz)\;d\tr d\tz,\quad  \nabla\Phi_u(r,z) = \int_{\H}\nabla E_3(r,z,\tilde r, \tilde z)u(\tr,\tz)\;d\tr d\tz,
\end{equation}
where
\begin{align}\label{appendix: explicit expression of Poisson fields}
\begin{split}
    &\nabla E_2(r,z,\tr,\tz) = -\frac{1}{2\pi}\frac{(r-\tr,z-\tz)}{(r-\tr)^2+(z-\tz)^2},\\
    &\nabla E_3(r,z,\tr,\tz) = -\frac{\tr}{2\pi}\int_{0}^{\pi}\frac{(r-\tilde r\cos(\theta),z-\tz)}{(r^2-2r\tr\cos(\theta)+\tr^2+(z-\tz)^2)^{\frac{3}{2}}}\;d\theta.
\end{split}
\end{align}
From \eqref{appendix: explicit expression of Poisson fields}, one can get the decay property of the Poisson fields:
\[
    \da{\nabla E_2(r,z,\tr,\tz)}\lesssim \frac{1}{((r-\tr)^2+(z-\tz)^2)^\frac{1}{2}},\quad \da{\nabla E_3(r,z,\tr,\tz)}\lesssim \frac{\tr}{(r-\tr)^2+(z-\tz)^2},
\]
 when $(r-\tr)^2+(z-\tz)^2\gg1$. In the following estimates of integrals, we denote $d: = \sqrt{(r-\tr)^2+(z-\tz^2)}$ and $\tilde d:= \sqrt{(\tr-R/\mu)^2+\tz^2}$ for brevity. We always bear in mind that $\mu\to 0$, which is much smaller than any fixed constant. Now, we assume $(r,z)\in B_{(R/\mu,0)}(\mu^{-s})$. By the expansion of elliptic integrals, one can show that (for example, see \cite{Chaabi2015} for a related discussion) for any $(\tr,\tz)\in B_{(r,z)}(2\mu^{-s})$,
\begin{equation}
    \nabla E_3(r,z,\tr,\tz) = \sqrt{\frac{\tr}{r}}\nabla E_2(r,z,\tr,\tz)+\Oc\dr{\sqrt{\frac{\tr}{r^3}}\log(d)}=\dr{1+\Oc(\mu^{1-s})}\nabla E_2(r,z,\tr,\tz)+\Oc\dr{\mu\log(d)}.
\end{equation}
Thus, by Hardy's inequality and Cauchy's inequality,
\begin{align}\label{appendix: inner ball}
    &\quad\da{\int_{B_{(r,z)}(2\mu^{-s})}\nabla E_3(r,z,\tr,\tz)u(\tr,\tz)-\nabla E_2(r,z,\tr,\tz)u(\tr,\tz)\;d\tr d\tz}\nonumber\\
    &\lesssim \da{\int_{B_{(r,z)}(2\mu^{-s})}\dr{1-\sqrt{\frac{\tr}{r}}}\nabla E_2(r,z,\tr,\tz) u(\tr,\tz)\;d\tr d\tz}+\mu\int_{B_{(r,z)}(2\mu^{-s})}|\log(d)u(\tr,\tz)|\;d\tr d\tz \nonumber\\
    &\lesssim \mu^{1-s}\int_{B_{(r,z)}(2\mu^{-s})}\frac{|u(\tr,\tz)|}{d}\;d\tr d\tz + \mu\|\log(d)\|_{L^2(B_{(r,z)}(2\mu^{-s}))}\cdot\|u\|_{L^2(B_{(r,z)}(2\mu^{-s}))}\nonumber\\
    &\lesssim \mu^{1-s}L'_2+\mu^{1-s}\int_{B_{(r,z)}(2\mu^{-s})\backslash B_{(r,z)}(\zeta^*)}\frac{|u(\tr,\tz)|}{d}\;d\tr d\tz + \mu^{1-s}|\log(\mu)|\cdot\|u\|_{L^2(B_{(r,z)}(2\mu^{-s}))}\nonumber\\
    &\lesssim \mu^{1-s}L'_2+\mu^{1-s}\|d^{-1}\|_{L^2(\zeta^*\leq d\leq 2\mu^{-s})}\cdot  \|u\|_{L^2(\zeta^*\leq d\leq 2\mu^{-s})}+ \mu^{1-s}|\log(\mu)|\cdot\|u\|_{L^2(B_{(r,z)}(2\mu^{-s}))}\nonumber\\
    &\lesssim \mu^{1-s}L'_2+\mu^{1-s}|\log(\mu)|\dr{L_2+L_\infty}.
\end{align}
It remains to estimate the tails of the integrals. Observe that $d/2\leq\tilde d\leq 2d$ for any $(\tr,\tz)\in \H\backslash B_{(r,z)}(2\mu^{-s})$. By the decay property of $u,u_*$ and the fundamental solutions, we have
\begin{align}\label{appendix: tail estimate 1}
    \da{\int_{\H\backslash B_{(r,z)}(2\mu^{-s})}\nabla E_3(r,z,\tr,\tz)u(\tr,\tz)\;d\tr d\tz}\lesssim L_\infty\int_{\H\backslash B_{(r,z)}(2\mu^{-s})}\frac{d+R/\mu}{d^{2+\frac{3}{2}}}\;d\tr d\tz \lesssim L_\infty(\mu^{\frac{s}{2}}+\mu^{\frac{3}{2}s-1}).
\end{align}
and
\begin{align}\label{appendix: tail estimate 2}
    \quad\da{\int_{\reall^2\backslash B_{(r,z)}(2\mu^{-s})}\nabla E_2(r,z,\tr,\tz)u_*(\tr,\tz)\;d\tr d\tz}
    \lesssim L_\infty\int_{\reall^2\backslash B_{(r,z)}(2\mu^{-s})}\frac{1}{d^{\frac{5}{2}}}\;d\tr d\tz\lesssim L_\infty\mu^{\frac{s}{2}}.
\end{align}
Combining \eqref{appendix: inner ball} \eqref{appendix: tail estimate 1} \eqref{appendix: tail estimate 2} gives (\romannumeral1).
\\[6pt]
\noindent\underline{\textit{Proof of (\romannumeral2)}.} Denoting $d_0 := \sqrt{(r-R/\mu)+z^2}-\frac{1}{2}\mu^{-s}\geq \frac{1}{2}\mu^{-s}$, we decompose the domain of integration into three parts:
\[
    D_1: = B_{(r,z)}(d_0) ,\quad D_2:=B_{(R/\mu,0)}(\mu^{-s}/2),\quad D_3 := \H\backslash(D_1\cup D_2).
\]
For the first part, by the decay rates of $u$ and $\nabla E_3$, we have the estimate
\begin{align*}
    &\da{\int_{D_1}\nabla E_3(r,z,\tr,\tz)u(\tr,\tz)\;d\tr d\tz}\leq\\ &\da{\int_{B_{(r,z)}(C)}\nabla E_3(r,z,\tr,\tz)u(\tr,\tz)\;d\tr d\tz}+\da{\int_{D_1\backslash B_{(r,z)}(C)}\nabla E_3(r,z,\tr,\tz)u(\tr,\tz)\;d\tr d\tz},
\end{align*}
where $C>0$ is some fixed constant. Since $|u(\tr,\tz)|\lesssim L_\infty d_0^{-\frac{3}{2}}$ on $B_{(r,z)(C)}$ and $d_0\geq \mu^{-s}/2$, we have
\begin{equation}
    \da{\int_{B_{(r,z)}(C)}\nabla E_3(r,z,\tr,\tz)u(\tr,\tz)\;d\tr d\tz}\lesssim L_\infty\frac{d_0+R/\mu}{d^{\frac{3}{2}}} \lesssim L_\infty\dr{\mu^{\frac{s}{2}}+\mu^{\frac{3}{2}s-1}}.
\end{equation}
As for the second integral, by the decay of $u$, we obtain
\begin{align*}
    \da{\int_{D_1\backslash B_{(r,z)}(C)}\nabla E_3(r,z,\tr,\tz)u(\tr,\tz)\;d\tr d\tz}&\lesssim \int_{D_1\backslash B_{(r,z)}(C)} \frac{\tilde r}{d^2}|u(\tr,\tz)|\;d\tr d\tz\\
    &\lesssim L_\infty\int_{D_1\backslash B_{(r,z)}(C)} \frac{\tilde r}{{\tilde d}^{\frac{3}{2}}}\cdot\frac{1}{d^2}\;d\tr d\tz\\
    &\lesssim L_\infty\int_{C}^{d_0}\int_{0}^{2\pi}\frac{d_0+R/\mu+y\cos(\theta)}{(d_0+y\cos(\theta)+\mu^{-s})^{\frac{3}{2}}}\cdot\frac{1}{y}\;d\theta dy.
\end{align*}
Fixing another constant $0<c<1$, when $\delta:=-\cos(\theta)>1-c$, we have the estimates:
\begin{align*}
    \int_{C}^{d_0}\frac{d_0-\delta y}{(d_0-\delta y+\mu^{-s})^{\frac{3}{2}}}\cdot\frac{1}{y}\;dy = \int_{C}^{\delta d_0}\frac{d_0-x}{(d_0-x+\mu^{-s})^{\frac{3}{2}}}\cdot\frac{1}{x}\;dx
    &\leq \int_{C}^{d_0}\frac{1}{(d_0-x+1)^{\frac{1}{2}}x}\;dx\\
    &\lesssim \frac{\log(d_0)}{\sqrt{d_0}}\lesssim \mu^{\frac{s}{2}}|\log(\mu)|,
\end{align*}
and by H\"older's inequality,
\begin{align*}
     \int_{C}^{d_0}\frac{R/\mu}{(d_0-\delta y+\mu^{-s})^{\frac{3}{2}}}\cdot\frac{1}{y}\;dy &= R\mu^{\frac{3}{2}s-1}\int_{C}^{\delta d_0}\frac{d_0-x}{(\mu^{s}(d_0-x)+1)^{\frac{3}{2}}}\cdot\frac{1}{x}\;dx\\
     &\leq R\mu^{\frac{3}{2}s-1}\dr{\int_{C}^{+\infty}\frac{1}{x^{\frac{3}{2}}}\;dx}^{\frac{2}{3}}\cdot\dr{\int_{0}^{+\infty}\frac{1}{(\mu^{s}x+1)^{\frac{9}{2}}}\;dx}^{\frac{1}{3}}
     \lesssim \mu^{\frac{7}{6}s-1}.
\end{align*}
When $\delta:=-\cos(\theta)\leq 1-c$, the estimate is more straightforward:
\begin{align*}
     \int_{C}^{d_0}\frac{d_0-\delta y+R/\mu}{(d_0-\delta y+\mu^{-s})^{\frac{3}{2}}}\cdot\frac{1}{y}\;dy \lesssim \frac{d_0+R/\mu}{(d_0+\mu^{-s})^{\frac{3}{2}}}\int_{C}^{d_0}\frac{1}{y}\;dy\leq \frac{d_0+R/\mu}{(d_0+\mu^{-s})^{\frac{3}{2}}}\log(d_0)\lesssim (\mu^{\frac{s}{2}}+\mu^{\frac{3}{2}s-1})|\log(\mu)|.
\end{align*}
In summary, we obtain
\begin{equation}\label{appendix: domain D_1}
    \da{\int_{D_1}\nabla E_3(r,z,\tr,\tz)u(\tr,\tz)\;d\tr d\tz}\lesssim L_\infty\dr{(\mu^{\frac{s}{2}}+\mu^{\frac{3}{2}s-1})|\log(\mu)|+\mu^{\frac{7}{6}s-1}}.
\end{equation}
For the second domain,
\begin{align}\label{appendix: domain D_2}
    \da{\int_{D_2}\nabla E_3(r,z,\tr,\tz)u(\tr,\tz)\;d\tr d\tz}\lesssim \int_{D_2}\frac{R}{\mu}u(\tr,\tz)\mu^{2s}\;d\tr d\tz\nonumber
    &\lesssim \mu^{2s-1}L_2 + L_\infty\mu^{2s-1}\int_{1}^{\mu^{-s}}\frac{1}{\sqrt{x}}\;dx\nonumber\\
    &\lesssim L_2\mu^{2s-1}+L_\infty\mu^{\frac{3}{2}s-1}.
\end{align}
Finally, for the third domain, observe that $\tilde d\leq 3d$ in this domain. Thus, we have the estimate
\begin{equation}\label{appendix: domain D_3}
     \da{\int_{D_3}\nabla E_3(r,z,\tr,\tz)u(\tr,\tz)\;d\tr d\tz}\lesssim L_\infty\int_{D_3} \frac{\tilde d+R/\mu}{\tilde d^{2+\frac{3}{2}}}\;d\tr d\tz\lesssim L_\infty\int_{\mu^{-s}}^{+\infty}\frac{x+R/\mu}{x^{\frac{5}{2}}}\;dx \lesssim L_\infty (\mu^{\frac{1}{2}s}+\mu^{\frac{3}{2}s-1}).
\end{equation}
Finally, collecting \eqref{appendix: domain D_1}\eqref{appendix: domain D_2}\eqref{appendix: domain D_3}, we obtain (\romannumeral2).

\end{proof}
\end{lemma}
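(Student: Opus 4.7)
The plan is to exploit the explicit angular-integral formula for the axisymmetric 3D Poisson kernel,
\[
\nabla E_3(r,z,\tr,\tz) = -\frac{\tr}{2\pi}\int_0^\pi \frac{(r-\tr\cos\theta,\,z-\tz)}{(r^2-2r\tr\cos\theta+\tr^2+(z-\tz)^2)^{3/2}}\,d\theta,
\]
and compare it with the familiar 2D kernel $\nabla E_2 = -\frac{1}{2\pi}(r-\tr,z-\tz)/((r-\tr)^2+(z-\tz)^2)$. The key observation driving both estimates is that when both $(r,z)$ and $(\tr,\tz)$ lie in a $\mu^{-s}$-neighborhood of the ring center $(R/\mu,0)$, one has $r,\tr\sim R/\mu\to\infty$, so an asymptotic expansion of the elliptic integral yields $\nabla E_3 = \sqrt{\tr/r}\,\nabla E_2 + \Oc(\sqrt{\tr/r^3}\log d)$, i.e. $\nabla E_3 = (1+\Oc(\mu^{1-s}))\nabla E_2 + \Oc(\mu \log d)$ uniformly in this region. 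This is the precise sense in which the 3D axisymmetric operator degenerates to a 2D one near a large ring.

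For (i), I would split the $(\tr,\tz)$-integration in $\nabla\Phi_u(r,z)$ into $B_{(r,z)}(2\mu^{-s})$ and its complement, and similarly for $\nabla\Psi_u$. On $B_{(r,z)}(2\mu^{-s})$, I would plug in the asymptotic expansion above; the resulting difference has two pieces, $\mu^{1-s}\int \tilde d^{-1}|u|$, which is bounded via Hardy's inequality using $\|\nabla u\|_{L^2(B_{(R/\mu,0)}(\zeta^*))}\le L_2'$ together with the pointwise decay $|u|\lesssim L_\infty(1+\tilde d^2)^{-3/4}$ on the annulus, and $\mu\int|\log d\,u|$, which is controlled by Cauchy–Schwarz plus the same $L^2$ and far-decay bounds. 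On the complement, both 3D and 2D integrals are estimated directly from the pointwise decay $|u|\lesssim L_\infty \tilde d^{-3/2}$; the dangerous factor is the prefactor $\tr \sim R/\mu$ in $\nabla E_3$, but since $d\ge 2\mu^{-s}$ this still produces a power $\mu^\kappa$ with $\kappa>0$ as long as $s$ is close enough to $1$ (the constraint $s>\tfrac{6}{7}$ emerges from this bookkeeping).

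For (ii), when $(r,z)$ itself is at distance at least $\mu^{-s}$ from the ring, I would partition the half-plane into (a) a ball $D_1=B_{(r,z)}(d_0)$ of radius $d_0\approx \sqrt{(r-R/\mu)^2+z^2}-\tfrac12\mu^{-s}$, on which $u$ is small by the $L^\infty$ decay hypothesis, (b) a ball $D_2=B_{(R/\mu,0)}(\mu^{-s}/2)$ about the ring center, on which the $L^2$ bound on $u$ applies, and (c) the remaining region $D_3$ where both source and target are far apart and $\tilde d\le 3d$. The estimates in $D_2$ and $D_3$ are straightforward from the decay of the kernels and $u$. In $D_1$ the integrand can be amplified by $\tr\sim R/\mu$, so one must carry out the angular integration carefully; I would split according to $\delta=-\cos\theta\gtrless 1-c$ for a small fixed $c$, using H\"older's inequality on the bad slice to absorb the $R/\mu$ factor against the extra integrability gained from $(d_0-\delta y+\mu^{-s})^{-3/2}$.

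The main obstacle is balancing the powers of $\mu$ across these contributions so that every error term carries a \emph{common} positive power $\kappa(s)$. The near-field asymptotic $\nabla E_3 \sim \sqrt{\tr/r}\nabla E_2$ is classical but needs to be made effective with explicit remainders uniform on $B_{(R/\mu,0)}(\mu^{-s})\times B_{(r,z)}(2\mu^{-s})$, and the trickiest term is the $D_1$ part of (ii), where the $R/\mu$ amplification almost cancels the decay of $u$; the condition $s>\tfrac{6}{7}$ is precisely what makes each of the exponents $\tfrac{s}{2}$, $\tfrac{3s}{2}-1$, $\tfrac{7s}{6}-1$, and $2s-1$ strictly positive simultaneously, which defines the final $\kappa=\kappa(s)>0$. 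Once that exponent bookkeeping is set up, the remaining calculations are mechanical applications of Hardy, Cauchy–Schwarz, and H\"older.
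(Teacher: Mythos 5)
Your proposal follows essentially the same route as the paper's proof: the same expansion $\nabla E_3=\sqrt{\tr/r}\,\nabla E_2+\Oc(\sqrt{\tr/r^3}\log d)$ on $B_{(r,z)}(2\mu^{-s})$ with Hardy and Cauchy--Schwarz for the near-field difference, and the same $D_1$, $D_2$, $D_3$ decomposition with the angular split $\delta=-\cos\theta\gtrless 1-c$ and H\"older for the far-field bound, with the exponents $\tfrac{s}{2}$, $\tfrac{3s}{2}-1$, $\tfrac{7s}{6}-1$, $2s-1$ correctly identified as the source of the constraint $s>\tfrac{6}{7}$. The outline is correct and matches the paper's argument.
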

Now we derive some useful estimates for the $2$D Poisson field. For convenience, in the following we will switch between the Cartesian coordinate $\y=(\rho,\xi)$ and the polar coordinate $(\gamma,\theta)$ from time to time, where $\rho = \gamma\sin\theta,\;\xi=\gamma\cos\theta$. The specific choice of coordinates will be clear from the context.  
\begin{lemma}[Pointwise estimates of the $2$D Poisson field]\label{appendix: pointwise est of 2D Poisson}
    The following pointwise estimates of $2$D Poisson fields hold:\\
    (\romannumeral1) For $u$ and its $2$D Poisson field $\Psi_u:= -\frac{1}{2\pi}\log|\y|*u$, we have for any $\alpha>0$,
    \begin{equation}\label{appendix: pointwise est of Poisson field by weighted L^2 norm}
        \|\Psi_u\|^2_{L^\infty(\gamma\leq 1)}+\dn{\frac{\Psi_u}{1+\log(\gamma)}}^2_{L^\infty(\gamma>1)} \leq C_\alpha\int_{\reall^2} u^2(\y)(1+\gamma)^{2+\alpha}\;d\y,
    \end{equation}
    with constant $C_\alpha>0$ depending on $\alpha$. Moreover, if $\int u = 0$, we have the improved estimate
    \begin{equation}\label{appendix: pointwise est of Poisson field by weighted L^2 norm, when int u = 0}
        \|\Psi_u\|^2_{L^\infty} \leq C_\alpha\int_{\reall^2} u^2(\y)(1+\gamma)^{2+\alpha}\;d\y.
    \end{equation}
    \\
    (\romannumeral2) If $u=u(\gamma)$ is a radial function on $\reall^2$, we have for $0\leq \alpha \leq 1$:
    \begin{equation}\label{appendix: pointwise est of Poisson field radial}
        |\gamma\pa_\gamma\Psi_u(\gamma)|^2 = \da{\int_{0}^{\gamma}ru(r)\;dr}^2\lesssim(1+\one_{\{\gamma\geq 1\}}\log(\gamma))\frac{\gamma^2}{(1+\gamma)^{2\alpha}}\int_{0}^{\gamma}ru^2(r)(1+r)^{2\alpha}\;dr.
    \end{equation}
    On the other hand, if $u$ is without radial component, then for any $0<\alpha<2$, we have (in the polar coordinates) 
    \begin{equation}\label{appendix: pointwise est of Poisson field nonradial}
    \int_{0}^{2\pi}|\Psi_u(\gamma,\theta)|^2+\gamma^2|\nabla\Psi_u(\gamma,\theta)|^2\;d\theta\lesssim \gamma^2(1+\gamma)^{-2\alpha}(1+\one_{\{\gamma\leq 1\}}|\log(\gamma)|)\int_{\reall^2} u^2(\y)(1+\gamma)^{2\alpha}\;d\y.
    \end{equation}
    It is also convenient to write equivalently in the parabolic variables ( $\zeta = \nu\gamma$ ) in our setting:
    \begin{align}\label{appendix: pointwise est of Poisson field radial and nonradial, parabolic}
    \begin{split}
        |\zeta\pa_\zeta\Psi_u(\zeta)|^2 \lesssim(1+\one_{\{\zeta\geq \nu\}}\log(\zeta/\nu))\frac{\zeta^2}{(\nu+\zeta)^{2\alpha}}\int_{\reall^2}u^2(\nu+\zeta)^{2\alpha}\;d\x\quad\text{ (radial) }&0\leq\alpha\leq 1,\\
       \int_{0}^{2\pi}|\Psi_u(\zeta,\theta)|^2+\zeta^2|\nabla\Psi_u(\zeta,\theta)|^2\;d\theta\lesssim \zeta^2(\nu+\zeta)^{-2\alpha}(1+\one_{\{\zeta\leq \nu\}}|\log(\zeta/\nu)|)\int_{\reall^2} &u^2(\x)(\nu+\zeta)^{2\alpha}\;d\x,\\
       \text{ (without radial component) } &0<\alpha<2. 
    \end{split}
    \end{align}
    (\romannumeral3) For any $1\leq p<2$, we have the following estimate based on the $L^\infty$-norm of $u$:
    \begin{equation}\label{appendix: pointwise est of the gradient of Poisson field based on L infty of u}
        \|\nabla\Psi_u\|_{L^\infty}\leq C(p) \dr{\|u\|_{L^\infty} + \|u\|_{L^p}}.
    \end{equation}
\end{lemma}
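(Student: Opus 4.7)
\medskip

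\noindent\textbf{Proof proposal.} For (i), the plan is to express $\Psi_u(\y_0) = -\tfrac{1}{2\pi}\int\log|\y_0-\y|\,u(\y)\,d\y$ and apply Cauchy--Schwarz with the weight $(1+|\y|)^{-(2+\alpha)}$, reducing the matter to the estimate
\begin{equation*}
    I(\y_0):=\int_{\reall^2}\frac{\log^2|\y_0-\y|}{(1+|\y|)^{2+\alpha}}\,d\y.
\end{equation*}
Splitting the integration into three regions $\{|\y-\y_0|\leq 1\}$, $\{|\y|\geq 2|\y_0|\}$ and the remainder, and tracking the logarithmic contributions, should give $I(\y_0)\lesssim_\alpha 1$ when $|\y_0|\leq 1$ and $I(\y_0)\lesssim_\alpha (1+\log|\y_0|)^2$ when $|\y_0|>1$. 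For the improved bound under the zero-mean assumption, I will write $\Psi_u(\y_0)=-\tfrac{1}{2\pi}\int\bigl(\log|\y_0-\y|-\log(1+|\y_0|)\bigr)u(\y)\,d\y$ and observe that the new kernel is bounded by $C\log\bigl(2+\tfrac{|\y|}{1+|\y_0|}\bigr)$ uniformly in $\y_0$, killing the logarithmic growth in $I$.

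For the radial part of (ii), the ODE $-(\pa_\gamma^2+\gamma^{-1}\pa_\gamma)\Psi_u=u$ together with regularity at the origin yields $\pa_\gamma\Psi_u(\gamma)=-\gamma^{-1}\int_0^\gamma r u(r)\,dr$, after which Cauchy--Schwarz against the weight $r^{2\alpha}$ gives
\begin{equation*}
    \Bigl|\int_0^\gamma r u(r)\,dr\Bigr|^2\leq \Bigl(\int_0^\gamma r(1+r)^{-2\alpha}\,dr\Bigr)\Bigl(\int_0^\gamma r u^2(r)(1+r)^{2\alpha}\,dr\Bigr),
\end{equation*}
and evaluating the first factor yields the factor $(1+\mathbf 1_{\{\gamma\geq 1\}}\log\gamma)\gamma^2/(1+\gamma)^{2\alpha}$ for $0\le\alpha\le 1$. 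For the non-radial part, I will expand $u(\gamma,\theta)=\sum_{n\neq 0}u_n(\gamma)e^{in\theta}$, so that $\Psi_u=\sum_{n\neq 0}\Psi_n(\gamma)e^{in\theta}$ with
\begin{equation*}
    \Psi_n(\gamma)=\frac{\gamma^{|n|}}{2|n|}\int_\gamma^{\infty}r^{1-|n|}u_n(r)\,dr+\frac{\gamma^{-|n|}}{2|n|}\int_0^{\gamma}r^{1+|n|}u_n(r)\,dr.
\end{equation*}
Cauchy--Schwarz on each of the two integrals with weight $r^{2\alpha}$, followed by summation in $n$ using Parseval, should produce the stated bound; the $|n|^{-2}$ prefactors ensure summability and are the reason the constraint drops to $0<\alpha<2$. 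The gradient estimate is obtained by differentiating the above representation; the singularity at $\gamma=0$ in the $\gamma^{-|n|}$ piece is harmless because of the extra $r^{1+|n|}$ in the inner integral.

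For (iii), I will split $\nabla\log|\y|=(\nabla\log|\y|)\mathbf 1_{|\y|\leq 1}+(\nabla\log|\y|)\mathbf 1_{|\y|>1}$, which lies in $L^{p'}+L^{\infty}$ for the H\"older conjugate $p'$ of any $p\in[1,2)$. H\"older's inequality applied to the convolution then gives $\|\nabla\Psi_u\|_{L^\infty}\lesssim \|u\|_{L^p}+\|u\|_{L^\infty}$ with a constant $C(p)$ blowing up as $p\uparrow 2$.

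The main obstacle I anticipate is bookkeeping the $\alpha$-dependent powers and the logarithmic losses in (i) and in the non-radial part of (ii); in particular, ensuring in the Fourier decomposition that the small-$\gamma$ and large-$\gamma$ regimes each only contribute the claimed weight $\gamma^2(1+\gamma)^{-2\alpha}(1+\mathbf 1_{\{\gamma\leq 1\}}|\log\gamma|)$ requires careful splitting of the integrals in the variation-of-parameters formula and a uniform-in-$n$ Cauchy--Schwarz that does not lose the factor $|n|^{-2}$ needed for summability.
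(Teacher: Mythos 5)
For part (ii) your argument coincides with the paper's: the radial bound via the explicit formula $\pa_\gamma\Psi_u(\gamma)=-\gamma^{-1}\int_0^\gamma ru(r)\,dr$ plus weighted Cauchy--Schwarz, and the nonradial bound via the Fourier decomposition in $\theta$, the variation-of-parameters formula for each mode, a Cauchy--Schwarz estimate with constants uniform in the mode index, and Parseval. (One small correction of emphasis: the restriction $0<\alpha<2$ comes from convergence of the weight integrals for the lowest mode $|n|=1$ — the outer integral needs $\alpha>0$ and the inner one $\alpha<2$ — not from the $|n|^{-2}$ prefactors, which are there to absorb the $n^2$ factors produced by the angular derivative in Parseval.) For parts (i) and (iii) the paper does not give a proof at all but cites \emph{Lemma 7.2} of \cite{Raphaël2014}; your direct arguments are therefore a legitimately different, self-contained route, and the regional splitting you propose for $I(\y_0)$ in (i) does deliver the $O_\alpha(1)$ bound for $\gamma\le 1$ and the $(1+\log\gamma)^2$ bound for $\gamma>1$.

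Two slips should be fixed, though neither is fatal. First, in the zero-mean case of (i), the subtracted kernel $\log|\y_0-\y|-\log(1+|\y_0|)$ is \emph{not} pointwise bounded by $C\log\bigl(2+\tfrac{|\y|}{1+|\y_0|}\bigr)$: it still has the logarithmic singularity at $\y=\y_0$. What is true, and what your Cauchy--Schwarz scheme actually needs, is the weighted $L^2$ bound $\int (\log\tfrac{|\y_0-\y|}{1+|\y_0|})^2(1+|\y|)^{-(2+\alpha)}d\y\lesssim_\alpha 1$ uniformly in $\y_0$; this follows by splitting into $\{|\y-\y_0|\le\tfrac{1+|\y_0|}{2}\}$ (rescale by $1+|\y_0|$ and use that the weight there is $\lesssim(1+|\y_0|)^{-(2+\alpha)}$) and its complement (where the ratio lies in $[\tfrac12,\,1+|\y|]$), so state the claim in that form. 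Second, in (iii) the decomposition as written is wrong: the near piece $\one_{\{|\y|\le1\}}|\y|^{-1}$ is in neither $L^{p'}$ (for $p'>2$) nor $L^\infty$, and pairing an $L^{p'}+L^\infty$ kernel with $u$ would produce $\|u\|_{L^p}+\|u\|_{L^1}$, not the claimed bound. The correct split is $L^1+L^{p'}$: the near piece is integrable and is paired with $\|u\|_{L^\infty}$, while the tail $\one_{\{|\y|>1\}}|\y|^{-1}$ lies in $L^{p'}$ precisely because $p'>2$ (i.e. $p<2$) and is paired with $\|u\|_{L^p}$; with that correction Young/H\"older gives $\|\nabla\Psi_u\|_{L^\infty}\le C(p)(\|u\|_{L^\infty}+\|u\|_{L^p})$ as stated.
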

\begin{proof}
    See \textit{Lemma 7.2} in \cite{Raphael2014} for a proof of (\romannumeral1) and (\romannumeral3). As for (\romannumeral 2), \eqref{appendix: pointwise est of Poisson field radial}  follows directly from the 
    explicit expression of $\pa_\zeta\Psi_u$ when $u$ is radial and Cauchy's inequality. To prove \eqref{appendix: pointwise est of Poisson field nonradial}, we expand both $u$ and $\Psi_u$ into trigonometric series:
    \begin{align*}
        &u(\gamma,\theta) = \sum_{j=1}^{+\infty}u^{+,j}(\gamma)\sin(j\theta)+\sum_{j=1}^{+\infty}u^{-,j}(\gamma)\cos(j\theta),\\
        &\Psi_u(\gamma,\theta) = \sum_{j=1}^{+\infty}\Psi^{+,j}_u(\gamma)\sin(j\theta)+\sum_{j=1}^{+\infty}\Psi^{-,j}_u(\gamma)\cos(j\theta).
    \end{align*}
     By $-\Delta \Psi_u = u$, we have
     \[
        -\dr{\pa^2_\gamma+\frac{1}{\gamma}\pa_\gamma-\frac{j^2}{\gamma^2}}\Psi_u^{\pm,j}(\gamma) = u^{\pm,j}(\gamma)\quad j\geq 1,
     \]
     which admits explicit solutions
     \begin{align*}
         &\Psi_u^{\pm,j}(\gamma) = \frac{\gamma^j}{2j}\int_{\gamma}^{+\infty}u^{\pm,j}(y)y^{1-j}\;dy+\frac{\gamma^{-j}}{2j}\int^{\gamma}_{0}u^{\pm,j}(y)y^{1+j}\;dy,\\
         &\pa_\gamma\Psi_u^{\pm,j}(\gamma) = \frac{\gamma^{j-1}}{2}\int_{\gamma}^{+\infty}u^{\pm,j}(y)y^{1-j}\;dy-\frac{\gamma^{-j-1}}{2}\int^{\gamma}_{0}u^{\pm,j}(y)y^{1+j}\;dy.
     \end{align*}
     By Cauchy's inequality, we have
     \begin{align*}
         \da{\int_{\gamma}^{+\infty}u^{\pm,j}(y)y^{1-j}\;dy} &\lesssim \dr{\int_{\gamma}^{+\infty}y^{1-2j}(1+y)^{-2\alpha}\;dy}^\frac{1}{2}\dr{\int_\gamma^{+\infty}(u^{\pm,j}(y))^2(1+y)^{2\alpha}y\;dy}^\frac{1}{2}\\
         &\lesssim \gamma^{1-j}(1+\gamma)^{-\alpha}(1+\one_{\{\gamma\leq 1\}}|\log\gamma|)^\frac{1}{2}\dr{\int_\gamma^{+\infty}(u^{\pm,j}(y))^2(1+y)^{2\alpha}y\;dy}^\frac{1}{2},\\
       \end{align*} 
       and
       \begin{align*} \da{\int_{0}^{\gamma}u^{\pm,j}(y)y^{1+j}\;dy}& \lesssim \dr{\int_{0}^{\gamma}y^{1+2j}(1+y)^{-2\alpha}\;dy}^\frac{1}{2}\dr{\int^\gamma_{0}(u^{\pm,j}(y))^2(1+y)^{2\alpha}y\;dy}^\frac{1}{2}\\
       &\lesssim \gamma^{1+j}(1+\gamma)^{-\alpha}\dr{\int^\gamma_{0}(u^{\pm,j}(y))^2(1+y)^{2\alpha}y\;dy}^\frac{1}{2},
     \end{align*}
     where all constants of the inequalities above are independent of $j$. It follows that
     \begin{align*}
        & |\Psi_u^{\pm,j}(\gamma)|^2\lesssim \frac{1}{j^2}\gamma^2(1+\one_{\{\gamma\leq 1\}}|\log\gamma|)\int_{0}^{+\infty}(u^{\pm,j}(y))^2(1+y)^{2\alpha}y\;dy,\\
        & |\pa_\gamma\Psi_u^{\pm,j}(\gamma)|^2\lesssim(1+\one_{\{\gamma\leq 1\}}|\log\gamma|)\int_{0}^{+\infty}(u^{\pm,j}(y))^2(1+y)^{2\alpha}y\;dy.
     \end{align*}
     Finally, by Parseval's identity, we obtain
     \begin{align*}
         \int_{0}^{2\pi}|\Psi_u(\gamma,\theta)|^2+\gamma^2|\nabla\Psi_u(\gamma,\theta)|^2\;d\theta &\lesssim \sum_{\pm}\sum_{j\geq 1}j^2|\Psi_u^{\pm,j}(\gamma)|^2+\sum_{\pm}\sum_{j\geq 1}\gamma^2|\pa_\gamma\Psi_u^{\pm,j}(\gamma)|^2\\
         &\lesssim \gamma^2(1+\one_{\{\gamma\leq 1\}}|\log\gamma|)\int_0^{+\infty}\sum_{\pm}\sum_{j\geq 1}(u^{\pm,j}(y))^2(1+y)^{2\alpha}y\;dy\\
         &\lesssim \gamma^2(1+\one_{\{\gamma\leq 1\}}|\log\gamma|)\int_{\reall^2}|u(\y)|^2(1+|\y|)^{2\alpha}\;d\y.
     \end{align*}
     which is \eqref{appendix: pointwise est of Poisson field nonradial}.
\end{proof}

\subsection*{Inequalities}
Here is one Hardy-Poincar\'e type inequality:
\begin{lemma}
    Let $b>0$ be a small parameter. Then, there exists a universal constant $C>0$, such that for any function $\eps$ on $\reall^2$, it holds that ($|\y|:=\gamma,\;\y\in\reall^2$)
    \begin{equation}\label{appendix: one Hardy-Poincare type ineq in soliton}
        \int_{\reall^2} \dr{(1+\gamma)^2\eps^2+b^2\gamma^2(1+\gamma)^4\eps^2}e^{-\frac{b\gamma^2}{2}}\leq C\int_{\reall^2}\dr{b\eps^2+|\nabla\eps|^2}(1+\gamma)^4e^{-\frac{b\gamma^2}{2}}. 
    \end{equation}
    If we change of variable $\gamma:=\zeta/\sqrt{b}$, an equivalent form of this inequality is written as
    \begin{equation}\label{appendix: one Hardy-Poincare type ineq in parabolic}
        \int_{\reall^2} \dr{(\sqrt{b}+\zeta)^2\eps^2+\zeta^2(\sqrt{b}+\zeta)^4\eps^2}e^{-\frac{\zeta^2}{2}}\leq C\int_{\reall^2}\dr{\eps^2+|\nabla\eps|^2}(\sqrt{b}+\zeta)^4e^{-\frac{\zeta^2}{2}}. 
    \end{equation}
\end{lemma}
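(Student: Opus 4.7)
The plan is to reduce both inequalities to a single integration-by-parts identity generated by the Gaussian weight $e^{-b\gamma^2/2}$. Since the stated substitution $\gamma = \zeta/\sqrt{b}$ transforms \eqref{appendix: one Hardy-Poincare type ineq in soliton} into \eqref{appendix: one Hardy-Poincare type ineq in parabolic} line by line (the factor $b^{-1}$ coming from $d\y$ rescales both sides identically), I will focus on the soliton form. I will handle the two pieces on the left-hand side separately, using in each case the radial vector field $F_p = \y\,(1+\gamma)^p\, e^{-b\gamma^2/2}$ for a well-chosen exponent $p$, together with Young's inequality at the end.

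For the second left-hand term $\int b^2\gamma^2(1+\gamma)^4\eps^2 e^{-b\gamma^2/2}$ the first step is to exploit the crucial identity $b\gamma^2 e^{-b\gamma^2/2} = -\y\cdot\nabla e^{-b\gamma^2/2}$. Integrating by parts transfers the divergence onto $(1+\gamma)^4\eps^2$ and produces a good ``volume" part controlled by $b\int(1+\gamma)^4\eps^2 e^{-b\gamma^2/2}$ together with a cross term of the form $b\int|\eps||\nabla\eps|\gamma(1+\gamma)^4 e^{-b\gamma^2/2}$. Splitting that cross integrand as $\bigl(b\gamma|\eps|(1+\gamma)^2\bigr)\cdot\bigl(|\nabla\eps|(1+\gamma)^2\bigr)$ and applying Young's inequality with a sufficiently small coefficient absorbs one-half of the target left-hand side into itself and leaves precisely the two terms on the target right-hand side.

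For the first term $\int(1+\gamma)^2\eps^2 e^{-b\gamma^2/2}$ I take $p=2$ and directly compute
\[
\nabla\cdot F_2 = \bigl[\,2(1+\gamma)^2 + 2\gamma(1+\gamma) - b\gamma^2(1+\gamma)^2\,\bigr]\, e^{-b\gamma^2/2}.
\]
Pairing $\int\eps^2\,\nabla\cdot F_2 = -2\int\eps\,F_2\cdot\nabla\eps$, dropping the nonnegative contribution $\int 2\gamma(1+\gamma)\eps^2 e^{-b\gamma^2/2}$ from the left and moving the ``wrong-sign" piece $\int b\gamma^2(1+\gamma)^2\eps^2 e^{-b\gamma^2/2}$ to the right gives
\[
2\int(1+\gamma)^2\eps^2 e^{-b\gamma^2/2} \leq \int b\gamma^2(1+\gamma)^2\eps^2 e^{-b\gamma^2/2} + 2\int|\eps||\nabla\eps|\gamma(1+\gamma)^2 e^{-b\gamma^2/2}.
\]
The elementary bound $\gamma^2\leq(1+\gamma)^2$ upgrades the first right-hand term to $\int b(1+\gamma)^4\eps^2 e^{-b\gamma^2/2}$, and one more Young's inequality on the cross term (with $\gamma^2\leq(1+\gamma)^2$ again) produces the remaining $\int|\nabla\eps|^2(1+\gamma)^4 e^{-b\gamma^2/2}$ piece. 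Adding the two bounds completes \eqref{appendix: one Hardy-Poincare type ineq in soliton}.

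The only content-bearing point is the algebraic observation that the ``wrong-sign" $b\gamma^2$ term produced by differentiating the Gaussian weight can be absorbed into the $b(1+\gamma)^4\eps^2$ weight on the target right-hand side via $\gamma^2\leq(1+\gamma)^2$; without this clean compatibility between the exponent $p=2$ used on the test field and the exponent $4$ appearing on the target, the chain of integrations by parts and Young's inequalities would not close. Regularity/density is not an obstacle since one may take $\eps\in C_c^\infty(\reall^2)$ and extend by approximation using the Gaussian weight's rapid decay.
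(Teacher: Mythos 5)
Your proof is correct and follows essentially the same route as the paper: integrate by parts against radial polynomial vector fields paired with the Gaussian weight $e^{-b\gamma^2/2}$, treat the two left-hand terms separately, and absorb the cross terms by Young's inequality together with $\gamma\leq 1+\gamma$. The only cosmetic difference is that the paper works componentwise with fields of the form $(y_i+y_i^3)$ and $b(y_i+y_i^5)$ rather than your $\y(1+\gamma)^2$ and $b\y(1+\gamma)^4$, which changes nothing of substance.
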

\begin{proof}
    It suffices to prove \eqref{appendix: one Hardy-Poincare type ineq in soliton}. Denote $\y = (y_1,y_2)$. Integrating by parts, we have for $i=1,2$,
    \[
       2\int(y_i+y^3_i)\eps\pa_i \eps e^{-\frac{b\gamma^2}{2}} = -\int(1+3y^2_i)\eps^2e^{-\frac{b\gamma^2}{2}}+\int b(y^2_i+3y^4_i)\eps^2e^{-\frac{b\gamma^2}{2}}.
    \]
    By Cauchy's inequality,
    \[
       \da{\int(y_i+y^3_i)\eps\pa_i \eps e^{-\frac{b\gamma^2}{2}}}\leq C\int (1+\gamma)^4|\nabla\eps|^2e^{-\frac{b\gamma^2}{2}} + \frac{1}{10}  \int(1+3y^2_i)\eps^2e^{-\frac{b\gamma^2}{2}},
    \]
    for some constant $C>0$. Thus, we have
    \[
         \int_{\reall^2} (1+\gamma^2)\eps^2e^{-\frac{b\gamma^2}{2}}\leq C\int_{\reall^2}\dr{b\eps^2+|\nabla\eps|^2}(1+\gamma)^4e^{-\frac{b\gamma^2}{2}}
    \]
    The other part is similar. Integrate by parts:
    \[
        2\int b(y_i+y_i^5)\eps\pa_i \eps e^{-\frac{b\gamma^2}{2}} = \int b^2(y_i^2+y_i^6)\eps^2e^{-\frac{b\gamma^2}{2}}-\int b(1+5y^4_i)\eps^2e^{-\frac{b\gamma^2}{2}}.
    \]
    Then, apply the Cauchy's inequality
    \[
       \da{\int b(y_i+y_i^5)\eps\pa_i \eps e^{-\frac{b\gamma^2}{2}}} \leq C\int (1+\gamma)^4|\nabla\eps|^2e^{-\frac{b\gamma^2}{2}}+\frac{1}{10} \int b^2(y_i^2+y_i^6)\eps^2e^{-\frac{b\gamma^2}{2}},
    \]
    and it follows that
    \[
       \int b^2\gamma^2(1+\gamma)^4\eps^2e^{-\frac{b\gamma^2}{2}} \leq C\int_{\reall^2}\dr{b\eps^2+|\nabla\eps|^2}(1+\gamma)^4e^{-\frac{b\gamma^2}{2}}.
    \]
    The proof is thus complete.
\end{proof}

We recall the Hardy-Littlewood-Sobolev (HLS) inequality in $\reall^n$: for $0<s<n$, $1<p<q<\infty$ with $\frac{1}{q}=\frac{1}{p}-\frac{s}{n}$, we have
\begin{equation*}
    \dn{\frac{1}{|\x|^{n-s}}*f}_{L^q}\leq C\|f\|_{L^p},
\end{equation*}
where $C=C(p)$.
Now combining the $2$D HLS ($p>2$) and H\"older inequality, we obtain:
\begin{align}\label{appendix: HLS ineq}
    \|\nabla \Psi_u\|_{L^p}\lesssim \dn{\frac{1}{|\x|}*u}_{L^p}\lesssim \|u\|_{L^{\frac{2p}{2+p}}}\lesssim \|u/\sqrt{W}\|_{L^2}\|W\|^\frac{1}{2}_{L^\frac{p}{2}},
\end{align}
for any weight function $W$.
A useful corollary in our setting is the following:
\begin{equation}\label{appendix: HLS ineq in our setting}
    \|\nabla\Psi_{\eps}\|_{L^p}\lesssim \|\eps\|_{L^{\frac{2p}{2+p}}}\lesssim \|\eps\nu/\sqrt{U_\nu}\|_{L^2}\|U_\nu/\nu^2\|^{\frac{1}{2}}_{L^{\frac{p}{2}}}\lesssim \nu^{-\frac{2p-2}{p}}\|\eps\nu/\sqrt{U_\nu}\|_{L^2}. 
\end{equation}

\vspace{0.2in}

\paragraph{Acknowledgments:} The research of P. Song and T. Y. Hou was in part supported by NSF Grant DMS-2205590 and the Choi Family Gift Fund. V. T. Nguyen is supported by the National Science and Technology Council of Taiwan.

\bibliographystyle{plain} 
\bibliography{ref} 

@article{CPAM2022,
author = {Collot, Charles and Ghoul, Tej-Eddine and Masmoudi, Nader and Nguyen, Van Tien},
title = {Refined Description and Stability for Singular Solutions of the {2D} {Keller-Segel} System},
journal = {Communications on Pure and Applied Mathematics},
volume = {75},
number = {7},
pages = {1419-1516},
doi = {https://doi.org/10.1002/cpa.21988},
url = {https://onlinelibrary.wiley.com/doi/abs/10.1002/cpa.21988},
eprint = {https://onlinelibrary.wiley.com/doi/pdf/10.1002/cpa.21988},
year = {2022}
}

@article{Ann.PDE22,
author={Collot, Charles
and Ghoul, Tej-Eddine
and Masmoudi, Nader
and Nguyen, Van Tien},
title={Spectral Analysis for Singularity Formation of the Two Dimensional {Keller--Segel} System},
journal={Annals of PDE},
year={2022},
month={Mar},
day={19},
volume={8},
number={1},
pages={5},
issn={2199-2576},
doi={10.1007/s40818-022-00118-5},
url={https://doi.org/10.1007/s40818-022-00118-5}
}

@article{Raphael2014,
author={Rapha{\"e}l, Pierre
and Schweyer, R{\'e}mi},
title={On the stability of critical chemotactic aggregation},
journal={Mathematische Annalen},
year={2014},
month={Jun},
day={01},
volume={359},
number={1},
pages={267-377},
abstract={We consider the two dimensional parabolic-elliptic Keller--Segel model of chemotactic aggregation for radially symmetric initial data. We show the existence of a stable mechanism of singularity formation and obtain a complete description of the associated aggregation process.},
issn={1432-1807},
doi={10.1007/s00208-013-1002-6},
url={https://doi.org/10.1007/s00208-013-1002-6}
}

@article{Velazquez2002,
 ISSN = {00361399},
 URL = {http://www.jstor.org/stable/3648728},
 abstract = {In this paper, the stability of a blow-up mechanism for a particular Keller-Segel model that yields chemotactic aggregation is considered. It is shown using matched asymptotic methods that the considered blow-up mechanism is stable. More precisely, small perturbations of the initial data produce a small shifting on the blow-up time and the blow-up point but not modification in the blow-up mechanism.},
 author = {J. J. L. Velázquez},
 journal = {SIAM Journal on Applied Mathematics},
 number = {5},
 pages = {1581--1633},
 publisher = {Society for Industrial and Applied Mathematics},
 title = {Stability of Some Mechanisms of Chemotactic Aggregation},
 urldate = {2024-12-19},
 volume = {62},
 year = {2002}
}

@article{Herrero1996,
author = {Herrero, Miguel A. and Velázquez, Juan J.L.},
journal = {Mathematische Annalen},
keywords = {blow-up; radial solutions; chemotactic collapse},
number = {3},
pages = {583-624},
title = {Singularity patterns in a chemotaxis model.},
url = {http://eudml.org/doc/165471},
volume = {306},
year = {1996},
}

@article{Patlak1953,
author={Patlak, Clifford S.},
title={Random walk with persistence and external bias},
journal={The Bulletin of Mathematical Biophysics},
year={1953},
month={Sep},
day={01},
volume={15},
number={3},
pages={311-338},
abstract={The partial differential equation of the random walk problem with persistence of direction and external bias is derived. By persistence of direction or internal bias we mean that the probability a particle will travel in a given direction need not be the same for all directions, but depends solely upon the particle's previous direction of motion. The external bias arises from an anisotropy of the medium or an external force on the particle. The problem is treated by considering that the net displacement of a particle arises from two factors, namely, that neither the probability of the particle traveling in any direction after turning nor the distance the particle travels in a given direction need be the same for all directions. A modified Fokker-Planck equation is first obtained using the assumptions that the particles have a distribution of travel times and speeds and that the average time of travel between turns need not be zero. The fional equation incopporating the assumption of a persistence of direction and an external bias is then derived. Applications to the study of diffusion and to long-chain polymers are then made.},
issn={1522-9602},
doi={10.1007/BF02476407},
url={https://doi.org/10.1007/BF02476407}
}

@article{KellerSegel1970,
  title={Initiation of slime mold aggregation viewed as an instability.},
  author={Evelyn Fox Keller and Lee A. Segel},
  journal={Journal of Theoretical Biology},
  year={1970},
  volume={26 3},
  pages={
          399-415
        },
  url={https://api.semanticscholar.org/CorpusID:10595429}
}

@article{Horstmann2004,
  title={From 1970 until present: the {Keller--Segel} model in chemotaxis and its consequences},
  author={Horstmann, Dirk},
  journal={Jahresber. Deutsch. Math.-Verein.},
  volume={106},
  pages={51},
  year={2004}
}

@article{Chavanis2008,
author={Chavanis, P. H.},
title={Nonlinear mean field {Fokker-Planck} equations: Application to the chemotaxis of biological populations},
journal={The European Physical Journal B},
year={2008},
month={Mar},
day={01},
volume={62},
number={2},
pages={179-208},
issn={1434-6036},
doi={10.1140/epjb/e2008-00142-9},
url={https://doi.org/10.1140/epjb/e2008-00142-9}
}

@article{Blanchet2006,
author = {Blanchet, Adrien and Dolbeault, Jean and Perthame, Benoit},
journal = {Electronic Journal of Differential Equations (EJDE)},
keywords = {existence; weak solutions; free energy; entropy method; logarithmic Hardy-Littlewood-Sobolev inequality; Aubin-Lions compactness method; hypercontractivity; time-dependent rescaling; self-similar variables; intermediate asymptotics; elliptic-parabolic system},
language = {eng},
pages = {Paper No. 44, 33 p., electronic only},
publisher = {Southwest Texas State University, Department of Mathematics, San Marcos, TX; North Texas State University, Department of Mathematics, Denton},
title = {Two-dimensional {Keller-Segel} model: Optimal critical mass and qualitative properties of the solutions.},
url = {http://eudml.org/doc/127363},
volume = {2006},
year = {2006},
}

@article{Blanchet2012,
title = {Functional inequalities, thick tails and asymptotics for the critical mass {Patlak–Keller–Segel} model},
journal = {Journal of Functional Analysis},
volume = {262},
number = {5},
pages = {2142-2230},
year = {2012},
issn = {0022-1236},
doi = {https://doi.org/10.1016/j.jfa.2011.12.012},
url = {https://www.sciencedirect.com/science/article/pii/S0022123611004423},
author = {Adrien Blanchet and Eric A. Carlen and José A. Carrillo},
keywords = {Keller–Segel model, Critical mass, Basins of attraction, Gradient flows with respect to transport distances},
abstract = {We investigate the long time behavior of the critical mass Patlak–Keller–Segel equation. This equation has a one parameter family of steady-state solutions ϱλ, λ>0, with thick tails whose second moment is unbounded. We show that these steady-state solutions are stable, and find basins of attraction for them using an entropy functional Hλ coming from the critical fast diffusion equation in R2. We construct solutions of Patlak–Keller–Segel equation satisfying an entropy–entropy dissipation inequality for Hλ. While the entropy dissipation for Hλ is strictly positive, it turns out to be a difference of two terms, neither of which needs to be small when the dissipation is small. We introduce a strategy of controlled concentration to deal with this issue, and then use the regularity obtained from the entropy–entropy dissipation inequality to prove the existence of basins of attraction for each stationary state composed by certain initial data converging towards ϱλ.}
}

@article{Biler2006,
author = {Biler, Piotr and Karch, Grzegorz and Lauren\c{c}ot, Philippe and Nadzieja, Tadeusz},
year = {2006},
month = {09},
pages = {1563 - 1583},
title = {The $8\pi$‐problem for radially symmetric solutions of a chemotaxis model in the plane},
volume = {29},
journal = {Mathematical Methods in the Applied Sciences},
doi = {10.1002/mma.743}
}

@article{Blanchet2008,
author = {Blanchet, Adrien and Carrillo, José A. and Masmoudi, Nader},
title = {Infinite time aggregation for the critical {Patlak-Keller-Segel} model in $\reall^2$},
journal = {Communications on Pure and Applied Mathematics},
volume = {61},
number = {10},
pages = {1449-1481},
doi = {https://doi.org/10.1002/cpa.20225},
url = {https://onlinelibrary.wiley.com/doi/abs/10.1002/cpa.20225},
eprint = {https://onlinelibrary.wiley.com/doi/pdf/10.1002/cpa.20225},
abstract = {Abstract We analyze the two-dimensional parabolic-elliptic Patlak-Keller-Segel model in the whole Euclidean space ℝ2. Under the hypotheses of integrable initial data with finite second moment and entropy, we first show local-in-time existence for any mass of “free-energy solutions,” namely weak solutions with some free-energy estimates. We also prove that the solution exists as long as the entropy is controlled from above. The main result of the paper is to show the global existence of free-energy solutions with initial data as before for the critical mass 8π/χ. Actually, we prove that solutions blow up as a delta Dirac at the center of mass when t → ∞ when their second moment is kept constant at any time. Furthermore, all moments larger than 2 blowup as t → ∞ if initially bounded. © 2007 Wiley Periodicals, Inc.},
year = {2008}
}

@article{Seki2013,
title = "Multiple peak aggregations for the {Keller-Segel} system",
abstract = "In this paper we derive matched asymptotic expansions for a solution of the Keller-Segel system in two space dimensions for which the amount of mass aggregation is 8πN, where N = 1, 2, 3, ⋯ Previously available asymptotics had been computed only for the case in which N = 1.",
author = "Yukihiro Seki and Yoshie Sugiyama and Vel{\'a}zquez, {Juan J.L.}",
year = "2013",
month = feb,
doi = "10.1088/0951-7715/26/2/319",
language = "English",
volume = "26",
pages = "319--352",
journal = "Nonlinearity",
issn = "0951-7715",
publisher = "IOP Publishing Ltd.",
number = "2",
}

@misc{collot2024,
      title={Singularity formed by the collision of two collapsing solitons in interaction for the {2D} {Keller-Segel} system}, 
      author={Charles Collot and Tej-Eddine Ghoul and Nader Masmoudi and Van Tien Nguyen},
      year={2024},
      eprint={2409.05363},
      archivePrefix={arXiv},
      primaryClass={math.AP},
      url={https://arxiv.org/abs/2409.05363}, 
}

@article{YukiNaito2008,
abstract = {We consider a system which describes the scaling limit of several chemotaxis systems. We focus on self-similarity, and review some recent results on forward and backward self-similar solutions to the system.},
author = {Yūki Naito and Takashi Suzuki},
journal = {Colloquium Mathematicae},
keywords = {large time behaviour; blow-up; Keller-Segel models},
language = {eng},
number = {1},
pages = {11-34},
title = {Self-similarity in chemotaxis systems},
url = {http://eudml.org/doc/283740},
volume = {111},
year = {2008},
}

@article{nguyen2025construction,
  title = {Construction of {Type} {I-Log} blowup for the {Keller-Segel} system in dimensions 3 and 4},
  author = {Nguyen, Van Tien and Nouaili, N. and Zaag, H.},
  journal = {Annals of PDE},
  volume = {11},
  number = {12},
  year = {2025}
}

@misc{collot2024stabilitytypeiselfsimilar,
      title={On the stability of Type {I} self-similar blowups for the {Keller-Segel} system in three dimensions and higher}, 
      author={Charles Collot and Kaiqiang Zhang},
      year={2024},
      eprint={2406.11358},
      archivePrefix={arXiv},
      primaryClass={math.AP},
      url={https://arxiv.org/abs/2406.11358}, 
}

@article{Herrero1998,
title = {Self-similar blow-up for a reaction-diffusion system},
journal = {Journal of Computational and Applied Mathematics},
volume = {97},
number = {1},
pages = {99-119},
year = {1998},
issn = {0377-0427},
doi = {https://doi.org/10.1016/S0377-0427(98)00104-6},
url = {https://www.sciencedirect.com/science/article/pii/S0377042798001046},
author = {M.A. Herrero and E. Medina and J.J.L. Velázquez},
keywords = {Reaction-diffusion systems, Blow-up, Self-similar behaviour, Matched asymptotic expansions},
abstract = {This work is concerned with the following system: which is a model to describe several phenomena in which aggregation plays a crucial role as, for instance, motion of bacteria by chemotaxis and equilibrium of self-attracting clusters. When the space dimension N is equal to three, we show here that (S) has radial solutions with finite mass that blow-up in finite time in a self-similar manner. When N = 2, however, no radial solution with finite mass may give rise to self-similar blow-up.}
}

@article{COLLOT2023collapsingRing,
title = {Collapsing-ring blowup solutions for the {Keller-Segel} system in three dimensions and higher},
journal = {Journal of Functional Analysis},
volume = {285},
number = {7},
pages = {110065},
year = {2023},
issn = {0022-1236},
doi = {https://doi.org/10.1016/j.jfa.2023.110065},
url = {https://www.sciencedirect.com/science/article/pii/S0022123623002227},
author = {Charles Collot and Tej-Eddine Ghoul and Nader Masmoudi and Van Tien Nguyen},
keywords = {Gravitational collapse, Keller-Segel system, Blowup solution, Stability}
}

@article{Herrero1997collapsingring3d,
doi = {10.1088/0951-7715/10/6/016},
url = {https://dx.doi.org/10.1088/0951-7715/10/6/016},
year = {1997},
month = {nov},
publisher = {},
volume = {10},
number = {6},
pages = {1739},
author = {M A Herrero and  E Medina and  J J L Velázquez},
title = {Finite-time aggregation into a single point in a reaction - diffusion system},
journal = {Nonlinearity}
}

@article{Calvez2012,
author = {Vincent Calvez and Lucilla Corrias and Mohamed Abderrahman Ebde},
title = {Blow-up, Concentration Phenomenon and Global Existence for the {Keller–Segel} Model in High Dimension},
journal = {Communications in Partial Differential Equations},
volume = {37},
number = {4},
pages = {561--584},
year = {2012},
publisher = {Taylor \& Francis},
doi = {10.1080/03605302.2012.655824},
URL = { 
        https://doi.org/10.1080/03605302.2012.655824
},
eprint = { 
        https://doi.org/10.1080/03605302.2012.655824
}
}

@article{Corrias2004,
author={Corrias, L.
and Perthame, B.
and Zaag, H.},
title={Global Solutions of Some Chemotaxis and Angiogenesis Systems in High Space Dimensions},
journal={Milan Journal of Mathematics},
year={2004},
month={Oct},
day={01},
volume={72},
number={1},
pages={1-28},
issn={1424-9294},
doi={10.1007/s00032-003-0026-x},
url={https://doi.org/10.1007/s00032-003-0026-x}
}

@article{Senba2005,
  title={Blowup Behavior of Radial Solutions to {J\"ager-Luckhaus} System in High Dimensional Domains},
  author={Takasi Senba},
  journal={Funkcialaj Ekvacioj},
  volume={48},
  number={2},
  pages={247-271},
  year={2005},
  doi={10.1619/fesi.48.247}
}

@book{NLS2015,
title={The Nonlinear {Schr\"odinger} Equation},
author={Gadi Fibich},
publisher={Springer Cham},
year={2015},
doi={https://doi.org/10.1007/978-3-319-12748-4},
issn={0066-5452},
isbn={978-3-319-12748-4
Published: 06 March 2015},
}

@article{RaphaelNLS2006,
author = {Pierre Rapha{\"e}l},
title = {{Existence and stability of a solution blowing up on a sphere for an $L^2$-supercritical nonlinear {Schrödinger} equation}},
volume = {134},
journal = {Duke Mathematical Journal},
number = {2},
publisher = {Duke University Press},
pages = {199 -- 258},
year = {2006},
doi = {10.1215/S0012-7094-06-13421-X},
URL = {https://doi.org/10.1215/S0012-7094-06-13421-X}
}

@Article{RaphaelNLS2009,
author={Rapha{\"e}l, Pierre
and Szeftel, J{\'e}r{\'e}mie},
title={Standing Ring Blow up Solutions to the N-Dimensional Quintic Nonlinear {Schr{\"o}dinger} Equation},
journal={Communications in Mathematical Physics},
year={2009},
month={Sep},
day={01},
volume={290},
number={3},
pages={973-996},
issn={1432-0916},
doi={10.1007/s00220-009-0796-2},
url={https://doi.org/10.1007/s00220-009-0796-2}
}

@article{MerleRapaelSzeftelNLS14,
author = {Frank Merle and Pierre Rapha{\"e}l and Jeremie Szeftel},
title = {{On collapsing ring blow-up solutions to the mass supercritical nonlinear {Schrödinger} equation}},
volume = {163},
journal = {Duke Mathematical Journal},
number = {2},
publisher = {Duke University Press},
pages = {369 -- 431},
year = {2014},
doi = {10.1215/00127094-2430477},
URL = {https://doi.org/10.1215/00127094-2430477}
}

@article{Chaabi2015,
author={Chaabi, Slah
and Rigat, St{\'e}phane},
title={Decomposition theorem and {Riesz} basis for axisymmetric potentials in the right half-plane},
journal={European Journal of Mathematics},
year={2015},
month={Sep},
day={01},
volume={1},
number={3},
pages={582-640},
issn={2199-6768},
doi={10.1007/s40879-015-0053-5},
url={https://doi.org/10.1007/s40879-015-0053-5}
}

@Article{Perelman2001,
author={Perelman, G.},
title={On the Formation of Singularities in Solutions of the Critical Nonlinear {Schr{\"o}dinger} Equation},
journal={Annales Henri Poincar{\'e}},
year={2001},
month={Aug},
day={01},
volume={2},
number={4},
pages={605-673},
abstract={For the one-dimensional nonlinear Schr{\"o}dinger equation with critical power nonlinearity the Cauchy problem with initial data close to a soliton is considered. It is shown that for a certain class of initial perturbations the solution develops a self-similar singularity infinite time T*, the profile being given by the ground state solitary wave and the limiting self-focusing law being of the form{\textparagraph}{\textparagraph}{\$} {\backslash}lambda(t) {\backslash}sim (ln {\backslash}mid ln(T^* -t){\backslash}mid)^{\{}1/2{\}} (T^* - t)^{\{}-1/2{\}} {\$}.},
issn={1424-0637},
doi={10.1007/PL00001048},
url={https://doi.org/10.1007/PL00001048}
}

@article{MerleNLSAnnals2005,
  title={The blow-up dynamic and upper bound on the blow-up rate for critical nonlinear {Schr{\"o}dinger} equation},
  author={Frank Merle and Pierre Rapha{\"e}l},
  journal={Annals of Mathematics},
  year={2005},
  volume={161},
  pages={157-222},
  url={https://api.semanticscholar.org/CorpusID:2897248}
}

@Article{MerleNLSGAFA2003,
author={Merle, Frank
and Raphael, Pierre},
title={Sharp upper bound on the blow-up rate for the critical nonlinear {Schr{\"o}dinger} equation},
journal={Geometric {\&} Functional Analysis GAFA},
year={2003},
month={Jun},
day={01},
volume={13},
number={3},
pages={591-642},
abstract={We consider the critical nonlinear Schr{\"o}dinger equation {\$}iu{\_}{\{}t{\}} = -{\backslash}Delta u-|u|^{\{}4/N{\}}{\$}with initial condition u(0, x) = u0.},
issn={1420-8970},
doi={10.1007/s00039-003-0424-9},
url={https://doi.org/10.1007/s00039-003-0424-9}
}

@article{MerleNLSJAMS06,
author = {Merle, Frank and Raphaël, Pierre},
year = {2006},
month = {01},
pages = {},
title = {On a sharp lower bound on the blow-up rate for {$L^2$} critical nonlinear {Schrödinger} equation},
volume = {19},
journal = {J. Amer. Math. Soc.},
doi = {10.1090/S0894-0347-05-00499-6}
}

@article{JuanDávila2019_HarmonicMapFlow,
title = {Blow-up for the 3-dimensional axially symmetric harmonic map flow into {$ S^2 $}},
journal = {Discrete and Continuous Dynamical Systems},
volume = {39},
number = {12},
pages = {6913-6943},
year = {2019},
issn = {1078-0947},
doi = {10.3934/dcds.2019237},
url = {https://www.aimsciences.org/article/id/de59668d-31e0-4576-83da-8f6e825cca63},
author = {Juan Dávila and Manuel Del Pino and Catalina Pesce and Juncheng Wei},
keywords = {Blow-up, semilinear parabolic equation, harmonic map flow, codimension 2 singular set, finite time blow-up}
}

@misc{delpino2020newtypeiifinite,
      title={New type {II} Finite time blow-up for the energy supercritical heat equation}, 
      author={Manuel del Pino and Chen-Chih Lai and Monica Musso and Juncheng Wei and Yifu Zhou},
      year={2020},
      eprint={2006.00716},
      archivePrefix={arXiv},
      primaryClass={math.AP},
      url={https://arxiv.org/abs/2006.00716}, 
}

@article{buseghin2023existencefinitetimeblowup,
author={Buseghin, Federico
    and D{\'a}vila, Juan
    and del Pino, Manuel
    and Musso, Monica},
title={Existence of Finite Time Blow-Up in {Keller-Segel} System},
journal={Annals of PDE},
year={2026},
month={May},
day={16},
volume={12},
number={1},
pages={17},
issn={2199-2576},
doi={10.1007/s40818-026-00241-7},
url={https://doi.org/10.1007/s40818-026-00241-7}
}

@Article{Davila2024,
author={D{\'a}vila, Juan
and del Pino, Manuel
and Dolbeault, Jean
and Musso, Monica
and Wei, Juncheng},
title={Existence and Stability of Infinite Time Blow-Up in the {Keller--Segel} System},
journal={Archive for Rational Mechanics and Analysis},
year={2024},
month={Jun},
day={18},
volume={248},
number={4},
pages={61},
issn={1432-0673},
doi={10.1007/s00205-024-02006-7},
url={https://doi.org/10.1007/s00205-024-02006-7}
}

@misc{liu2025finitetimeblowupkellersegel,
      title={Finite time blowup for {Keller-Segel} equation with logistic damping in three dimensions}, 
      author={Jiaqi Liu and Yixuan Wang and Tao Zhou},
      year={2025},
      eprint={2504.12231},
      archivePrefix={arXiv},
      primaryClass={math.AP},
      url={https://arxiv.org/abs/2504.12231}, 
}

@Article{Souplet2019,
author={Souplet, Philippe
and Winkler, Michael},
title={Blow-up Profiles for the Parabolic--Elliptic {Keller--Segel} System in Dimensions {$n\geq 3$}},
journal={Communications in Mathematical Physics},
year={2019},
month={Apr},
day={01},
volume={367},
number={2},
pages={665-681},
issn={1432-0916},
doi={10.1007/s00220-018-3238-1},
url={https://doi.org/10.1007/s00220-018-3238-1}
}

@article{Tello07062007,
author = {J. Ignacio Tello and Michael Winkler},
title = {A Chemotaxis System with Logistic Source},
journal = {Communications in Partial Differential Equations},
volume = {32},
number = {6},
pages = {849--877},
year = {2007},
publisher = {Taylor \& Francis},
doi = {10.1080/03605300701319003},


URL = { 
    
        https://doi.org/10.1080/03605300701319003
    
    

},
eprint = { 
    
        https://doi.org/10.1080/03605300701319003
    

}
}

@article{WINKLER2011261,
title = {Blow-up in a higher-dimensional chemotaxis system despite logistic growth restriction},
journal = {Journal of Mathematical Analysis and Applications},
volume = {384},
number = {2},
pages = {261-272},
year = {2011},
issn = {0022-247X},
doi = {https://doi.org/10.1016/j.jmaa.2011.05.057},
url = {https://www.sciencedirect.com/science/article/pii/S0022247X11005099},
author = {Michael Winkler},
keywords = {Chemotaxis, Logistic proliferation, Blow-up, Singularity formation},

}

@Article{Winkler2018,
author={Winkler, Michael},
title={Finite-time blow-up in low-dimensional {Keller--Segel} systems with logistic-type superlinear degradation},
journal={Zeitschrift f{\"u}r angewandte Mathematik und Physik},
year={2018},
month={Mar},
day={07},
volume={69},
number={2},
pages={40},
issn={1420-9039},
doi={10.1007/s00033-018-0935-8},
url={https://doi.org/10.1007/s00033-018-0935-8}
}

@article{KANG201657,
title = {Blowup and global solutions in a chemotaxis–growth system},
journal = {Nonlinear Analysis: Theory, Methods \& Applications},
volume = {135},
pages = {57-72},
year = {2016},
issn = {0362-546X},
doi = {https://doi.org/10.1016/j.na.2016.01.017},
url = {https://www.sciencedirect.com/science/article/pii/S0362546X16000262},
author = {Kyungkeun Kang and Angela Stevens},
keywords = {Chemotaxis–growth system, Blowup, Global solutions, Hyperbolic–elliptic system, Parabolic–elliptic system},
abstract = {We study a Keller–Segel type of system, which includes growth and death of the chemotactic species and an elliptic equation for the chemo-attractant. The problem is considered in bounded domains with smooth boundary as well as in the whole space. In case the random motion of the chemotactic species is neglected, a hyperbolic–elliptic problem results, for which we characterize blow-up of solutions in finite time and existence of regular solutions globally in time, in dependence on the systems parameters. In this case, convexity of the domain is needed. For the parabolic–elliptic problem in dimensions three and higher, we establish global existence of regular solutions in a limiting case, which is an extension of the results given by Tello and Winkler (2007).}
}

@article{Winkler2025,
  author  = {Winkler, Michael},
  title   = {Can diffusion degeneracies enhance complexity in chemotactic aggregation? {F}inite-time blow-up on spheres in a quasilinear {Keller--Segel} system},
  journal = {J. Eur. Math. Soc.},
  year    = {2025},
  note    = {published online first},
}

@Article{Fuest2021,
author={Fuest, Mario},
title={Approaching optimality in blow-up results for {Keller--Segel} systems with logistic-type dampening},
journal={Nonlinear Differential Equations and Applications NoDEA},
year={2021},
month={Feb},
day={27},
volume={28},
number={2},
pages={16},
issn={1420-9004},
doi={10.1007/s00030-021-00677-9},
url={https://doi.org/10.1007/s00030-021-00677-9}
}

@Article{Glogić2024,
author={Glogi{\'{c}}, Irfan
and Sch{\"o}rkhuber, Birgit},
title={Stable Singularity Formation for the {Keller--Segel} System in Three Dimensions},
journal={Archive for Rational Mechanics and Analysis},
year={2024},
month={Jan},
day={05},
volume={248},
number={1},
pages={4},
issn={1432-0673},
doi={10.1007/s00205-023-01947-9},
url={https://doi.org/10.1007/s00205-023-01947-9}
}

@misc{li2025nonradialstabilityselfsimilarblowup,
      title={Nonradial stability of self-similar blowup to {Keller-Segel} equation in three dimensions}, 
      author={Zexing Li and Tao Zhou},
      year={2025},
      eprint={2501.07073},
      archivePrefix={arXiv},
      primaryClass={math.AP},
      url={https://arxiv.org/abs/2501.07073}, 
}

@article{delPino_JFA26,
title = {Finite-time blow-up for the three dimensional axially symmetric {Keller-Segel} system},
journal = {Journal of Functional Analysis},
volume = {290},
number = {12},
pages = {111443},
year = {2026},
issn = {0022-1236},
doi = {https://doi.org/10.1016/j.jfa.2026.111443},
url = {https://www.sciencedirect.com/science/article/pii/S0022123626001072},
author = {Federico Buseghin and Juan Dávila and Manuel {del Pino} and Monica Musso},
keywords = {Keller-Segel system, Type II blow-up, Asymptotic expansion, Gluing techniques},
}

@article{bertozzi2012aggregation,
  title = {Aggregation and spreading via the {Newtonian} potential: the dynamics of patch solutions},
  author = {Bertozzi, Andrea L. and Laurent, Thomas and L{\'e}ger, Flavien},
  journal = {Mathematical Models and Methods in Applied Sciences},
  volume = {22},
  number = {suppl. 1},
  pages = {1140005},
  year = {2012},
  publisher = {World Scientific},
  doi = {10.1142/S0218202511400057}
}

@article{bertozzi2016regularity,
  title = {The regularity of the boundary of a multidimensional aggregation patch},
  author = {Bertozzi, Andrea L. and Garnett, John and Laurent, Thomas and Verdera, Joan},
  journal = {{SIAM} Journal on Mathematical Analysis},
  volume = {48},
  number = {3},
  pages = {3789--3819},
  year = {2016},
  publisher = {SIAM},
  doi = {10.1137/15m1033125}
}

@article{campospinto2018convergence,
  title = {Convergence of a linearly transformed particle method for aggregation equations},
  author = {Campos Pinto, Martin and Carrillo, Jos{\'e} A. and Charles, Fr{\'e}d{\'e}rique and Choi, Young-Pil},
  journal = {Numerische Mathematik},
  volume = {139},
  number = {3},
  pages = {743--793},
  year = {2018},
  publisher = {Springer},
  doi = {10.1007/s00211-018-0958-2}
}

@misc{hu2026faststochasticinteractingparticlefield,
      title={A fast stochastic interacting particle-field method for {3D} parabolic parabolic Chemotaxis systems: numerical algorithms and error analysis}, 
      author={Jingyuan Hu and Zhongjian Wang and Jack Xin and Zhiwen Zhang},
      year={2026},
      eprint={2512.03452},
      archivePrefix={arXiv},
      primaryClass={math.NA},
      url={https://arxiv.org/abs/2512.03452}, 
}

@article{nguyen2026infinitely,
  title = {Infinitely many self-similar blow-up profiles for the {Keller-Segel} system in dimensions 3 to 9},
  author = {Nguyen, Van Tien and Wang, Zhi-An and Zhang, Kaiqiang},
  journal = {Journal of Differential Equations},
  volume = {458},
  pages = {114033},
  year = {2026},
  doi = {10.1016/j.jde.2025.114033}
}

\end{document}